\newtheorem{theorem}{Theorem}[section]
\newtheorem{definition}{Definition}[section]
\newtheorem{proposition}{Proposition}[section]
\newtheorem{lemma}{Lemma}[section]
\newtheorem{corollary}{Corollary}[section]
\newtheorem{remark}{Remark}[section]
\newtheorem{example}{Example}[section]
\newcommand{\normSpectral}[1]{\ensuremath{\| #1\|_2}}
\newcommand{\normFSquare}[1]{\ensuremath{\| #1\|_F^2}}
\newcommand{\normF}[1]{\ensuremath{\| #1\|_F}}
\newcommand{\normA}[1]{\ensuremath{\| #1\|_a}}
\newcommand{\bi}{\mathbf{i}}
\newcommand{\bj}{\mathbf{j}}
\newcommand{\bk}{\mathbf{k}}
\newcommand{\bfi}{\mathbf{i}}
\newcommand{\bfj}{\mathbf{j}}
\newcommand{\bfk}{\mathbf{k}}
\newcommand{\qmat}[1]{\ensuremath{\mathbf{ #1}}}
\newcommand{\diag}{{\rm diag}} 
\newcommand{\range}[1]{\ensuremath{\mathcal{R}(#1)}}
\newcommand{\adjointJ}{ \mathcal{J} }
 \newcommand{\spanmat}[1]{ \ensuremath{{\rm span}(#1)}  }
 \newcommand{\rangemat}[1]{ \ensuremath{{\mathcal R}(#1)}  }
 \newcommand{\chiQ}[1]{ \ensuremath{ \mathlarger{\chi}_{ \mathbf{ #1} } }  }
 \newcommand{\chilarger}[1]{ \ensuremath{ \mathlarger{\chi}_{ #1 } }  }
 \newcommand{\cptQa}[1]{ \ensuremath{ \qmat{#1}_a   }  }
 \newcommand{\cptQc}[1]{ \ensuremath{ \qmat{#1}_c   }  }
 \newcommand{\cptQr}[1]{ \ensuremath{ \qmat{#1}_r   }  }
 \newcommand{\kappaq}[1]{ \ensuremath{ \kappa(\qmat{#1} )   }  }
 \newcommand{\bbQ}{ \mathbb{Q} }
 \newcommand{\bbQmn}{ \mathbb{Q}^{m\times n} }
 \newcommand{\bbQns}{ \mathbb{Q}^{n\times s} }
 \newcommand{\bbQms}{ \mathbb{Q}^{m\times s} }
 \newcommand{\bdOmega}{ \ensuremath{ \boldsymbol{\Omega}  } }
 \newcommand{\bdPsi}{ \ensuremath{ \boldsymbol{\Psi}  } }
 \newcommand{\bdPhi}{ \ensuremath{ \boldsymbol{\Phi}  } }
 \newcommand{\bbC}{ \mathbb{C} }
 \newcommand{\bigxiaokuohao}[1]{\ensuremath{ \left(  #1 \right) }}      
 \newcommand{\bigjueduizhi}[1]{\ensuremath{ \left|  #1 \right| }}   
 \newcommand{\bigdakuohao}[1]{\ensuremath{ \left\{  #1 \right\} }}         
 \newcommand{\bigzhongkuohao}[1]{\ensuremath{ \left[   #1 \right] }}
 \newcommand{\innerprodbig}[2]{\ensuremath{ \left\langle   #1 , #2\right\rangle }}      
 \newcommand{\innerprod}[2]{\ensuremath{  \langle   #1 , #2 \rangle }}
	\definecolor{darkgray}{rgb}{0.66, 0.66, 0.66}
\newenvironment{mytabular1}{\bgroup\footnotesize\tabular}{\endtabular\egroup}
\title{Randomized Large-Scale Quaternion Matrix Approximation: Practical Rangefinders and One-Pass  Algorithm}
\author{Chao Chang$^*$ \and Yuning Yang\thanks{College of Mathematics and Information Science, Guangxi University, Nanning, 530004, China} \thanks{Corresponding author: Yuning Yang, yyang@gxu.edu.cn.}
}
\begin{document} 
\maketitle

\begin{abstract}
	Recently, randomized algorithms for low-rank approximation of quaternion matrices have received increasing attention. However, for large-scale problems, existing quaternion orthonormalizations   are inefficient, leading to slow rangefinders. 	To address this, by appropriately leveraging  efficient scientific computing libraries in the complex arithmetic, this work  devises two practical quaternion rangefinders,    one of which is 
	non-orthonormal yet well-conditioned.  They are then integrated into the quaternion version of a    one-pass algorithm, which originally takes  orthonormal rangefinders only. We   establish the error bounds and demonstrate that the error is proportional to the condition number of the rangefinder. The probabilistic bounds are exhibited for both quaternion Gaussian and sub-Gaussian embeddings. Numerical experiments demonstrate that the one-pass algorithm with the proposed rangefinders significantly outperforms previous techniques in efficiency. Additionally,   we tested the   algorithm in a 3D Navier-Stokes equation ($5.22$GB) and a 4D Lorenz-type chaotic system ($5.74$GB)  data compression, as well as a $31365\times 27125$ image compression  to demonstrate its capability for handling large-scale   applications.

\noindent {\bf Keywords:} Randomized Algorithm, Rangefinder, One-pass, Quaternion matrix, Sketching, Low-rank approximation, sub-Gaussian    
\end{abstract}

\section{Introduction}\label{sec:Introduction}
	
	\subsection{Background}
	Low-rank matrix approximation (LRMA) has been applied in various applications.
	In the big data era, large amounts of   data are being captured and generated through various channels, such as high-definition   color video, scientific simulations, and artificial intelligence   training sets. This trend poses challenges in terms of computation time, storage, and memory costs to LRMA. In 2011, a randomized SVD algorithm (HMT)   \cite{FindingStructureHalko} was proposed by Halko, Martinsson, and Tropp, which uses a random sketch to obtain an oversampling approximation before implementing the truncated SVD. Compared to the deterministic SVD, the randomized one runs faster with adjustable precision loss. It is robust, but there is still a need to revisit the original data during the low-rank approximation. In 2017, Tropp et al. \cite{Practical_Sketching_Algorithms_Tropp} developed a one-pass randomized   algorithm using two sketches, which needs to visit the data only once.  It is more effective for managing data with limited storage, arithmetic, and communication capabilities.  Later on, the authors devised also a one-pass algorithm using three sketches and applied it to streaming data \cite{troppStreamingLowRankMatrix2019}. Prior to these work, randomized algorithms have been studied extensively in the literature; see, e.g., \cite{FastMontecarlo,woolfe2008Fast,clarkson2009NumericalLinear,mahoney2011RandomizedAlgorithms,woodruff2014SketchingTool,cohen2015dimensionalityreduction,boutsidis2016OptimalPrincipal}, and the recent surveys \cite{tropp2023RandomizedAlgorithms,kannan2017RandomizedAlgorithms,kireeva2024RandomizedMatrix,murray2023RandomizedNumerical,martinsson2020RandomizedNumerical}.

	 Despite the  noncummutativity in quaternion multiplications,   quaternion matrices have been widely used in various applications such as signal processing \cite{ellQuaternionFourierTransforms2014}, color image analysis \cite{miao2023quaternion,soo-changpeiQuaternionMatrixSingular2003}, and machine learning \cite{zhangAugmentedQuaternionExtreme2019,minemotoFeedForwardNeural2017} in recent years.  
	Randomized quaternion low-rank matrix approximation has garnered increasing attention very recently. 
	Liu et al. \cite{RandomizedQSVD} developed a randomized quaternion SVD algorithm based on the HMT framework by using structure-preserving quaternion QR and   quaternion SVD,  and studied its error bound. This algorithm was later applied to nonnegative pure quaternion matrix approximation \cite{lyuRandomizedLowRankNonnegativePureQuaternionMatrices2024}.
	Ren et al. \cite{renRandomizedQuaternionQLP2022} proposed a randomized quaternion QLP decomposition algorithm. 
	Li et al.    \cite{liRandomizedBlockKrylov2023}  also proposed a randomized block Krylov subspace algorithm with improved approximation accuracy. Very recently, a fixed-precesion randomized quaternion SVD was studied in \cite{liu2024FixedprecisionRandomized} and a randomized quaternion UTV decomposition has been proposed in \cite{xu2024RandomizedQuaternion}.

	The framework of the HMT algorithm \cite{FindingStructureHalko} (also \cite{Practical_Sketching_Algorithms_Tropp,troppStreamingLowRankMatrix2019}, and the quaternion randomized algorithms \cite{RandomizedQSVD,renRandomizedQuaternionQLP2022,liRandomizedBlockKrylov2023})  can   be divided into a randomized QB approximation stage and a truncation stage. Initially,   the QB stage involves generating a sketch of the input data matrix  through randomized oversampling with Gaussian or alternative embeddings. This then leads to the formation of the $Q$ matrix, representing an orthonormal basis for the range of the sketch, known as the \emph{rangefinder} step. The $B$ matrix is then determined, either exactly \cite{FindingStructureHalko} or approximately \cite{Practical_Sketching_Algorithms_Tropp,troppStreamingLowRankMatrix2019}.
  In the truncation stage, truncated SVD or other deterministic methods  are performed on the $B$ matrix to find a more accurate fixed-rank approximation. 

	\subsection{Quaternion rangefinders}
	
	  In the real/complex case,  constructing an orthonormal rangefinder is cheap. Fast and stable algorithms for orthogonalization have matured, and highly optimized implementations are available, such as   MATLAB's built-in function \texttt{qr}, and various QR routines in LAPACK \cite{lapack} and the Intel Math Kernel Library (MKL) \cite{IntelMKL}.

	In the quaternion case, orthogonalization approaches are   developing. Classical QR decomposition methods can be extended to the quaternion arithmetic with little modifications. For example, the quaternionic Householder QR   was proposed for quaternion eigenvalue computations \cite{bunse-gerstnerQuaternionQRAlgorithm1989}; this was implemented by Sangwine and   Le Bihan in the quaternion toolbox for MATLAB (QTFM) \cite{sangwine2020qtfm} with the function name \texttt{qr}. To speed up quaternion matrix computations, in a series of papers \cite{jiaNewStructurepreservingMethod2013,li2016RealStructurepreserving,jiaNewRealStructurepreserving2018,chenNewStructurepreservingQuaternion2021}, the authors proposed structure-preserving algorithms. 
  This type of algorithms is promising, as its basic idea is to only operate on the   real representation of a quaternion matrix,     avoiding quaternion operations and smartly reducing computational complexity. 
	The structure-preserving quaternion Householder QR (QHQR)  \cite{jiaNewRealStructurepreserving2018,li2016RealStructurepreserving} is numerically stable and accurate; the structure-preserving  quaternionic modified Gram-Schimit (QMGS) \cite{wei2018quaternion} is more economic but may lose accuracy. These methods together with QTFM's \texttt{qr} function  were employed for orthonormal rangefinders by the quaternion randomized algorithms \cite{RandomizedQSVD,liRandomizedBlockKrylov2023,renRandomizedQuaternionQLP2022}.  

 \subsection{Limitation and motivation}

 \begin{figure}
	\centering
	\begin{subfigure}[b]{0.5\textwidth}
		\includegraphics[width=\linewidth]{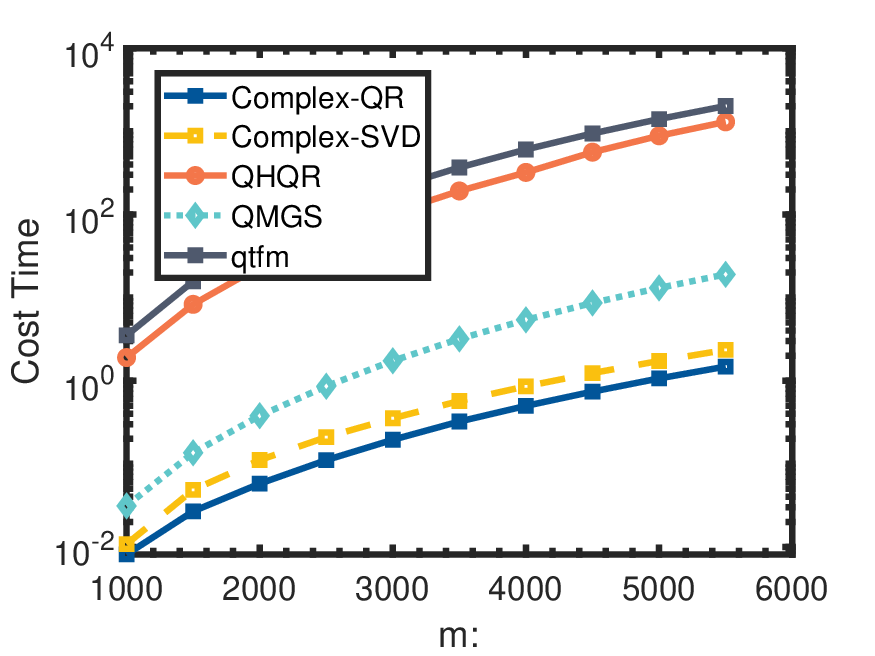}
	\end{subfigure}
	\caption{\small Running time comparisons of QTFM's \texttt{qr}, QHQR, QMGS, and MATLAB's \texttt{qr} and \texttt{svd} (the latter two are applied to the full complex representation; see Sect. \ref{sec:comlex_representation} for its definition). $m$ from $1000$ to $5500$, $n=0.1m$.}
	 \label{fig:ComplexCompare}
\end{figure}

	The above quaternionic orthonormalization methods are efficient in small and moderate problems;  
	however,  for large-scale   matrices, they are still expensive.   For instance,  even QMGS, which is the fastest algorithm mentioned previously\footnote{The code of QHQR was downloaded from Jia's homepage \url{http://maths.jsnu.edu.cn/_upload/article/files/40/5c/0abecd234d2c909be8b4fac9c4ad/1f0499ce-65d1-4364-9000-b8998137e516.zip} and that of QMGS was implemented from \cite{wei2018quaternion}. 
	QMGS and QHQR both have complexity $O(mn^2)$ for matrices of size $m\times n$ \cite{wei2018quaternion,jiaNewRealStructurepreserving2018,li2016RealStructurepreserving,golubMatrixComputations2013}. If the full orthonormal matrix $Q$ is computed, as was done in the implementation of QHQR, then its complexity becomes $O(m^2n)$; see also \cite[p. 249]{golubMatrixComputations2013}. This is expensive when $m\gg n$, as is usually the size of rangefinders.  Nevertheless, empirically  we find that even without explicitly forming $Q$, QHQR is still about twice slower than QMGS, which also confirms the discussions in \cite[p. 255]{golubMatrixComputations2013}.},   
	 is  at least an   order  of magnitude slower than   MATLAB's built in functions \texttt{qr} and \texttt{svd} for $5000\times 500$ quaternion matrices; see Fig. \ref{fig:ComplexCompare}\footnote{ MATLAB's \texttt{qr} and \texttt{svd} are applied to the full complex   representation of the quaternion matrix, which is of size $2m\times 2n$. Of course, this is a rough comparison, as complex QR and SVD may not directly generate an orthonormal rangefinder in the quaternion domain; nevertheless, the purpose of this   comparison is to show that the speed of quaternion rangefinders still has a large room to improve.}.  One of the possible reasons is that highly optimized implementations of these quaternion orthogonalization methods are far from developed. For example, MATLAB's built-in functions such as \texttt{qr} and \texttt{svd} do not support quaternions yet; nor LAPACK or MKL. Without an elegant implementation, even an advanced method cannot take advantage of the features such as parallelism and memory management in modern computing architecture, holding back its advancement.



\color{black}  A common design philosophy to gain efficiency is to trade accuracy, akin to randomized algorithms. Following this vein,   it is possible to     consider a non-orthonormal   rangefinder. If so,  what criteria should the new rangefinder meet, and how to quantify the loss of orthonormality?
	

	
	

	Within the HMT and RQSVD frameworks, the orthonormal rangefinder plays two roles. One is to improve the numerical stability in matrix decompositions \cite[Section 4.1]{FindingStructureHalko}. 
	The other is to  explode the singular values and their corresponding singular vectors in the truncation stage, which helps in better approximating the original matrix.  To  quantify the accuracy, it is natural to take the condition number of the rangefinder into account. Obviously, an orthonormal rangefinder gives the optimal condition number, while  deviations from orthonormality should maintain a condition number that does not indicate severe ill-conditioning.

	\color{black}

	When the criterion above is met, it is expected to appropriately utilizing     mature  	scientific computing libraries or advanced algorithms in the real/complex arithmetic  to accelerate our computations. %
	\color{black}

	
	\color{black}
	

	\subsection{This work}
The general idea throughout this work is to transform heavy quaternion computations   to  QR, SVD, and solving linear equations in the complex arithmetic.
To this end, we employ compact or full complex representations of quaternion matrices as intermediaries, accepting potential reductions in orthonormality  while maintaining favorable condition numbers. The influence of condition numbers on approximation accuracy will be examined theoretically.
Specifically:
 

In Section \ref{sec:quaternionFastRangeFinder}, we    provide two  practical range-preserving  rangefinders, termed pseudo-QR and pseudo-SVD. Theoretically,  Pseudo-QR  can reduce the condition number of the sketch from $10^8$ within $10$. Pseudo-SVD on the other hand generates an orthonormal matrix even with very ill-conditioned sketch. Almost all the computations are built upon 
established scientific computing libraries in the complex arithmetic to ensure  their efficiency.  Comparisons with previous techniques in terms of time and accuracy are illustrated in Fig. \ref{fig:CompareTime} and \ref{fig:ComparePrecision}.

In Section \ref{sec:one-pass-alg-error-anal}, the proposed rangefinders are   incorporated  into   the quaternion version of the one-pass framework of \cite{Practical_Sketching_Algorithms_Tropp}. Originally, the algorithm only employs orthonormal rangefinders. 
 Our theoretical findings in Sections \ref{subsec:ErrorAnalysis}, \ref{sec:Gaussian}, and \ref{sec:sub-gaussian} are summarized as follows:
\begin{itemize}
    \item In the QB approximation stage,  the approximation error,  measured by the tail energy, is independent of the condition number of the rangefinder;
    \item  In the truncation stage,  the truncation error is proportional to the condition number of the rangefinder. This and the previous   point   ensure the reasonability of using a non-orthonormal yet well-conditioned rangefinder in theory.
    \item 
	The probabilistic bounds are exhibited for either quaternion Gaussian or sub-Gaussian test matrices. 
\end{itemize}


Some comments are in order. 
\begin{itemize}
	\item The result  of the first two points   applies to any  range-preserving while non-orthonormal rangefinders.
	\item The probabilistic bounds rely on  the non-asymptotic deviation bounds of extreme singular values of a quaternion random matrix. The  quaternion  Gaussian case was first studied in \cite{RandomizedQSVD}, while we    generalize    a   result   in the real case  by Vershynin \cite{vershynin_2012_Introduction_non-asymptotic} to quaternions. 
\end{itemize}

Finally,  Section \ref{sec:numerical} evaluated the performance of our algorithm and previous ones through various experiments using synthetic data. 
To demonstrate the applicability of our approach in large-scale problems, we tested the   algorithm in a 3D Navier-Stokes equation ($5.22$GB) and a 4D Lorenz-type chaotic system ($5.74$GB)  data compression, as well as a $31365\times 27125$ color image compression. 
\color{black}Note that existing quaternion randomized algorithms \cite{RandomizedQSVD,renRandomizedQuaternionQLP2022,liRandomizedBlockKrylov2023,liu2024FixedprecisionRandomized,xu2024RandomizedQuaternion} were only reported to handled matrices of size up to a few thousand. \color{black}

\color{black} Our implementation in MATLAB is available at \url{github.com/Mitchell-Cxyk/RQLRMA}.

	\color{black}

	\section{Preliminaries on Quaternions}\label{sec:Preliminaries}
	Throughout this work, quaternion  vectors  and matrices are written in bold face characters; quaternion scalars and real/complex scalars, vectors, and matrices are written as italic characters.

	Quaternions  were invented by Sir William Rowan Hamilton in 1843. A quaternion scalar is   of the form 
	$ {a} = a_w + a_x \bi + a_y \bj + a_z \bk $
	where \(a_w, a_x, a_y, a_z\) are real numbers.
	The sum of quaternions is defined component-wise 
	and the their multiplication   is determined by the following rules along with the associative and distributive laws $\mathbf{i}^2=\mathbf{j}^2=\mathbf{k}^2=\bfi\bfj\bfk=-1$. For $a,b\in\bbQ$, $ab\neq ba$ in general. 
	 A  quaternion matrix $\qmat{Q}\in \bbQmn$    is defined   as $\qmat{Q}=Q_w+Q_x\bi+Q_y\bj +Q_z\bk $, where $Q_i (i=w,x,y,z) \in\mathbb R^{m\times n}$.
	The conjugate and  conjugate transpose of $\qmat{Q}$ are respectively denoted as $\overline{\qmat{Q}}:= Q_w-Q_x\bi - Q_y\bj - Q_z\bk$ and 
	 $\qmat{Q}^*:=Q_w^T-Q_x^T\bi-Q_y^T\bj-Q_z^T \bk $. For two quaternion matrices $\qmat{P}$ and $\qmat{Q}$ of proper size, $ \qmat{P}^*\qmat{Q} = (\qmat{Q}^*\qmat{P})^*$ while $\overline{\qmat{P}^*\qmat{Q}}\neq\overline{\qmat{P}^*}\cdot \overline{\qmat{Q}}$. More   properties are referred to \cite{zhangQuaternionsMatricesQuaternions1997}. $\|{\qmat{Q}}\|_F:=(\sum_{k\in \{w,x,y,z \}}\|Q_k\|_F^2)^{1/2}$.

	\subsection{Quaternion vector space}
	
	 Considering vectors with quaternion coordinates, a module over the ring $\mathbb{Q}$ is usually called the quaternion right vector space under the summation and the right scalar multiplication.  Given quaternion vectors $\qmat{v}_1,\ldots,\qmat{v}_r$, they are right linearly independent if  for quaternions $k_1,\ldots,k_r$,
	\begin{align*}
		\qmat{v}_1 k_1+ \qmat{v}_2 k_2+\cdots+\qmat{v}_r k_r =0 \quad\text{implies}\quad k_i=0,~ ~i=1,\ldots,r.
	\end{align*}
	Most linear algebra concepts and results can be transplanted to the right vector space in parallel \cite{zhangQuaternionsMatricesQuaternions1997} and throughout this work, we always omit the prefix ``right''. 

	Given a quaternion matrix $\qmat{V}=[\qmat{v}_1,\ldots,\qmat{v}_r]$, its range space is defined as
	\[
	\rangemat{\qmat{V}} = \spanmat{\qmat{v}_1,\ldots,\qmat{v}_r} = \bigdakuohao{\sum^r_{i=1}\nolimits\qmat{v}_ik_i~\mid~ k_i\in\bbQ, ~i=1,\ldots,r  }.	
	\]
	The inner product between $\qmat{u}$ and $\qmat{v}$
is given by	 $\langle\mathbf{u} ,\mathbf{v}\rangle=\mathbf{u}^*\mathbf{v}$ \cite{jia2019RobustQuaternion,chen2022colorImage}. 
	They are orthogonal if $\langle\mathbf{u},\mathbf{v}\rangle=0$ and written as $\qmat{u}\perp\qmat{v}$, and orthonormal if in addition $\langle\qmat{u},\qmat{u}\rangle=\langle\qmat{v},\qmat{v}\rangle=1$. Given  a quaternion linear space $\mathcal{V}$, for a subspace $\mathcal{L}\subset\mathcal{V}$,   its orthogonal complement is defined as
	 $
		\mathcal{L}^\bot:=\{\qmat{v} \in\mathcal{V}: \langle \qmat{v},\qmat{w}\rangle=0,\forall \qmat{w}\in\mathcal{L}\}
	 $. 
	\subsection{Complex representation}\label{sec:comlex_representation}

	

	  $\qmat{Q}\in\bbQmn$ can   be represented as $\qmat{Q}=Q_0+Q_1\bj$, where $Q_0,Q_1\in\mathbb{C}^{m\times n}$ 
	with $Q_0 = Q_w + Q_x\bi$ and $Q_1 = Q_y+Q_z\bi$. 
	The (full) complex representation of $\qmat{Q}$ is defined as \cite{zhangQuaternionsMatricesQuaternions1997}:
	\begin{gather*}
		\chiQ{Q}:=\begin{bmatrix}
			Q_0 & Q_1\\
			-\overline{Q_1} &\overline{Q_0} 
		\end{bmatrix} \in \bbC^{2m\times 2n}.
	\end{gather*}
	$\chiQ{Q}$ has several nice properties that it is useful in the study of quaternions:
	\begin{proposition}[\cite{zhangQuaternionsMatricesQuaternions1997}]\label{prop:Chi_properties}
		 Let $\qmat{P},\qmat{Q}$ be quaternion matrices of proper size. Then
	\begin{gather*}
		\mathlarger{\chi}_{k_1\mathbf{P}+k_2\mathbf{Q}}=k_1\chiQ{P}+k_2\chiQ{Q}(k_1,k_2\in\mathbb{R}), \quad\mathlarger{\chi}_{\qmat{P}\qmat{Q}}=\chiQ{P}\chiQ{Q},\\
		\mathlarger{\chi}_{\qmat{Q}^*}=\chiQ{Q}^*,\quad \chilarger{ \qmat{Q}^{-1} } = \chiQ{Q}^{-1} ~{\rm if}~ \qmat{Q}^{-1}~{\rm exists}, \\ 
		\chiQ{Q} ~{\rm is~ (partially)~unitary/Hermitian ~if ~and~ only~ if}~\qmat{Q}~{\rm is~(partially)~unitary/Hemitian}.
	\end{gather*}
\end{proposition}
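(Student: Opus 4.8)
The statement to prove is Proposition \ref{prop:Chi_properties}, collecting the standard algebraic properties of the complex representation map $\chiQ{\cdot}$.

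\medskip

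\noindent\textbf{Proof proposal.}

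The plan is to verify each identity by direct computation, exploiting the decomposition $\qmat{Q}=Q_0+Q_1\bj$ with $Q_0,Q_1\in\bbC^{m\times n}$ and the single algebraic fact that drives everything: for any complex matrix $C$ (viewed as a quaternion matrix with zero $\bj$-part), $\bj\, C = \overline{C}\,\bj$, equivalently $C\bj=\bj\,\overline{C}$. This "conjugation past $\bj$" rule is what makes the $2\times 2$ block pattern of $\chiQ{Q}$ multiplicative. First I would establish $\bj C = \overline C \bj$ from the multiplication table ($\bj\bfi = -\bfi\bj$, etc.), then use it as the workhorse throughout.

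\medskip

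For $\mathbb{R}$-linearity $\chilarger{k_1\qmat P+k_2\qmat Q}=k_1\chiQ{P}+k_2\chiQ{Q}$: since $k_1,k_2$ are real, $(k_1\qmat P+k_2\qmat Q)_0 = k_1P_0+k_2Q_0$ and likewise for the $1$-component, and conjugation is $\mathbb{R}$-linear, so every block of the defining matrix is the corresponding $\mathbb{R}$-linear combination — immediate. For multiplicativity $\chilarger{\qmat P\qmat Q}=\chiQ P\chiQ Q$: write $\qmat P\qmat Q=(P_0+P_1\bj)(Q_0+Q_1\bj)$, expand into four terms, and push $\bj$ to the right using $\bj C=\overline C\bj$ to collect the result as $(P_0Q_0-P_1\overline{Q_1}) + (P_0Q_1+P_1\overline{Q_0})\bj$; this must be matched against the $(1,1)$ and $(1,2)$ blocks of the product of the two $2\times 2$ block matrices, and the $(2,1),(2,2)$ blocks then follow by taking overall conjugates (or by an analogous expansion). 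For the conjugate-transpose identity $\chilarger{\qmat Q^*}=\chiQ Q^*$: from $\qmat Q^*=(Q_0+Q_1\bj)^* = Q_0^* - \bj^* Q_1^* $ — here one has to be careful, use $(\qmat A\qmat B)^*=\qmat B^*\qmat A^*$ and $\bj^*=-\bj$, and again $\bj Q_1^* = \overline{Q_1^*}\,\bj$ — to get $\qmat Q^* = Q_0^* + (-\overline{Q_1^*})$... more cleanly: the $0$-component of $\qmat Q^*$ is $Q_0^{\mathsf H}$ (complex conjugate transpose) and the $1$-component is $-Q_1^{\mathsf T}$, which one checks equals the block matrix obtained by conjugate-transposing $\chiQ Q$. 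The inverse identity $\chilarger{\qmat Q^{-1}}=\chiQ Q^{-1}$ is then a corollary: apply multiplicativity to $\qmat Q\qmat Q^{-1}=\qmat I$ and note $\chilarger{\qmat I}=I_{2n}$, so $\chiQ Q \chilarger{\qmat Q^{-1}} = I$, and since $\chiQ Q$ is square this forces $\chilarger{\qmat Q^{-1}}=\chiQ Q^{-1}$. Finally, the (partial) unitarity/Hermitian equivalence follows by combining the previous identities: $\qmat Q^*\qmat Q=\qmat I_r$ (partial isometry) transforms under $\chiQ{\cdot}$ — using multiplicativity, the conjugate-transpose identity, and $\chilarger{\qmat I_r}=I_{2r}$ — into $\chiQ Q^*\chiQ Q = I_{2r}$, and conversely; the Hermitian case $\qmat Q=\qmat Q^*$ is even more direct, and one should note the map is injective on the relevant entries so the "if and only if" genuinely goes both ways.

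\medskip

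The main obstacle, such as it is, is purely bookkeeping: keeping conjugations, transposes, and the ordering of factors straight when moving $\bj$ across complex blocks and when handling $\ast$ of a product — a single sign error in $\bj C=\overline C\bj$ or in $\bj^*=-\bj$ propagates through the multiplicativity and conjugate-transpose checks. There is no conceptual difficulty; all of this is classical (see \cite{zhangQuaternionsMatricesQuaternions1997}), so in the paper I would state the identities and either cite the reference or sketch only the multiplicativity computation, from which the rest cascade.
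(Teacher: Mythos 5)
Your verification is correct: the paper itself offers no proof of this proposition, stating it with a citation to \cite{zhangQuaternionsMatricesQuaternions1997}, and your direct block computation (linearity, expanding $(P_0+P_1\bj)(Q_0+Q_1\bj)$ with the rule $\bj C=\overline C\,\bj$, the conjugate-transpose check giving components $Q_0^{\mathsf H}$ and $-Q_1^{\mathsf T}$, the inverse as a corollary of multiplicativity, and the unitary/Hermitian equivalence via injectivity of $\chi$) is exactly the standard argument behind that cited result. Your closing remark that in the paper one would simply cite the reference matches what the authors actually do, so there is nothing to fix.
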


	 $\chiQ{Q}$  can be partitioned as two blocks:
	\begin{align*} \label{def:Q_c_Q_a}
\chiQ{Q} = [ \cptQc{Q},\cptQa{Q}  ],~{\rm with}~\cptQc{Q}:=\begin{bmatrix}
	Q_0  \\
	-\overline{Q_1}   
\end{bmatrix},~\cptQa{Q}:= \begin{bmatrix}
	 Q_1\\
	 \overline{Q_0} 
\end{bmatrix}.
	\end{align*}
	We  call $\cptQc{Q}$ the \emph{compact} complex representation of $\qmat{Q}$.   $\cptQa{Q}$ can be generated from $\cptQc{Q}$  as $\cptQa{Q} = \mathcal J\overline{\cptQc{Q}}$, where 
	  $\mathcal{J}:=\begin{bmatrix}
	0&-I_m\\
	I_m&0
\end{bmatrix}$ is the symplectic matrix.  The relation between $\cptQc{Q}$ and $\cptQa{Q}$ is important in the design and analysis of our rangefinders. One can directly check that $\mathcal J$ admits the following properties:
			\begin{lemma} \label{lem:J-adjoint}
 J-adjoint satisfies:
	\begin{gather*}
		    \mathcal{J}^*=\mathcal{J}^{-1}=-\mathcal{J}; \quad 
		   \chiQ{Q}=[\cptQc{Q},\mathcal{J}\overline{\cptQc{Q}}];  \quad
		   \mathcal{J}^*\chiQ{Q}\mathcal{J}=\mathcal{J}\chiQ{Q}\mathcal{J}^*=\overline{\chiQ{Q}} ;\\
		   \mathcal{J}^*[v,\adjointJ \overline{v}  ]\mathcal{J}=\mathcal{J}[v,\adjointJ\overline{v}]\mathcal{J}^*=\overline{[v,\adjointJ \overline{v}]},\quad{\rm and}  
		   \quad \innerprodbig{v}{\mathcal J\overline v}=0,\forall v\in\mathbb C^{2m}.
	\end{gather*}
\end{lemma}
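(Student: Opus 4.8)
The plan is to verify each identity in Lemma~\ref{lem:J-adjoint} by a direct block computation, treating them in the order listed since the later ones build on the earlier. First I would handle $\mathcal{J}^*=\mathcal{J}^{-1}=-\mathcal{J}$: since $\mathcal{J}=\left[\begin{smallmatrix}0&-I_m\\I_m&0\end{smallmatrix}\right]$ has real entries, $\mathcal{J}^*=\mathcal{J}^T=\left[\begin{smallmatrix}0&I_m\\-I_m&0\end{smallmatrix}\right]=-\mathcal{J}$, and multiplying out $\mathcal{J}(-\mathcal{J})$ gives $\left[\begin{smallmatrix}I_m&0\\0&I_m\end{smallmatrix}\right]$, so $-\mathcal{J}=\mathcal{J}^{-1}$. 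The identity $\chiQ{Q}=[\cptQc{Q},\mathcal{J}\overline{\cptQc{Q}}]$ is essentially the definition restated: $\overline{\cptQc{Q}}=\left[\begin{smallmatrix}\overline{Q_0}\\-Q_1\end{smallmatrix}\right]$, and applying $\mathcal{J}$ to it yields $\left[\begin{smallmatrix}Q_1\\\overline{Q_0}\end{smallmatrix}\right]=\cptQa{Q}$, so the claim follows from $\chiQ{Q}=[\cptQc{Q},\cptQa{Q}]$. This already records the relation $\cptQa{Q}=\mathcal{J}\overline{\cptQc{Q}}$ stated just before the lemma.

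Next I would establish $\mathcal{J}^*\chiQ{Q}\mathcal{J}=\mathcal{J}\chiQ{Q}\mathcal{J}^*=\overline{\chiQ{Q}}$. Using $\mathcal{J}^*=-\mathcal{J}$, both middle expressions equal $-\mathcal{J}\chiQ{Q}\mathcal{J}$ with a sign that cancels, so it suffices to compute $\mathcal{J}\chiQ{Q}\mathcal{J}^*$ block-wise. Writing $\chiQ{Q}=\left[\begin{smallmatrix}Q_0&Q_1\\-\overline{Q_1}&\overline{Q_0}\end{smallmatrix}\right]$ and carrying out the two multiplications by $\mathcal{J}$ and $\mathcal{J}^*$, the $(1,1)$ block becomes $\overline{Q_0}$, the $(1,2)$ block becomes $\overline{Q_1}$, the $(2,1)$ block $-Q_1$, and the $(2,2)$ block $Q_0$; comparing with $\overline{\chiQ{Q}}=\left[\begin{smallmatrix}\overline{Q_0}&\overline{Q_1}\\-Q_1&Q_0\end{smallmatrix}\right]$ gives the identity. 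The fourth line is just the same computation applied to the special matrix $[v,\adjointJ\overline v]$, which has exactly the structural form $\left[\begin{smallmatrix}v_1&v_2\\-\overline{v_2}&\overline{v_1}\end{smallmatrix}\right]$ when $v=\left[\begin{smallmatrix}v_1\\v_2\end{smallmatrix}\right]$ (so that $\adjointJ\overline v=\left[\begin{smallmatrix}v_2\\\overline{v_1}\end{smallmatrix}\right]$), i.e. it is a complex representation $\chiQ{P}$ for the quaternion $\qmat{P}$ whose complex parts are $v_1,v_2$; hence it follows immediately from the third line with $\chiQ{Q}$ replaced by $[v,\adjointJ\overline v]$.

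Finally, for $\innerprodbig{v}{\mathcal{J}\overline v}=0$ I would write $v=\left[\begin{smallmatrix}v_1\\v_2\end{smallmatrix}\right]$ with $v_1,v_2\in\mathbb{C}^m$, so $\mathcal{J}\overline v=\left[\begin{smallmatrix}-\overline{v_2}\\\overline{v_1}\end{smallmatrix}\right]$, and then $\innerprod{v}{\mathcal{J}\overline v}=v^*(\mathcal{J}\overline v)=-v_1^*\,\overline{v_2}+v_2^*\,\overline{v_1}$. Since $v_2^*\,\overline{v_1}=\overline{v_1^T v_2}=\overline{(v_2^T v_1)^{T}}=\cdots$ one checks that the two terms are complex conjugates of... no: more directly, $v_1^*\overline{v_2}=\sum_i \overline{(v_1)_i}\,\overline{(v_2)_i}=\overline{\sum_i (v_1)_i (v_2)_i}$ and $v_2^*\overline{v_1}=\sum_i \overline{(v_2)_i}\,\overline{(v_1)_i}=\overline{\sum_i (v_2)_i(v_1)_i}$, which are equal since scalar multiplication in $\mathbb{C}$ commutes, so the difference is $0$. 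The only mild subtlety throughout is bookkeeping of conjugates and of the non-commutativity conventions (e.g. $\chiQ{Q^*}=\chiQ{Q}^*$ from Proposition~\ref{prop:Chi_properties}), so I do not expect a genuine obstacle; the main care needed is simply to keep the $2\times2$ block arithmetic and the complex conjugation consistent, and to note that $[v,\adjointJ\overline v]$ really is of the form $\chiQ{P}$ so the structural identities transfer verbatim.
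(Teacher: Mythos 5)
The paper offers no written proof for this lemma (it is stated as something one ``can directly check''), so your direct block-by-block verification is exactly the intended argument, and all five identities are verified correctly in substance. One bookkeeping slip to fix in the fourth item: with $v=[v_1^{\,T},\ v_2^{\,T}]^{T}$ one has $\adjointJ\overline v=[-\overline{v_2}^{\,T},\ \overline{v_1}^{\,T}]^{T}$ (as you yourself use correctly in the last step), so $[v,\adjointJ\overline v]=\begin{bmatrix} v_1 & -\overline{v_2}\\ v_2 & \overline{v_1}\end{bmatrix}$, i.e.\ it is $\chilarger{\qmat{P}}$ for the quaternion vector $\qmat{P}=v_1-\overline{v_2}\,\bj$, not the matrix $\begin{bmatrix} v_1 & v_2\\ -\overline{v_2} & \overline{v_1}\end{bmatrix}$ you display; with this relabeling your reduction of the fourth identity to the third goes through verbatim, so the slip is cosmetic rather than a gap.
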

	
The quaternion Moore-Penrose (MP)  inverse can be defined similarly as its real/complex counterpart \cite[section 1.6]{wei2018quaternion}. For   $\qmat{A}\in\bbQmn$,    there exists a unique solution $\qmat{X}$, denoted as $\qmat{A}^\dagger$, that satisfies the following four matrix equations: 	\begin{gather*}
		  \qmat{A}\qmat{X}\qmat{A}=\qmat{A}, ~~
		   \qmat{X}\qmat{A}\qmat{X}=\qmat{X} ,~~
		   (\qmat{A}\qmat{X})^*=\qmat{A}\qmat{A}\qmat{X}, ~~
	 (\qmat{X}\qmat{A})^*=\qmat{X}\qmat{A}.
	\end{gather*}

	\begin{lemma}\label{lem:AdaggerComplex}
		Let $\qmat{A}\in\mathbb{Q}^{m\times n}$; then   $\chi_{\qmat{A}^\dagger}=\left(\chi_\qmat{A}\right)^\dagger$.
	\end{lemma}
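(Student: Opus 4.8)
The plan is to exploit the fact, recorded in Proposition \ref{prop:Chi_properties}, that the complex-representation map $\qmat{Q}\mapsto\chiQ{Q}$ is an $\mathbb{R}$-linear $*$-homomorphism: it is additive, multiplicative ($\chilarger{\qmat{P}\qmat{Q}}=\chiQ{P}\chiQ{Q}$), and intertwines conjugate transposition ($\chilarger{\qmat{Q}^*}=\chiQ{Q}^*$). Combined with the uniqueness of the Moore--Penrose inverse over $\bbC$, this is essentially all that is needed, so the argument is short.

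Concretely, I would set $\qmat{X}=\qmat{A}^\dagger\in\bbQ^{n\times m}$, so that $\qmat{X}$ satisfies the four Penrose identities $\qmat{A}\qmat{X}\qmat{A}=\qmat{A}$, $\qmat{X}\qmat{A}\qmat{X}=\qmat{X}$, $(\qmat{A}\qmat{X})^*=\qmat{A}\qmat{X}$, $(\qmat{X}\qmat{A})^*=\qmat{X}\qmat{A}$. Applying $\chi$ to each identity and using multiplicativity and the $*$-property from Proposition \ref{prop:Chi_properties} transforms them, respectively, into
\[
\chiQ{A}\chiQ{X}\chiQ{A}=\chiQ{A},\quad \chiQ{X}\chiQ{A}\chiQ{X}=\chiQ{X},\quad (\chiQ{A}\chiQ{X})^*=\chiQ{A}\chiQ{X},\quad (\chiQ{X}\chiQ{A})^*=\chiQ{X}\chiQ{A}.
\]
These are exactly the four Penrose equations characterizing the (complex) Moore--Penrose inverse of $\chiQ{A}\in\bbC^{2m\times 2n}$; since that inverse is unique, we conclude $\chiQ{X}=(\chiQ{A})^\dagger$, i.e. $\chiAdag=(\chiQ{A})^\dagger$. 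A one-line consistency check on dimensions is worth including: $\qmat{A}^\dagger\in\bbQ^{n\times m}$ gives $\chiAdag\in\bbC^{2n\times 2m}$, matching $(\chiQ{A})^\dagger\in\bbC^{2n\times 2m}$.

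There is no real obstacle here; the only points deserving a line of care are (i) that $\chi$ respects all three algebraic operations appearing in the Penrose system, which is quoted directly from Proposition \ref{prop:Chi_properties}, and (ii) invoking existence and uniqueness of the complex Moore--Penrose inverse, which is classical. I would deliberately avoid the alternative route of starting from $(\chiQ{A})^\dagger$ and pulling it back through $\chi$, since that first requires verifying that $(\chiQ{A})^\dagger$ lies in the image of $\chi$ (equivalently, has the required block structure $\begin{bmatrix}Y_0&Y_1\\-\overline{Y_1}&\overline{Y_0}\end{bmatrix}$); the forward direction above sidesteps that minor subtlety entirely.
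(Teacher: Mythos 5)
Your proposal is correct and follows essentially the same route as the paper, which simply notes that the lemma "can be proved by checking the MP inverse directly": you apply $\chi$ to the four Penrose identities using its multiplicativity and $*$-compatibility from Proposition \ref{prop:Chi_properties}, then invoke uniqueness of the complex Moore--Penrose inverse. Your remark about avoiding the pull-back direction (which would require checking that $(\chiQ{A})^\dagger$ has the admissible block structure) is a sensible extra precaution, but the argument is the same in substance.
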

	\begin{proof}
		The lemma can be proved by checking the MP inverse directly.
	\end{proof}

	\begin{lemma}\label{lem:j_pm_jstar}
		Let $U\in\bbC^{2m\times n}$; denote $M:=[U,\adjointJ\overline{U}]$ and $P_{M }:=  MM^\dagger$ the orthogonal projection onto $\mathcal R(M)$. Then $\adjointJ P_{M } \adjointJ^* = \adjointJ^*P_{M } \adjointJ = \overline{P_M}$.
	\end{lemma}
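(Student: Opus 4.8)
The plan is to reduce the statement to two ingredients: the uniqueness of the orthogonal projector onto a prescribed subspace, and the single structural identity that left-multiplication by $\mathcal{J}$ maps $\mathcal{R}(M)$ onto $\mathcal{R}(\overline{M})$. First I would record the key identity
$\mathcal{J} M = \overline{M}\,\mathcal{J}_n$, where $\mathcal{J}_n := \begin{bmatrix} 0 & -I_n \\ I_n & 0 \end{bmatrix}$ is the $2n\times 2n$ symplectic matrix. This is a one-line computation: since $\mathcal{J}$ is a real matrix, $\overline{\mathcal{J}} = \mathcal{J}$, so $\overline{M} = [\,\overline{U},\ \mathcal{J} U\,]$; and since $\mathcal{J}^2 = -I_{2m}$ (a consequence of $\mathcal{J}^* = \mathcal{J}^{-1} = -\mathcal{J}$ in Lemma~\ref{lem:J-adjoint}), one gets $\mathcal{J} M = [\,\mathcal{J} U,\ -\overline{U}\,] = \overline{M}\,\mathcal{J}_n$. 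Because $\mathcal{J}_n$ is invertible, this gives $\mathcal{R}(\mathcal{J} M) = \mathcal{R}(\overline{M})$; the same computation with $\mathcal{J}^* = -\mathcal{J}$ in place of $\mathcal{J}$ gives $\mathcal{R}(\mathcal{J}^* M) = \mathcal{R}(\overline{M})$ as well.

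Next I would invoke three standard facts about Moore--Penrose projectors. (i) For any matrix $N$, the matrix $NN^\dagger$ is the unique orthogonal projector onto $\mathcal{R}(N)$; this is immediate from the four Penrose equations ($NN^\dagger$ is Hermitian by the third equation, idempotent by the first, and has range $\mathcal{R}(N)$ since $NN^\dagger N = N$). (ii) If $W$ is unitary, then $W(NN^\dagger)W^*$ is Hermitian and idempotent with range $W\mathcal{R}(N) = \mathcal{R}(WN)$, hence equals $P_{WN}$ by uniqueness; I apply this with $W = \mathcal{J}$ and with $W = \mathcal{J}^*$, both unitary since $\mathcal{J}^*\mathcal{J} = (-\mathcal{J})\mathcal{J} = \mathcal{J}^{-1}\mathcal{J} = I$. (iii) $\overline{P_M}$ is itself Hermitian (being $\overline{P_M^*}$), idempotent (being $\overline{P_M^2}$), with range $\overline{\mathcal{R}(P_M)} = \overline{\mathcal{R}(M)} = \mathcal{R}(\overline{M})$, so $\overline{P_M} = P_{\overline{M}}$ — this is proved in the same spirit as Lemma~\ref{lem:AdaggerComplex}, or directly as just indicated. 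Chaining these facts: $\mathcal{J} P_M \mathcal{J}^* = P_{\mathcal{J} M} = P_{\overline{M}} = \overline{P_M}$, which is the first claimed identity.

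For the second identity, $\mathcal{J}^* P_M \mathcal{J} = \overline{P_M}$, I would either repeat the argument verbatim with $\mathcal{J}^*$ in place of $\mathcal{J}$ (legitimate since $\mathcal{J}^*$ is unitary and $\mathcal{R}(\mathcal{J}^* M) = \mathcal{R}(\overline{M})$ by the first paragraph), or simply conjugate the first identity: taking complex conjugates of $\mathcal{J} P_M \mathcal{J}^* = \overline{P_M}$ and using $\overline{\mathcal{J}} = \mathcal{J}$, $\overline{\mathcal{J}^*} = \mathcal{J}^*$ gives $\mathcal{J}\,\overline{P_M}\,\mathcal{J}^* = P_M$, and then multiplying on the left by $\mathcal{J}^*$ and on the right by $\mathcal{J}$ (with $\mathcal{J}^*\mathcal{J} = I$) yields $\overline{P_M} = \mathcal{J}^* P_M \mathcal{J}$. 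I do not expect a genuine obstacle here; the argument is essentially bookkeeping. The only two points deserving care are keeping the dimensions straight — the compensating factor on the right of $\mathcal{J} M = \overline{M}\mathcal{J}_n$ is the $2n\times 2n$ symplectic matrix, not the $2m\times 2m$ one — and making fact (iii), i.e. that conjugation respects orthogonal projectors (equivalently, the Moore--Penrose inverse), explicit rather than tacit.
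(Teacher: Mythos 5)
Your proof is correct, but it takes a genuinely different route from the paper's. The paper lifts the problem back to the quaternion domain: since $M=[U,\adjointJ\overline U]$ is by construction the full complex representation of a quaternion matrix $\qmat{U}$ with $\cptQc{U}=U$, it writes $P_M=\chiQ{U}\chiQ{U}^\dagger=\chilarger{\qmat{U}\qmat{U}^\dagger}$ using Proposition \ref{prop:Chi_properties} together with Lemma \ref{lem:AdaggerComplex} ($\chi_{\qmat{A}^\dagger}=\chiQ{A}^\dagger$), and then applies the identity $\adjointJ\chiQ{Q}\adjointJ^*=\overline{\chiQ{Q}}$ of Lemma \ref{lem:J-adjoint} to the single square quaternion projector $\qmat{U}\qmat{U}^\dagger$ — a short chain of equalities. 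You instead stay entirely in complex arithmetic: the intertwining relation $\adjointJ M=\overline M\,\mathcal J_n$ (with $\mathcal J_n$ the $2n\times 2n$ symplectic matrix), the uniqueness of the orthogonal projector onto a prescribed range, and the fact $\overline{P_M}=P_{\overline M}$, all of which you justify. Both arguments are sound; the paper's is shorter because the $\chi$-calculus has already been set up, whereas yours never needs Lemma \ref{lem:AdaggerComplex}, makes the dimension bookkeeping explicit (the $2n\times 2n$ versus $2m\times 2m$ symplectic factors, which the paper's uniform symbol $\adjointJ$ leaves implicit), and isolates the only property of $M$ actually used, namely $\adjointJ\,\rangemat{M}=\overline{\rangemat{M}}$, so your argument applies verbatim to any matrix whose range has this symmetry.
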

	\begin{proof}
		Denote $\qmat{U}\in\bbQ^{m\times n}$ such that $\cptQc{U}=U$. Then $\chiQ{U}=M$. Thus 
		\begin{align*}
			\adjointJ P_{M } \adjointJ^* = \adjointJ \chiQ{U}\chiQ{U}^\dagger \adjointJ^* = \adjointJ \chilarger{\qmat{U}\qmat{U}^\dagger }\adjointJ^* = \overline{\chilarger{ \qmat{U}\qmat{U}^\dagger }} = \overline{\chiQ{U} \chiQ{U}^\dagger}= \overline{MM^\dagger}=\overline{P_M},
		\end{align*}
		where the second and fourth equalities are due to Lemma \ref{lem:AdaggerComplex} and Proposition \ref{prop:Chi_properties}, and the third one comes from Lemma \ref{lem:J-adjoint}. Verifying $\adjointJ^*P_{M } \adjointJ = \overline{P_M}$ is similar. 
	\end{proof}

	\begin{lemma}
		\label{lem:range_equivalent_quaternion_chi_Q}
		Let $\qmat{A},\qmat{B}\in\bbQmn$. Then $\rangemat{\qmat{A}}=\rangemat{\qmat{B}}$ if and only if $\rangemat{\chiQ{A}}=\rangemat{\chiQ{B}}$. 
	\end{lemma}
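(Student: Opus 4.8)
The plan is to reduce the biconditional to a single one-sided inclusion and then translate that inclusion through the orthogonal projectors furnished by Moore--Penrose inverses. First I would observe that it suffices to prove, for every pair $\qmat{A},\qmat{B}\in\bbQmn$, the equivalence
\[
\rangemat{\qmat{A}}\subseteq\rangemat{\qmat{B}} \iff \rangemat{\chiQ{A}}\subseteq\rangemat{\chiQ{B}};
\]
the stated equivalence of ranges then follows by applying this to the ordered pair $(\qmat{A},\qmat{B})$ and to $(\qmat{B},\qmat{A})$ and intersecting the two inclusions.

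For the quaternion side, set $\qmat{P}:=\qmat{B}\qmat{B}^\dagger$. Using the four defining equations of the MP inverse one checks that $\qmat{P}$ is idempotent, $\qmat{P}^*=\qmat{P}$, and $\rangemat{\qmat{P}}=\rangemat{\qmat{B}}$ (the inclusion $\rangemat{\qmat{P}}\subseteq\rangemat{\qmat{B}}$ is clear, and $\qmat{P}\qmat{B}=\qmat{B}$ gives the reverse), so $\qmat{P}$ is the orthogonal projector onto $\rangemat{\qmat{B}}$ in the right vector space with inner product $\innerprod{\qmat{u}}{\qmat{v}}=\qmat{u}^*\qmat{v}$, exactly as in Lemma \ref{lem:j_pm_jstar}. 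Since $\rangemat{\qmat{A}}$ is the right-linear span of the columns of $\qmat{A}$ and left multiplication by $\qmat{P}$ commutes with right scalar multiplication, the inclusion $\rangemat{\qmat{A}}\subseteq\rangemat{\qmat{B}}$ holds if and only if $\qmat{P}\qmat{A}=\qmat{A}$. The identical reasoning over $\bbC^{2m}$ with the standard inner product shows that $\rangemat{\chiQ{A}}\subseteq\rangemat{\chiQ{B}}$ holds if and only if $\chiQ{B}(\chiQ{B})^\dagger\chiQ{A}=\chiQ{A}$.

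The bridge between the two conditions is Lemma \ref{lem:AdaggerComplex} together with Proposition \ref{prop:Chi_properties}: indeed
\[
\chiQ{B}(\chiQ{B})^\dagger = \chiQ{B}\,\chilarger{\qmat{B}^\dagger} = \chilarger{\qmat{B}\qmat{B}^\dagger} = \chiQ{P},
\qquad\text{hence}\qquad
\chiQ{B}(\chiQ{B})^\dagger\chiQ{A} = \chilarger{\qmat{P}\qmat{A}}.
\]
Because the map $\qmat{Q}\mapsto\chiQ{Q}$ is injective (its $(1,1)$ and $(1,2)$ blocks recover $Q_0$ and $Q_1$, hence $\qmat{Q}$), the equality $\chilarger{\qmat{P}\qmat{A}}=\chiQ{A}$ is equivalent to $\qmat{P}\qmat{A}=\qmat{A}$. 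Chaining the equivalences of the previous paragraph with this one yields precisely $\rangemat{\qmat{A}}\subseteq\rangemat{\qmat{B}}\iff\rangemat{\chiQ{A}}\subseteq\rangemat{\chiQ{B}}$, completing the argument.

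I expect no genuine obstacle; the only two points deserving a line of care are (i) confirming that $\qmat{B}\qmat{B}^\dagger$ is an \emph{orthogonal} projector in the quaternion right vector space --- routine, since the four MP equations are purely algebraic and insensitive to noncommutativity --- and (ii) the injectivity of $\qmat{Q}\mapsto\chiQ{Q}$, which is immediate from its block form. An alternative route avoiding projectors is to write $\rangemat{\qmat{A}}\subseteq\rangemat{\qmat{B}}$ as $\qmat{A}=\qmat{B}\qmat{C}$ for some quaternion $\qmat{C}$ and push this through $\chi$; but that forces one to argue that a complex factorization $\chiQ{A}=\chiQ{B}E$ can be rewritten with a factor of the special form $\chiQ{C}$, which is slightly more delicate, so I would prefer the projector argument above.
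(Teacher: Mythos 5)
Your argument is correct and is essentially the paper's own: the paper's one-line verification also characterizes range conditions through the Moore--Penrose projectors (it writes $\qmat{A}\qmat{A}^\dagger=\qmat{B}\qmat{B}^\dagger \Leftrightarrow \chiQ{A}\chiQ{A}^\dagger=\chiQ{B}\chiQ{B}^\dagger$, using Lemma \ref{lem:AdaggerComplex}, Proposition \ref{prop:Chi_properties}, and the injectivity of $\chi$). Your variant via one-sided inclusions $\qmat{B}\qmat{B}^\dagger\qmat{A}=\qmat{A}$ is just a slightly more detailed rendering of the same projector argument.
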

One can verify the above lemma as $  
			\qmat{A}\qmat{A}^\dagger = \qmat{B}\qmat{B}^\dagger   \Leftrightarrow \chilarger{  \qmat{A}\qmat{A}^\dagger    } = \chilarger{\qmat{B}\qmat{B}^\dagger} \Leftrightarrow \chiQ{A}\chiQ{A}^\dagger  = \chiQ{B}\chiQ{B}^\dagger $. 
 
	
	Analygously to the real/complex case, quaternion matrices admit SVD:
	\begin{theorem}{(Compact QSVD \cite[Theorem 7.2]{zhangQuaternionsMatricesQuaternions1997})}\label{lem:QSVD}
		Let $\qmat{A}\in\mathbb{Q}^{m\times n}$ $(m\geq n)$ be of rank $r$. Then there exists unitary quaternion matrices $U\in \mathbb{Q}^{m\times m}$, $V\in\mathbb{Q}^{n\times n}$ and diagonal real matrix $\Sigma = \diag(\sigma_1,\ldots,\sigma_r,0,\ldots,0)\in\mathbb{R}^{n\times n}$ with $\sigma_1\geq\cdots\geq\sigma_r>0$, such that $\qmat{A}=\qmat{U}\Sigma\qmat{V}^*$. 
	\end{theorem}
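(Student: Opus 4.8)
The plan is to descend from the complex representation $\chiQ{A}$ to the quaternion factorization, using the symplectic identities of Lemma~\ref{lem:J-adjoint} to keep the block structure under control. Write $\mathcal J_m,\mathcal J_n$ for the symplectic matrices of the respective sizes and set $H:=\chiQ{A}^{*}\chiQ{A}=\chilarger{\qmat{A}^{*}\qmat{A}}\in\bbC^{2n\times 2n}$, which is Hermitian positive semidefinite. From $\mathcal J_m^{*}\chiQ{A}\mathcal J_n=\overline{\chiQ{A}}$ in Lemma~\ref{lem:J-adjoint} one gets $H\mathcal J_n=\mathcal J_n\overline{H}$, so that if $Hv=\lambda v$ then $H(\mathcal J_n\overline v)=\mathcal J_n\overline{Hv}=\lambda\,\mathcal J_n\overline v$ (using $\lambda\in\mathbb{R}$); moreover $\innerprod{v}{\mathcal J_n\overline v}=0$ by the last line of Lemma~\ref{lem:J-adjoint}. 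Thus every eigenvalue of $H$ comes with a conjugate partner $\mathcal J_n\overline v$ lying in the same eigenspace and orthogonal to $v$.

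First I would show that each eigenspace $E_\lambda$ of $H$ is even-dimensional and admits an orthonormal basis of the form $v_{1},\mathcal J_n\overline{v_{1}},\dots,v_{d},\mathcal J_n\overline{v_{d}}$. Starting from a unit $v_1\in E_\lambda$, the pair $\{v_1,\mathcal J_n\overline{v_1}\}$ is orthonormal and lies in $E_\lambda$; a short computation (using $\mathcal J_n^{*}\mathcal J_n=I$ together with $\overline{\mathcal J_n}=\mathcal J_n$ and $\mathcal J_n^{*}=-\mathcal J_n$) shows that $w\mapsto\mathcal J_n\overline w$ maps the orthogonal complement $E_\lambda\cap\{v_1,\mathcal J_n\overline{v_1}\}^{\bot}$ into itself, and since this map is injective the induction closes. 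Collecting these bases over the distinct eigenvalues, ordered decreasingly, gives orthonormal columns $v_1,\dots,v_n$ forming $V_0\in\bbC^{2n\times n}$ with $\innerprod{v_i}{\mathcal J_n\overline{v_j}}=0$ for all $i,j$ (within an eigenspace by the construction, across eigenspaces by orthogonality of distinct eigenspaces of $H$) and $HV_0=V_0\,\diag(\sigma_1^{2},\dots,\sigma_n^{2})$, where $\sigma_1\ge\cdots\ge\sigma_n\ge 0$.

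Next I would transport this back to $\bbQ$. By the characterization $\chiQ{Q}=[\cptQc{Q},\mathcal J\overline{\cptQc{Q}}]$ in Lemma~\ref{lem:J-adjoint}, the $2n\times 2n$ matrix $[V_0,\mathcal J_n\overline{V_0}]$ equals $\chiQ{V}$ for the quaternion matrix $\qmat{V}$ with $\cptQc{V}=V_0$, and it is unitary by the previous paragraph, so $\qmat{V}$ is unitary by Proposition~\ref{prop:Chi_properties}. Since $\chilarger{\Sigma^{2}}=\diag(\Sigma^{2},\Sigma^{2})$ for the real diagonal $\Sigma=\diag(\sigma_1,\dots,\sigma_n)$, the eigen-relation reads $\chilarger{\qmat{V}^{*}\qmat{A}^{*}\qmat{A}\qmat{V}}=\chiQ{V}^{*}H\chiQ{V}=\chilarger{\Sigma^{2}}$, and injectivity of $\chi$ yields $(\qmat{A}\qmat{V})^{*}(\qmat{A}\qmat{V})=\Sigma^{2}$. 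Hence the columns $\qmat{b}_1,\dots,\qmat{b}_n$ of $\qmat{A}\qmat{V}$ are pairwise orthogonal with $\|\qmat{b}_i\|=\sigma_i$; letting $r$ be the number of nonzero $\sigma_i$, the vectors $\qmat{u}_i:=\qmat{b}_i\sigma_i^{-1}$ $(i\le r)$ are orthonormal and $\qmat{b}_i=0$ for $i>r$. Extending $\qmat{u}_1,\dots,\qmat{u}_r$ to an orthonormal basis of $\bbQ^{m}$ by quaternion Gram--Schmidt yields a unitary $\qmat{U}\in\bbQ^{m\times m}$ with $\qmat{A}\qmat{V}=\qmat{U}\Sigma$, i.e.\ $\qmat{A}=\qmat{U}\Sigma\qmat{V}^{*}$; comparing ranks gives $r=\rank(\qmat{A})$, as claimed.

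The delicate point is the eigenspace-pairing step: it is not enough that $\lambda$ reappears along $\mathcal J_n\overline v$; one must verify that $\mathcal J_n\overline{\cdot}$-conjugation is compatible with orthogonal complements inside a fixed eigenspace, so that the eigenspace genuinely splits into orthonormal $\mathcal J_n$-conjugate pairs and the resulting $\tilde V=[V_0,\mathcal J_n\overline{V_0}]$ is both unitary \emph{and} of the special block form $\chiQ{V}$. Everything else---pushing unitarity through $\chi$, injectivity of $\chi$, and quaternion Gram--Schmidt---is routine given the material already established.
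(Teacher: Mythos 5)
The paper does not prove this statement at all -- it is imported verbatim from Zhang \cite[Theorem 7.2]{zhangQuaternionsMatricesQuaternions1997} -- so there is no in-paper proof to compare against; what you have written is a self-contained proof, and it is correct. Your route (diagonalize $H=\chiQ{A}^*\chiQ{A}$, use $H\mathcal J_n=\mathcal J_n\overline H$ and $\innerprod{v}{\mathcal J_n\overline v}=0$ to split each eigenspace into orthonormal $\mathcal J$-conjugate pairs, read off a unitary of the special form $[V_0,\mathcal J_n\overline{V_0}]=\chiQ{V}$, then push back through the injective homomorphism $\chi$ and finish with quaternion Gram--Schmidt on the columns of $\qmat{A}\qmat{V}$) is essentially the classical complex-representation argument behind Zhang's theorem, and it is the same symplectic-pairing mechanism the paper itself deploys later for its pseudo-SVD machinery (Lemma \ref{lem:eigenvectorsSpace}, Lemma \ref{lem:sudoSVD_Usigma_span_invariant_distict_sig}, and the Wiegmann-style induction in Proposition \ref{prop:U_tildeb_span_2t}); so nothing genuinely new is needed, and the ``delicate point'' you flag does go through -- the orthogonality relations $w\perp v_1$, $w\perp\mathcal J_n\overline{v_1}$ are preserved by $w\mapsto\mathcal J_n\overline w$ because $\mathcal J_n$ is real with $\mathcal J_n^*\mathcal J_n=I$ and $\mathcal J_n^T=-\mathcal J_n$, exactly as you indicate. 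Two minor remarks: the identity $\mathcal J^*\chiQ{A}\mathcal J=\overline{\chiQ{A}}$ in Lemma \ref{lem:J-adjoint} is stated with a single $\mathcal J$, so for rectangular $\qmat{A}$ you are right to distinguish $\mathcal J_m$ and $\mathcal J_n$ (the identity you actually use, $\chiQ{A}\mathcal J_n=\mathcal J_m\overline{\chiQ{A}}$, is the correct form); and the final equation $\qmat{A}\qmat{V}=\qmat{U}\Sigma$ with $\qmat{U}\in\bbQ^{m\times m}$ and $\Sigma\in\mathbb R^{n\times n}$ has a formal size mismatch for $m>n$, but this looseness is inherited from the theorem statement itself -- only the first $n$ columns of $\qmat{U}$ enter, so the conclusion $\qmat{A}=\qmat{U}\Sigma\qmat{V}^*$ is correct as intended.
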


The following property, which can be deduced from \cite{zhangQuaternionsMatricesQuaternions1997},   reveals the relation between the SVD of $\mathbf A\in\mathbb{Q}^{m\times n}$ and  its complex representation $\chi_{\mathbf A}$:
\begin{proposition}\label{lem:qsvd_svd_relation}
	Under the notations in Theorem \ref{lem:QSVD}, if $\qmat{A}=\qmat{U}\Sigma \qmat{V}^*$, then $\chiQ{A} = \chiQ{U}\cdot S\cdot \chiQ{V}^*$ with $S = \diag(\Sigma,\Sigma)\in\mathbb R^{2n\times 2n}$ is a compact SVD of $\chiQ{A}$, and vice versa.
\end{proposition}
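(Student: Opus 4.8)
The plan is to verify the claimed identity $\chiQ{A} = \chiQ{U}\cdot S\cdot \chiQ{V}^*$ by a direct computation using the multiplicative property of the complex representation from Proposition \ref{prop:Chi_properties}, and then check that the resulting factorization has the structural features of a genuine compact SVD of $\chiQ{A}$ (unitary outer factors, nonnegative diagonal middle factor with the correct rank and singular values).

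First I would apply $\chi_{(\cdot)}$ to both sides of $\qmat{A}=\qmat{U}\Sigma\qmat{V}^*$. By the homomorphism property $\chi_{\qmat P\qmat Q}=\chiQ{P}\chiQ{Q}$ and $\chi_{\qmat{Q}^*}=\chiQ{Q}^*$ in Proposition \ref{prop:Chi_properties}, this gives $\chiQ{A}=\chiQ{U}\,\chi_{\Sigma}\,\chiQ{V}^*$. Next I would compute $\chi_\Sigma$ explicitly: since $\Sigma$ is a real $n\times n$ diagonal matrix, in the decomposition $\Sigma=\Sigma_0+\Sigma_1\bj$ we have $\Sigma_0=\Sigma$ and $\Sigma_1=0$, so by definition $\chi_\Sigma=\begin{bmatrix}\Sigma&0\\0&\overline{\Sigma}\end{bmatrix}=\diag(\Sigma,\Sigma)=S$ because $\Sigma$ is real. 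This identifies the middle factor. Then, because $\qmat{U}$ and $\qmat{V}$ are unitary quaternion matrices, Proposition \ref{prop:Chi_properties} tells us $\chiQ{U}$ and $\chiQ{V}$ are unitary complex matrices, so $\chiQ{U}\,S\,\chiQ{V}^*$ is indeed of the form (unitary)(nonnegative diagonal)(unitary)$^*$, i.e., a valid SVD of $\chiQ{A}$; moreover its nonzero singular values are exactly $\sigma_1,\ldots,\sigma_r$ each appearing twice, and $\rank(\chiQ{A})=2r$, matching the size of $S$.

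For the converse direction I would argue that the singular values of $\chiQ{A}$ always come in equal pairs: applying Lemma \ref{lem:J-adjoint}, one has $\mathcal J^*\chiQ{A}\mathcal J=\overline{\chiQ{A}}$, and since $\overline{\chiQ{A}}$ has the same singular values as $\chiQ{A}$ while $\mathcal J$ is unitary, this forces a two-fold symmetry in the singular spectrum; combined with $\chiQ{A}^*\chiQ{A}=\chi_{\qmat A^*\qmat A}$ and the fact that $\qmat A^*\qmat A$ is a Hermitian quaternion matrix whose eigenvalues are $\sigma_i^2$, one recovers that any compact SVD of $\chiQ{A}$ has middle factor of the form $\diag(\Sigma,\Sigma)$ up to ordering, and the outer unitary factors can be taken to be $\chiQ{U},\chiQ{V}$ for suitable quaternion unitaries $\qmat U,\qmat V$ (using the characterization of which $2m\times 2m$ unitary matrices arise as complex representations). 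Reassembling via Proposition \ref{prop:Chi_properties} then yields $\qmat A=\qmat U\Sigma\qmat V^*$.

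The only genuinely delicate point is the converse: a given complex SVD of $\chiQ{A}$ need not, column by column, be the complex representation of a quaternion SVD, because singular vectors belonging to a repeated singular value are determined only up to a unitary rotation within each eigenspace. The main obstacle is therefore to show one can always choose the bases of the (paired) singular subspaces of $\chiQ{A}$ so that the resulting unitary matrices respect the block structure $\begin{bmatrix}U_0&U_1\\-\overline{U_1}&\overline{U_0}\end{bmatrix}$; this is where the symplectic relation $\mathcal J^*\chiQ{A}\mathcal J=\overline{\chiQ{A}}$ from Lemma \ref{lem:J-adjoint} does the real work, pairing up singular vectors $v$ with $\mathcal J\overline v$ (which are orthogonal by the last identity of Lemma \ref{lem:J-adjoint}). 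Since the statement only asserts existence of such a correspondence ("and vice versa"), I expect this can be handled cleanly by invoking the structure theory already cited from \cite{zhangQuaternionsMatricesQuaternions1997} rather than reproving it from scratch.
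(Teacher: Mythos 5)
The paper itself gives no written proof of this proposition (it is attributed to a deduction from \cite{zhangQuaternionsMatricesQuaternions1997}), so the comparison is with what the statement actually requires. Your forward direction is complete and is the natural argument: apply $\chi$ to $\qmat{A}=\qmat{U}\Sigma\qmat{V}^*$, use $\chi_{\qmat{P}\qmat{Q}}=\chiQ{P}\chiQ{Q}$ and $\chi_{\qmat{Q}^*}=\chiQ{Q}^*$, observe $\chi_\Sigma=\diag(\Sigma,\Sigma)=S$ since $\Sigma$ is real, and invoke the unitarity transfer in Proposition \ref{prop:Chi_properties}. That part is correct and needs nothing more.

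For the ``vice versa'' you are proving a different (and much harder) statement than the one asserted. As written, the converse hypothesis is that $\chiQ{A}=\chiQ{U}\cdot S\cdot\chiQ{V}^*$ with $S=\diag(\Sigma,\Sigma)$ is a compact SVD of $\chiQ{A}$, i.e.\ the outer factors are \emph{already} complex representations of quaternion matrices. Under that hypothesis the conclusion is immediate: $\chiQ{A}=\chi_{\qmat{U}\Sigma\qmat{V}^*}$ and injectivity of the map $\qmat{Q}\mapsto\chiQ{Q}$ give $\qmat{A}=\qmat{U}\Sigma\qmat{V}^*$, while Proposition \ref{prop:Chi_properties} transfers (partial) unitarity of $\chiQ{U},\chiQ{V}$ back to $\qmat{U},\qmat{V}$; no pairing of singular subspaces is needed, so the repeated-singular-value issue you flag does not arise. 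What you attempt instead is the stronger claim that an \emph{arbitrary} compact SVD of $\chiQ{A}$ can be rearranged into $\chi$-structured form. That claim is exactly the delicate content of the paper's Theorem \ref{thm:pseudoSVD} and its surrounding machinery: it fails naively when singular values repeat (see the paper's remark with $\qmat{Y}=I_2$, where columns of an arbitrary singular-vector matrix need not satisfy $\adjointJ\overline{u_1}\perp u_2$), and handling multiplicities requires either the distinct-singular-value assumption or a Wiegmann-type selection as in Proposition \ref{prop:U_tildeb_span_2t}. Deferring that step to ``structure theory in \cite{zhangQuaternionsMatricesQuaternions1997}'' leaves the only genuinely nontrivial point of your chosen route unproven; either restrict the converse to the structured factorization actually stated (in which case your proof closes easily), or supply the subspace-pairing argument in full.
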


	\begin{lemma}\cite[Section 1.6]{wei2018quaternion}
		Under the notations in Theorem \ref{lem:QSVD}, if $\qmat{A}=\qmat{U}\Sigma \qmat{V}^*$, 
		then $\qmat{A}^\dagger=\qmat{V}\Sigma^\dagger\qmat{U}^*$ with $\Sigma^\dagger = \diag(\sigma_r^{-1},\ldots,\sigma_1^{-1},0,\ldots,0)\in\mathbb R^{n\times n}$. 
	\end{lemma}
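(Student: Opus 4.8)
The plan is to verify directly that the candidate matrix $\qmat{X}:=\qmat{V}\Sigma^\dagger\qmat{U}^*$ satisfies the four quaternion Moore--Penrose equations listed before Lemma \ref{lem:AdaggerComplex}, and then invoke uniqueness of the MP inverse to conclude $\qmat{A}^\dagger=\qmat{X}$. The only inputs needed are: (i) $\qmat{U},\qmat{V}$ are unitary, so $\qmat{U}^*\qmat{U}=\qmat{U}\qmat{U}^*=I_m$ and $\qmat{V}^*\qmat{V}=\qmat{V}\qmat{V}^*=I_n$; and (ii) the real diagonal matrices $\Sigma,\Sigma^\dagger$ satisfy $\Sigma\Sigma^\dagger\Sigma=\Sigma$, $\Sigma^\dagger\Sigma\Sigma^\dagger=\Sigma^\dagger$, and $\Sigma\Sigma^\dagger$, $\Sigma^\dagger\Sigma$ are real symmetric (indeed orthogonal projections). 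Noncommutativity is harmless here because at no point does one need to move $\Sigma$ or $\Sigma^\dagger$ past $\qmat{U}$ or $\qmat{V}$: all simplifications are either unitary cancellations $\qmat{V}^*\qmat{V}=I$, $\qmat{U}^*\qmat{U}=I$, or products of real diagonals.

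Concretely, substituting $\qmat{A}=\qmat{U}\Sigma\qmat{V}^*$ and $\qmat{X}=\qmat{V}\Sigma^\dagger\qmat{U}^*$ gives $\qmat{A}\qmat{X}\qmat{A}=\qmat{U}\Sigma(\qmat{V}^*\qmat{V})\Sigma^\dagger(\qmat{U}^*\qmat{U})\Sigma\qmat{V}^*=\qmat{U}(\Sigma\Sigma^\dagger\Sigma)\qmat{V}^*=\qmat{U}\Sigma\qmat{V}^*=\qmat{A}$, and likewise $\qmat{X}\qmat{A}\qmat{X}=\qmat{V}(\Sigma^\dagger\Sigma\Sigma^\dagger)\qmat{U}^*=\qmat{X}$. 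For the Hermitian conditions, $\qmat{A}\qmat{X}=\qmat{U}(\Sigma\Sigma^\dagger)\qmat{U}^*$ and $\qmat{X}\qmat{A}=\qmat{V}(\Sigma^\dagger\Sigma)\qmat{V}^*$; applying $(\qmat{P}\qmat{Q})^*=\qmat{Q}^*\qmat{P}^*$ and using that $\Sigma\Sigma^\dagger$, $\Sigma^\dagger\Sigma$ are real symmetric shows $(\qmat{A}\qmat{X})^*=\qmat{A}\qmat{X}$ and $(\qmat{X}\qmat{A})^*=\qmat{X}\qmat{A}$. Hence $\qmat{X}$ satisfies all four equations, and by uniqueness $\qmat{A}^\dagger=\qmat{V}\Sigma^\dagger\qmat{U}^*$ with $\Sigma^\dagger$ the (real) pseudo-inverse of $\Sigma$.

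An alternative route, which avoids the checking, is to pass to the complex representation: by Proposition \ref{lem:qsvd_svd_relation}, $\chiQ{A}=\chiQ{U}\,S\,\chiQ{V}^*$ with $S=\diag(\Sigma,\Sigma)$ is a compact SVD of $\chiQ{A}$, so the classical complex pseudo-inverse formula yields $(\chiQ{A})^\dagger=\chiQ{V}\,S^\dagger\,\chiQ{U}^*$ with $S^\dagger=\diag(\Sigma^\dagger,\Sigma^\dagger)$. Since $\Sigma^\dagger$ is real, $\chi_{\Sigma^\dagger}=\diag(\Sigma^\dagger,\Sigma^\dagger)=S^\dagger$, and by Proposition \ref{prop:Chi_properties}, $\chi_{\qmat{V}\Sigma^\dagger\qmat{U}^*}=\chiQ{V}\chi_{\Sigma^\dagger}\chiQ{U}^*=(\chiQ{A})^\dagger=\chi_{\qmat{A}^\dagger}$, the last equality being Lemma \ref{lem:AdaggerComplex}. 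Injectivity of the map $\qmat{Q}\mapsto\chiQ{Q}$ then gives the claim.

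I expect no genuine obstacle: the statement is routine. The only point demanding a little care is keeping the order of factors intact throughout (so that the Hermitian-symmetry checks rely solely on $(\qmat{P}\qmat{Q})^*=\qmat{Q}^*\qmat{P}^*$ and the realness of $\Sigma\Sigma^\dagger$, $\Sigma^\dagger\Sigma$, never on commutativity of quaternion products); the diagonal cancellations $\Sigma\Sigma^\dagger\Sigma=\Sigma$ and $\Sigma^\dagger\Sigma\Sigma^\dagger=\Sigma^\dagger$ are immediate from the entrywise definition.
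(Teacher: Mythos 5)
Your proof is correct. The paper itself gives no argument for this lemma---it is simply cited from \cite[Section 1.6]{wei2018quaternion}---so there is no internal proof to compare against; your direct verification of the four Penrose equations (using only $\qmat{U}^*\qmat{U}=I$, $\qmat{V}^*\qmat{V}=I$, the reversal rule $(\qmat{P}\qmat{Q})^*=\qmat{Q}^*\qmat{P}^*$, and the realness of $\Sigma\Sigma^\dagger$ and $\Sigma^\dagger\Sigma$) together with the uniqueness of the quaternion MP inverse, which the paper asserts when defining $\qmat{A}^\dagger$, is exactly the standard argument and is sound. Your second route through the complex representation is also valid and fits the paper's toolkit (Proposition \ref{lem:qsvd_svd_relation}, Lemma \ref{lem:AdaggerComplex}, injectivity of $\qmat{Q}\mapsto\chiQ{Q}$). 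Two cosmetic remarks: the paper's displayed $\Sigma^\dagger=\diag(\sigma_r^{-1},\ldots,\sigma_1^{-1},0,\ldots,0)$ has the reciprocals in reversed order, which is evidently a typo, and your reading of $\Sigma^\dagger$ as the entrywise pseudo-inverse matching each $\sigma_i$ with $\sigma_i^{-1}$ is the intended (and correct) one; also, your invocation of $\qmat{U}\qmat{U}^*=I_m$ is never actually needed, so the argument applies verbatim to the compact (partially unitary) form as well.
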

	


	\section{Practical Quaternion   Rangefinders}\label{sec:quaternionFastRangeFinder}

\color{black}

Given a   data matrix $A\in\mathbb F^{m\times n}$ $(\mathbb F=\mathbb R,\bbC,\bbQ)$, a randomized rangefinder first draws a random test matrix $\Omega\in\mathbb F^{n\times s}$ with $s\ll n$ \cite{Practical_Sketching_Algorithms_Tropp}   ($s$ is close to the target rank and sometimes can be regarded as a constant),   takes a sketch $Y = A\Omega$, and then orthonormalizes it, i.e.,  
\[
	Y=A\Omega \in\mathbb R^{m\times s},~ Q = \texttt{orth}(Y),
	\]
where $Q$ is orthnormal and preserves the range of $Y$. In the real/complex case, computing $Q$ is cheap by using QR decomposition, while things change in the quaternion setting, especially for large-scale problems, as discussed in the introduction. To better fit into the modern need, we present two practical rangefinders in this section by trading accuracy or space for time cost. To achieve this, we appropriately employ mature libraries such as QR, SVD, and linear equation solvers in complex arithmetic for heavy computations.  Finally, we will compare the running time and accuracy  of the proposed rangefinders with previous techniques. 

\color{black}

	\subsection{Pseudo-QR}
	


Given a quaternion matrix $\qmat{X} = X_0 + X_1 \bj \in \mathbb{Q}^{m\times n}$,   its full information has been contained in its compact represtation 
	$
	\qmat{X}_c = 	\begin{bmatrix}
				X_0\\
			-	\overline{X_1}
			\end{bmatrix} \in\mathbb{C}^{2m\times n},
		$
while   full   representation  $\chi_{\qmat{X}}$ futher preserves its structure as an  operator.  Although $\qmat{X}$ does not capture the entire structure in the same way as $\chiQ{X}$, the fact that $\chiQ{X}$ can be easily derived from $\cptQc{X}$ implies that underlying structure may still be preserved within $\qmat{X}_c$. Starting from this observation, we prefer to operate on $\qmat{X}_c$ to obtain our first rangefinder.

		
		Let $ \qmat{Y}=\mathbf A\boldsymbol{\Omega} =  Y_0+ Y_1 \bj\in\mathbb{Q}^{m\times s} $  with $m>s$ be the sketch of the   data matrix $\mathbf A\in\mathbb{Q}^{m\times n}$, with $\boldsymbol{\Omega}\in\mathbb{Q}^{n\times s}$ the random test matrix. Let 	$
		\cptQc{Y} = 	\begin{bmatrix}
			Y_0\\
			-\overline{Y_1}
		\end{bmatrix} \in\mathbb{C}^{2m\times s}
		$ be its compact complex representation.  Then, a   thin QR in the complex arithmetic can be directly applied to $\cptQc{Y}$:
		\begin{align}\label{eq:Y_equals_HcR_old}
			  \cptQc{Y} 
			=  Q R,
		\end{align}
where $Q\in\mathbb{C}^{2m\times s}$ is orthonormal in the complex space, and $R\in\mathbb{C}^{s\times s}$ is upper triangular. 	Then, 	we partition $Q$ as 
$
Q  = \begin{bmatrix}
	Q_0\\
	Q_1
\end{bmatrix}
$
	with $Q_0,Q_1\in\mathbb{C}^{m\times s}$. Furthermore, denote 
\begin{align} \label{eq:relation_Q_H}
	H_0:=Q_0,~ H_1 := -\overline{Q_1}~~{\rm and}~~\mathbf H:=H_0+H_1\bj \in\mathbb{Q}^{m\times s}.
\end{align}
It then follows from \eqref{eq:relation_Q_H} that  the compact   representation of $\mathbf H$ is exactly $Q$:
\[
	~ \cptQc{H}  = \begin{bmatrix}
		H_0\\
		-\overline{H_1}
	\end{bmatrix} =  \begin{bmatrix}
		Q_0\\
		Q_1
	\end{bmatrix} = Q.	
\]
This together with \eqref{eq:Y_equals_HcR_old} shows that $\cptQc{H}$ and $R$ gives the QR decomposition of $\cptQc{Y}$:
	\begin{align} \label{eq:Y_equals_HcR}
		\cptQc{Y} = \cptQc{H} R,\quad\cptQc{H}\in\mathbb{C}^{2m\times s},~R\in\bbC^{s\times s}.
	\end{align}
			The following result shows that $\mathbf H$ has the same range as $\mathbf Y$:

			\begin{proposition}\label{thm:p_QR_same_range_Y_H}
				Assume that $\mathbf Y\in\bbQms$ has full column rank. Then, 
				\begin{align}\label{eq:EqualSpacepseudoQR}
					\rangemat{\qmat{H}} =\rangemat{\qmat{Y}}.
				\end{align}
			\end{proposition}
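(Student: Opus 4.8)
The plan is to reduce the quaternion range equality to a range equality of complex representations via Lemma~\ref{lem:range_equivalent_quaternion_chi_Q}, and then exploit the structured QR factorization \eqref{eq:Y_equals_HcR} together with the symplectic relation between the compact and full complex representations recorded in Lemma~\ref{lem:J-adjoint}. By Lemma~\ref{lem:range_equivalent_quaternion_chi_Q}, proving \eqref{eq:EqualSpacepseudoQR} is equivalent to showing $\rangemat{\chiQ{H}} = \rangemat{\chiQ{Y}}$. Since $\chiQ{H} = [\cptQc{H}, \adjointJ\overline{\cptQc{H}}]$ and $\chiQ{Y} = [\cptQc{Y}, \adjointJ\overline{\cptQc{Y}}]$, it suffices to understand how the factorization $\cptQc{Y} = \cptQc{H} R$ propagates to the second blocks.

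First I would observe that, since $R \in \bbC^{s\times s}$ is the triangular factor of a thin QR of the full-column-rank matrix $\cptQc{Y}$ (full column rank of $\qmat{Y}$ implies full column rank of $\cptQc{Y}$, because the $2m$ rows of $\cptQc{Y}$ already contain the information of the $2m \times 2s$ matrix $\chiQ{Y}$ which has rank $2s$), $R$ is invertible. Hence $\cptQc{Y}$ and $\cptQc{H}$ have the same column space in $\bbC^{2m}$: $\cptQc{H} = \cptQc{Y} R^{-1}$ and $\cptQc{Y} = \cptQc{H} R$. Next I would take conjugates: $\overline{\cptQc{Y}} = \overline{\cptQc{H}}\,\overline{R}$ with $\overline{R}$ invertible, so $\adjointJ\overline{\cptQc{Y}} = \adjointJ\overline{\cptQc{H}}\,\overline{R}$, which shows the second blocks also span the same subspace: $\rangemat{\adjointJ\overline{\cptQc{Y}}} = \rangemat{\adjointJ\overline{\cptQc{H}}}$. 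Combining the two, the column space of $\chiQ{Y} = [\cptQc{Y}, \adjointJ\overline{\cptQc{Y}}]$ equals the column space of $\chiQ{H} = [\cptQc{H}, \adjointJ\overline{\cptQc{H}}]$, because each generating block of one is a linear combination (over $\bbC$) of the corresponding block of the other, via $R$ and $\overline{R}$ respectively. Then Lemma~\ref{lem:range_equivalent_quaternion_chi_Q} yields $\rangemat{\qmat{H}} = \rangemat{\qmat{Y}}$.

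The main subtlety, and the step I would be most careful about, is making sure the change of basis on the two blocks is compatible, i.e.\ that $\chiQ{Y}$ and $\chiQ{H}$ are related by a single invertible $2s\times 2s$ complex matrix acting on the right. In fact one can write this out explicitly: from $\cptQc{Y} = \cptQc{H}R$ one gets
\[
\chiQ{Y} = [\cptQc{H}R,\ \adjointJ\overline{\cptQc{H}}\,\overline{R}] = [\cptQc{H},\ \adjointJ\overline{\cptQc{H}}] \begin{bmatrix} R & 0 \\ 0 & \overline{R} \end{bmatrix} = \chiQ{H}\begin{bmatrix} R & 0 \\ 0 & \overline{R} \end{bmatrix},
\]
and the block-diagonal matrix $\diag(R,\overline{R})$ is invertible since $R$ is. This immediately gives $\rangemat{\chiQ{Y}} \subseteq \rangemat{\chiQ{H}}$ and, using the inverse, the reverse inclusion, hence equality; Lemma~\ref{lem:range_equivalent_quaternion_chi_Q} then finishes the proof. (An alternative, entirely quaternionic route would be to directly check $\qmat{Y} = \qmat{H}\qmat{R}$ for a suitable invertible quaternion $\qmat{R}$ reconstructed from $R$, but the structured-QR identity $\diag(R,\overline R)$ is cleaner and avoids reassembling a quaternion triangular factor.) The only place requiring genuine care is justifying that $\qmat{Y}$ having full column rank forces $R$ invertible — this follows because $\rank_{\bbC}\cptQc{Y} \ge \rank_{\bbQ}\qmat{Y} = s$ (the columns of $\cptQc{Y}$ cannot be $\bbC$-linearly dependent without inducing a dependence among columns of $\chiQ{Y}$, contradicting $\rank\chiQ{Y} = 2s$), so the thin QR of $\cptQc{Y}$ has a nonsingular triangular factor.
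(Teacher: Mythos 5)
Your proposal is correct and follows essentially the same route as the paper: both hinge on the identity $\chiQ{Y}=\chiQ{H}\,\diag(R,\overline{R})$ with $R$ invertible from the thin QR of the full-column-rank $\cptQc{Y}$. The only cosmetic difference is that you finish by invoking Lemma~\ref{lem:range_equivalent_quaternion_chi_Q} on the complex representations, while the paper converts the same identity back to $\qmat{Y}=\qmat{H}R$ in the quaternion domain (your parenthetical alternative) and concludes directly.
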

			\begin{proof} It suffices to show that there exists an invertible   matrix $\mathbf R\in\mathbb{Q}^{s\times s}$ such that $\mathbf Y = \mathbf H\mathbf R$. 
				 Let $\cptQc{Y}=\cptQc{H}R$ be as in \eqref{eq:Y_equals_HcR}.
				  As $\mathbf{Y}$ is of full column rank, Proposition \ref{prop:Chi_properties}     indicates that $\chiQ{Y}$ is also of full column rank, and so is $\mathbf{Y}_c$. Thus $R$ is invertible. 
				  By Lemma \ref{lem:J-adjoint},   $\chiQ{Y}$ can be represented as:
				\begin{align*}
				\chiQ{Y}	 &=[\mathbf Y_c,\mathcal{J}\overline{\mathbf  Y_c}] 
					 =[\mathbf H_cR,\mathcal{J}\overline{\mathbf H_cR}]\\
					&=[\mathbf H_c,\mathcal{J}\overline{\mathbf H_c}]\diag(R,\overline{R}) =\chiQ{H} \diag(R,\overline{R}) . 
				\end{align*}
Transforming back to quaternions,			  the above identity is equivalent to 
			$
			\mathbf Y = \mathbf H R
			$. 
			Thus \eqref{eq:EqualSpacepseudoQR} follows from the invertibility of $R$. 
			\end{proof}
			\begin{remark}
				Even if $\mathbf Y$ is rank-deficient, we still have $\mathcal{R}(\mathbf H) \supset \mathcal{R}(\mathbf Y) $, i.e., the range of $\mathbf H$ captures the range of the sketch $\mathbf Y$.
			\end{remark}
		
Throughout this work, we respectively denote 
			\begin{align*}\label{eq:def:cond_num}
				\sigma_i(\qmat{H}),~\sigma_{\max}(\qmat{H})=\sigma_1(\qmat{H}),~\sigma_{\min}(\qmat{H}) = \sigma_s(\qmat{H}),	~\kappaq{H}:= \sigma_{\max}(\qmat{H})/\sigma_{\min}(\qmat{H}),
				\end{align*}
	as the $i$-th, the largest and   the smallest singular value, as well as the condition number of $\qmat{H}$. 			
The proposition belows shows that, although $\mathbf H$ may be non-orthonormal in the quaternion domain,   its      singular values	are structured. 
			\begin{proposition}\label{thm:spectralNormProofOfpseudoQR}
				All the singular values of $\qmat{H}$ takes the form  of 
				\[
					 \sqrt{1\pm \mu_1},\sqrt{1\pm \mu_2},\ldots,	
				\]
				with $ 0\leq \mu_i \leq 1$, and  $\sigma_{\max}(\qmat{H})\leq \sqrt 2$.
			\end{proposition}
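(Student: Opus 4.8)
The plan is to pass to the full complex representation $\chiQ{H}$ and study the $2s\times 2s$ Gram matrix $\chiQ{H}^*\chiQ{H}$. By Proposition~\ref{lem:qsvd_svd_relation} the eigenvalues of $\chiQ{H}^*\chiQ{H}$ are exactly the squared singular values of $\qmat H$, each appearing with doubled multiplicity, so it suffices to show that these eigenvalues all have the form $1\pm\mu_i$ with $0\le\mu_i\le1$, and that the largest one is $\le 2$. The only property of $\qmat H$ that will be used is that, by its construction as the $Q$-factor of the thin complex QR of $\cptQc{Y}$ in \eqref{eq:Y_equals_HcR}, the compact representation $\cptQc{H}$ has orthonormal columns, i.e. $\cptQc{H}^*\cptQc{H}=I_s$.

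First I would write $\chiQ{H}=[\cptQc{H},\adjointJ\overline{\cptQc{H}}]$ via Lemma~\ref{lem:J-adjoint} and expand the Gram matrix blockwise:
\[
	\chiQ{H}^*\chiQ{H}=\begin{bmatrix}\cptQc{H}^*\cptQc{H} & \cptQc{H}^*\adjointJ\overline{\cptQc{H}}\\ (\adjointJ\overline{\cptQc{H}})^*\cptQc{H} & (\adjointJ\overline{\cptQc{H}})^*\adjointJ\overline{\cptQc{H}}\end{bmatrix}.
\]
Using $\cptQc{H}^*\cptQc{H}=I_s$, the identity $\adjointJ^*\adjointJ=I_{2m}$ from Lemma~\ref{lem:J-adjoint}, and $(\overline{\cptQc{H}})^*\overline{\cptQc{H}}=\overline{\cptQc{H}^*\cptQc{H}}=I_s$, both diagonal blocks collapse to $I_s$, leaving $\chiQ{H}^*\chiQ{H}=\left[\begin{smallmatrix}I_s & K\\ K^* & I_s\end{smallmatrix}\right]$ with $K:=\cptQc{H}^*\adjointJ\overline{\cptQc{H}}$.

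Next I would invoke the standard fact that a Hermitian block matrix of the form $\left[\begin{smallmatrix}I_s & K\\ K^* & I_s\end{smallmatrix}\right]$ has eigenvalue multiset $\{1\pm\sigma_i(K)\}_i$: diagonalize $K$ by its singular value decomposition to reduce, after a permutation, to a direct sum of $2\times 2$ blocks $\left[\begin{smallmatrix}1 & \sigma_i(K)\\ \sigma_i(K) & 1\end{smallmatrix}\right]$. Combined with Proposition~\ref{lem:qsvd_svd_relation}, this forces every $\sigma_j(\qmat H)^2$ to equal $1\pm\mu_i$ for some $\mu_i:=\sigma_i(K)\ge 0$, which yields the asserted form $\sqrt{1\pm\mu_i}$. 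To finish, I would bound $\mu_i$: since $\cptQc{H}$ (hence $\overline{\cptQc{H}}$) has unit spectral norm and $\adjointJ$ is unitary, submultiplicativity gives $\mu_i\le\normSpectral{K}\le\normSpectral{\cptQc{H}^*}\,\normSpectral{\adjointJ}\,\normSpectral{\overline{\cptQc{H}}}=1$, so $0\le\mu_i\le1$; moreover, since $\sigma_{\max}(\qmat H)=\normSpectral{\chiQ{H}}$ we get $\sigma_{\max}(\qmat H)^2=\normSpectral{\chiQ{H}^*\chiQ{H}}\le 1+\normSpectral{K}\le 2$.

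I do not expect a genuine obstacle: the argument is essentially a one-line Gram-matrix computation plus the CS-type $2\times 2$ block eigenvalue identity. The points that need a little care are (i) the multiplicity bookkeeping relating the singular values of the quaternion matrix $\qmat H$ to those of its $2m\times 2s$ complex representation $\chiQ{H}$, which is supplied cleanly by Proposition~\ref{lem:qsvd_svd_relation}, and (ii) checking that the whole derivation uses only the orthonormality of the columns of $\cptQc{H}$, so that — unlike Proposition~\ref{thm:p_QR_same_range_Y_H} — no full-column-rank hypothesis on $\qmat Y$ is actually required.
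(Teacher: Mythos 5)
Your proposal is correct and follows essentially the same route as the paper: both pass to $\chiQ{H}=[\cptQc{H},\adjointJ\overline{\cptQc{H}}]$, use $\cptQc{H}^*\cptQc{H}=I_s$ to reduce the Gram matrix to $\left[\begin{smallmatrix}I_s & T\\ T^* & I_s\end{smallmatrix}\right]$, and read off the eigenvalues $1\pm\mu_i$ before invoking Proposition~\ref{lem:qsvd_svd_relation}. The only (immaterial) differences are that you obtain the $1\pm\sigma_i(T)$ eigenvalue identity by SVD block-diagonalization rather than the paper's characteristic-polynomial determinant computation, and you bound $\mu_i\le 1$ by submultiplicativity of the spectral norm rather than by positive semi-definiteness of the Gram matrix; both are valid, and your remark that full column rank of $\qmat{Y}$ is not needed is consistent with the paper.
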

			\begin{proof}
			By Lemma \ref{lem:qsvd_svd_relation}, 	It suffices to consider the singular values of its complex  representation  
				$$
				\chi_{\mathbf H}=\begin{bmatrix}
					H_0&H_1\\
					-\overline{H_1}&\overline{H_0}
				\end{bmatrix}= \begin{bmatrix}
					\mathbf H_c,& \cptQa{H}
				\end{bmatrix} = \begin{bmatrix}
					\mathbf H_c,& \adjointJ \overline{\mathbf  H_c}
				\end{bmatrix}\in\bbC^{2m\times 2s}.$$
				 It follows from    \eqref{eq:Y_equals_HcR} that $\mathbf  H_c^* \mathbf  H_c =  I_s$;    $\cptQa{H}^*\cptQa{H}= \overline{\mathbf  H_c}^*\adjointJ^* \adjointJ \overline{\mathbf  H_c} = \overline{\mathbf  H_c}^* \overline{\mathbf  H_c}= I_s$ as well.  
				Then we have: 
				\begin{align*}
					\chiQ{H}^*\chiQ{H}=\begin{bmatrix}
						I_s&H_0^*H_1-\overline{H_1}^*\overline{H_0}\\
						H_1^*H_0-\overline{H_0}^*\overline{H_1}&I_s
					\end{bmatrix}
				\end{align*}
				and it is positive semi-definite.
				 Denote $T := \mathbf H_c^*\cptQa{H} =   H_0^*H_1-\overline{H_1}^*\overline{H_0}$; then 
				\begin{align}\label{eq:chiH*chiH}
					\chiQ{H}^*\chiQ{H}=\begin{bmatrix}
						I_s&T\\
						T^*&I_s
					\end{bmatrix}.
				\end{align} 


				Consider the characteristic polynomial of $\chiQ{H}^*\chiQ{H}$:
				\begin{align*}
					\rm det\bigxiaokuohao{ \begin{bmatrix}
						(1-\lambda)I_s&T\\
						T^*&(1-\lambda)I_s
					\end{bmatrix} } & =\rm det\bigxiaokuohao{(1-\lambda)I_s}\rm det\left( (1-\lambda)I_s -\left(1-\lambda\right)^{-1}T^*T \right)\\
					&= \rm det\bigxiaokuohao{ (1-\lambda)^2 I_s - T^*T }.
				\end{align*}
				Let  $\mu^2$ be an eigenvalue of $T^*T$. Then the above relation shows that $\lambda = 1\pm \mu$ are a pair of eigenvalues of $\chiQ{H}^*\chiQ{H}$, i.e., $\sqrt{1\pm \mu}$ are a pair of singular values of $\chiQ{H}$, which together with Lemma \ref{lem:qsvd_svd_relation} shows that they are also singular values of $\qmat{H}$. Finally, it is easily seen from \eqref{eq:chiH*chiH} that $I_s\succeq T^*T$, namely, $\mu \in [0,1]$, which implies that   $\sigma_{\max}(\qmat{H})\leq \sqrt 2$.
			\end{proof} 

			\begin{remark}\label{rmk:Hc_Ha_T}	The two properties above are not enough to control the condition number $\kappaq{H}$. In fact,   the analysis shows that the smallest singular value of $\qmat{H}$ depends on $T = \cptQc{H}^*\cptQa{H}$. If the angle between $\spanmat{\cptQc{H}}$ and $\spanmat{\cptQa{H}}$ is very small, then $T$ tends to be an identity matrix and so the smallest eigenvalue of $\chiQ{H}^*\chiQ{H}$ tends to zero; on the contrary, if $\spanmat{\cptQc{H}}$ and $\spanmat{\cptQa{H}}$ are perpendicular to each other, then $T$ is exactly $0$ and the smallest eigevalue of $\chiQ{H}^*\chiQ{H}$ is $1$. However, by the construction \eqref{eq:Y_equals_HcR},   $\spanmat{\cptQc{H}} = \spanmat{\cptQc{Y}}$ and $\spanmat{\cptQa{H}} = \spanmat{\cptQa{Y}}$. As $[\cptQc{Y},\cptQa{Y}] = \chiQ{Y}$ and $\qmat{Y}$ is data-dependent, we cannot make any assumption on the angle between $\spanmat{\cptQc{Y}}$ and $\spanmat{ \cptQa{Y}}$. Thus    $\sigma_{\min}(\qmat{H})$ cannot be estimated, nor $\kappa(\mathbf H)$. 
			\end{remark}

		Nevertheless, empirically  we usually observe that    $\kappaq{H}$ is at least two times smaller than $\kappaq{Y}$. To further reduce $\kappaq{H}$, it is possible to perform the correction step  several times:
		\begin{align}\label{eq:newton_schulz}
			\qmat{H}  \leftarrow 0.5\qmat{H}(3I_s - \qmat{H}^*\qmat{H}).
		\end{align}
It was shown in \cite[Sect. 8]{higham2008functions} that if all singular values $\sigma_i(\qmat{H}) \in [0,\sqrt 3]$ (which is confirmed by Proposition \ref{thm:spectralNormProofOfpseudoQR}), then  \eqref{eq:newton_schulz} ensures that all $\sigma_i(\qmat{H})\rightarrow 1$   quadratically, and hence $\qmat{H}$ is gradually well-conditioned. However, we empirically find that  \eqref{eq:newton_schulz} converges very slowly. Instead, we resort to the following
  range-preserving correction step:
			\begin{align}\label{eq:CondNumReductionIter}
				\qmat{H}_{new} \leftarrow (1-\epsilon)\qmat{H}+\epsilon (\qmat{H}^\dagger)^*.
			\end{align}
			The next proposition shows that, if $\epsilon$ is chosen close to the smallest singular value of $\qmat{H}$, and if $\kappaq{H}$ is still large, then it will be reduced rapidly.
			\begin{proposition}\label{thm:CondReductionSpeed}
			Suppose in \eqref{eq:CondNumReductionIter}, one chooses $\epsilon$ such that  $\epsilon \in \bigzhongkuohao{\sigma_s(\qmat{H}),\delta\sigma_s(\qmat{H})}$, where $\delta\in[1,\sqrt{7}/2]$. If $\kappa(\qmat{H})>\max \{2\sqrt{2}\delta,2\delta^2+1/2\}$, then $\kappa(\qmat{H}_{new})< \sqrt{\kappa(\qmat{H})}$.  
		\end{proposition}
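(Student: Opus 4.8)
The plan is to reduce the claim to a one–variable estimate on the scalar map
\[
f(x):=(1-\epsilon)\,x+\epsilon\,x^{-1},\qquad x>0 .
\]
We may assume $\qmat{H}$ has full column rank (otherwise $\kappa(\qmat{H})=\infty$ and there is nothing to prove). Writing the thin QSVD $\qmat{H}=\qmat{U}\Sigma\qmat{V}^*$ with $\Sigma=\diag(\sigma_1,\dots,\sigma_s)$, $\sigma_1=\sigma_{\max}(\qmat{H})\ge\cdots\ge\sigma_s=\sigma_{\min}(\qmat{H})>0$, the QSVD (Theorem~\ref{lem:QSVD}) together with the pseudoinverse formula $\qmat{H}^\dagger=\qmat{V}\Sigma^\dagger\qmat{U}^*$ shows that $(\qmat{H}^\dagger)^*$ has the same singular vectors as $\qmat{H}$ with reciprocal singular values. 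Hence $\qmat{H}_{new}=(1-\epsilon)\qmat{H}+\epsilon(\qmat{H}^\dagger)^*$ shares the singular vectors of $\qmat{H}$ and has singular values $|f(\sigma_1)|,\dots,|f(\sigma_s)|$, so
\[
\kappa(\qmat{H}_{new})=\frac{\max_i|f(\sigma_i)|}{\min_i|f(\sigma_i)|}\ \le\ \frac{\max_{x\in[\sigma_s,\sigma_1]}f(x)}{\min_{x\in[\sigma_s,\sigma_1]}f(x)} ,
\]
and the problem becomes one of estimating $f$ on the interval $[\sigma_s,\sigma_1]$.

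Next I would record the structural input. By Proposition~\ref{thm:spectralNormProofOfpseudoQR} the singular values of $\qmat{H}$ have the form $\sqrt{1\pm\mu_i}$ with $\mu_i\in[0,1]$; since $\kappa(\qmat{H})>1$ forces some $\mu_i>0$, this gives $\sigma_s<1<\sigma_1\le\sqrt2$ together with $\sigma_s=\sigma_1/\kappa(\qmat{H})$. (This spectral information is genuinely needed: without a bound on $\sigma_{\max}(\qmat{H})$ the step \eqref{eq:CondNumReductionIter} need not decrease $\kappa$ at all.) From $\kappa(\qmat{H})>2\sqrt2\,\delta$ we get $\sigma_s\le\sqrt2/\kappa(\qmat{H})<1/(2\delta)$, so $0<\epsilon\le\delta\sigma_s<\tfrac12$. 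Consequently $f$ is strictly convex and strictly positive on $(0,\infty)$, with minimiser $x^\star=\sqrt{\epsilon/(1-\epsilon)}$ and $f(x^\star)=2\sqrt{\epsilon(1-\epsilon)}$, and $|f(\sigma_i)|=f(\sigma_i)$. A short check — using $\epsilon\in[\sigma_s,\delta\sigma_s]$, $\sigma_s<1<\sigma_1\le\sqrt2$, and $\kappa(\qmat{H})>2\sqrt2\,\delta>\delta(\sigma_1+\sigma_1^{-1})$ — places $x^\star$ inside $[\sigma_s,\sigma_1]$. Therefore $\min_{[\sigma_s,\sigma_1]}f=2\sqrt{\epsilon(1-\epsilon)}\ge2\sqrt{\sigma_s(1-\sigma_s)}$ (since $t\mapsto t(1-t)$ increases on $[0,\tfrac12]\supseteq[\sigma_s,\delta\sigma_s]$), while $\max_{[\sigma_s,\sigma_1]}f=\max\{f(\sigma_s),f(\sigma_1)\}$ by convexity; one also has $f(\sigma_1)<\sigma_1$ because $\sigma_1>1$ and $\epsilon>0$.

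It then remains to split on which endpoint realises the maximum. If $\max\{f(\sigma_s),f(\sigma_1)\}=f(\sigma_1)$, then $\kappa(\qmat{H}_{new})<\sigma_1/\bigl(2\sqrt{\sigma_s(1-\sigma_s)}\bigr)$, and squaring turns the target $\kappa(\qmat{H}_{new})<\sqrt{\kappa(\qmat{H})}=\sqrt{\sigma_1/\sigma_s}$ into $\sigma_1+4\sigma_s<4$, immediate from $\sigma_1\le\sqrt2$ and $\sigma_s<\tfrac12$. If $\max=f(\sigma_s)$, bound $f(\sigma_s)=(1-\epsilon)\sigma_s+\epsilon/\sigma_s<\sigma_s+\delta$; the target then reduces to
\[
(\sigma_s+\delta)^2<4\,\sigma_1\,(1-\sigma_s).
\]
This is the heart of the matter. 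I would prove it by observing that over the admissible configurations ($1<\sigma_1\le\sqrt2$ and $\sigma_s=\sigma_1/\kappa$ with $\kappa\ge\kappa_0:=\max\{2\sqrt2\,\delta,\,2\delta^2+\tfrac12\}$) the left side increases and the right side decreases in $\sigma_s$, so the least favourable value is $\sigma_s=\sigma_1/\kappa_0$; with that substituted, $4\sigma_1(1-\sigma_1/\kappa_0)-(\sigma_1/\kappa_0+\delta)^2$ is a concave quadratic in $\sigma_1$ on $(1,\sqrt2]$, hence minimised at an endpoint, and the two endpoint cases are elementary inequalities in $\delta\in[1,\sqrt7/2]$ that can be checked by hand.

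The main obstacle is precisely this last inequality $(\sigma_s+\delta)^2<4\sigma_1(1-\sigma_s)$: it is true only because the pseudo-QR spectral constraint ties $\sigma_1$, $\sigma_s$ and $\kappa(\qmat{H})$ together (so a large $\sigma_s$ must be accompanied by $\sigma_1$ close to $\sqrt2$), and the two quantitative thresholds $2\sqrt2\,\delta$ and $2\delta^2+\tfrac12$ on $\kappa(\qmat{H})$, with $\delta\in[1,\sqrt7/2]$, are calibrated exactly so that it and all the earlier estimates ($\epsilon<\tfrac12$, $x^\star\in[\sigma_s,\sigma_1]$, $\sigma_1+4\sigma_s<4$) hold simultaneously. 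Everything else — the QSVD reduction, convexity of $f$, locating $x^\star$, and the easy case — is routine calculus on the scalar function $f$.
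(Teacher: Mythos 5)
Your proposal is correct and follows essentially the same route as the paper's proof: reduce to the scalar map $f(x)=(1-\epsilon)x+\epsilon x^{-1}$ acting on the singular values, invoke Proposition \ref{thm:spectralNormProofOfpseudoQR} to get $\sigma_s<1<\sigma_1\le\sqrt{2}$, derive $\sigma_s<1/(2\delta)$ (hence $\epsilon<1/2$) from the hypothesis on $\kappa(\qmat{H})$, lower-bound the new singular values by $2\sqrt{\sigma_s(1-\sigma_s)}$ via convexity, and split according to which endpoint of $[\sigma_s,\sigma_1]$ realizes the maximum of $f$. The only divergence is in closing the second case: the paper sharpens $f(\sigma_s)\le\delta+\sigma_s-\delta\sigma_s^2<\delta+1/(4\delta)$ and finishes by monotonicity of $\bigl(\delta+1/(4\delta)\bigr)/\bigl(2\sqrt{1-1/(2\delta)}\bigr)$ on $[1,\sqrt{7}/2]$, whereas you keep the cruder bound $f(\sigma_s)<\sigma_s+\delta$ and defer to a worst-case ($\sigma_s=\sigma_1/\kappa_0$) plus concave-quadratic endpoint check in $\sigma_1\in\{1,\sqrt{2}\}$ — and those deferred inequalities do hold for every $\delta\in[1,\sqrt{7}/2]$ (with comfortable margin), so your plan closes.
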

		The role of $\delta$ above means that one can compute $\sigma_s(\qmat{H})$ inexactly; in practice, one  usually performs  two or three power iterates of approximating $\sigma_1(\qmat{H}^{\dagger})$ (namely, $\sigma_s(\qmat{H})^{-1}$) to obtain $\epsilon$.   The upper bound of $ \max \{2\sqrt{2}\delta,2\delta^2+1/2\}$ is $4$, i.e, when $\kappaq{H}>4$, it will be reduced to its square root by \eqref{eq:CondNumReductionIter}.

In practice, one can execute the correction step \eqref{eq:CondNumReductionIter} at most   three times to obtain a desirable $\qmat{H}$. The following corollary shows that, if $\qmat{H}$ is generated by \eqref{eq:Y_equals_HcR_old} and \eqref{eq:relation_Q_H} with $\kappaq{H}<10^8$, then $\kappaq{H}$ will not exceed $10$ after at most three correction steps. Empirically, an $\qmat{H}$ with $\kappaq{H}<10$ is enough for practical use. \begin{corollary} \label{col:correction_H_iteratively}
	Let $\qmat{H}_{k+1} \leftarrow (1-\epsilon_k)\qmat{H}_k+\epsilon_k (\qmat{H}_k^\dagger)^*$ with $\qmat{H}_0$   given by   \eqref{eq:Y_equals_HcR_old} and \eqref{eq:relation_Q_H}. If   $\epsilon_k \in \bigzhongkuohao{\sigma_s(\qmat{H}_k),\delta\sigma_s(\qmat{H}_k)}$, where $\delta\in[1,\sqrt{7}/2]$, and if $\kappa(\qmat{H}_k)>\max \{2\sqrt{2}\delta,2\delta^2+1/2\}$, then $\kappa(\qmat{H}_{k+1})< \sqrt{\kappa(\qmat{H}_k)}$.  
\end{corollary}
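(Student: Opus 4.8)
The plan is to read the corollary as nothing more than Proposition~\ref{thm:CondReductionSpeed} applied once at each index $k$, after first pinning down how the correction step \eqref{eq:CondNumReductionIter} acts on the spectrum. Write a compact QSVD $\qmat{H}_k=\qmat{U}_k\Sigma_k\qmat{V}_k^*$ (Theorem~\ref{lem:QSVD}). Since $\Sigma_k$ is real, $(\qmat{H}_k^\dagger)^*=\qmat{U}_k\Sigma_k^\dagger\qmat{V}_k^*$, hence
\[
\qmat{H}_{k+1}=(1-\epsilon_k)\qmat{H}_k+\epsilon_k(\qmat{H}_k^\dagger)^*=\qmat{U}_k\bigl[(1-\epsilon_k)\Sigma_k+\epsilon_k\Sigma_k^\dagger\bigr]\qmat{V}_k^*,
\]
so the singular values of $\qmat{H}_{k+1}$ are exactly $\phi_{\epsilon_k}(\sigma_i(\qmat{H}_k))$, where $\phi_\epsilon(x):=(1-\epsilon)x+\epsilon/x$. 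When $\epsilon_k<1$ we have $\phi_{\epsilon_k}(x)>0$ for all $x>0$, so no nonzero singular value is annihilated and $\rangemat{\qmat{H}_{k+1}}=\rangemat{\qmat{H}_k}$ (the step is range-preserving, consistent with its name). In particular the claimed inequality $\kappa(\qmat{H}_{k+1})<\sqrt{\kappa(\qmat{H}_k)}$ is verbatim the conclusion of Proposition~\ref{thm:CondReductionSpeed} with $\qmat{H}:=\qmat{H}_k$, $\qmat{H}_{new}:=\qmat{H}_{k+1}$, and $\epsilon:=\epsilon_k$.

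Thus it remains only to check that each $\qmat{H}_k$ meets the hypotheses under which Proposition~\ref{thm:CondReductionSpeed} holds. The conditions $\epsilon_k\in[\sigma_s(\qmat{H}_k),\delta\sigma_s(\qmat{H}_k)]$ with $\delta\in[1,\sqrt7/2]$ and $\kappa(\qmat{H}_k)>\max\{2\sqrt2\delta,2\delta^2+1/2\}$ are assumed in the statement. What is left is the magnitude normalization that Proposition~\ref{thm:CondReductionSpeed} inherits implicitly from its pseudo-QR setting: for $\qmat{H}_0$, Proposition~\ref{thm:spectralNormProofOfpseudoQR} says the singular values come in pairs $\sqrt{1\pm\mu_i}$, so $1\le\sigma_{\max}(\qmat{H}_0)\le\sqrt2$; I would propagate the band $1\le\sigma_{\max}(\qmat{H}_k)\le 4/\sqrt7$ by induction on $k$ using the spectral picture above.

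For the induction step, first note $\epsilon_k\le\delta\sigma_s(\qmat{H}_k)=\delta\,\sigma_{\max}(\qmat{H}_k)/\kappa(\qmat{H}_k)<\delta\,(4/\sqrt7)/(2\sqrt2\delta)<1$, so $\phi_{\epsilon_k}$ is convex on $(0,\infty)$ and its maximum over $[\sigma_s(\qmat{H}_k),\sigma_{\max}(\qmat{H}_k)]$ is attained at an endpoint. At $x=\sigma_{\max}(\qmat{H}_k)\ge1$ we get $\phi_{\epsilon_k}(x)=x-\epsilon_k(x-1/x)\le x$; at $x=\sigma_s(\qmat{H}_k)$, writing $\epsilon_k=\delta'\sigma_s(\qmat{H}_k)$ with $\delta'\in[1,\delta]$, we get $\phi_{\epsilon_k}(\sigma_s(\qmat{H}_k))=\delta'+\sigma_s(\qmat{H}_k)\bigl(1-\delta'\sigma_s(\qmat{H}_k)\bigr)\le\delta'+\tfrac{1}{4\delta'}\le 4/\sqrt7$, a bound independent of $k$. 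Hence $\sigma_{\max}(\qmat{H}_{k+1})\le\max\{\sigma_{\max}(\qmat{H}_k),4/\sqrt7\}$, and since one singular value of $\qmat{H}_{k+1}$ is $\phi_{\epsilon_k}(\sigma_s(\qmat{H}_k))\ge\delta'\ge1$ we also keep $\sigma_{\max}(\qmat{H}_{k+1})\ge1$. As $\sigma_{\max}(\qmat{H}_0)\le\sqrt2<4/\sqrt7$, the band survives for every $k$, each $\qmat{H}_k$ lies in the regime of Proposition~\ref{thm:CondReductionSpeed}, and the corollary follows by invoking that proposition at index $k$.

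The delicate point is precisely this last one: certifying that iterating \eqref{eq:CondNumReductionIter} does not drift $\sigma_{\max}$ outside the window Proposition~\ref{thm:CondReductionSpeed} can handle --- the conclusion genuinely needs such control, since it fails if the whole spectrum is scaled far below $1$. Everything else is the one-line reduction to Proposition~\ref{thm:CondReductionSpeed} together with the QSVD bookkeeping above.
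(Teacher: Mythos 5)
Your overall strategy---propagate a spectral window for the iterates by induction and then re-apply the one-step analysis---is the same as the paper's, and your QSVD bookkeeping ($\sigma_i(\qmat{H}_{k+1})=f(\sigma_i(\qmat{H}_k))$ with $f(x)=(1-\epsilon_k)x+\epsilon_k/x$) and the base case from Proposition \ref{thm:spectralNormProofOfpseudoQR} are fine. The gap is in the last step, ``each $\qmat{H}_k$ lies in the regime of Proposition \ref{thm:CondReductionSpeed}, invoke it.'' Proposition \ref{thm:CondReductionSpeed} has no explicit hypothesis on $\sigma_{\max}(\qmat{H})$; its proof imports $\sigma_{\max}(\qmat{H})\leq\sqrt{2}$ from Proposition \ref{thm:spectralNormProofOfpseudoQR} and uses it at one decisive place: since $\max\{2\sqrt{2}\delta,\,2\delta^2+1/2\}=2\delta\max\{\sqrt{2},\,\delta+1/(4\delta)\}\geq 2\delta\,\sigma_{\max}(\qmat{H})$, the condition-number hypothesis yields $\sigma_s(\qmat{H})\leq 1/(2\delta)$, and this feeds everything downstream ($\sigma_s<1/2$, $\epsilon\leq 1/2$, $\epsilon+\sigma_s\leq 1$ so that $2\sqrt{\epsilon(1-\epsilon)}\geq 2\sqrt{\sigma_s(1-\sigma_s)}$, and the two final numerical case estimates). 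Your propagated window only gives $1\leq\sigma_{\max}(\qmat{H}_k)\leq 4/\sqrt{7}$, and $4/\sqrt{7}>\max\{\sqrt{2},\,\delta+1/(4\delta)\}$ whenever $\delta<\sqrt{7}/2$; so the deduction $\sigma_s\leq 1/(2\delta)$ is not available from your window. Concretely, with $\delta=1$, $\sigma_{\max}(\qmat{H}_k)=4/\sqrt{7}$ and $\kappa(\qmat{H}_k)$ just above $2\sqrt{2}$, one gets $\sigma_s(\qmat{H}_k)\approx 0.534>1/2$, a configuration your certified band does not exclude but which the proof of Proposition \ref{thm:CondReductionSpeed} does not cover as written. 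This is exactly the delicate point you flagged, but your band is not strong enough to discharge it by citation alone.

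The repair is cheap, and it is what the paper does: propagate the tighter window $\sigma_{\max}(\qmat{H}_k)\leq\max\{\sqrt{2},\,\delta+1/(4\delta)\}$ (together with $\sigma_{\max}(\qmat{H}_k)\geq 1$). Your own endpoint computation already proves it is invariant, since $f(x)\leq x$ on $[1,\sigma_{\max}(\qmat{H}_k)]$ and $f(\sigma_s)\leq\delta'+1/(4\delta')\leq\delta+1/(4\delta)$; with this bound the argument of Proposition \ref{thm:CondReductionSpeed} goes through verbatim at every index $k$ (the paper phrases its induction with the band $\bigzhongkuohao{2\sqrt{\sigma_s(\qmat{H}_{k-1})(1-\sigma_s(\qmat{H}_{k-1}))},\,\max\{\sqrt{2},\delta+1/(4\delta)\}}$). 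Alternatively you may keep the $\delta$-independent window $[1,4/\sqrt{7}]$, but then you must redo the one-step inequalities under it rather than invoke the proposition---for instance using $f(x)\geq 2\sqrt{\epsilon(1-\epsilon)}\geq 2\sqrt{\sigma_s(1-\epsilon)}$ with $\epsilon\leq\sqrt{2/7}$, which does still close both cases, but that verification is missing from your proposal.
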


The proofs of Proposition \ref{thm:CondReductionSpeed} and Corollary \ref{col:correction_H_iteratively} are given in the appendix. 
  The iterative scheme above is essentially the quaternion version of the     Newton method for computing   polar decomposition \cite[Sect. 8]{higham2008functions} but with more relaxed parameters. 


We summarize the computation and analysis of pseudo-QR as follows. The pseudo code is given in Algorithm \ref{alg:pseudoQR}.
\begin{proposition}
	\label{prop:pseudo-qr_summarize}
Let $\qmat{H}$ be   given by \eqref{eq:Y_equals_HcR_old} and \eqref{eq:relation_Q_H}  and then the correction step \eqref{eq:CondNumReductionIter} is excecuted three times.  Then  $\rangemat{\qmat{H}_{new}}= \rangemat{\qmat{Y}}$; under the setting of Corollary \ref{col:correction_H_iteratively}, if $\kappaq{H}<10^8$, then $\kappa(\qmat{H}_{new} )<10$. 
\end{proposition}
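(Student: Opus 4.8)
The plan is to assemble Proposition~\ref{prop:pseudo-qr_summarize} from the three ingredients already established, so the ``proof'' is really a bookkeeping exercise. First I would dispose of the range claim: Proposition~\ref{thm:p_QR_same_range_Y_H} gives $\rangemat{\qmat{H}_0}=\rangemat{\qmat{Y}}$ for the initial $\qmat{H}_0$ produced by \eqref{eq:Y_equals_HcR_old}--\eqref{eq:relation_Q_H}, and the correction step \eqref{eq:CondNumReductionIter} is range-preserving, as already asserted when it was introduced. Concretely, $\qmat{H}_{k+1}=(1-\epsilon_k)\qmat{H}_k+\epsilon_k(\qmat{H}_k^\dagger)^*$: writing $\qmat{H}_k=\qmat{U}\Sigma\qmat{V}^*$ in its compact QSVD (Theorem~\ref{lem:QSVD}), one has $(\qmat{H}_k^\dagger)^*=\qmat{U}\Sigma^\dagger\qmat{V}^*$, so $\qmat{H}_{k+1}=\qmat{U}\,\bigxiaokuohao{(1-\epsilon_k)\Sigma+\epsilon_k\Sigma^\dagger}\qmat{V}^*$, and the diagonal factor is invertible (each nonzero $\sigma_i$ is mapped to $(1-\epsilon_k)\sigma_i+\epsilon_k/\sigma_i>0$ since $\epsilon_k>0$), whence $\rangemat{\qmat{H}_{k+1}}=\rangemat{\qmat{U}}=\rangemat{\qmat{H}_k}$. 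Iterating three times and chaining with Proposition~\ref{thm:p_QR_same_range_Y_H} yields $\rangemat{\qmat{H}_{new}}=\rangemat{\qmat{Y}}$.

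Next I would handle the conditioning claim by iterating Corollary~\ref{col:correction_H_iteratively}. The hypothesis is $\kappaq{H}=\kappa(\qmat{H}_0)<10^8$; the parameters $\epsilon_k$ and $\delta$ are chosen as in the corollary, so at each step either $\kappa(\qmat{H}_k)\le\max\{2\sqrt2\delta,2\delta^2+1/2\}\le 4$ already, or else $\kappa(\qmat{H}_{k+1})<\sqrt{\kappa(\qmat{H}_k)}$. I would argue by cases on how many of the three steps actually trigger the square-root contraction. If at some step $k\le 3$ we have $\kappa(\qmat{H}_k)\le 4$, then from that point on — since once below the threshold the contraction bound still gives $\kappa(\qmat{H}_{k+1})<\max\{4,\sqrt{4}\}=4$, and more carefully the Newton-type iterate does not increase a condition number already near $1$ — we stay below $4<10$ for the remaining steps, so $\kappa(\qmat{H}_{new})<10$. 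Otherwise the contraction fires all three times, giving $\kappa(\qmat{H}_3)<\kappa(\qmat{H}_0)^{1/8}<(10^8)^{1/8}=10$. Either way $\kappa(\qmat{H}_{new})<10$.

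The one subtlety — and the place I would be most careful — is the ``stay below $4$ once you dip below the threshold'' step, because Corollary~\ref{col:correction_H_iteratively} only gives a contraction statement \emph{conditional} on $\kappa(\qmat{H}_k)>\max\{2\sqrt2\delta,2\delta^2+1/2\}$ and says nothing directly about what happens when that fails. I would close this gap by appealing to the monotonicity of the scalar map $g_\epsilon(\sigma)=(1-\epsilon)\sigma+\epsilon/\sigma$ acting on the singular values (the same map appearing in the range argument): for $\epsilon\in[\sigma_s,\delta\sigma_s]$ with $\delta\le\sqrt7/2$, one checks that $\max_i g_\epsilon(\sigma_i)/\min_i g_\epsilon(\sigma_i)$ is bounded by a function of $\kappa(\qmat{H}_k)$ that stays below $4$ whenever $\kappa(\qmat{H}_k)\le 4$ — essentially the Newton-for-polar-decomposition convergence analysis of \cite[Sect.~8]{higham2008functions}, restricted to the already-small-$\kappa$ regime, which is exactly the content underlying the corollary's proof in the appendix. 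Since this is the quaternion Newton iteration for the polar factor with relaxed step sizes, the condition number is driven monotonically toward $1$ and in particular never re-inflates past $4$. Having secured that, the three-step count in the previous paragraph is rigorous and the proposition follows.
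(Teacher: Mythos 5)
Your overall assembly is the same as the paper's (implicit) argument: the range claim is Proposition \ref{thm:p_QR_same_range_Y_H} plus the range-preserving nature of \eqref{eq:CondNumReductionIter} (your QSVD computation $\qmat{H}_{k+1}=\qmat{U}\bigl((1-\epsilon_k)\Sigma+\epsilon_k\Sigma^{-1}\bigr)\qmat{V}^*$ is a reasonable way to make explicit what the paper only asserts), and the conditioning claim is Corollary \ref{col:correction_H_iteratively} applied three times, giving $\kappa(\qmat{H}_3)<\kappa(\qmat{H}_0)^{1/8}<(10^8)^{1/8}=10$. The paper offers no further proof of the proposition, so on this main line you are in step with it.

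The genuine weak spot is exactly the step you flagged and then waved through: the case where $\kappa(\qmat{H}_k)$ falls below the threshold $\max\{2\sqrt{2}\delta,2\delta^2+1/2\}$ before the third correction. Below the threshold the corollary gives no bound at all (its hypothesis fails), and your substitute claims --- ``the contraction bound still gives $\kappa(\qmat{H}_{k+1})<4$'' and ``the condition number is driven monotonically toward $1$ and never re-inflates past $4$'' --- are not established and the monotonicity claim is in fact false for the relaxed step sizes allowed here. Counterexample: singular values $\{0.9,\sqrt{2}\}$, so $\kappa\approx 1.57$, with the admissible choice $\epsilon=\delta\sigma_s\approx 1.19$ (note $\epsilon>1$ is permitted when $\sigma_s>1/\delta$); then $f(\sigma)=(1-\epsilon)\sigma+\epsilon/\sigma$ sends the singular values to roughly $1.15$ and $0.57$, so $\kappa$ \emph{increases} to about $2.0$. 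Higham's Newton analysis does not apply verbatim because $\epsilon$ is not the optimally scaled parameter. What you actually need is the weaker quantitative fact that, under the bounds from the appendix ($\sigma_{\max}(\qmat{H}_{k+1})\le\max\{\sqrt{2},\delta+1/(4\delta)\}$, $\epsilon\in[\sigma_s,\delta\sigma_s]$, $\delta\le\sqrt{7}/2$), a step started from $\kappa\le 4$ cannot push the ratio $\max_i f(\sigma_i)/\min_i f(\sigma_i)$ anywhere near $10$ within the remaining iterations; this is true but requires a short direct estimate of $\min_{x\in[\sigma_s,\sigma_1]}f(x)$, not a citation. (Alternatively, read ``under the setting of Corollary \ref{col:correction_H_iteratively}'' as the paper evidently does, i.e.\ the contraction hypothesis holds at each executed step, in which case your case (a) already finishes the proof and case (b) never arises.) A minor related nit: positivity of $(1-\epsilon_k)\sigma_i+\epsilon_k/\sigma_i$ does not follow from $\epsilon_k>0$ alone once $\epsilon_k>1$; in the contraction regime one has $\sigma_s\le 1/(2\delta)$, hence $\epsilon_k\le 1/2$ and your argument stands, and otherwise nonsingularity of the diagonal factor should again be argued from the singular-value bounds rather than from positivity.
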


\begin{algorithm}
	\caption{(pseudo-QR) quaternion pseudo-QR implementation}\label{alg:pseudoQR}
	\begin{algorithmic}[1]
		\Require   Sketch matrix $\qmat{Y}\in\bbQms$.
		\Ensure A   quaternion rangefinder $\qmat{H}\in\bbQms$
		\State Construct the $2m\times s$ complex matrix $\cptQc{Y}$ from $\qmat{Y}$.
		\State Compute complex QR $[U,\sim ]=\texttt{qr}(\cptQc{Y},0)$
		\State Construct $\qmat{H}=H_0+H_1\bj$ from   $U$ with  $H_0=U(1:m,:)  $ and $ H_1=- \overline{U(m+1:2m,:)}$ .       
		\State Execute the correction step \eqref{eq:CondNumReductionIter} a few times (often   $3$ times). Use Algorithm  \ref{alg:qequation_cplx_solver} to solve $\mathlarger{\chi}_{\qmat{H}^*\qmat{H} }Z = (\qmat{H}^*)_c$ to obtain $ \qmat{H}^\dagger  $.
	\end{algorithmic}
\end{algorithm}

The remaining question is how to compute  $(\qmat{H}^\dagger)^*$. \cite{higham2008functions} computed \eqref{eq:CondNumReductionIter} using QR, which is expensive in the quaternion setting.  
We convert it to solving linear equations in the complex arithmetic such that highly efficient solvers can be used. To this end,   
assume that $\qmat{H}$ is not too ill-conditioned (say, $\kappa(\qmat{H}^*\qmat{H})<10^{16} $). Since $ \qmat{H}^\dagger  = (\qmat{H}^*\qmat{H})^{-1}\qmat{H}^*$, we can solve the equation $\qmat{H}^*\qmat{H}\qmat{X} = \qmat{H}^*$ to obtain $(\qmat{H}^\dagger)^*$. The following idea comes from \cite{zhangQuaternionsMatricesQuaternions1997}. For  a general quaternion linear equation  $\qmat{A}\qmat{X}=\qmat{B}$ with $\qmat{A}\in\bbQmn, \qmat{X}\in\bbQns,\qmat{B}\in\bbQ^{m\times s}$,  according to Proposition \ref{prop:Chi_properties},
\begin{align*}
	\qmat{A}\qmat{X}=\qmat{B}  \Leftrightarrow \chiQ{A}\chiQ{X} = \chiQ{B} \Leftrightarrow \chiQ{A}\bigzhongkuohao{ \cptQc{X},\cptQa{X} } = \bigzhongkuohao{\cptQc{B},\cptQa{B}  }.
\end{align*}
In fact, using the relation $\cptQa{X} = \mathcal J\overline{\cptQc{X}}$, solving the complex equation $\chiQ{A}Z = \cptQc{B}$ is enough to give a solution to $\qmat{A}\qmat{X}=\qmat{B}$. To see this, let $\hat Z\in\bbC^{2n\times s}$ be a solution to $\chiQ{A}Z = \cptQc{B}$ and partition it as $\hat Z = [\hat Z_0^*,~ \hat Z_1^*]^*$ with $\hat Z_0,\hat Z_1\in\bbC^{n\times s}$. Similar to \eqref{eq:relation_Q_H}, let $\qmat{X}:= \hat Z_0 - \overline{\hat Z_1}\bj$; then $\cptQc{X} = \hat Z$, namely, $\chiQ{A}\cptQc{X}=\cptQc{B}$, and it follows from Lemma \ref{lem:J-adjoint} that
\begin{align*}
	\chiQ{A}\cptQa{X} = \chiQ{A}\mathcal J\overline{\cptQc{X}} = \mathcal J\overline{\chiQ{A}}\mathcal J^*\mathcal J \overline{\cptQc{X}} = \mathcal J \overline{ \chiQ{A}\cptQc{X} } = \mathcal J\overline{\cptQc{B}} = \cptQa{B}, 
\end{align*}
which together with $\chiQ{A}\cptQc{X}=\cptQc{B}$ means that $\qmat{A}\qmat{X}=\qmat{B}$.  

Therefore,   it suffices to solve $\mathlarger{\chi}_{\qmat{H}^*\qmat{H} }Z = (\qmat{H}^*)_c$ to obtain $ \qmat{H}^\dagger  $. Note that $\mathlarger{\chi}_{\qmat{H}^*\qmat{H} }\in\bbC^{2s\times 2s}$ and $(\qmat{H}^*)_c\in\bbC^{2s\times m}$. As the sampling size $s$ is usually small, solving this equation in the complex arithmetic  can be efficient by using mature solvers.  

\begin{algorithm} 
	\caption{Quaternion linear equations solver}\label{alg:QLEQ}
	\begin{algorithmic}[1]
		\Require $\qmat{A}\in\mathbb{Q}^{n_1\times n_2}$ and $\qmat{B}\in\mathbb{Q}^{n_1\times l}$ 
		\Ensure  Quaternion matrix $\qmat{X}\in\mathbb{C}^{n_2\times l}$ satisfy $\qmat{A}\qmat{X}=\qmat{B}$
		\State Construct $\chiQ{A}\in\mathbb{C}^{2n_1\times 2n_2}$ and $\cptQc{B}\in\mathbb{C}^{2n_1\times l}$.
		\State Compute $\cptQc{X}=\chiQ{A}\backslash \cptQc{B}$.
		\State Construct $\qmat{X}=\cptQc{X}(1:n_2,:)-\overline{\cptQc{X}(n_2+1:2n_2,:)}\bj$
	\end{algorithmic}\label{alg:qequation_cplx_solver}
\end{algorithm}

	\subsection{Pseudo-SVD}\label{sec:pseudo-svd}

	If the orthonormality of $\qmat{H}$ is  absolutely required,  
	   the relation between   $\cptQc{H}$ and $\cptQa{H}=\adjointJ\overline{\cptQc{H}}$ should be taken into account. 
	   A motivation is from the following lemma. 
	  \begin{lemma}[c.f. \cite{zhangQuaternionsMatricesQuaternions1997}]\label{lem:eigenvectorsSpace}
	Let $\qmat{A}\in\bbQ^{m\times m}$ be Hermitian.	If $v$ is an eigenvector of $\chiQ{A}$  corresponding to the eigenvalue $\lambda$, then     $\adjointJ\overline{v}$ is also an eigenvector asssociated to   $\lambda$. Moreover, $v\perp\mathcal J\overline{v}$. 

	  \end{lemma}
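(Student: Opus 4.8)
The plan is to prove the lemma by translating everything into the complex representation $\chiQ{A}$ and exploiting the identity $\chiQ{A}\mathcal J = \mathcal J\overline{\chiQ{A}}$, which is a direct consequence of Lemma \ref{lem:J-adjoint} (namely $\mathcal J^*\chiQ{A}\mathcal J = \overline{\chiQ{A}}$, rearranged). First I would observe that since $\qmat{A}$ is Hermitian, $\chiQ{A}$ is Hermitian by Proposition \ref{prop:Chi_properties}; in particular its eigenvalues are real. Now suppose $\chiQ{A}v = \lambda v$. Applying complex conjugation to both sides gives $\overline{\chiQ{A}}\,\overline{v} = \overline\lambda\,\overline v = \lambda\overline v$ (using that $\lambda$ is real). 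Left-multiplying by $\mathcal J$ and inserting the commutation relation yields $\chiQ{A}(\mathcal J\overline v) = \mathcal J\overline{\chiQ{A}}\,\overline v = \lambda(\mathcal J\overline v)$, which is exactly the claim that $\mathcal J\overline v$ is an eigenvector for the same eigenvalue $\lambda$.

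For the orthogonality statement $v\perp\mathcal J\overline v$, the cleanest route is to invoke the last identity of Lemma \ref{lem:J-adjoint}, which already records $\langle v,\mathcal J\overline v\rangle = 0$ for every $v\in\mathbb C^{2m}$; so this part requires no computation beyond citing that lemma. Alternatively, for a self-contained argument one can compute $v^*\mathcal J\overline v$ directly: writing $v = [v_1^\top,\,v_2^\top]^\top$ with $v_1,v_2\in\mathbb C^m$, one has $\mathcal J\overline v = [-\overline{v_2}^\top,\,\overline{v_1}^\top]^\top$, so $v^*\mathcal J\overline v = -v_1^*\overline{v_2} + v_2^*\overline{v_1}$; since $v_2^*\overline{v_1} = \overline{v_1^* \overline{v_2}}$ is not quite the negative of the first term in general, one instead notes $v_1^*\overline{v_2}$ and $v_2^*\overline{v_1}$ are transposes of scalars hence equal, giving cancellation — but this is precisely the routine verification already subsumed by Lemma \ref{lem:J-adjoint}, so I would simply cite it.

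I do not anticipate a genuine obstacle here: the only subtlety is making sure the conjugation step is legitimate, i.e.\ that $\overline\lambda = \lambda$, which is where Hermiticity of $\qmat{A}$ (hence of $\chiQ{A}$) is genuinely used — without it the vector $\mathcal J\overline v$ would be an eigenvector of $\overline{\chiQ{A}}$, or equivalently an eigenvector of $\chiQ{A}$ for eigenvalue $\overline\lambda$ rather than $\lambda$. One should also remark that $\mathcal J\overline v$ is nonzero whenever $v$ is, since $\mathcal J$ is invertible and conjugation is a bijection, so it is a bona fide eigenvector and not the trivial one. Assembling these observations gives the full proof in a few lines.
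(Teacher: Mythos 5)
Your proof is correct. The paper offers no proof of its own for this lemma (it simply points to Zhang's paper), and your argument — Hermiticity of $\qmat{A}$ gives Hermiticity of $\chiQ{A}$ and hence real $\lambda$ via Proposition \ref{prop:Chi_properties}; the commutation $\chiQ{A}\adjointJ=\adjointJ\overline{\chiQ{A}}$ obtained by rearranging Lemma \ref{lem:J-adjoint} then gives $\chiQ{A}(\adjointJ\overline v)=\lambda\,\adjointJ\overline v$; and $v\perp\adjointJ\overline v$ is the last identity recorded in Lemma \ref{lem:J-adjoint} — is exactly the standard verification that the citation covers. The only blemish is the parenthetical identity $v_2^*\overline{v_1}=\overline{v_1^*\overline{v_2}}$ in your aside, which is false in general, but this is harmless since you abandon that route: the transpose-of-a-scalar cancellation and the appeal to Lemma \ref{lem:J-adjoint} are both valid.
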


	  The above lemma implies that in the ideal situation, if every (normalized) columns of $\cptQc{H}$   are   from different eigenspaces, then $\cptQc{H}^*\cptQc{H}=I$, $\cptQc{H}^*\cptQa{H}=\cptQc{H}^*\adjointJ\overline{\cptQc{H}}=0$, which means that $\chiQ{H}^*\chiQ{H}=I$, and  so  $\qmat{H}$ is orthonormal. To this end, we   resort to (and modify) the method introduced in \cite{lebihan2004SingularValue} to find a suitable pair $(\cptQc{H},\cptQa{H})$. The idea is to find a  QSVD of $\qmat{Y}$ via computing the   complex SVD of $\chiQ{Y}$. 	Let
	\begin{align}\label{eq:USVOfchiY}
		\chiQ{Y}:=US V^*,\quad U\in\mathbb{C}^{2m\times 2s},~ V\in\mathbb{C}^{2s\times 2s},~S =\begin{bmatrix}
			\Sigma &  0 \\ 0 & \Sigma
		\end{bmatrix} \in\mathbb R^{2s\times 2s} 
	\end{align}
	be a compact SVD of $\chiQ{Y}$, where $\Sigma$ is diagonal. Note that due to duplicated singular values, $U S V^*$ may not take the structure as that in Proposition \ref{lem:qsvd_svd_relation}. Nevertheless, this may be constructed.  Partition 
	\begin{align} \label{eq:sudoSVD_partition_U}
		U = \begin{bmatrix}
			U_{ul} & U_{ur}\\ 
			U_{dl} & U_{dr}
		\end{bmatrix},~
		V = \begin{bmatrix}
			V_{ul} & V_{ur}\\
			V_{dl} & V_{dr}
		\end{bmatrix},~{\rm with}~ U_{ul}\in\mathbb C^{m\times s},~V_{ul}\in\mathbb C^{s\times s}.
	\end{align}
	Denote 
	\begin{align}\label{eq:ComplexUSVToQuaternion}
		\qmat{H}:=U_{ul}  -  \overline{U_{dl}} \bj \in\bbQms\quad{\rm and} \quad
		\qmat{V}:=V_{ul}  - \overline{V_{dl}}\bj\in\mathbb  Q^{s\times s}. 
	\end{align}
Conditionally, $ \qmat{H},\Sigma,\qmat{V}^*$ given by the above approach is a compact QSVD of $\qmat{Y}$:

\begin{theorem}\label{thm:pseudoSVD}
Let $\qmat{H},\Sigma,\qmat{V}$ be given by \eqref{eq:USVOfchiY}  and \eqref{eq:ComplexUSVToQuaternion}.	If all the singular values of $\qmat{{Y}}$ are distinct, i.e., $\Sigma$ consists of distict singular values, then $
\qmat{H}\Sigma\qmat{V}^*$ is a compact QSVD of $\qmat{Y}$, i.e., $\qmat{Y}=\qmat{H}\Sigma\qmat{V}^*$.   
\end{theorem}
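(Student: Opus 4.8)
The plan is to pass to complex representations and reduce the statement to Proposition~\ref{lem:qsvd_svd_relation}. Since $\Sigma$ is real and diagonal, $S=\diag(\Sigma,\Sigma)=\chilarger{\Sigma}$, so by Propositions~\ref{prop:Chi_properties} and~\ref{lem:qsvd_svd_relation} it suffices to establish three facts: (i) $\chiQ{Y}=\chiQ{H}\,S\,\chiQ{V}^*$, (ii) $\chiQ{H}$ has orthonormal columns, and (iii) $\chiQ{V}$ is unitary; transforming back then yields the compact QSVD $\qmat{Y}=\qmat{H}\Sigma\qmat{V}^*$. The construction \eqref{eq:sudoSVD_partition_U}--\eqref{eq:ComplexUSVToQuaternion} is rigged precisely so that $\cptQc{H}$ (resp. $\cptQc{V}$) equals the first $s$ columns of $U$ (resp. $V$); writing $U=[U_L,U_R]$ and $V=[V_L,V_R]$ for the splits into the first and last $s$ columns, we have $\cptQc{H}=U_L$, $\cptQc{V}=V_L$, and hence $\chiQ{H}=[U_L,\mathcal J\overline{U_L}]$, $\chiQ{V}=[V_L,\mathcal J\overline{V_L}]$ by Lemma~\ref{lem:J-adjoint}. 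Throughout I assume $\qmat{Y}$ has full column rank, so all $\sigma_i>0$; this is the generic case for a random sketch (otherwise one restricts $s$ to the rank).

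The structural input is the distinctness hypothesis. Because the $\sigma_i$ are distinct positive reals and $S$ lists each of them twice, every $\sigma_i$ is a singular value of $\chiQ{Y}$ of multiplicity exactly two; hence the eigenspace of $\chiQ{Y}\chiQ{Y}^*=\chilarger{\qmat{Y}\qmat{Y}^*}$ for $\sigma_i^2$ is precisely $\spanmat{u_i,u_{i+s}}$, and likewise $\spanmat{v_i,v_{i+s}}$ is the eigenspace of $\chiQ{Y}^*\chiQ{Y}=\chilarger{\qmat{Y}^*\qmat{Y}}$ for $\sigma_i^2$ (using $\chiQ{Y}v_i=\sigma_i u_i$, $\chiQ{Y}v_{i+s}=\sigma_i u_{i+s}$). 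Now $\qmat{Y}\qmat{Y}^*$ and $\qmat{Y}^*\qmat{Y}$ are Hermitian, so Lemma~\ref{lem:eigenvectorsSpace} applies: each of these two-dimensional subspaces is invariant under $x\mapsto\mathcal J\overline{x}$, and $u_i\perp\mathcal J\overline{u_i}$, $v_i\perp\mathcal J\overline{v_i}$. Since the subspaces are only two-dimensional and $\mathcal J$ is unitary, this forces $\mathcal J\overline{u_i}=c_i u_{i+s}$ and $\mathcal J\overline{v_i}=d_i v_{i+s}$ with $|c_i|=|d_i|=1$. Consequently $\chiQ{H}=U\,\diag(I_s,C)$ and $\chiQ{V}=V\,\diag(I_s,D)$ with $C=\diag(c_1,\dots,c_s)$, $D=\diag(d_1,\dots,d_s)$ diagonal unitaries; since $U$ has orthonormal columns and $V$ is unitary, (ii) and (iii) follow at once.

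For (i), substitute the two displays above into $\chiQ{H}S\chiQ{V}^*$: one gets $\chiQ{H}S\chiQ{V}^*=U\,\diag(I_s,C)\,S\,\diag(I_s,\overline{D})\,V^*=U\,\diag(\Sigma,\,C\Sigma\overline{D})\,V^*$, so (i) holds exactly when $C\Sigma\overline{D}=\Sigma$, i.e. when $c_i\overline{d_i}=1$ (using $\sigma_i>0$), i.e. when $c_i=d_i$ for all $i$. To establish $c_i=d_i$ I would use the symplectic identity $\chiQ{Y}=\mathcal J\overline{\chiQ{Y}}\mathcal J^*$ from Lemma~\ref{lem:J-adjoint}: applying it to the vector $\mathcal J\overline{v_i}$, using $\mathcal J^*\mathcal J=I$ and $\chiQ{Y}v_i=\sigma_i u_i$, gives $\chiQ{Y}(\mathcal J\overline{v_i})=\mathcal J\overline{\chiQ{Y}v_i}=\sigma_i\mathcal J\overline{u_i}$; now substitute $\mathcal J\overline{v_i}=d_i v_{i+s}$, $\mathcal J\overline{u_i}=c_i u_{i+s}$ and $\chiQ{Y}v_{i+s}=\sigma_i u_{i+s}$ to obtain $d_i\sigma_i u_{i+s}=c_i\sigma_i u_{i+s}$, whence $c_i=d_i$ since $\sigma_i>0$ and $u_{i+s}\neq 0$. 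With (i), (ii), (iii) in hand, Propositions~\ref{prop:Chi_properties} and~\ref{lem:qsvd_svd_relation} yield $\qmat{Y}=\qmat{H}\Sigma\qmat{V}^*$ as a compact QSVD.

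I expect the phase-matching step $c_i=d_i$ to be the main obstacle: it is the coupling that links the independently chosen complex left and right singular vectors of $\chiQ{Y}$ and makes the block-wise quaternionization \eqref{eq:ComplexUSVToQuaternion} internally consistent. The distinctness hypothesis is essential and is used twice — to even make sense of the scalars $c_i,d_i$ (with a repeated singular value the relevant eigenspace is higher-dimensional and $\mathcal J\overline{u_i}$ need not be a multiple of $u_{i+s}$) and through the $\mathcal J$-conjugation invariance supplied by Lemma~\ref{lem:eigenvectorsSpace} — so that for a generic complex SVD of $\chiQ{Y}$ with repeated singular values the naive construction would not produce a QSVD at all.
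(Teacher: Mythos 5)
Your proof is correct (under your added full-column-rank assumption) and follows the same underlying strategy as the paper: reduce via Proposition \ref{lem:qsvd_svd_relation} to exhibiting a structured complex SVD $\chiQ{Y}=\chiQ{H}S\chiQ{V}^*$, use the multiplicity-two structure coming from distinctness together with Lemma \ref{lem:eigenvectorsSpace} to control the $\adjointJ$-conjugates of the singular vectors, and use the symplectic identity $\adjointJ^*\chiQ{Y}\adjointJ=\overline{\chiQ{Y}}$ to couple the left and right factors. The difference is one of execution: you work column-wise, extracting explicit unimodular phases $\adjointJ\overline{u_i}=c_iu_{i+s}$, $\adjointJ\overline{v_i}=d_iv_{i+s}$ and then proving $c_i=d_i$, whereas the paper works block-wise, never naming the phases: it observes $\chiQ{Y}\cptQc{V}=\cptQc{H}\Sigma$ by construction, derives $\chiQ{Y}\adjointJ\overline{\cptQc{V}}=\adjointJ\overline{\cptQc{H}}\Sigma$ in one line from the symplectic identity (this single computation is exactly your $c_i=d_i$ step), and gets orthonormality of $[\cptQc{H},\adjointJ\overline{\cptQc{H}}]$ and unitarity of $[\cptQc{V},\adjointJ\overline{\cptQc{V}}]$ directly from Lemma \ref{lem:sudoSVD_Usigma_span_invariant_distict_sig} rather than from the phase relations. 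The block form buys a little generality: since it never divides by $\sigma_i$ nor needs the zero-eigenspace to be two-dimensional, it covers the edge case of one vanishing singular value (still ``distinct''), which your column-wise argument must exclude — for $\sigma_i=0$ the relevant eigenspace of $\chiQ{Y}\chiQ{Y}^*$ has dimension larger than two, so $c_i$ need not exist, which is precisely why you had to assume $\qmat{Y}$ has full column rank. That restriction is generically harmless for a random sketch, but be aware it is an assumption beyond the theorem's statement, and the parenthetical fix ``restrict $s$ to the rank'' would change the construction \eqref{eq:ComplexUSVToQuaternion}, so it is not an immediate repair.
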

We first present the following lemma.
\begin{lemma}\label{lem:sudoSVD_Usigma_span_invariant_distict_sig}
	Let $\sigma_1> \cdots >\sigma_r$ be $r$ singular values of $\chiQ{Y}$, each of which has multiplicity exactly two, and $U_\sigma = [u_1,\ldots,u_r]$ be $r$ left singular vectors corresponding to $\sigma_1,\ldots,\sigma_r$. Then $[U_\sigma,\adjointJ\overline{U_\sigma}]$ is orthonormal and spans the left invariant subspace corresponding to $\sigma_1,\ldots,\sigma_r$.  
	\end{lemma}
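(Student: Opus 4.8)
The plan is to work entirely with the complex representation $\chiQ{Y}$ and exploit the eigenvector structure guaranteed by Lemma \ref{lem:eigenvectorsSpace}, applied to the Hermitian quaternion matrix $\qmat{Y}^*\qmat{Y}$ (or equivalently to $\chiQ{Y}^*\chiQ{Y} = \chiHH$). First I would observe that each $\sigma_i$ being a singular value of $\chiQ{Y}$ of multiplicity exactly two corresponds to an eigenvalue $\sigma_i^2$ of $\chiQ{Y}^*\chiQ{Y}$ of multiplicity two, and by Lemma \ref{lem:eigenvectorsSpace} the two-dimensional eigenspace is spanned by $\{v_i, \adjointJ\overline{v_i}\}$ for any unit eigenvector $v_i$, with $v_i \perp \adjointJ\overline{v_i}$. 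The same statement transfers to the left singular vectors $u_i$ of $\chiQ{Y}$ via $u_i = \chiQ{Y} v_i / \sigma_i$: since $\adjointJ\overline{u_i} = \adjointJ\overline{\chiQ{Y}}\,\overline{v_i}/\sigma_i = \adjointJ \overline{\chiQ{Y}} \adjointJ^* \adjointJ \overline{v_i}/\sigma_i = \chiQ{Y}\,\adjointJ\overline{v_i}/\sigma_i$ using Lemma \ref{lem:J-adjoint} (the identity $\adjointJ\overline{\chiQ{Y}}\adjointJ^* = \chiQ{Y}$, equivalently $\mathcal{J}^*\chiQ{Y}\mathcal{J}=\overline{\chiQ{Y}}$), we get that $\adjointJ\overline{u_i}$ is a left singular vector for $\sigma_i$ as well.

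Next I would prove orthonormality of $[U_\sigma,\adjointJ\overline{U_\sigma}]$. Within a single eigenspace, $\langle u_i,\adjointJ\overline{u_i}\rangle = 0$ by Lemma \ref{lem:eigenvectorsSpace} (or directly by $\innerprod{v}{\mathcal J\overline v}=0$ from Lemma \ref{lem:J-adjoint}). Across distinct eigenspaces, $u_i \perp u_j$ and $u_i \perp \adjointJ\overline{u_j}$ follow because left singular vectors for distinct singular values are orthogonal (they belong to different eigenspaces of the Hermitian matrix $\chiQ{Y}\chiQ{Y}^*$), and $\adjointJ\overline{u_j}$ lies in the same eigenspace as $u_j$. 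Each $u_i$ is a unit vector by construction, and $\|\adjointJ\overline{u_i}\| = \|u_i\| = 1$ since $\adjointJ$ is unitary. Hence the $2r$ columns of $[U_\sigma, \adjointJ\overline{U_\sigma}]$ form an orthonormal system.

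For the spanning claim, I would argue by dimension counting. The left invariant subspace of $\chiQ{Y}$ associated with $\sigma_1,\dots,\sigma_r$ — that is, the sum of the corresponding left eigenspaces of $\chiQ{Y}\chiQ{Y}^*$ — has complex dimension $2r$ since each $\sigma_i$ has multiplicity two. The $2r$ columns of $[U_\sigma,\adjointJ\overline{U_\sigma}]$ lie inside this subspace (each $u_i$ and each $\adjointJ\overline{u_i}$ being a left singular vector for some $\sigma_i$), and they are linearly independent by the orthonormality just established. Therefore they form an orthonormal basis of the $2r$-dimensional invariant subspace, proving the lemma.

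The main obstacle I anticipate is handling the bookkeeping around the multiplicity-two structure cleanly — in particular verifying that $\adjointJ\overline{u_i}$ really is a left singular vector (not merely a vector in the right invariant subspace) and that it is linearly independent from $u_i$ rather than a scalar multiple of it. Both points rest on the symplectic identities in Lemma \ref{lem:J-adjoint}: the relation $\mathcal{J}^*\chiQ{Y}\mathcal{J}=\overline{\chiQ{Y}}$ is what pushes the eigenvector structure from the right singular vectors to the left, and $\innerprod{v}{\mathcal J\overline v}=0$ is what guarantees $u_i$ and $\adjointJ\overline{u_i}$ are genuinely distinct (indeed orthogonal) directions. Once those are in hand the rest is routine linear algebra, so I would state them explicitly and keep the remaining steps terse.
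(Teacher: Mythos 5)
Your proposal is correct and follows essentially the same route as the paper's proof: both rest on Lemma \ref{lem:eigenvectorsSpace} together with the symplectic identities of Lemma \ref{lem:J-adjoint} to show that each pair $\{u_j,\adjointJ\overline{u_j}\}$ is orthonormal and spans the two-dimensional left invariant subspace for $\sigma_j$, with orthogonality across distinct $\sigma_j$ and the multiplicity-two assumption giving the spanning claim. The only difference is that you explicitly verify $\adjointJ\overline{u_i}$ is a left singular vector by transferring through the right singular vectors via $\adjointJ\overline{\chiQ{Y}}\adjointJ^*=\chiQ{Y}$ (implicitly using $\sigma_i>0$), a detail the paper leaves tacit.
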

	\begin{proof}
		By the multiplicity assumption on $\sigma_1,\ldots,\sigma_r$ and noting    $u_j\perp\adjointJ\overline{u_j}$,  Lemma \ref{lem:eigenvectorsSpace} shows that $\spanmat{u_j,\adjointJ \overline{u_j}}$ is the left invariant subspace of $\chiQ{Y}$ corresponding to $\sigma_j$. On the other hand, each $u_j$ belongs to a distinct $\sigma_j$, and so $u_i\perp u_j$, $ {u_i}\perp\adjointJ\overline{u_j}$, $\adjointJ\overline{u_i}\perp\adjointJ\overline{u_j}$, $i\neq j$. Thus the results follow.
	\end{proof}
\begin{proof}[Proof of Theorem \ref{thm:pseudoSVD}]
	 The assumption shows that 
	  $\Sigma$ consists of distinct singular values. By Lemma \ref{lem:qsvd_svd_relation},    it suffices to prove that
	\begin{align}\label{eq:proof:distinct}
	\chiQ{Y} = \chiQ{H}	S \chiQ{V}^* = [\qmat{H}_c,\adjointJ\overline{\cptQc{H}}]\begin{bmatrix}
		\Sigma &  0 \\ 0 & \Sigma
	\end{bmatrix} \begin{bmatrix}
		\cptQc{V}^* \\ (\adjointJ \overline{\cptQc{V}})^*
	\end{bmatrix}
\end{align}
	with $[\qmat{H}_c,\adjointJ\overline{\cptQc{H}}]$ and $ [\cptQc{V}, \adjointJ \overline{\cptQc{V}}]$ orthonormal.  It follows from the construction of $\qmat{H}$ that $\cptQc{H}=\begin{bmatrix}
		U_{ul}\\ U_{dl}
	\end{bmatrix}$, which, by Lemma \ref{lem:sudoSVD_Usigma_span_invariant_distict_sig}, demonstrates the orthonormality of $[\qmat{H}_c,\adjointJ\overline{\cptQc{H}}]$. Similarly, $ [\cptQc{V}, \adjointJ \overline{\cptQc{V}}]$ is   orthonormal.  
The construction of $\qmat{H}$ and $\qmat{V}$ obeys $\chiQ{Y}\cptQc{V} = \cptQc{H}\Sigma$. Then,
	\[
		\chiQ{Y}\adjointJ\overline{\cptQc{V}} = \adjointJ\adjointJ^* \chiQ{Y}\adjointJ \overline{\cptQc{V}} = \adjointJ\overline{\chiQ{Y}}\overline{\cptQc{V}} = \adjointJ \overline{\cptQc{H}\Sigma } = \adjointJ\overline{\cptQc{H}}\Sigma,
		\]
		and so $\chiQ{Y}[\cptQc{V},\adjointJ\overline{\cptQc{V}}] = [\cptQc{H},\adjointJ\overline{\cptQc{H}}]\diag(\Sigma,\Sigma)$, which together with the orthonormality of $[\cptQc{V},\adjointJ\overline{\cptQc{V}}]\in\bbC^{2s\times 2s}$ yields \eqref{eq:proof:distinct}. 
\end{proof}
\begin{remark}
We provide Theorem \ref{thm:pseudoSVD} and the proof   as we cannot find one in the literature.	 The   proof implies why the   assumption    is neccessary. Consider the  example $\qmat{Y}=I_2\in\bbQ^{2\times 2}$, and so $\chiQ{Y}=I_4\in\bbC^{4\times 4}$. Any orthonormal $U=[u_1,\ldots,u_4]\in\bbC^{4\times 4}$ is a singular vector matrix of $\chiQ{Y}$. If constructing $\cptQc{H}=[u_1,u_2]$, then although $u_1\perp u_2, u_1\perp\adjointJ \overline{u_1}$, as $\adjointJ\overline{u_1}$ may not perpendicular to $u_2$, $\qmat{H}$ may not be orthonormal and so $\qmat{H}$ is not a singular vector matrix of $\qmat{Y}=I_{2}$.  
\end{remark}

However, some issues should   be addressed. Firstly, as $\chiQ{Y}$ is two times larger than $\qmat{Y}$ (in terms of the real elements), directly computing the SVD of $\chiQ{Y}$ seems to be redundant. Nevertheless, recall that $\qmat{Y}\in \bbQms$ is a sketch, whose column size $s$ is usually much smaller than $m$ \cite{Practical_Sketching_Algorithms_Tropp}; thus the SVD of $\chiQ{Y}$ scales well. As shown in subsection  \ref{sec:time_comparison_rangefinders}, rangefinder based on  SVD  of $\chiQ{Y}$ is still much faster than the competitors.

	A much criticized flaw is that when $\qmat{Y}$ is too ill-conditioned (say, $\kappaq{Y}>10^{13}$), due to rounding errors, numerically, the small singular values of $\chiQ{Y}$ may not appear twice, and so doing \eqref{eq:ComplexUSVToQuaternion} may not generate the correct singular vectors for $\qmat{H}$ corresponding to the small singular values  \cite[p. 84]{bunse-gerstnerQuaternionQRAlgorithm1989}. 
	 The same situation also occurs when $\qmat{Y}$ has duplicated singular values. In these two cases, $\qmat{H}$ given by \eqref{eq:ComplexUSVToQuaternion} is no longer orthonormal and may not even span the correct range.

Fortunately, we find that the following correction step can tackle the above issue. 
 When the above issue occurs,    numerically computing an SVD of $\chiQ{Y}$   exhibits  the following form:
\begin{align}\label{eq:USVOfchiY_incorrect}
	\chiQ{Y}&=U\tilde S V^*,\quad U\in\mathbb{C}^{2m\times 2s},~ V\in\mathbb{C}^{2s\times 2s},~\tilde S =\begin{bmatrix}
		S_g &  0  \\ 0 & \Sigma_b   
	\end{bmatrix} \in\mathbb R^{2s\times 2s}, \\
  U^*U &= I_{2s}, V^*V = I_{2s},  \Sigma_{b}\in\mathbb R^{2t\times 2t} (t<s), S_g= \begin{bmatrix}
	\Sigma &  0  \\ 0 & \Sigma    
\end{bmatrix} \in\mathbb R^{2(s-t)\times 2(s-t)} ; \nonumber
\end{align}
i.e., now the singular values $\tilde{S}$ can be partitioned as the ``good'' part $S_g$ and the ``bad'' part $\Sigma_d$. $S_g$   consists of   singular values of $\chiQ{Y}$  still appearing exactly twice, i.e., $\Sigma$ consists of distinct singular values.     $\Sigma_b$  represents those  small singular values, which, due to   rounding errors,   are distinct and the order may be  disturbed, as well as    the sigular values with multiplicity larger than $2$. \color{black}Besides,    there still holds   $\rangemat{\chiQ{Y}} = \rangemat{U}$; this is assured for example by using MATLAB's \texttt{svd} even with very ill-conditioned $\chiQ{Y}$. \color{black}

Given $\chiQ{Y}=U\tilde{S}V^*$ as in \eqref{eq:USVOfchiY_incorrect}, if still generating $\qmat{H}$ by \eqref{eq:ComplexUSVToQuaternion}, then $\qmat{H}^*\qmat{H}$ will exhibit the form 
\[
	\qmat{H}^*\qmat{H} = \begin{bmatrix}
		I_{ (s-t)}& 0 \\ 0 & \times
	\end{bmatrix} \in\bbQ^{ s\times  s},~\text{``}\times\text{''} \in\bbQ^{t\times t},
\]
where ``$\times$'' is a matrix not equal to identity. Based on this observation, correcting $\qmat{H}$ can be excecuted as follows: first write $U=[U_g,U_b]$, with
    $U_g\in\mathbb C^{2m \times 2(s-t)}$ and $U_b\in\bbC^{2m\times 2t}$   corresponding to $S_g$ and $\Sigma_b$ respectively; $V=[V_g,V_b]$ is partitioned accordingly. From \eqref{eq:USVOfchiY_incorrect}  denote
	\begin{align}\label{eq:sudoSVD_partition_chiY}
		\chiQ{Y} := Y_g+Y_b,\quad Y_g=U_gS_gV_g^*,\quad Y_b = U_b\Sigma_bV_b^*.
	\end{align}
	Further partition $U_g$ as in \eqref{eq:sudoSVD_partition_U} and generate $\qmat{H}_g\in\bbQ^{m\times (s-t)}$ from $U_g$ as in   \eqref{eq:ComplexUSVToQuaternion}. Since   $S_g=\diag(\Sigma,\Sigma)$, Lemma \ref{lem:sudoSVD_Usigma_span_invariant_distict_sig} ensures that 
	\begin{align}\label{eq:sudoSVD_Hg_orth_preserve_range_Ug}
		\qmat{H}_g~  {\rm is~orthonormal~and~}~ \rangemat{\chilarger{\qmat{H}_g}} = \rangemat{U_g}.
	\end{align}
		 
For the bad part, 
 we can still select a representation basis from $U_b$. Write  $U_b =[u_1,\ldots,u_{2t}]\in\mathbb C^{2m\times 2t}$. Then:
 \begin{proposition} \label{prop:U_tildeb_span_2t}
  One can find  $U_{\tilde{b}}:=[u_{i_1},\ldots,u_{i_t}] \subset U_b=[u_1,\ldots,u_{2t}]$, such that 
\begin{align}\label{eq:U_tilde_epsilon_dim_2t}
\spanmat{U_{\tilde{b}}, \adjointJ \overline{ U_{\tilde{b}}  }  }  = \spanmat{U_b}= 2t.	
\end{align}
\end{proposition}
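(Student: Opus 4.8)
The plan is to reduce the statement to a single structural fact, namely that $\rangemat{U_b}$ is invariant under the conjugate-linear map $\tau\colon v\mapsto\adjointJ\overline v$ on $\bbC^{2m}$, and then to extract the $t$ columns by a greedy dimension count that uses only the three elementary properties of $\adjointJ$ collected in Lemma \ref{lem:J-adjoint}: that $\tau$ is a conjugate-linear isometry (because $\adjointJ^*\adjointJ=I$), that $\tau^2=-\mathrm{id}$ (because $\overline{\adjointJ}=\adjointJ$ and $\adjointJ^2=-I$), and that $v\perp\tau(v)$ for all $v$. Note already that $\dim\rangemat{U_b}=2t$, since $U_b$ has $2t$ orthonormal columns.

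\emph{Step 1 ($\tau$-invariance of $\rangemat{U_b}$).} First I would record that $\adjointJ\overline{\chiQ{Y}}=\chiQ{Y}\adjointJ$, which follows from $\adjointJ^*\chiQ{Y}\adjointJ=\overline{\chiQ{Y}}$ in Lemma \ref{lem:J-adjoint} after multiplying on the left by $\adjointJ$ and using $\adjointJ\adjointJ^*=I$. Hence for $w=\chiQ{Y}z\in\rangemat{\chiQ{Y}}$ we get $\tau(w)=\adjointJ\overline{\chiQ{Y}}\,\overline z=\chiQ{Y}\adjointJ\overline z\in\rangemat{\chiQ{Y}}$, so $\rangemat{\chiQ{Y}}$ is $\tau$-invariant; since $U$ in \eqref{eq:USVOfchiY_incorrect} is unitary with $\rangemat{U}=\rangemat{\chiQ{Y}}$, the full column space $\rangemat{U}$ is $\tau$-invariant. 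The same computation applied to $\chilarger{\qmat{H}_g}$, together with $\rangemat{\chilarger{\qmat{H}_g}}=\rangemat{U_g}$ from \eqref{eq:sudoSVD_Hg_orth_preserve_range_Ug}, shows $\rangemat{U_g}$ is $\tau$-invariant. Because $U=[U_g,U_b]$ is orthonormal, $\rangemat{U_b}$ is precisely the orthogonal complement of $\rangemat{U_g}$ inside $\rangemat{U}$; as $\tau$ is a bijective conjugate isometry carrying both $\rangemat{U}$ and $\rangemat{U_g}$ onto themselves, it carries their orthogonal difference onto itself, i.e.\ $\tau(\rangemat{U_b})=\rangemat{U_b}$.

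\emph{Step 2 (greedy selection).} Writing $U_b=[u_1,\ldots,u_{2t}]$, I would pick indices $i_1,\ldots,i_t$ one at a time. Suppose $u_{i_1},\ldots,u_{i_{j-1}}$ have been chosen so that $\mathcal U^{(j-1)}:=\spanmat{u_{i_1},\adjointJ\overline{u_{i_1}},\ldots,u_{i_{j-1}},\adjointJ\overline{u_{i_{j-1}}}}$ is $\tau$-invariant of dimension $2(j-1)$ (for $j=1$, $\mathcal U^{(0)}=\{0\}$). If $2(j-1)<2t$ then $\mathcal U^{(j-1)}\subsetneq\rangemat{U_b}=\spanmat{u_1,\ldots,u_{2t}}$, so some column, say $u_{i_j}$, lies outside $\mathcal U^{(j-1)}$ (forcing $i_j\notin\{i_1,\ldots,i_{j-1}\}$); write $u_{i_j}=p+q$ with $p\in\mathcal U^{(j-1)}$ and $0\neq q\perp\mathcal U^{(j-1)}$. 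Then $\tau(p)\in\mathcal U^{(j-1)}$ by $\tau$-invariance, and $\tau(q)\perp\mathcal U^{(j-1)}$ because $\tau$ is a conjugate isometry permuting $\mathcal U^{(j-1)}$; since $q\neq0$ and $q\perp\tau(q)$, the pair $q,\tau(q)$ is linearly independent, so $\mathcal U^{(j)}:=\spanmat{\mathcal U^{(j-1)},u_{i_j},\adjointJ\overline{u_{i_j}}}=\mathcal U^{(j-1)}\oplus\spanmat{q,\tau(q)}$ has dimension $2j$ and, since $\tau(\tau(q))=-q$, is again $\tau$-invariant. After $t$ steps $\mathcal U^{(t)}\subseteq\rangemat{U_b}$ has dimension $2t=\dim\rangemat{U_b}$, whence $\spanmat{U_{\tilde b},\adjointJ\overline{U_{\tilde b}}}=\mathcal U^{(t)}=\rangemat{U_b}$, which is \eqref{eq:U_tilde_epsilon_dim_2t}, with $U_{\tilde b}=[u_{i_1},\ldots,u_{i_t}]$.

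The main obstacle is Step 1: the individual columns of $U_b$ carry no symmetry — that is exactly why the ``bad'' block $\Sigma_b$ arises, from rounding or from multiplicities $>2$ — so one cannot argue column-by-column (in general $\adjointJ\overline{u_j}$ is not another column of $U_b$), and $\tau$-invariance has to be obtained globally, as the orthogonal difference of one $\tau$-invariant subspace inside another. Once that is available, Step 2 is routine linear algebra, but it does rely on all three properties of $\adjointJ$ in Lemma \ref{lem:J-adjoint} to guarantee that each newly added pair $\{u_{i_j},\adjointJ\overline{u_{i_j}}\}$ raises the dimension by exactly $2$.
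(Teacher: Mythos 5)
Your proof is correct, and your Step 2 is essentially the paper's own argument: the paper also builds $U_{\tilde b}$ greedily, showing at each stage that appending a column $u_{i_{k+1}}\notin\spanmat{U_k,\adjointJ\overline{U_k}}$ together with $\adjointJ\overline{u_{i_{k+1}}}$ raises the dimension by exactly two — though the paper carries this out with the projectors $P_M$, $P_{M^\perp}$ and Lemma \ref{lem:j_pm_jstar}, whereas you use the splitting $u_{i_j}=p+q$ and the antiunitarity of $\tau(v)=\adjointJ\overline v$; these are the same computation in different clothing. Where you genuinely diverge is in establishing the containment $\spanmat{U_{\tilde b},\adjointJ\overline{U_{\tilde b}}}\subseteq\spanmat{U_b}$ (equivalently, the $\tau$-invariance of $\rangemat{U_b}$). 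The paper obtains it pointwise from the singular-value pairing: each selected $u$ is a left singular vector of $\chiQ{Y}$ for a ``bad'' singular value $\sigma$, and $\adjointJ\overline u$ is again a left singular vector for the same $\sigma$, hence lies in $\rangemat{U_b}$; the dimension count $\dim\rangemat{U_b}=2t$ then forces equality. You instead prove invariance globally: $\rangemat{U}=\rangemat{\chiQ{Y}}$ and $\rangemat{U_g}=\rangemat{\chilarger{\qmat{H}_g}}$ are $\tau$-invariant via the commutation $\chiQ{Q}\adjointJ=\adjointJ\overline{\chiQ{Q}}$, and $\rangemat{U_b}$ is their orthogonal difference, which an antiunitary bijection preserves. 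Your route avoids reasoning about which singular value each bad column carries (precisely the numerically unreliable data), at the price of leaning on the paper's standing assertion $\rangemat{U}=\rangemat{\chiQ{Y}}$ and on \eqref{eq:sudoSVD_Hg_orth_preserve_range_Ug}; the paper's route needs neither but implicitly uses that the bad singular values are disjoint from the good ones, so the corresponding invariant subspaces sit inside $\rangemat{U_b}$. Both are sound at the paper's level of rigor; the only slip is calling $U$ unitary — it is merely column-orthonormal — which is harmless here.
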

The proof is given in appendix. Now construct $\qmat{H}_{\tilde{b}}\in\bbQ^{m\times t}$      such that $\chilarger{\qmat{H}_{\tilde b}}=[U_{\tilde b},\adjointJ\overline{U_{\tilde b}}]$. Then 
\begin{proposition}
	If $\qmat{Y}$ has full column rank, then $\mathcal{R}( [\qmat{H}_g,\qmat{H}_{\tilde{b}}] ) = \mathcal R(\qmat{Y})$.
\end{proposition}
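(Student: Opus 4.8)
\emph{Proof proposal.} The plan is to move everything to complex representations, where the pieces have already been identified, and then chain range identities. By Lemma~\ref{lem:range_equivalent_quaternion_chi_Q}, it suffices to show $\rangemat{\chilarger{[\qmat{H}_g,\qmat{H}_{\tilde b}]}}=\rangemat{\chiQ{Y}}$. First I would unfold $\chi$ on the horizontal concatenation: writing $\qmat{H}_g=H_{g,0}+H_{g,1}\bj$ and $\qmat{H}_{\tilde b}=H_{\tilde b,0}+H_{\tilde b,1}\bj$, the matrix $\chilarger{[\qmat{H}_g,\qmat{H}_{\tilde b}]}$ has, up to a column permutation, exactly the columns of $\chilarger{\qmat{H}_g}$ together with those of $\chilarger{\qmat{H}_{\tilde b}}$ (each $\chi$-block contributes its $\cptQc{\cdot}$ and $\cptQa{\cdot}$ halves). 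Hence
\begin{align*}
\rangemat{\chilarger{[\qmat{H}_g,\qmat{H}_{\tilde b}]}}=\rangemat{\chilarger{\qmat{H}_g}}+\rangemat{\chilarger{\qmat{H}_{\tilde b}}},
\end{align*}
a sum of subspaces of $\bbC^{2m}$.

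Next I would substitute the two facts established just above. From \eqref{eq:sudoSVD_Hg_orth_preserve_range_Ug}, $\rangemat{\chilarger{\qmat{H}_g}}=\rangemat{U_g}$. For the bad block, $\chilarger{\qmat{H}_{\tilde b}}$ was \emph{defined} to equal $[U_{\tilde b},\adjointJ\overline{U_{\tilde b}}]$, so Proposition~\ref{prop:U_tildeb_span_2t} (the span identity \eqref{eq:U_tilde_epsilon_dim_2t}) gives $\rangemat{\chilarger{\qmat{H}_{\tilde b}}}=\spanmat{U_{\tilde b},\adjointJ\overline{U_{\tilde b}}}=\spanmat{U_b}$. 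Combining with the display above, and using $U=[U_g,U_b]$,
\begin{align*}
\rangemat{\chilarger{[\qmat{H}_g,\qmat{H}_{\tilde b}]}}=\rangemat{U_g}+\rangemat{U_b}=\rangemat{[U_g,U_b]}=\rangemat{U}.
\end{align*}

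Finally I would bring in the full–column–rank hypothesis. Since $\qmat{Y}$ has full column rank, Proposition~\ref{prop:Chi_properties} shows $\chiQ{Y}$ has full column rank $2s$, so in the compact SVD $\chiQ{Y}=U\tilde S V^*$ of \eqref{eq:USVOfchiY_incorrect} the matrix $\tilde S$ is nonsingular and $\rangemat{\chiQ{Y}}=\rangemat{U}$ — this is precisely the property noted after \eqref{eq:USVOfchiY_incorrect}. Chaining the three displays yields $\rangemat{\chilarger{[\qmat{H}_g,\qmat{H}_{\tilde b}]}}=\rangemat{\chiQ{Y}}$, and Lemma~\ref{lem:range_equivalent_quaternion_chi_Q} then gives $\rangemat{[\qmat{H}_g,\qmat{H}_{\tilde b}]}=\rangemat{\qmat{Y}}$.

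No step here is deep; the only place that needs genuine care is the first one, namely checking that $\chi$ of a block-column quaternion matrix decomposes, at the level of column spans, as the sum of the $\chi$'s of the blocks, and that $\chilarger{\qmat{H}_g}$ and $\chilarger{\qmat{H}_{\tilde b}}$ are indeed the objects whose ranges \eqref{eq:sudoSVD_Hg_orth_preserve_range_Ug} and \eqref{eq:U_tilde_epsilon_dim_2t} refer to. A minor but worthwhile sanity check is the dimension bookkeeping: $\qmat{H}_g$ has $s-t$ columns and $\qmat{H}_{\tilde b}$ has $t$, so $[\qmat{H}_g,\qmat{H}_{\tilde b}]\in\bbQ^{m\times s}$ is conformable with $\qmat{Y}$, and the full–rank assumption is exactly what forces both sides to have (quaternion) dimension $s$.
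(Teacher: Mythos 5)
Your proposal is correct and follows essentially the same route as the paper's proof: reduce to complex representations via Lemma \ref{lem:range_equivalent_quaternion_chi_Q}, identify $\rangemat{\chilarger{[\qmat{H}_g,\qmat{H}_{\tilde b}]}}$ with the sum of $\rangemat{\chilarger{\qmat{H}_g}}$ and $\rangemat{\chilarger{\qmat{H}_{\tilde b}}}$, and then invoke \eqref{eq:sudoSVD_Hg_orth_preserve_range_Ug}, \eqref{eq:U_tilde_epsilon_dim_2t}, and $\rangemat{\chiQ{Y}}=\rangemat{U}$. Your column-permutation argument for the block decomposition and the explicit full-rank justification of $\rangemat{\chiQ{Y}}=\rangemat{U}$ are just slightly more detailed versions of steps the paper asserts (the latter via orthogonality of $\qmat{H}_g$ and $\qmat{H}_{\tilde b}$ and the remark after \eqref{eq:USVOfchiY_incorrect}).
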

\begin{proof}
	By  Lemma \ref{lem:range_equivalent_quaternion_chi_Q} it suffices to prove $\rangemat{ \chilarger{[\qmat{H}_g,\qmat{H}_{\tilde{b}}]}  }  = \rangemat{\chiQ{Y}}$. Note that     $\rangemat{ \chilarger{[\qmat{H}_g,\qmat{H}_{\tilde{b}}]}  } = \rangemat{ [ \chilarger{\qmat{H}_g},\chilarger{\qmat{H}_{\tilde{b}}} ] } = \rangemat{ \chilarger{\qmat{H}_g} } \otimes \rangemat{\chilarger{\qmat{H}_{\tilde{b}}} }$, where the second relation follows from  $\qmat{H}_g\perp \qmat{H}_{\tilde b}$; on the other hand, $\rangemat{\chiQ{Y}} = \rangemat{U} = \range{U_g }\otimes\rangemat{ U_{ b}  }$. 
\eqref{eq:sudoSVD_Hg_orth_preserve_range_Ug}, \eqref{eq:U_tilde_epsilon_dim_2t}, and $\chilarger{\qmat{H}_{\tilde b}}=[U_{\tilde b},\adjointJ\overline{U_{\tilde b}}]$    give the assertion. 
\end{proof}

Denote $\qmat{H}_{new} = [\qmat{H}_g,\qmat{H}_{\tilde b}]$, it remains to adjust $\qmat{H}_{new}$ such that it is orthonormal. Owing to \eqref{eq:sudoSVD_Hg_orth_preserve_range_Ug} and that $\qmat{H}_g\perp\qmat{H}_{\tilde b}$,  we only need to orthonormalize $\qmat{H}_{\tilde b}$, which in fact can be simultaneously done during the selection of $U_{\tilde b}$ using (modified) Gram-Schmidt orthogonalization \cite{golubMatrixComputations2013}. 

However, in case that 
   $\qmat{Y}$ is too ill-conditioning ($\kappaq{Y}>10^{13}$),    the following process for finding $\qmat{H}_{\tilde b}$ is more accurate and efficient. The idea still resorts to complex SVD. Denote $\qmat{H}_b\in\bbQ^{m\times 2t}$  such that $\chilarger{\qmat{H}_b} = [U_b,\adjointJ\overline{U_b}]$  ($\qmat{H}_b$ needs not be explicitly constructed). 
By   Lemma \ref{lem:eigenvectorsSpace} and the definition of $U_b$,   $\chilarger{\qmat{H}_b}$ spans the same invariant subspace as $U_b$, i.e., 
$\rangemat{\chilarger{\qmat{H}_b} } = \rangemat{U_b}$. 
   Additionally, let
\begin{align}\label{eq:sudoSVD_Hbdiagf}
\qmat{H}_{b} \leftarrow \qmat{H}_b \diag(f),~ f\in\mathbb R^{2t}~{\rm with}~ f_j \sim{\rm Uniform}(0,1)+1,~j=1,\ldots,2t. 	
\end{align}
This does not change the range of $\qmat{H}_b$ and empirically, multiplying $\diag(f)$ avoids $\qmat{H}_b$ to have duplicated singular values. In this case, applying   complex SVD to $\chilarger{\qmat{H}_b}\in \bbC^{2m\times 4t}$ has the form:
\begin{align}\label{eq:sudoSVD_svd_chi_Hb}
	\chilarger{\qmat{H}_b} = [U_{h1},U_{h2},U_{hb}] \diag(\Sigma_h,\Sigma_h,0) V_h^* , ~\Sigma_h\in\mathbb R^{t\times t}, ~U_{h1}\in\bbC^{2m\times  t},
\end{align}
where $\Sigma_h$ are the positive distinct singular values of $\qmat{H}_b$. Thus \eqref{eq:sudoSVD_svd_chi_Hb}	  reduces to the case of \eqref{eq:USVOfchiY_incorrect}. Note that $\rangemat{[U_{h1},U_{h2}]} = \rangemat{\chilarger{\qmat{H}_b} }=\rangemat{U_b}$. 
 We thus       construct the new $\qmat{H}_{\tilde b}\in\bbQ^{m\times t}$ from   $U_{h1}$ such that $\chilarger{\qmat{H}_{\tilde{ b}}} = [U_{h1},\adjointJ\overline{U_{h1}}]$.   By  Lemma \ref{lem:sudoSVD_Usigma_span_invariant_distict_sig},  $\qmat{H}_{\tilde b}$ is orthonormal and   $\rangemat{\chilarger{\qmat{H}_{\tilde b}}} =  \spanmat{U_b}$. Denote  
$\qmat{H}_{new}=[\qmat{H}_g,\qmat{H}_{\tilde b}] 	$. Finally we summarize that
\begin{align*}
	\rangemat{\qmat{H}_{new}}=\rangemat{\qmat{Y}}~{\rm and}~\qmat{H}_{new}~{\rm is~orthonormal}.
\end{align*} 

The whole computation is summarized in Algorithm \ref{alg:pseudoSVD}.



	\begin{algorithm}
		\caption{(pseudo-SVD) quaternion pseudo-SVD implementation}\label{alg:pseudoSVD}
		\begin{algorithmic}[1]
			\Require Sketch matrix $\qmat{Y}\in\bbQms$.
			\Ensure Quaternion rangefinder $\qmat{H}\in\bbQms$.
			\State Compute complex SVD $[U,\sim,\sim]=\texttt{svd}(\chiQ{Y},0)$ and partition $U=[U_g,U_b]$ with $U_b\in\bbC^{2m\times 2t}$ corresponding to the ``bad'' part. 
			\State  Construct $\qmat{H}_g = H_0+H_1\bj$ from $U_g$ such that $\chilarger{\qmat{H}_g}=[U_g,\adjointJ\overline{U_g}]$.
\If{$t\lesssim s$}
\State Construct $\qmat{H}_b$ from   \eqref{eq:sudoSVD_Hbdiagf}. Compute   $[U_h,\sim,\sim]=\texttt{svd}(\chilarger{\qmat{H}_b},0)$ with $U_h=[U_{h1},U_{h2},U_{hb}]$ as \eqref{eq:sudoSVD_svd_chi_Hb}, and construct $\qmat{H}_{\tilde b}$ such that $\chilarger{\qmat{H}_{\tilde b}}=[U_{h1},\adjointJ\overline{U_{h1}}]$.
\Else 
\State Sequentially select linearly independent $u_{i_1},\ldots,u_{i_t}$ from $U_b$ and simultaneously do orthogonalization such that $u_{i_j}\perp [u_{i_1},\adjointJ\overline{u_{i_1}},\ldots,u_{i_{j-1}},\adjointJ\overline{u_{i_{j-1}}}]$, $j=2,\ldots,t$. Let $U_{\tilde b}=[u_{i_1,\ldots,u_{i_t}}]$ and construct $\qmat{H}_{\tilde b}$ such that $\chilarger{\qmat{H}_{\tilde b}}=[U_{\tilde b},\adjointJ\overline{U_{\tilde b}}]$.
 \EndIf
 \State $\qmat{H}=[\qmat{H}_g,\qmat{H}_{\tilde b}]$. 
\end{algorithmic}
	\end{algorithm}
	
	\subsection{Comparsions} \label{sec:time_comparison_rangefinders}


For a sketch \( \mathbf{Y} \in \mathbb{Q}^{m \times s} \), the   computational complexity of   Pseudo-QR and Pseudo-SVD   is 
 \( O(ms^2) \) (assuming $s\ll m$). In practice, these algorithms are suited to different criteria.   Pseudo-QR   performs well  when the condition number \( \kappa(\mathbf{Y}) < 10^8 \). Conversely,   Pseudo-SVD   is more accurate for highly ill-conditioned sketch; however, it necessitates to compute the SVD of $\chiQ{Y}$, which demands   twice the memory   compared to   Pseudo-QR. 

Subsequently, we will juxtapose these algorithms with other rangefinders, including        QTFM's \texttt{qr}, the structure-preserving  quaternion Householder QR (QHQR)    \cite{jiaNewRealStructurepreserving2018}, and the structure-preserving modified Gram-Schmidt   (QMGS)   \cite{wei2018quaternion}.
Fig. \ref{fig:fiveAlgs} illustrates that our   rangefinders exhibit significantly lower computational costs compared to        QTFM's \texttt{qr} and   QHQR. The disparity in their time complexity can span two to three orders of magnitude, escalating with an increase in the row size of the sketches from \( 600 \) to \( 2000 \). As the size continues to grow, Fig. \ref{fig:threeAlgs} demonstrates that our   rangefinders outperform   QMGS   markedly, while        QTFM's \texttt{qr} and QHQR become prohibitively time-consuming. Finally, Fig. \ref{fig:twoAlgs} reveals that   Pseudo-QR   operates faster than   Pseudo-SVD   when $m$ exceeds \( 10^4 \). 

We then fix  the size of $\qmat{Y}$ to be $1000\times 200$ while vary   $\kappaq{Y}$ and compare their precision as illustrated in Fig. \ref{fig:ComparePrecision}. In terms of the condition number of the rangefinder, all the rangefinders perform well when $\kappaq{Y}<10^{16}$, and   the orthonormal ones (Pseudo-SVD, QMGS, QHQR) keep their orthogonality. In terms of the range precision $\normF{\qmat{H}\qmat{H}^\dagger - \qmat{Y}\qmat{Y}^\dagger }$,     Pseudo-SVD and QMGS outperform  the competetors, while Pesudo-QR is still valuable when $\kappaq{Y}<10^8$.  QHQR performs the worst when $\kappaq{Y}$ increases.

  
	
	\begin{figure}
		\centering
		\begin{subfigure}{0.3\textwidth}
			\includegraphics[width=\textwidth]{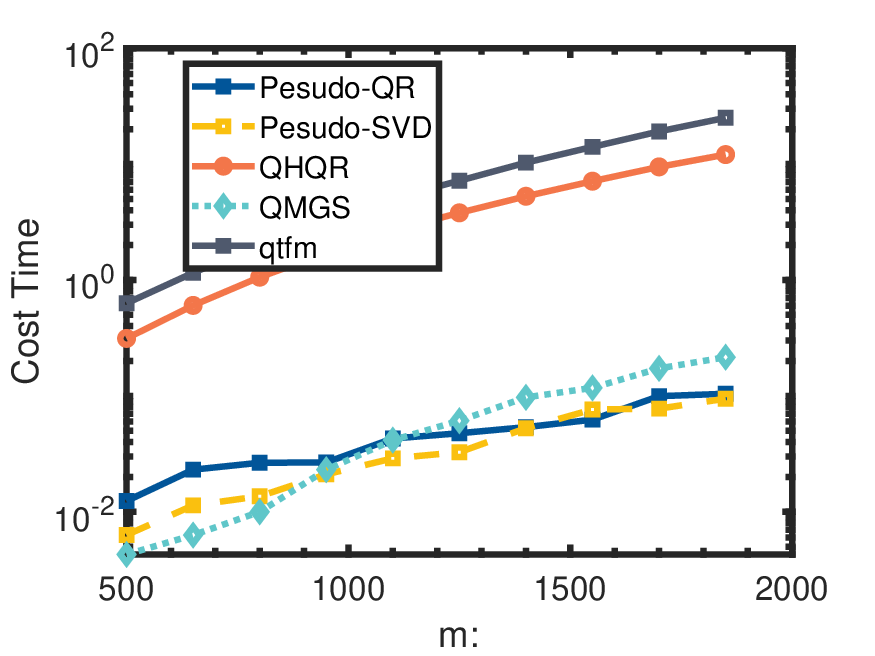}
			\caption{All rangefinders}
			\label{fig:fiveAlgs}
		\end{subfigure}
		\hfill
		\begin{subfigure}{0.3\textwidth}
			\includegraphics[width=\textwidth]{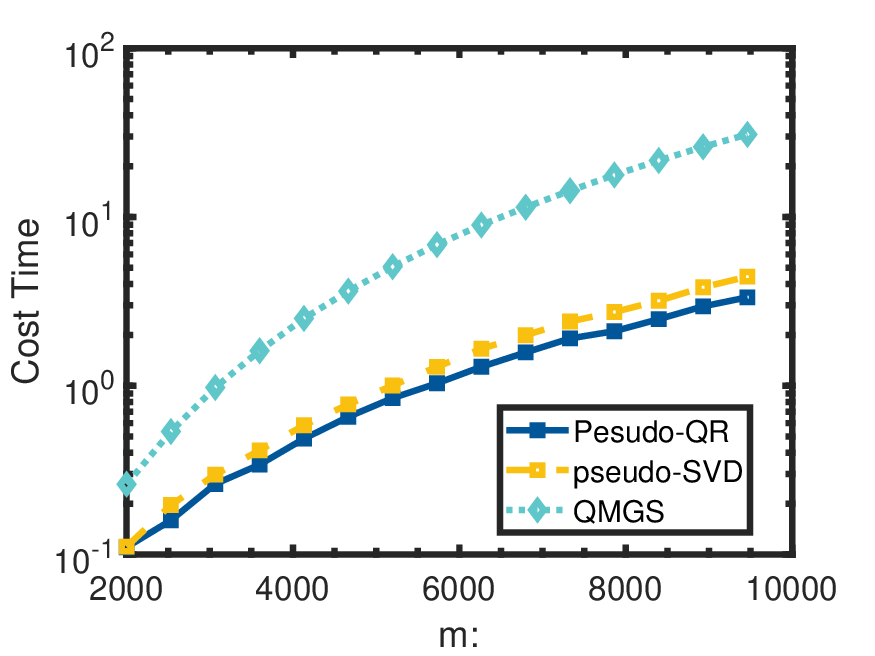}
			\caption{QMGS, our rangefinders}
			\label{fig:threeAlgs}
		\end{subfigure}
		\hfill
		\begin{subfigure}{0.3\textwidth}
			\includegraphics[width=\textwidth]{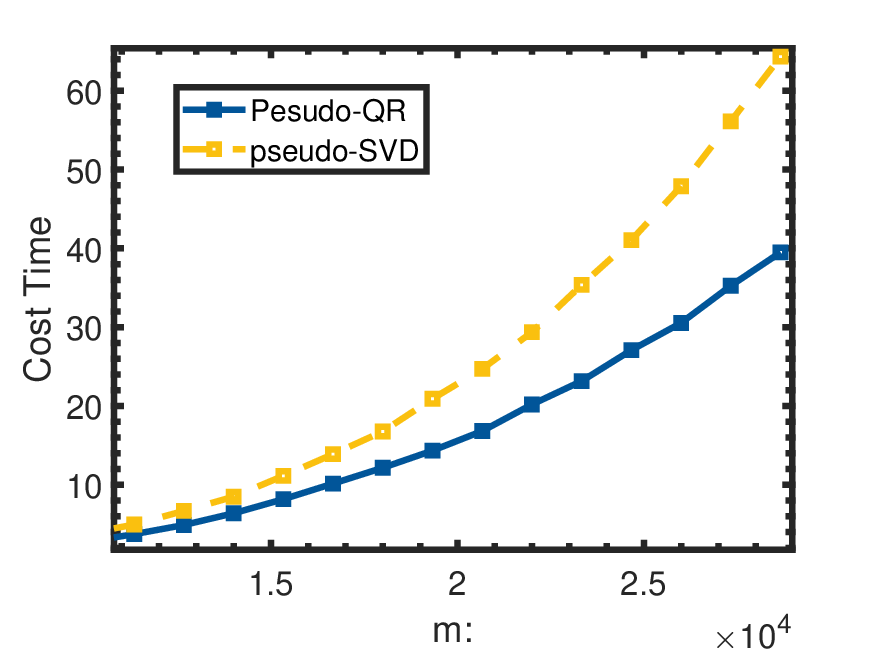}
			\caption{our rangefidners}
			\label{fig:twoAlgs}
		\end{subfigure}
		\caption{\small Running Time when $m=20s$: $x$-axis is $m$ and $y$-axis is the running time with log  scale (the first two subfigures).}
		\label{fig:CompareTime}
	\end{figure} 
	\begin{figure}
		\centering
		\begin{subfigure}{0.45\textwidth}
			\includegraphics[width=\textwidth]{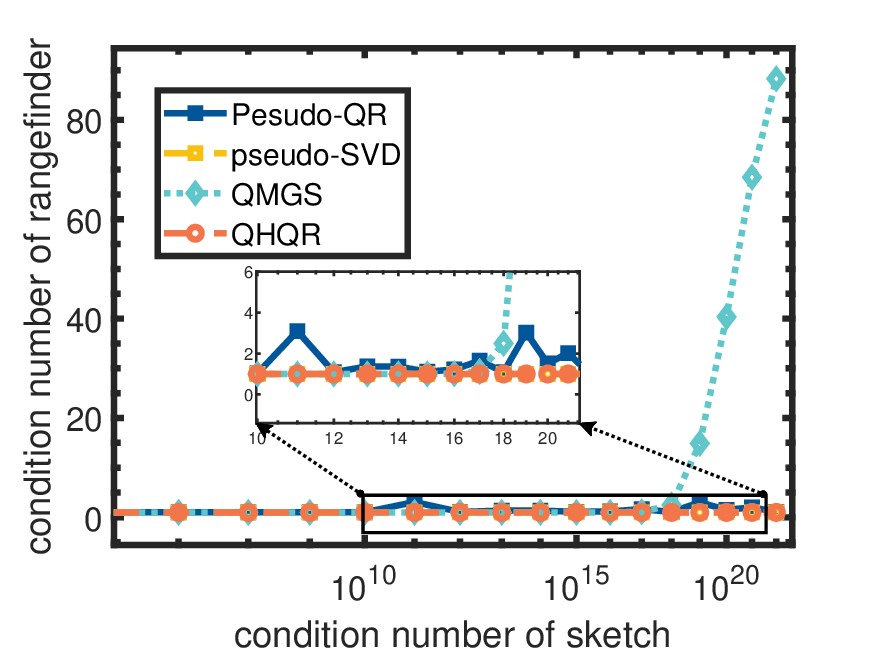}
			\caption{$y$-axis: condition number of the rangefinder}
			\label{fig:conditionThree}
		\end{subfigure}
		\hfill
		\begin{subfigure}{0.45\textwidth}
			\includegraphics[width=\textwidth]{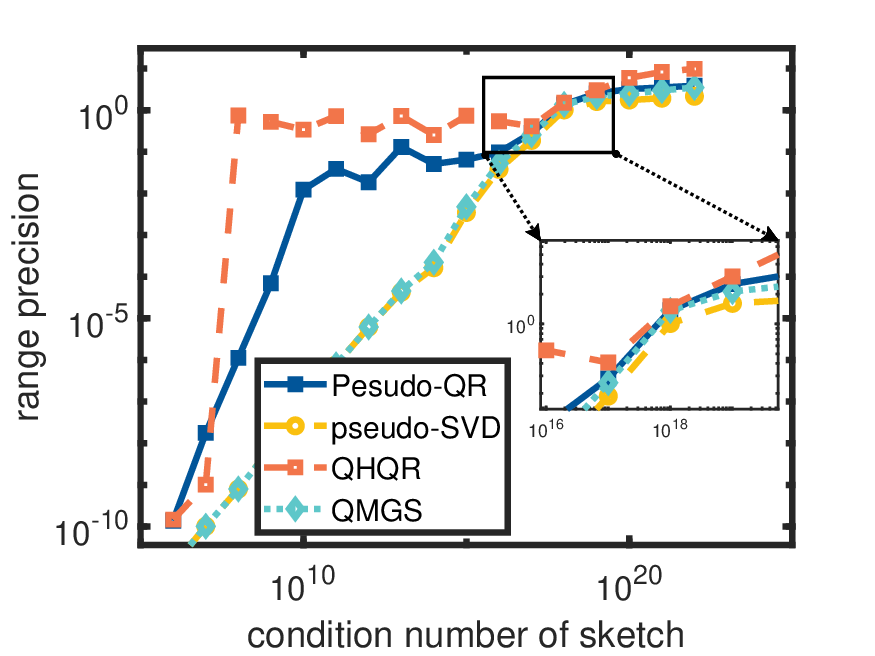}
			\caption{$y$-axis: range Precision}
			\label{fig:rangePrecisionThree}
		\end{subfigure}
		\caption{\small $m=1000$, $n=200$, $\kappaq{Y}$ from $1e6$ to $1e22$.}
		\label{fig:ComparePrecision}
	\end{figure}

\subsection{Remarks} \label{sect:remarks_rangefinder}
Very recently, based on randomized subspace embeddings \cite{martinsson2020RandomizedNumerical,nakatsukasa2022FastAccurate}, efficient randomized orthogonalization techniques  have been proposed  in the real domain \cite{balabanov2022randomized,balabanov2022RandomizedCholesky}, which guarantee to generate  a non-orthonormal yet well-conditioned matrix with high probability. The design of our non-orthonormal rangefinders is in the same spirit of   \cite{balabanov2022randomized,balabanov2022RandomizedCholesky}, while note that our computations are deterministic (if $\qmat{Y}$ is given). It is feasible to combine  Pseudo-QR, Pseudo-SVD, and any  QB approximation   with     randomized embedding techniques to yield possibly more practical non-orthonormal rangefinders. However, a fast deterministic quaternion QB approximation   still serves as a backbone.

	\section{One-Pass Algorithm}\label{sec:one-pass-alg-error-anal}
	In this section, we consider     the one-pass randomized   algorithm proposed by Tropp et al. \cite{Practical_Sketching_Algorithms_Tropp}  while with a   possibly non-orthonormal rangefinder (originally it only takes an orthonormal rangefinder). 
	 The one-pass algorithm can reduce storage cost and ensure linear update of streaming data, where the latter can save multiplication flops during sketching \cite{Practical_Sketching_Algorithms_Tropp,troppStreamingLowRankMatrix2019}.

	Our   theoretical result in Section \ref{subsec:ErrorAnalysis} shows that the accuracy loss of the truncation approximation is proportional to the condition number of the rangefinder $\kappa(\qmat{H})$. 
	 Probabilistic bounds  with Gaussian and sub-Gaussian embeddings will be given in    Sections \ref{sec:Gaussian} and \ref{sec:sub-gaussian}.

	\subsection{The algorithm}
	For a given data matrix $\qmat{A}\in\mathbb{Q}^{m\times n}$ and the target rank $r$, the purpose is to find a rank-$r$ approximation. First 
	draw two random quaternion   $\bdOmega \in\mathbb{Q}^{n\times s}$ and $\boldsymbol{\bdPsi}\in\mathbb{Q}^{l\times m} $ independently where the sketch size satisfy: $r\leq s\leq l \ll \min\{m,n\}$. The  main information can be captured by two sketches:
	\begin{align}\label{eq:Y=AOmegaW=PsiA}
		\qmat{Y}=\qmat{A}\bdOmega\in\mathbb{Q}^{m\times s} \quad\text{and}\quad \qmat{W}=\bdPsi \qmat{A}\in\mathbb{Q}^{l\times n},
	\end{align}
	where $\qmat{Y}$ is used to generate the range   $\qmat{H}\in\bbQms$ by using  any range-preserving rangefinders such as   pesudo-QR or pesudo-SVD. 
 Then, a rank-$s$   approximation of $\qmat{A}$ is given by	   
	\begin{align}\label{eq:reconstructionOfTwoSketch}
		\qmat{A}\approx\qmat{H}\qmat{X},\quad \qmat{X}=\left(\bdPsi \qmat{H}\right)^\dagger \qmat{W}\in\bbQ^{s\times n}.
	\end{align}
	Finally, a truncated QSVD is applied to $\qmat{X}$ to further obtain the final  rank-$r$ approximation. 
	In the whole algorithm, only $\qmat{Y}$, $\qmat{W}$, and $\bdPsi$ are required, which means that $\qmat{A}$ is not exposed to the   process. This is suitable for the scenorio that $\qmat{A}$ is too large to be read in memory. The pseudocode is given in Algorithms \ref{alg:S2S} and \ref{alg:FixedRankTwoSketches}. \color{black} Note that when $\bdPsi$ takes the identity matrix and $\qmat{H}$ is orthonormal,   the algorithm is exactly the randomized QSVD \cite{RandomizedQSVD}. That is to say, the one-pass algorithm takes an additional sketch $\qmat{W}=\bdPsi\qmat{A}$ than randomized QSVD, thereby trading accuracy for efficiency as well as the usage in the limit storage scenorio. 
	\color{black}
	\begin{algorithm}
		\caption{\textbf{QB Stage}}\label{alg:S2S}
		\begin{algorithmic}[1]
			\Require Quaternion sketch $\qmat{Y}\in\mathbb{Q}^{m\times s},\qmat{W}\in\mathbb{Q}^{l\times n}$, test matrices $\bdPsi\in\mathbb{Q}^{l\times m}$,
			\Ensure Rank-$s$ approximation of the form $\hat{\qmat{A}}=\qmat{H}\qmat{X}\in\mathbb{Q}^{m\times n}$ with   $\qmat{H}\in\mathbb{Q}^{m\times s}$ and $\qmat{X}\in\mathbb{Q}^{s\times n}$
			\State $\qmat{H}\leftarrow \mathcal{F}(\qmat{Y})$ \Comment{$\mathcal{F}$ is a  range-preserving rangefinder map and $\qmat{H}$ is well-conditioned.}
			\State $\qmat{X}\leftarrow\left(\bdPsi \qmat{H}\right)\backslash \qmat{W}$ \Comment{Solve   overdetermined quaternion linear equations using Algorithm \ref{alg:qequation_cplx_solver}.}
			\State return $\left(\qmat{H},\qmat{X}\right)$
		\end{algorithmic}
	\end{algorithm}
	
	\begin{algorithm}
		\caption{\textbf{Truncation Stage}}\label{alg:FixedRankTwoSketches}
		\begin{algorithmic}[1]
			\Require $(\qmat{H},\qmat{X})$ from Algorithm \ref{alg:S2S}, $\qmat{H}\in\bbQms,~\qmat{X}\in\bbQ^{s\times n}$,
			\Ensure Rank-$r$ approximation $\hat{\qmat{A}}=\qmat{H}\Sigma \qmat{V}^*$ with  $\qmat{H}\in\mathbb{Q}^{m\times r}$, orthonormal $\qmat{V}\in\mathbb{Q}^{n\times r}$, and real diagonal matrix $\Sigma\in\mathbb{R}^{r\times r}$.
			\State $(\qmat{U},\Sigma,\qmat{V})={\rm QSVD}(\qmat{X})$
			\State $\Sigma=\Sigma(1:r,1:r)$, $\qmat{U}=\qmat{U}(:,1:r)$, $\qmat{V}=\qmat{V}(:,1:r)$
			\State $\qmat{H}=\qmat{H} \qmat{U}$
			\State return $\left(\qmat{H},\Sigma, \qmat{V}\right)$
		\end{algorithmic}
	\end{algorithm}

	\begin{remark}
Even though $\qmat{H}$ may be non-orthonormal, empirically, we find that a truncation stage is still helpful in improving the recovery accuracy than only doing a QB approximation. This is the same as the orthonormal case \cite{Practical_Sketching_Algorithms_Tropp,FindingStructureHalko,RandomizedQSVD}.  
	\end{remark}

	\subsection{Deterministic error}\label{subsec:ErrorAnalysis}
 This subsection is   concerned with the deterministic error  without assuming the distribution of the test matrices at first. Probabilistic bounds will be detailed in the next two subsections. We do not assume a specific rangefinder $\qmat{H}$, but
	the following basic requirements on   $\qmat{H}$   and the sketch size are made througout this subsection:
	\begin{align}
		 \rangemat{\qmat{H}} &=\rangemat{\qmat{Y}},~{\rm and}~\qmat{H}~{\rm has~full~column~rank}; \label{eq:rangefinder_basic_requirement}\\
		 r \leq s &\leq l\leq \min\{m,n\}.\label{eq:sketch_size_basic_requirement}
	\end{align}
To analyze the approximation error with a general $\qmat{H}$, the idea is to use QB decomposition as a bridge such that existing   error analysis can be applied. One can represent $\qmat{H}$ as:
\begin{align}\label{eq:QB_representation_H}
	\qmat{H} = \qmat{Q}\qmat{B} , ~\qmat{Q}\in\bbQ^{m\times s},~\qmat{Q}^*\qmat{Q}=I_s.
\end{align} Under \eqref{eq:rangefinder_basic_requirement}, $\rangemat{\qmat{Q}}=\rangemat{\qmat{H}}=\rangemat{\qmat{Y}}$   and $\qmat{B}\in\bbQ^{s\times s}$ is invertible. Here  \eqref{eq:QB_representation_H} can be thin QR, compact QSVD, or polar decomposition.


	We     introduce notations used in this subsection. Define the partially orthonormal matrix $\qmat{Q}_{\bot}\in\bbQ^{m\times (m-s)}$ with $\qmat{Q}_{\bot}\qmat{Q}_{\bot}^* := I_m-\qmat{Q}\qmat{Q}^*$  such that $\rangemat{\qmat{Q}}\perp\rangemat{\qmat{Q}_{\bot}}$. 
	Denote 
	\begin{align}\label{eq:LambdaNotation}
		\bdPsi_1=\bdPsi \qmat{Q}_{\bot}\in\mathbb{Q}^{l\times \left(m-s\right)},\quad \bdPsi_2=\bdPsi \qmat{Q}\in\mathbb{Q}^{l\times s}. 
	\end{align}	
	Let the QSVD of $\qmat{A}\in\bbQmn$ be
	 \begin{align}\label{eq:A_QSVD} \small
		\qmat{A}=\qmat{U}\Sigma \qmat{V}^*=\begin{bmatrix}\qmat{U}_1&\qmat{U}_2\end{bmatrix}\left[\begin{array}{cc}\Sigma_1&0\\0&\Sigma_2\end{array}\right]\left[\begin{array}{c}\qmat{V}_1^*\\\qmat{V}_2^*\end{array}\right],~\Sigma_1\in\mathbb{R}^{r\times r}, ~ \qmat{U}_1\in\bbQ^{m\times r},~ \qmat{V}_1\in\mathbb{Q}^{n\times r};
	\end{align}
	and denoote:
	\begin{align}\label{eq:GammaOmegaNotation}
		\bdOmega_1=\qmat{V}_1^* \bdOmega\in\mathbb{Q}^{r\times s},&\quad \bdOmega_2=\qmat{V}_2^*\bdOmega\in\mathbb{Q}^{\left(n-r\right)\times s}.
	\end{align}

	\begin{theorem}[QB error]\label{thm:errorAnalysisOfTwoSketch}
		Let $(\qmat{H},\qmat{X})$   be    generated by Algorithm \ref{alg:S2S} and \eqref{eq:rangefinder_basic_requirement} holds. With notations in \eqref{eq:LambdaNotation} and \eqref{eq:GammaOmegaNotation}, if $\bdOmega_1^*$ and $\bdPsi_2$ have full column rank, 
		we have the following Frobenius error estimation,
		\begin{align}
			\normF{\qmat{A}-\qmat{H}\qmat{X}}^2 &= \|\qmat{A}-\qmat{Q}\qmat{Q}^*\qmat{A}\|_F^2 + \|\bdPsi_2^\dagger \bdPsi_1\left(\qmat{Q}_{\bot}^*\qmat{A}\right)\|_F^2 \label{eq:thm_QB_error_for_Guassian} 
			\\ &\leq  \left(1+\normSpectral{\bdPsi_2^\dagger\bdPsi_1}^2\right)\left(1+\normSpectral{\bdOmega_2 \bdOmega_1^\dagger}^2\right)\|\Sigma_2\|^2_F. \label{eq:thm_QB_error_for_sub_Gaussian}
		\end{align}
	\end{theorem}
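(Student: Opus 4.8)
The plan is to reduce the analysis of the non-orthonormal rangefinder $\qmat{H}$ to the orthonormal case via the factorization $\qmat{H}=\qmat{Q}\qmat{B}$ in \eqref{eq:QB_representation_H}, and then to mimic the error analysis of the one-pass algorithm of Tropp et al. \cite{Practical_Sketching_Algorithms_Tropp} in the quaternion setting. First I would observe that since $\qmat{B}$ is invertible, $(\bdPsi\qmat{H})^\dagger\qmat{W}$ and the reconstruction $\qmat{H}\qmat{X}=\qmat{H}(\bdPsi\qmat{H})^\dagger\qmat{W}$ can be rewritten: $\qmat{H}(\bdPsi\qmat{H})^\dagger = \qmat{Q}\qmat{B}(\bdPsi\qmat{Q}\qmat{B})^\dagger$, and because $\bdPsi\qmat{Q}=\bdPsi_2$ has full column rank (so $\bdPsi_2\qmat{B}$ also has full column rank), $(\bdPsi_2\qmat{B})^\dagger=(\qmat{B})^{-1}\bdPsi_2^\dagger$, hence $\qmat{H}(\bdPsi\qmat{H})^\dagger=\qmat{Q}\bdPsi_2^\dagger$. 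Thus $\qmat{H}\qmat{X}=\qmat{Q}\bdPsi_2^\dagger\qmat{W}=\qmat{Q}\bdPsi_2^\dagger\bdPsi\qmat{A}$, which is \emph{exactly} the one-pass reconstruction one would get using the orthonormal rangefinder $\qmat{Q}$. This is the key point: the reconstruction is independent of $\qmat{B}$, so the error does not see the conditioning of $\qmat{H}$ at the QB stage at all.

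Next I would decompose the error using the orthogonal splitting $I_m = \qmat{Q}\qmat{Q}^* + \qmat{Q}_\bot\qmat{Q}_\bot^*$. Writing $\qmat{A}-\qmat{H}\qmat{X} = \qmat{A}-\qmat{Q}\bdPsi_2^\dagger\bdPsi\qmat{A}$ and inserting $\bdPsi\qmat{A}=\bdPsi(\qmat{Q}\qmat{Q}^*+\qmat{Q}_\bot\qmat{Q}_\bot^*)\qmat{A}=\bdPsi_2\qmat{Q}^*\qmat{A}+\bdPsi_1\qmat{Q}_\bot^*\qmat{A}$, and using $\bdPsi_2^\dagger\bdPsi_2=I_s$ (full column rank), one gets
\begin{align*}
\qmat{A}-\qmat{H}\qmat{X} = \underbrace{\qmat{A}-\qmat{Q}\qmat{Q}^*\qmat{A}}_{\in\,\rangemat{\qmat{Q}_\bot}} \;-\; \underbrace{\qmat{Q}\,\bdPsi_2^\dagger\bdPsi_1\,\qmat{Q}_\bot^*\qmat{A}}_{\in\,\rangemat{\qmat{Q}}}.
\end{align*}
The two summands lie in orthogonal subspaces ($\rangemat{\qmat{Q}_\bot}$ and $\rangemat{\qmat{Q}}$ respectively), so by the Pythagorean identity for the Frobenius norm — which holds verbatim over $\bbQ$ since $\langle\qmat{u},\qmat{v}\rangle=\qmat{u}^*\qmat{v}$ gives a genuine inner product — the cross term vanishes and we obtain
\begin{align*}
\normF{\qmat{A}-\qmat{H}\qmat{X}}^2 = \normF{\qmat{A}-\qmat{Q}\qmat{Q}^*\qmat{A}}^2 + \normF{\bdPsi_2^\dagger\bdPsi_1\,\qmat{Q}_\bot^*\qmat{A}}^2,
\end{align*}
using unitary invariance of $\normF{\cdot}$ to drop the leading $\qmat{Q}$. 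This is \eqref{eq:thm_QB_error_for_Guassian}.

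For the upper bound \eqref{eq:thm_QB_error_for_sub_Gaussian}, I would bound each term. For the second term, pull out $\normSpectral{\bdPsi_2^\dagger\bdPsi_1}^2$ as a factor and note $\normF{\qmat{Q}_\bot^*\qmat{A}}^2=\normF{(I-\qmat{Q}\qmat{Q}^*)\qmat{A}}^2=\normF{\qmat{A}-\qmat{Q}\qmat{Q}^*\qmat{A}}^2$; so the whole thing is $\le(1+\normSpectral{\bdPsi_2^\dagger\bdPsi_1}^2)\normF{\qmat{A}-\qmat{Q}\qmat{Q}^*\qmat{A}}^2$. The remaining task is the standard deterministic rangefinder bound $\normF{\qmat{A}-\qmat{Q}\qmat{Q}^*\qmat{A}}^2\le(1+\normSpectral{\bdOmega_2\bdOmega_1^\dagger}^2)\normF{\Sigma_2}^2$, which is the quaternion analogue of \cite[Theorem 9.1]{FindingStructureHalko} and was established in the quaternion setting in \cite{RandomizedQSVD}; here one uses that $\rangemat{\qmat{Q}}=\rangemat{\qmat{Y}}=\rangemat{\qmat{A}\bdOmega}$, the QSVD \eqref{eq:A_QSVD}, the partition \eqref{eq:GammaOmegaNotation}, and the full-column-rank hypothesis on $\bdOmega_1^*$. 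Combining gives \eqref{eq:thm_QB_error_for_sub_Gaussian}.

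The main obstacle I anticipate is purely bookkeeping rather than conceptual: verifying that pseudoinverse identities like $(\bdPsi_2\qmat{B})^\dagger=\qmat{B}^{-1}\bdPsi_2^\dagger$ and $\bdPsi_2^\dagger\bdPsi_2=I_s$ hold over the noncommutative ring $\bbQ$ under the stated full-rank hypotheses — these do hold (they follow from the QSVD-based definition of $\dagger$ in the excerpt and can be checked via the complex representation $\chi$ and Lemma \ref{lem:AdaggerComplex}), but one must be careful about the order of factors throughout, since quaternion multiplication does not commute. A secondary care point is confirming that the Pythagorean/orthogonality argument genuinely transfers: this is fine because $\rangemat{\qmat{Q}}\perp\rangemat{\qmat{Q}_\bot}$ in the quaternion inner product by construction of $\qmat{Q}_\bot$, and $\langle\cdot,\cdot\rangle$ is additive and conjugate-linear in the appropriate slots, so $\normF{\cdot}^2$ splits over orthogonal components exactly as in the real/complex case.
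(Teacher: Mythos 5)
Your proposal follows essentially the same route as the paper: you reduce to the orthonormal factor $\qmat{Q}$ via $\qmat{H}=\qmat{Q}\qmat{B}$ and the pseudoinverse identity $(\bdPsi_2\qmat{B})^\dagger=\qmat{B}^{-1}\bdPsi_2^\dagger$ (the paper's Lemma \ref{lem:HToQ}, resting on Lemma \ref{lem:ABdagger=BdaggerAdagger}), derive inline the identity $(\bdPsi\qmat{Q})^\dagger\bdPsi\qmat{A}-\qmat{Q}^*\qmat{A}=\bdPsi_2^\dagger\bdPsi_1(\qmat{Q}_\bot^*\qmat{A})$ that the paper isolates as Lemma \ref{lem:X-Q^*A}, apply the same Pythagorean splitting, and then invoke the same quaternion Halko-type bound (Lemma \ref{lem:A-QQ^*A} from \cite{RandomizedQSVD}) for $\normF{\qmat{A}-\qmat{Q}\qmat{Q}^*\qmat{A}}^2$. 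The argument is correct and matches the paper's proof in structure and substance.
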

	\eqref{eq:thm_QB_error_for_sub_Gaussian} is a rough estimation, whose purpose is to demonstrate that   the QB error  is independent of $\kappaq{H}$. More refined probabilistic bounds will be given in Sections \ref{sec:Gaussian} and \ref{sec:sub-gaussian} based on \eqref{eq:thm_QB_error_for_Guassian}.

	Recall that $\qmat{X}=\left(\bdPsi \qmat{H}\right)^\dagger \bdPsi\qmat{A}$ in Algorithm \ref{alg:S2S}. Denote $\lfloor \qmat{X}\rfloor_r$ as the best rank-$r$ approximation of $\qmat{X}$. Analygous to the real/complex counterpart, $\lfloor \qmat{X}\rfloor_r$ is also given by the rank-$r$ turncated QSVD \cite{lebihan2004SingularValue}.  	The truncation error is estimated in the following theorem:
	\begin{theorem}[Truncation error]\label{thm:truncationError}
		Let $\qmat{H}$, $\qmat{X}$ be generated by Algorithm \ref{alg:S2S} with \eqref{eq:rangefinder_basic_requirement} hold, and $\lfloor \qmat{X}\rfloor_r$ be truncated by Algorithm \ref{alg:FixedRankTwoSketches}. Then
		\begin{align}
			\normF{\qmat{H}\lfloor \qmat{X}\rfloor_r-\qmat{H}\qmat{X}}\leq \kappa(\qmat{H})\left(\normF{\qmat{A}-\qmat{H}\qmat{X}}+ \|\Sigma_2\|_F  
			\right).
		\end{align}
	\end{theorem}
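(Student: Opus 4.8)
The plan is to exploit the decomposition $\qmat{H}=\qmat{Q}\qmat{B}$ from \eqref{eq:QB_representation_H}, which lets us pass between the (possibly ill-conditioned) matrix $\qmat{H}$ and the orthonormal $\qmat{Q}$ while paying only a factor of $\kappa(\qmat{H})=\kappa(\qmat{B})$. First I would observe that, since $\qmat{B}$ is invertible, $\qmat{H}\qmat{X}=\qmat{Q}(\qmat{B}\qmat{X})$ and $\qmat{H}\lfloor\qmat{X}\rfloor_r=\qmat{Q}(\qmat{B}\lfloor\qmat{X}\rfloor_r)$; because $\qmat{Q}$ is orthonormal, $\normF{\qmat{H}\lfloor\qmat{X}\rfloor_r-\qmat{H}\qmat{X}}=\normF{\qmat{B}\lfloor\qmat{X}\rfloor_r-\qmat{B}\qmat{X}}\le\normSpectral{\qmat{B}}\,\normF{\lfloor\qmat{X}\rfloor_r-\qmat{X}}$. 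Then I would use the fact that the truncated QSVD gives the best rank-$r$ approximation in Frobenius norm (cited from \cite{lebihan2004SingularValue} in the excerpt), so $\normF{\lfloor\qmat{X}\rfloor_r-\qmat{X}}=\normF{\qmat{X}-\lfloor\qmat{X}\rfloor_r}\le\normF{\qmat{X}-\qmat{M}}$ for \emph{any} rank-$r$ quaternion matrix $\qmat{M}$.

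The natural choice of competitor is to feed in the rank-$r$ truncation of $\qmat{A}$ through the same channel. Specifically, I would set $\qmat{M}=\qmat{B}^{-1}\qmat{Q}^*\qmat{A}_r$ where $\qmat{A}_r=\qmat{U}_1\Sigma_1\qmat{V}_1^*$ is the rank-$r$ truncated QSVD of $\qmat{A}$ from \eqref{eq:A_QSVD}; this has rank at most $r$. Then $\normF{\qmat{X}-\qmat{M}}\le\normSpectral{\qmat{B}^{-1}}\,\normF{\qmat{H}\qmat{X}-\qmat{Q}\qmat{Q}^*\qmat{A}_r}$ after multiplying through by $\qmat{B}$ and using orthonormality of $\qmat{Q}$ again (note $\qmat{H}\qmat{M}=\qmat{Q}\qmat{Q}^*\qmat{A}_r$). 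Combining with the previous display yields
\begin{align*}
\normF{\qmat{H}\lfloor\qmat{X}\rfloor_r-\qmat{H}\qmat{X}}\le \normSpectral{\qmat{B}}\normSpectral{\qmat{B}^{-1}}\normF{\qmat{H}\qmat{X}-\qmat{Q}\qmat{Q}^*\qmat{A}_r}=\kappa(\qmat{H})\normF{\qmat{H}\qmat{X}-\qmat{Q}\qmat{Q}^*\qmat{A}_r},
\end{align*}
using $\kappa(\qmat{H})=\kappa(\chiQ{H})=\normSpectral{\qmat{B}}\normSpectral{\qmat{B}^{-1}}$ since the singular values of $\qmat{H}$ and $\qmat{B}$ coincide (as $\qmat{Q}$ is orthonormal). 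It remains to bound $\normF{\qmat{H}\qmat{X}-\qmat{Q}\qmat{Q}^*\qmat{A}_r}$ by $\normF{\qmat{A}-\qmat{H}\qmat{X}}+\|\Sigma_2\|_F$. I would do this by a triangle inequality through $\qmat{A}$ and through $\qmat{Q}\qmat{Q}^*\qmat{A}$:
\begin{align*}
\normF{\qmat{H}\qmat{X}-\qmat{Q}\qmat{Q}^*\qmat{A}_r}\le\normF{\qmat{H}\qmat{X}-\qmat{A}}+\normF{\qmat{A}-\qmat{Q}\qmat{Q}^*\qmat{A}}+\normF{\qmat{Q}\qmat{Q}^*\qmat{A}-\qmat{Q}\qmat{Q}^*\qmat{A}_r},
\end{align*}
where the last term is $\le\normF{\qmat{A}-\qmat{A}_r}=\|\Sigma_2\|_F$ by orthonormality of $\qmat{Q}$, and the middle term is $\le\normF{\qmat{A}-\qmat{H}\qmat{X}}$ since $\qmat{Q}\qmat{Q}^*$ is the orthogonal projection onto $\rangemat{\qmat{H}}$ and $\qmat{H}\qmat{X}\in\rangemat{\qmat{H}}$, so $\qmat{Q}\qmat{Q}^*\qmat{A}$ is at least as close to $\qmat{A}$ as $\qmat{H}\qmat{X}$ is. Assembling these gives $\normF{\qmat{H}\qmat{X}-\qmat{Q}\qmat{Q}^*\qmat{A}_r}\le 2\normF{\qmat{A}-\qmat{H}\qmat{X}}+\|\Sigma_2\|_F$, which is slightly weaker than the stated bound; to recover exactly $\normF{\qmat{A}-\qmat{H}\qmat{X}}+\|\Sigma_2\|_F$ I would need to be more careful — likely routing the competitor $\qmat{M}$ differently so that one copy of $\normF{\qmat{A}-\qmat{H}\qmat{X}}$ is absorbed, e.g. choosing $\qmat{M}$ so that $\qmat{H}\qmat{M}$ equals the rank-$r$ truncation of $\qmat{H}\qmat{X}$ itself rather than $\qmat{Q}\qmat{Q}^*\qmat{A}_r$, and then comparing the truncation of $\qmat{H}\qmat{X}$ to $\qmat{A}$ directly.

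The main obstacle I anticipate is precisely this last bookkeeping: getting the constant in front of $\normF{\qmat{A}-\qmat{H}\qmat{X}}$ down to exactly $1$ rather than $2$. The cleanest route is probably to take $\qmat{M}$ to be (a preimage under $\qmat{H}$ of) the best rank-$r$ approximation of $\qmat{H}\qmat{X}$, use that $\normF{\qmat{H}\qmat{X}-\lfloor\qmat{H}\qmat{X}\rfloor_r}\le\normF{\qmat{H}\qmat{X}-\qmat{A}_r}\le\normF{\qmat{H}\qmat{X}-\qmat{A}}+\|\Sigma_2\|_F$ (best rank-$r$ approximation property applied to $\qmat{H}\qmat{X}$ with competitor $\qmat{A}_r$), and then transport this back through $\qmat{B}$ at the cost of $\kappa(\qmat{H})$; one must check that $\lfloor\qmat{H}\lfloor\qmat{X}\rfloor_r\rfloor$ versus $\qmat{H}\lfloor\qmat{H}\qmat{X}\text{-truncation}\rfloor$ interact correctly, which is where the $\kappa(\qmat{H})$ genuinely enters and cannot be avoided. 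Everything else is triangle inequalities, orthonormality of $\qmat{Q}$, invariance of Frobenius and spectral norms under the quaternion structure, and the best-rank-$r$ property of truncated QSVD, all of which are available from the excerpt.
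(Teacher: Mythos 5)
Your final sketched route is essentially the paper's own proof, and it does close: the paper writes $\qmat{X}=\qmat{B}^{-1}(\bdPsi\qmat{Q})^\dagger\bdPsi\qmat{A}$ and compares $\lfloor\qmat{X}\rfloor_r$ against the competitor $\qmat{B}^{-1}\lfloor(\bdPsi\qmat{Q})^\dagger\bdPsi\qmat{A}\rfloor_r$, which is exactly your ``preimage under $\qmat{H}$ of the rank-$r$ truncation of $\qmat{H}\qmat{X}$,'' and then bounds $\normF{\qmat{H}\qmat{X}-\lfloor\qmat{H}\qmat{X}\rfloor_r}\leq\normF{\qmat{H}\qmat{X}-\lfloor\qmat{A}\rfloor_r}\leq\normF{\qmat{A}-\qmat{H}\qmat{X}}+\|\Sigma_2\|_F$, exactly your last display. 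The check you flag does succeed and is the only missing detail: since $\rangemat{\lfloor\qmat{H}\qmat{X}\rfloor_r}\subset\rangemat{\qmat{H}\qmat{X}}\subset\rangemat{\qmat{Q}}$, the matrix $\qmat{M}:=\qmat{B}^{-1}\qmat{Q}^*\lfloor\qmat{H}\qmat{X}\rfloor_r$ has rank at most $r$ and satisfies $\qmat{H}\qmat{M}=\qmat{Q}\qmat{Q}^*\lfloor\qmat{H}\qmat{X}\rfloor_r=\lfloor\qmat{H}\qmat{X}\rfloor_r$, so the Eckart--Young property of the truncated QSVD gives $\normF{\qmat{X}-\lfloor\qmat{X}\rfloor_r}\leq\normF{\qmat{X}-\qmat{M}}\leq\normSpectral{\qmat{B}^{-1}}\normF{\qmat{H}\qmat{X}-\lfloor\qmat{H}\qmat{X}\rfloor_r}$, and multiplying by $\normSpectral{\qmat{H}}=\normSpectral{\qmat{B}}$ yields the factor $\kappa(\qmat{H})$. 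Your first, fully worked attempt with $\qmat{M}=\qmat{B}^{-1}\qmat{Q}^*\qmat{A}_r$ is valid but, as you note, only gives the constant $2$ in front of $\normF{\qmat{A}-\qmat{H}\qmat{X}}$; routing through the truncation of $\qmat{H}\qmat{X}$ itself, as in your fix and in the paper, is what recovers the stated constant $1$ (and, as a small bonus over the paper's write-up, your phrasing does not even need the explicit formula for $\qmat{X}$ via $\bdPsi$).
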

	
	To make the proofs of the theorems  clear, we devide them in a series of lemmas. 
	
	
 
	\begin{lemma}\label{lem:ABdagger=BdaggerAdagger}
		Let $\qmat{A}\in\mathbb{Q}^{m\times n}$ and $\qmat{B}\in\mathbb{Q}^{n\times p}$; then $(\qmat{A}\qmat{B})^\dagger\neq \qmat{B}^\dagger \qmat{A}^\dagger$ in general. However, we have the following   sufficient conditions for  $(\qmat{A}\qmat{B})^\dagger= \qmat{B}^\dagger \qmat{A}^\dagger$:
		\begin{enumerate}
			\item $\qmat{A}$ has orthonormal columns ($\qmat{A}^*\qmat{A}=\qmat{A}^\dagger \qmat{A}=I$) or
			\item $\qmat{B}$ has orthonormal rows ($\qmat{B}\qmat{B}^*=\qmat{B}\qmat{B}^\dagger=I$) or
			\item $\qmat{A}$ has full column rank ($\qmat{A}^\dagger \qmat{A}=I$) and $\qmat{B}$ has full row rank ($\qmat{B}\qmat{B}^\dagger=I$) or
			\item $\qmat{B}=\qmat{A}^*$ or
			  $\qmat{B}=\qmat{A}^\dagger$.
		\end{enumerate}
	\end{lemma}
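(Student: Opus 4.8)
The plan is to reduce everything to the corresponding facts about the Moore--Penrose inverse over $\mathbb{C}$, using the isomorphism properties of the complex representation $\chi$ collected in Proposition \ref{prop:Chi_properties} and Lemma \ref{lem:AdaggerComplex}. The key observation is that $(\mathbf{A}\mathbf{B})^\dagger = \mathbf{B}^\dagger\mathbf{A}^\dagger$ holds as a quaternion identity if and only if $\chi_{(\mathbf{A}\mathbf{B})^\dagger} = \chi_{\mathbf{B}^\dagger\mathbf{A}^\dagger}$, i.e.\ $(\chi_{\mathbf{A}}\chi_{\mathbf{B}})^\dagger = \chi_{\mathbf{B}}^\dagger\chi_{\mathbf{A}}^\dagger$ in $\mathbb{C}^{2m\times 2p}$; the last equivalence uses $\chi_{\mathbf{A}\mathbf{B}} = \chi_{\mathbf{A}}\chi_{\mathbf{B}}$, $\chi_{\mathbf{A}^\dagger} = \chi_{\mathbf{A}}^\dagger$, and injectivity of $\chi$. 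Moreover $\mathbf{A}$ has orthonormal columns / full column rank / etc.\ precisely when $\chi_{\mathbf{A}}$ does (for orthonormality this is the last clause of Proposition \ref{prop:Chi_properties}; for the rank conditions it follows from $\chi_{\mathbf{A}^\dagger\mathbf{A}} = \chi_{\mathbf{A}}^\dagger\chi_{\mathbf{A}}$ together with the rank/size doubling). So once the four complex sufficient conditions are established, each quaternion case follows immediately by translating through $\chi$.

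For the first ``general'' sentence, I would exhibit a small counterexample. Since scalar quaternions already fail associativity-type identities less dramatically, the cleanest route is to borrow a standard $2\times 2$ complex counterexample to $(AB)^\dagger = B^\dagger A^\dagger$ and view it as a quaternion example (a complex matrix is a quaternion matrix), e.g.\ $\mathbf{A} = \begin{bmatrix} 1 & 1 \\ 0 & 0\end{bmatrix}$, $\mathbf{B} = \begin{bmatrix} 1 \\ 0 \end{bmatrix}$ or a similar rank-drop configuration where $\mathcal{R}(\mathbf{B}) \not\subseteq \mathcal{R}(\mathbf{A}^*)$; one checks $\mathbf{A}\mathbf{B} \neq 0$ while $\mathbf{B}^\dagger\mathbf{A}^\dagger$ lands in the wrong subspace. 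I will just state that such an example exists rather than computing it in full.

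For the four sufficient conditions, the workhorse is the Greville/Arghiriade-type criterion, which I expect holds verbatim in the quaternion setting since the four defining Penrose equations are the same: $(\mathbf{A}\mathbf{B})^\dagger = \mathbf{B}^\dagger\mathbf{A}^\dagger$ iff $\mathbf{A}^*\mathbf{A}\mathbf{B}\mathbf{B}^*$ commutes appropriately with the relevant projectors — concretely it suffices that $\mathbf{A}^\dagger\mathbf{A}\mathbf{B} = \mathbf{B}$ and $\mathbf{A}\mathbf{B}\mathbf{B}^\dagger = \mathbf{A}$. Rather than invoking that lemma (which is not in the excerpt), the safe approach is to verify the four Penrose equations for $\mathbf{X} := \mathbf{B}^\dagger\mathbf{A}^\dagger$ directly in each case: (1) if $\mathbf{A}^*\mathbf{A} = I$ then $\mathbf{A}^\dagger = \mathbf{A}^*$, $\mathbf{A}^\dagger\mathbf{A} = I$, and substituting into $\mathbf{A}\mathbf{B}\mathbf{X}\mathbf{A}\mathbf{B}$, $(\mathbf{A}\mathbf{B})\mathbf{X}$, etc., the $\mathbf{A}^\dagger\mathbf{A}$ factors collapse and the $\mathbf{B}$-equations take over; (2) is the transpose-dual of (1) via $((\mathbf{A}\mathbf{B})^\dagger)^* = (\mathbf{B}^*\mathbf{A}^*)^\dagger$ and the fact $\mathbf{M}^{\dagger *} = \mathbf{M}^{* \dagger}$ (provable through $\chi$); (3) combines the mechanisms of (1) and (2) since full column rank of $\mathbf{A}$ gives $\mathbf{A}^\dagger\mathbf{A} = I$ and full row rank of $\mathbf{B}$ gives $\mathbf{B}\mathbf{B}^\dagger = I$, so again all the interior projectors are identities; (4) if $\mathbf{B} = \mathbf{A}^*$ then $\mathbf{B}^\dagger\mathbf{A}^\dagger = \mathbf{A}^{*\dagger}\mathbf{A}^\dagger = (\mathbf{A}\mathbf{A}^*)^\dagger$ and one verifies directly that $(\mathbf{A}\mathbf{A}^*)^\dagger$ satisfies the Penrose equations for $\mathbf{A}^*\mathbf{A}\cdot(\ )\cdot$... — more cleanly, use the QSVD $\mathbf{A} = \mathbf{U}\Sigma\mathbf{V}^*$ from Theorem \ref{lem:QSVD}, whence $\mathbf{A}\mathbf{A}^* = \mathbf{U}\Sigma^2\mathbf{U}^*$, $(\mathbf{A}\mathbf{A}^*)^\dagger = \mathbf{U}\Sigma^{2\dagger}\mathbf{U}^* = \mathbf{A}^{*\dagger}\mathbf{A}^\dagger$; the case $\mathbf{B} = \mathbf{A}^\dagger$ is similar since $\mathbf{A}\mathbf{A}^\dagger$ is the orthogonal projector $\mathbf{U}_1\mathbf{U}_1^*$ and $(\mathbf{A}\mathbf{A}^\dagger)^\dagger = \mathbf{A}\mathbf{A}^\dagger = \mathbf{A}^{\dagger\dagger}\mathbf{A}^\dagger$.

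The main obstacle I anticipate is purely bookkeeping: confirming that the standard complex facts — the Greville conditions, $\mathbf{M}^{*\dagger} = \mathbf{M}^{\dagger *}$, and the behaviour of $\dagger$ under one-sided invertibility — transfer to quaternions without subtlety, given the noncommutativity. The $\chi$-representation makes this painless for identities between \emph{specific} matrices (since $\chi$ is a ring homomorphism compatible with $*$ and $\dagger$), so the cleanest write-up is to prove each sufficient condition over $\mathbb{C}$ for $\chi_{\mathbf{A}}, \chi_{\mathbf{B}}$ and then pull back; the only thing to double-check is that ``$\mathbf{A}$ has orthonormal columns'' and the rank hypotheses are genuinely $\chi$-stable, which they are by Proposition \ref{prop:Chi_properties} and Lemma \ref{lem:AdaggerComplex}.
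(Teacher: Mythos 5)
Your proposal is correct, and it shares the paper's first step but then diverges in the main mechanism. Like the paper, you reduce via Lemma \ref{lem:AdaggerComplex} and Proposition \ref{prop:Chi_properties} to the complex identity $(\chiQ{A}\chiQ{B})^\dagger=\chiQ{B}^\dagger\chiQ{A}^\dagger$; but where the paper then invokes Greville's characterization from \cite{grevilleNoteGeneralizedInverse1966}, namely that $(EF)^\dagger=F^\dagger E^\dagger$ holds iff $E^\dagger EFF^*E^*EFF^\dagger=FF^*E^*E$, and checks this single criterion in each of the four cases, you instead verify the four Penrose equations for $\qmat{X}=\qmat{B}^\dagger\qmat{A}^\dagger$ case by case (with the $*$-duality trick for case 2 and the QSVD of Theorem \ref{lem:QSVD} for case 4). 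Your route is more elementary and self-contained: it needs no external reverse-order-law reference, and since the verifications only use $(\qmat{P}\qmat{Q})^*=\qmat{Q}^*\qmat{P}^*$ and the defining MP equations, they go through directly in quaternion arithmetic, so the $\chi$-transfer you set up is actually dispensable rather than essential. The paper's route is more compact on the page, since one uniform criterion disposes of all cases at once, but it leaves the case checks implicit ("one can check") and rests on the cited complex result. Two small points to watch in a write-up: the auxiliary sufficient condition you quote ($\qmat{A}^\dagger\qmat{A}\qmat{B}=\qmat{B}$ and $\qmat{A}\qmat{B}\qmat{B}^\dagger=\qmat{A}$) does not hold in cases 1, 2, 4 in general, so your decision to fall back on direct Penrose verification is the right one and should be the stated argument; and the duality step for case 2 needs $\qmat{M}^{*\dagger}=\qmat{M}^{\dagger *}$, which is easy but should be checked (directly from the Penrose equations or through $\chi$). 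Your $2\times 2$ counterexample for the "in general" clause is valid as stated (one computes $(\qmat{A}\qmat{B})^\dagger=[1,\,0]$ while $\qmat{B}^\dagger\qmat{A}^\dagger=[1/2,\,0]$), which is in fact more than the paper provides, since it asserts the failure without an example.
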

	\begin{proof}
		By Lemma \ref{lem:AdaggerComplex}, it suffices to prove $\left(\chi_{\qmat{A}}\chi_{\qmat{B}}\right)^\dagger=\chi_{\qmat{B}}^\dagger\chi_{\qmat{A}}^\dagger$. 
		For two complex matrices $E$ and $F$,  \cite{grevilleNoteGeneralizedInverse1966}  proved that $(EF)^\dagger= F^\dagger E^\dagger$   holds if and only if: 
		\begin{align}\label{eq:1966GI}
			E^\dagger EFF^*E^*EFF^\dagger=FF^*E^*E.
		\end{align}   
		Let $E={\chi}_\qmat{A}$, $F=\chi_\qmat{B}$. Using the properties of generalized inverse and complex representation of quaternion matrices, one can check that each condition  in this lemma   can make (\ref{eq:1966GI}) hold.
	\end{proof}

	\begin{lemma}\label{lem:HToQ}
		Let $\qmat{H}\in \bbQ^{m\times s}~(m\geq s)$ have full column rank,  $\qmat{Q}\in\mathbb{Q}^{m\times s}$ is given by \eqref{eq:QB_representation_H}, and $\bdPsi$ is an arbitrary $l\times m~(s\leq l\leq m)$ quaternion matrix with full row rank. Then we have:
		\begin{align}
			\qmat{H}\qmat{X}=\qmat{H}\left(\bdPsi \qmat{H}\right)^\dagger \bdPsi \qmat{A}&=\qmat{Q}(\bdPsi \qmat{Q})^\dagger\bdPsi \qmat{A}\label{eq:Leftaccelerate}.
		\end{align}
	\end{lemma}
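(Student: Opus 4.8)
The plan is to write $\qmat{H}=\qmat{Q}\qmat{B}$ as in \eqref{eq:QB_representation_H}, where $\qmat{B}\in\bbQ^{s\times s}$ is invertible (since $\qmat{H}$ has full column rank and $\qmat{Q}$ has orthonormal columns spanning $\rangemat{\qmat{H}}$), and then push the factor $\qmat{B}$ through the pseudoinverse using the reverse-order law from Lemma \ref{lem:ABdagger=BdaggerAdagger}. Concretely, I would substitute $\qmat{H}=\qmat{Q}\qmat{B}$ into $\qmat{H}(\bdPsi\qmat{H})^\dagger\bdPsi\qmat{A}$ to get $\qmat{Q}\qmat{B}\,(\bdPsi\qmat{Q}\qmat{B})^\dagger\bdPsi\qmat{A}$, and then argue that $(\bdPsi\qmat{Q}\qmat{B})^\dagger = \qmat{B}^\dagger(\bdPsi\qmat{Q})^\dagger = \qmat{B}^{-1}(\bdPsi\qmat{Q})^\dagger$, so that the $\qmat{B}$ and $\qmat{B}^{-1}$ cancel, leaving $\qmat{Q}(\bdPsi\qmat{Q})^\dagger\bdPsi\qmat{A}$.

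The one point needing care is justifying the split $(\bdPsi\qmat{Q}\qmat{B})^\dagger = \qmat{B}^\dagger(\bdPsi\qmat{Q})^\dagger$. I would apply Lemma \ref{lem:ABdagger=BdaggerAdagger} with the two factors $E := \bdPsi\qmat{Q}\in\bbQ^{l\times s}$ and $F := \qmat{B}\in\bbQ^{s\times s}$. Here $F=\qmat{B}$ is square and invertible, hence has full column rank \emph{and} full row rank, so $\qmat{B}\qmat{B}^\dagger = I_s$; and $E=\bdPsi\qmat{Q}$ has full column rank, because $\bdPsi$ has full row rank and $\qmat{Q}$ has orthonormal (in particular full-rank) columns, so that $(\bdPsi\qmat{Q})^\dagger(\bdPsi\qmat{Q})=I_s$ (using $l\geq s$). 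Thus condition 3 of Lemma \ref{lem:ABdagger=BdaggerAdagger} is met, giving the reverse-order law. Since $\qmat{B}$ is invertible, $\qmat{B}^\dagger=\qmat{B}^{-1}$, so $\qmat{Q}\qmat{B}\qmat{B}^{-1}(\bdPsi\qmat{Q})^\dagger\bdPsi\qmat{A} = \qmat{Q}(\bdPsi\qmat{Q})^\dagger\bdPsi\qmat{A}$, which is \eqref{eq:Leftaccelerate}.

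I do not anticipate a genuine obstacle here; the only thing to be vigilant about is the noncommutativity of quaternions, which forbids naive rearrangement of products — this is precisely why the reverse-order law of Lemma \ref{lem:ABdagger=BdaggerAdagger} (rather than any ad hoc manipulation) must be invoked, and why one must verify that $\bdPsi\qmat{Q}$ really does have full column rank so that the hypothesis of that lemma genuinely applies. A one-line remark that $\bdPsi\qmat{Q}$ having full column rank also guarantees $\bdPsi\qmat{H}=\bdPsi\qmat{Q}\qmat{B}$ has full column rank (so all pseudoinverses in sight are the ``thin'' left-inverse type) rounds out the argument.
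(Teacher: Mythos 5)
Your proposal is correct and follows essentially the same route as the paper's proof: substitute $\qmat{H}=\qmat{Q}\qmat{B}$ from \eqref{eq:QB_representation_H} and invoke point 3 of Lemma \ref{lem:ABdagger=BdaggerAdagger} with the full-column-rank factor $\bdPsi\qmat{Q}$ and the invertible (hence full-row-rank) factor $\qmat{B}$, so that $(\bdPsi\qmat{Q}\qmat{B})^\dagger=\qmat{B}^{-1}(\bdPsi\qmat{Q})^\dagger$ and the $\qmat{B}$'s cancel. Your added remarks on noncommutativity and on $\bdPsi\qmat{H}$ inheriting full column rank are consistent with, and slightly more explicit than, the paper's argument.
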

	\begin{proof}
		Let $\qmat{H}=\qmat{Q}\qmat{B}$ where $\qmat{B}\in\bbQ^{s\times s}$ is invertible.   Then  
		\begin{align*}
			\qmat{H}\left(\bdPsi \qmat{H}\right)^\dagger \bdPsi \qmat{A}&=\qmat{Q}\qmat{B}(\bdPsi \qmat{Q}\qmat{B})^\dagger\bdPsi \qmat{A}\\
			\text{\tiny(Lemma \ref{lem:ABdagger=BdaggerAdagger}, point 3)}&=\qmat{Q}\qmat{B}\qmat{B}^{-1}(\bdPsi \qmat{Q})^\dagger\bdPsi \qmat{A}\\
			&=\qmat{Q}(\bdPsi \qmat{Q})^\dagger\bdPsi \qmat{A}.
		\end{align*}
		The second identity is from Lemma \ref{lem:ABdagger=BdaggerAdagger} because   $\bdPsi \qmat{Q}\in\bbQ^{l\times s}(l\geq s)$ has full column rank ($\bdPsi^*$ and $\qmat{Q}$ are both of full column rank) and $\qmat{B}$ is invertible.
	\end{proof}


	
	 
		The following lemma is a trivial  quaternion version of \cite[Lemma A.4]{Practical_Sketching_Algorithms_Tropp}.
	\begin{lemma}\label{lem:X-Q^*A}
	 	Assume that $\bdPsi_2$ has full column rank; then 
		\begin{align*}
			\left(\bdPsi \qmat{Q}\right)^\dagger \bdPsi\qmat{A}-\qmat{Q}^*\qmat{A}=\bdPsi_2^\dagger \bdPsi_1\left(\qmat{Q}_\bot^*\qmat{A}\right).
		\end{align*}
	\end{lemma}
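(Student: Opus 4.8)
The plan is to mimic the real-case argument of Tropp et al.\ \cite[Lemma A.4]{Practical_Sketching_Algorithms_Tropp}, being careful about noncommutativity but otherwise following the same skeleton. First I would write $\bdPsi\qmat{Q}\in\bbQ^{l\times s}$ and note that since $\bdPsi_2=\bdPsi\qmat{Q}$ has full column rank, $(\bdPsi\qmat{Q})^\dagger\bdPsi\qmat{Q}=I_s$, so left-multiplying anything by $(\bdPsi\qmat{Q})^\dagger\bdPsi$ acts as an oblique projector onto $\rangemat{\qmat{Q}}$ expressed in the $\qmat{Q}$-coordinates. The key algebraic device is the orthogonal decomposition $I_m=\qmat{Q}\qmat{Q}^*+\qmat{Q}_\bot\qmat{Q}_\bot^*$, which lets me split $\qmat{A}=\qmat{Q}\qmat{Q}^*\qmat{A}+\qmat{Q}_\bot\qmat{Q}_\bot^*\qmat{A}$.

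The main computation then runs as follows. Insert the splitting of $\qmat{A}$ into $(\bdPsi\qmat{Q})^\dagger\bdPsi\qmat{A}$:
\begin{align*}
	(\bdPsi\qmat{Q})^\dagger\bdPsi\qmat{A}
	&=(\bdPsi\qmat{Q})^\dagger\bdPsi\qmat{Q}\qmat{Q}^*\qmat{A}+(\bdPsi\qmat{Q})^\dagger\bdPsi\qmat{Q}_\bot\qmat{Q}_\bot^*\qmat{A}\\
	&=\qmat{Q}^*\qmat{A}+(\bdPsi\qmat{Q})^\dagger\bdPsi\qmat{Q}_\bot(\qmat{Q}_\bot^*\qmat{A}),
\end{align*}
using $(\bdPsi\qmat{Q})^\dagger\bdPsi\qmat{Q}=I_s$ (here $\bdPsi\qmat{Q}=\bdPsi_2$ and $\bdPsi\qmat{Q}_\bot=\bdPsi_1$ by the definitions in \eqref{eq:LambdaNotation}, so $\bdPsi_2^\dagger=(\bdPsi\qmat{Q})^\dagger$). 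Subtracting $\qmat{Q}^*\qmat{A}$ from both sides and substituting the notation gives exactly
\[
	(\bdPsi\qmat{Q})^\dagger\bdPsi\qmat{A}-\qmat{Q}^*\qmat{A}=\bdPsi_2^\dagger\bdPsi_1(\qmat{Q}_\bot^*\qmat{A}),
\]
which is the claim. I would then remark that, via Lemma \ref{lem:HToQ}, $\qmat{H}\qmat{X}=\qmat{Q}(\bdPsi\qmat{Q})^\dagger\bdPsi\qmat{A}$, so this identity is precisely the ``$\qmat{X}$ versus $\qmat{Q}^*\qmat{A}$'' comparison promised in the lemma statement (with $\qmat{X}$ understood after the change of basis).

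The only genuinely delicate point is the identity $(\bdPsi\qmat{Q})^\dagger\bdPsi\qmat{Q}=I_s$: I must invoke that $\bdPsi_2=\bdPsi\qmat{Q}$ has full column rank (an explicit hypothesis of the lemma), whence the MP-inverse property $\qmat{M}^\dagger\qmat{M}=I$ for a full-column-rank quaternion matrix $\qmat{M}$ holds — this transfers from the complex case through Lemma \ref{lem:AdaggerComplex} and Proposition \ref{prop:Chi_properties}, since $\chi_{\bdPsi_2}$ is then of full column rank. No commutativity is needed because every manipulation keeps the matrices in their original left-to-right order; the argument is a clean bookkeeping exercise, and I expect no real obstacle beyond stating the rank hypothesis carefully.
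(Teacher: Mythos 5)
Your proof is correct and is essentially the same argument the paper has in mind: the paper simply defers to Tropp et al.'s Lemma A.4, whose proof is exactly this splitting $\qmat{A}=\qmat{Q}\qmat{Q}^*\qmat{A}+\qmat{Q}_\bot\qmat{Q}_\bot^*\qmat{A}$ followed by $(\bdPsi\qmat{Q})^\dagger\bdPsi\qmat{Q}=I_s$ from the full-column-rank hypothesis. Your extra care in transferring $\qmat{M}^\dagger\qmat{M}=I$ to the quaternion setting via the complex representation is a sound (if unstated in the paper) detail.
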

	
			

The follow lemma comes from \cite[Section 4.3]{RandomizedQSVD}; see also \cite[Theorem 9.1]{FindingStructureHalko}.
	\begin{lemma}\label{lem:A-QQ^*A}
		With the notations in Algorithm \ref{alg:S2S} and in  \eqref{eq:GammaOmegaNotation}, and $\qmat{Q}$ is   in \eqref{eq:QB_representation_H}, if $\bdOmega_1$ has full row rank, then 
		\begin{align}
		  	\normF{\qmat{A}-\qmat{Q}\qmat{Q}^*\qmat{A}}^2 = \normF{\qmat{A}-\qmat{Y}\qmat{Y}^\dagger\qmat{A}}^2 &\leq\normA{\Sigma_2}^2+\normF{\Sigma_2\bdOmega_2 \bdOmega_1^\dagger}^2 \label{eq:lem:A-QQ^*A:1}\\
			&\leq \left(1+\normSpectral{ \bdOmega_2 \bdOmega_1^\dagger}^2\right)\normF{\Sigma_2}^2. \label{eq:lem:A-QQ^*A:2} 
		\end{align}
	\end{lemma}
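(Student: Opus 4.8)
\noindent\emph{Proof plan.} The plan is to reduce the estimate to the deterministic randomized-SVD bound of Halko, Martinsson and Tropp \cite[Thm.~9.1]{FindingStructureHalko} (already used over $\bbQ$ in \cite[Sect.~4.3]{RandomizedQSVD}), transporting the few linear-algebraic facts it uses to the quaternion right vector space. First I would settle the equality: by \eqref{eq:rangefinder_basic_requirement}--\eqref{eq:QB_representation_H} the matrix $\qmat{Q}$ has orthonormal columns with $\rangemat{\qmat{Q}}=\rangemat{\qmat{Y}}$, so $\qmat{Q}\qmat{Q}^{*}$ is a Hermitian idempotent with range $\rangemat{\qmat{Y}}$; the four MP-inverse equations give the same for $\qmat{Y}\qmat{Y}^{\dagger}$; and, the orthogonal projector onto a quaternion subspace being unique (transferable from the complex case by Proposition \ref{prop:Chi_properties} and Lemma \ref{lem:AdaggerComplex}), $\qmat{Q}\qmat{Q}^{*}=\qmat{Y}\qmat{Y}^{\dagger}$, which yields the first identity.

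For the inequality I would set $\qmat{P}:=\qmat{Q}\qmat{Q}^{*}$ and, using the QSVD \eqref{eq:A_QSVD}, write $\qmat{Y}=\qmat{A}\bdOmega=\qmat{U}_{1}\Sigma_{1}\bdOmega_{1}+\qmat{U}_{2}\Sigma_{2}\bdOmega_{2}$ with $\bdOmega_{i}$ as in \eqref{eq:GammaOmegaNotation}. Replacing $r$ by $\rank(\qmat{A})$ if needed (so that $\Sigma_{1}$ is invertible; the claim is otherwise degenerate) and using that $\bdOmega_{1}$ has full row rank, Lemma \ref{lem:ABdagger=BdaggerAdagger} gives $(\Sigma_{1}\bdOmega_{1})^{\dagger}=\bdOmega_{1}^{\dagger}\Sigma_{1}^{-1}$ and $(\Sigma_{1}\bdOmega_{1})(\Sigma_{1}\bdOmega_{1})^{\dagger}=I_{r}$; hence $\qmat{W}:=\qmat{Y}(\Sigma_{1}\bdOmega_{1})^{\dagger}=\qmat{U}_{1}+\qmat{U}_{2}\qmat{G}=\qmat{U}\qmat{M}$, with $\qmat{G}:=\Sigma_{2}\bdOmega_{2}\bdOmega_{1}^{\dagger}\Sigma_{1}^{-1}$ and $\qmat{M}:=\bigl[\begin{smallmatrix}I_{r}\\\qmat{G}\end{smallmatrix}\bigr]$, has full column rank with $\rangemat{\qmat{W}}\subseteq\rangemat{\qmat{Y}}$. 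Monotonicity of $\qmat{P}'\mapsto\normFSquare{(I-\qmat{P}')\qmat{A}}=\mathrm{Re}\,\mathrm{tr}\bigl(\qmat{A}^{*}(I-\qmat{P}')\qmat{A}\bigr)$ over orthogonal projectors (here $\qmat{P}_{\qmat{W}}\preceq\qmat{P}$) gives $\normFSquare{\qmat{A}-\qmat{P}\qmat{A}}\le\normFSquare{(I-\qmat{P}_{\qmat{W}})\qmat{A}}$; then a short orthogonality computation using $\qmat{A}=\qmat{U}\Sigma\qmat{V}^{*}$, $\qmat{U}^{*}\qmat{U}=I$, the unitarity of $\qmat{V}$, and $\qmat{W}=\qmat{U}\qmat{M}$ collapses the right-hand side to $\normFSquare{(I-\qmat{P}_{\qmat{M}})\Sigma}$. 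Finally, the best-approximation property of orthogonal projection in the real inner-product space $\bigl(\bbQ^{m\times n},\ \langle\qmat{X},\qmat{Y}\rangle=\mathrm{Re}\,\mathrm{tr}(\qmat{X}^{*}\qmat{Y})\bigr)$, tested against the candidate $\qmat{M}\,[\,\Sigma_{1},\,0\,]\in\rangemat{\qmat{M}}$, yields $\normFSquare{(I-\qmat{P}_{\qmat{M}})\Sigma}\le\normFSquare{\qmat{G}\Sigma_{1}}+\normFSquare{\Sigma_{2}}$, and $\qmat{G}\Sigma_{1}=\Sigma_{2}\bdOmega_{2}\bdOmega_{1}^{\dagger}$; this is \eqref{eq:lem:A-QQ^*A:1} (the first term being $\normFSquare{\Sigma_{2}}$). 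Inequality \eqref{eq:lem:A-QQ^*A:2} is then immediate from $\normFSquare{\Sigma_{2}\bdOmega_{2}\bdOmega_{1}^{\dagger}}\le\normSpectralSquare{\bdOmega_{2}\bdOmega_{1}^{\dagger}}\,\normFSquare{\Sigma_{2}}$.

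The hard part is not any single computation but checking that the Hilbert-space geometry behind the HMT argument survives the noncommutativity of $\bbQ$. The economical route is to record once that $\langle\qmat{X},\qmat{Y}\rangle=\mathrm{Re}\,\mathrm{tr}(\qmat{X}^{*}\qmat{Y})$ makes $\bbQ^{m\times n}$ a genuine real inner-product space and that $\mathrm{Re}\,\mathrm{tr}$ is cyclic; this simultaneously gives unitary invariance of $\normF{\cdot}$, the bound $\mathrm{Re}\,\mathrm{tr}(\qmat{A}^{*}\qmat{C}\qmat{A})\ge 0$ for Hermitian positive semidefinite $\qmat{C}$ (hence the monotonicity step), and the best-approximation property used at the end. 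The pseudoinverse identities are precisely the cases isolated in Lemma \ref{lem:ABdagger=BdaggerAdagger}, and the passage from $\qmat{P}_{\qmat{W}}$ to $\qmat{P}_{\qmat{M}}$ is a one-line computation from $\qmat{U}^{*}\qmat{U}=I$; with these in place the argument is verbatim HMT. The only other point to flag is the degenerate case $r>\rank(\qmat{A})$, handled as usual by shrinking $r$.
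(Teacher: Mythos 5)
Your proof is correct, and it is essentially the argument the paper itself relies on: the paper gives no proof of Lemma \ref{lem:A-QQ^*A}, citing instead \cite[Sect.\ 4.3]{RandomizedQSVD} and \cite[Thm.\ 9.1]{FindingStructureHalko}, and your reconstruction (the flattening $\qmat{W}=\qmat{Y}(\Sigma_1\bdOmega_1)^\dagger=\qmat{U}\qmat{M}$ via Lemma \ref{lem:ABdagger=BdaggerAdagger}, projector monotonicity, and the variational bound tested against $\qmat{M}[\Sigma_1,\,0]$) is exactly that HMT argument transported to quaternions, with the right quaternion-specific checks (uniqueness of the orthogonal projector onto $\rangemat{\qmat{Y}}$, cyclicity of $\mathrm{Re}\,\mathrm{tr}$, and shrinking $r$ in the degenerate case). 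The only cosmetic discrepancy is that you obtain the first term of \eqref{eq:lem:A-QQ^*A:1} as $\normFSquare{\Sigma_2}$ rather than $\normA{\Sigma_2}^2$, which is all that is needed for \eqref{eq:lem:A-QQ^*A:2} and for every subsequent use of the lemma in the paper.
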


	\begin{proof}[Proof of Theorem \ref{thm:errorAnalysisOfTwoSketch}]
		By Lemma \ref{lem:HToQ}, it holds that
		\begin{align}
		\normFSquare{\qmat{A}-\qmat{H}\qmat{X}} = 	\normFSquare{\qmat{A}-\qmat{H}\left(\bdPsi \qmat{H}\right)^\dagger \bdPsi \qmat{A}}=\normFSquare{\qmat{A}-\qmat{Q}(\bdPsi \qmat{Q})^\dagger\bdPsi \qmat{A}}.\label{eq:H2Q}
		\end{align}
	 Thus, it suffices to evaluate the right part of  \eqref{eq:H2Q}. By Pythagorean identity,
		\begin{align}
			\|\qmat{A}-\qmat{Q}(\bdPsi \qmat{Q})^\dagger\bdPsi \qmat{A}\|_F^2&= \|\qmat{A}-\qmat{Q}\qmat{Q}^*\qmat{A}+  \qmat{Q}\qmat{Q}^*\qmat{A}- \qmat{Q}(\bdPsi \qmat{Q})^\dagger\bdPsi \qmat{A}\|_F^2 \nonumber\\
			&= \|\qmat{A}-\qmat{Q}\qmat{Q}^*\qmat{A}\|_F^2+\|\qmat{Q}(\bdPsi \qmat{Q})^\dagger\bdPsi \qmat{A}-\qmat{Q}\qmat{Q}^*\qmat{A}\|_F^2 \nonumber\\
			&= \|\qmat{A}-\qmat{Q}\qmat{Q}^*\qmat{A}\|_F^2 + \|\bdPsi_2^\dagger \bdPsi_1\left(\qmat{Q}_{\bot}^*\qmat{A}\right)\|_F^2 , \label{eq:proof:qb_error:1}
		\end{align}
	where the  last equality uses    Lemma \ref{lem:X-Q^*A}. 
	Note that  
	$  
			\|\qmat{Q}_\bot^*\qmat{A}\|_F=\|\qmat{Q}_\bot \qmat{Q}_\bot^*\qmat{A}\|_F=\|\qmat{A}-\qmat{Q}\qmat{Q}^*\qmat{A}\|_F
	$; thus $\|\bdPsi_2^\dagger \bdPsi_1\left(\qmat{Q}_{\bot}^*\qmat{A}\right)\|_F \leq \| \bdPsi_2^\dagger \bdPsi_1\|_2\|\qmat{A}-\qmat{Q}\qmat{Q}^*\qmat{A}\|_F $, which together with \eqref{eq:proof:qb_error:1} and Lemma \ref{lem:A-QQ^*A} yields \eqref{eq:thm_QB_error_for_sub_Gaussian}. 	 
		\end{proof}

	\begin{proof}[Proof of Theorem \ref{thm:truncationError}]
		\color{black}
		Let $\qmat{H}=\qmat{Q}\qmat{B}$ as \eqref{eq:QB_representation_H} where $\qmat{Q}$ is partially orthonormal and $\qmat{B}$ is   invertible. Then $\qmat{X}=\left(\bdPsi \qmat{H}\right)^\dagger \bdPsi\qmat{A} =\left(\bdPsi \qmat{Q}\qmat{B}\right)^\dagger \bdPsi\qmat{A}=\qmat{B}^{-1}\left(\bdPsi \qmat{Q}\right)^\dagger \bdPsi\qmat{A}$, where the last equality follows from the proof of Lemma \ref{lem:HToQ} and that a random fat matrix $\bdPsi \in\bbQ^{l\times m}$ drawn from continuous distribution has full row rank genericly. Denote $\qmat{A}_{in}:=\qmat{Q}(\bdPsi \qmat{Q})^\dagger \bdPsi \qmat{A}$.
		We have:
		\begin{align*}
			\normF{\qmat{H}\qmat{X}-\qmat{H}\lfloor \qmat{X}\rfloor_r}
			&\leq\normSpectral{\qmat{H}}\normF{\qmat{X}-\lfloor \qmat{X}\rfloor_r}\\
			 &=\normSpectral{\qmat{H}}\normF{\qmat{B}^{-1}(\bdPsi \qmat{Q})^\dagger \bdPsi \qmat{A}-\lfloor \qmat{B}^{-1}(\bdPsi \qmat{Q})^\dagger \bdPsi \qmat{A}\rfloor_r}\\		
			&\leq \normSpectral{\qmat{H}}\normF{\qmat{B}^{-1}(\bdPsi \qmat{Q})^\dagger \bdPsi \qmat{A}-\qmat{B}^{-1}\lfloor (\bdPsi \qmat{Q})^\dagger \bdPsi \qmat{A}\rfloor_r}	\\
			&\leq\normSpectral{\qmat{H}}\normSpectral{\qmat{B}^{-1}}\cdot\normF{(\bdPsi \qmat{Q})^\dagger \bdPsi \qmat{A}-\lfloor (\bdPsi \qmat{Q})^\dagger \bdPsi \qmat{A}\rfloor_r}\\
	 \text{\tiny(\cite[(6.3)]{Practical_Sketching_Algorithms_Tropp}, $\qmat{Q}\lfloor (\bdPsi \qmat{Q})^\dagger \bdPsi \qmat{A}\rfloor_r = \lfloor\qmat{A}_{in}\rfloor_r)$}		&=\kappa(\qmat{H}) \normF{ \qmat{A}_{in} - \lfloor \qmat{A}_{in}  \rfloor_r   } ~\leq  \kappaq{H}\normF{ \qmat{A}_{in} - \lfloor \qmat{A} \rfloor_r }\\
	 &\leq \kappaq{H}(\normF{ \qmat{A}_{in} - \qmat{A} } + \normF{\qmat{A}-\lfloor \qmat{A}\rfloor_r }  )\\
			&=\kappa(\qmat{H})\left(\normF{\qmat{A}-\qmat{H}\qmat{X}}+\|\Sigma_2\|_F\right),
		\end{align*}
		where the second   inequality is because that $\qmat{B}^{-1}\lfloor (\bdPsi \qmat{Q})^\dagger \bdPsi \qmat{A}\rfloor_r$ has rank at most  $r$, which is no closer than $\lfloor \qmat{B}^{-1}(\bdPsi \qmat{Q})^\dagger \bdPsi \qmat{A}\rfloor_r$ to $\qmat{B}^{-1}(\bdPsi \qmat{Q})^\dagger \bdPsi \qmat{A}$. The last equality uses Lemma \ref{lem:HToQ} that $\qmat{H}\qmat{X}=  \qmat{H}(\bdPsi \qmat{H})^\dagger \bdPsi \qmat{A} = \qmat{Q}(\bdPsi \qmat{Q})^\dagger \bdPsi \qmat{A}=\qmat{A}_{in}$.
		\color{black}
		
	\end{proof}
	\subsection{Probabilistic error: Guassian embedding}\label{sec:Gaussian}


	
	We further quantify the probabilistic estimation of the QB error in Theorem \ref{thm:errorAnalysisOfTwoSketch} with   Guassian test matrices. The fixed-rank error can be   derived  accordingly. 
	Using the statistical properties of  quaternion Gaussian matrices established in \cite{RandomizedQSVD}, the estimation can be proved using a similar deduction as in \cite{Practical_Sketching_Algorithms_Tropp}. We first recall some results from \cite{RandomizedQSVD}. 

	\begin{definition}(c.f. \cite{RandomizedQSVD}) \label{def:q_gaussian_mat}
	Let $\qmat{A}\in\mathbb{Q}^{m\times n}$ and write $\qmat{A}=A_w+A_x\bi+A_y\bj+A_z\bk$. $\qmat{A}$ is Gaussian if all entries of  $A_k\in\mathbb{R}^{m\times n},k\in\{w,x,y,z \}$ 
	are  i.i.d   standard  Gaussian random variables.
\end{definition}
we call a quaternion matrix $\qmat{A}$   standard Gaussian   if it satisfies Definition \ref{def:q_gaussian_mat}.
 
	\begin{lemma}{(\cite{RandomizedQSVD})}\label{lem:normOfSGT}
		Let $\qmat{G}\in\mathbb{Q}^{m\times n}$ be   standard Guassian    and $\qmat{S}\in\mathbb{Q}^{l\times m}$, $\qmat{T}\in\mathbb{Q}^{n\times s}$ be fixed. Then 
			$\mathbb{E}\normFSquare{\qmat{SGT}} =4\normFSquare{\qmat{S}}\normFSquare{\qmat{T}}$.
		Furthermore if $m\leq n$, then
			$\mathbb{E}\normFSquare{\qmat{G}^\dagger}=\frac{m}{4\left(n-m\right)+2}$.
	\end{lemma}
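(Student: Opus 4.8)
The plan is to leverage the complex representation $\chi_{(\cdot)}$ to reduce each identity to a statement about complex Gaussian matrices, for which analogous results are classical. First I would record the key structural fact: if $\qmat{G}\in\bbQ^{m\times n}$ is standard quaternion Gaussian, then $\chi_{\qmat G}\in\bbC^{2m\times 2n}$ is \emph{not} a standard complex Gaussian matrix (its blocks are dependent through the conjugation constraint), so a direct appeal to complex results is not automatic. Instead I would work with the real representation or, more cleanly, compute directly in quaternion coordinates. Write $\qmat{SGT}=\sum_{k}(\qmat{SGT})_k\,e_k$ over $e_k\in\{1,\bi,\bj,\bk\}$ and expand $\normFSquare{\qmat{SGT}}=\sum_k\normFSquare{(\qmat{SGT})_k}$; since $\qmat{G}=\sum_k G_k e_k$ with the $G_k$ independent real Gaussians, each entry of $\qmat{SGT}$ is a fixed $\bbQ$-linear combination of the entries of the $G_k$'s, hence its four real components are (correlated) centered real Gaussians whose variances sum, after using the quaternion multiplication rules and the identity $\sum_k|a e_k b|^2 = 4|a|^2|b|^2$ for quaternion scalars $a,b$ (itself a short computation from $|pq|=|p||q|$), to the claimed $4\normFSquare{\qmat S}\normFSquare{\qmat T}$. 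The cleanest route is: by linearity of expectation reduce to $\qmat S=\qmat e_i^*$, $\qmat T=\qmat e_j$ (standard basis vectors), so that $\qmat{SGT}$ is a single entry $\qmat G_{ij}$ with $\mathbb E\normFSquare{\qmat G_{ij}}=4$, then restore general $\qmat S,\qmat T$ by writing $\mathbb E\normFSquare{\qmat{SGT}}=\sum_{i,j}\mathbb E|\,(\qmat{SGT})_{\cdot}\,|^2$ and using that distinct entries of $\qmat G$ are independent together with the scalar identity above applied row/column-wise.

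For the second identity, $\mathbb E\normFSquare{\qmat G^\dagger}=\frac{m}{4(n-m)+2}$ with $m\le n$, I would invoke Lemma \ref{lem:AdaggerComplex} and Proposition \ref{lem:qsvd_svd_relation}: the nonzero singular values of $\chi_{\qmat G}$ are exactly those of $\qmat G$ each with multiplicity two, and $\chi_{\qmat G^\dagger}=(\chi_{\qmat G})^\dagger$, so $\normFSquare{\qmat G^\dagger}=\sum_{i=1}^m \sigma_i(\qmat G)^{-2}=\tfrac12\sum \sigma_i(\chi_{\qmat G})^{-2}=\tfrac12\normFSquare{(\chi_{\qmat G})^\dagger}=\tfrac12\,\mathbb E\,\mathrm{tr}\big[(\chi_{\qmat G}\chi_{\qmat G}^*)^{-1}\big]$. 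The matrix $W:=\chi_{\qmat G}\chi_{\qmat G}^*\in\bbC^{2m\times2m}$ is a (scaled) complex Wishart-type matrix, but again with a nonstandard distribution inherited from the quaternion structure; the honest thing is to note that $W = \qmat G\qmat G^*$ viewed through $\chi$, i.e. $W=\chi_{\qmat G\qmat G^*}$, and that $\qmat G\qmat G^*$ is a \emph{quaternion Wishart matrix}. So I would instead compute $\mathbb E\,\mathrm{tr}[(\qmat G\qmat G^*)^{-1}]$ directly using the known first inverse moment of a quaternion Wishart distribution — equivalently, using that $\qmat G\qmat G^*$ has a quaternionic chi-square ($\beta=4$) spectral density — which yields $\mathbb E\,\mathrm{tr}[(\qmat G\qmat G^*)^{-1}]=\frac{m}{4(n-m)+2}$; doubling and halving then gives the claim. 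Alternatively, one can cite that this exact formula is Lemma 3.3 (or the corresponding statement) of \cite{RandomizedQSVD}, since the lemma is attributed to that reference.

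The main obstacle is bookkeeping the factor-of-$4$ (the real dimension count $\beta=4$ for quaternions) correctly and handling the non-Gaussianity of $\chi_{\qmat G}$: one cannot simply say "$\chi_{\qmat G}$ is complex Gaussian so apply the classical Wishart inverse-moment formula." The safe resolution is to treat the quaternion Gaussian directly — its $4mn$ real coordinates are i.i.d.\ standard normal — and use quaternion Wishart / Jacobi-ensemble moment identities (the $\beta=4$ analogue of the real/complex formulas $\mathbb E\,\mathrm{tr}[(G G^T)^{-1}]=\frac{m}{n-m-1}$, resp.\ $\frac{m}{n-m}$), rather than forcing everything through complexification. Since the statement is explicitly cited to \cite{RandomizedQSVD}, the cleanest writeup just records the reduction $\normFSquare{\qmat G^\dagger}=\tfrac12\normFSquare{\chi_{\qmat G}^\dagger}$ and $\normFSquare{\qmat{SGT}}=\tfrac12\normFSquare{\chi_{\qmat{SGT}}}$ and then quotes the quaternion Gaussian moment computations from there, filling in the scalar identity $\sum_{k}|a e_k b|^2=4|a|^2|b|^2$ as the one genuinely new elementary step.
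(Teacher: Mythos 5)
You should first note that the paper itself gives no proof of this lemma: it is imported wholesale from \cite{RandomizedQSVD}, so there is no internal argument to compare your plan against, and a citation-level writeup is exactly how the paper treats it. On its own terms your sketch is essentially sound. For the first identity, the operative argument is the expansion $\mathbb{E}\normFSquare{\qmat{S}\qmat{G}\qmat{T}}=\sum_{p,q}\sum_{i,j}\mathbb{E}|\qmat{S}_{pi}\qmat{G}_{ij}\qmat{T}_{jq}|^2$ (cross terms vanish by independence and zero mean of the entries of $\qmat{G}$) combined with $\mathbb{E}|a\,\qmat{G}_{ij}\,b|^2=|a|^2\,\mathbb{E}|\qmat{G}_{ij}|^2\,|b|^2=4|a|^2|b|^2$, which follows even more directly from multiplicativity of the quaternion modulus than from your identity $\sum_k|a e_k b|^2=4|a|^2|b|^2$; note, however, that your phrase ``by linearity of expectation reduce to $\qmat{e}_i^*,\qmat{e}_j$'' is not literally valid, since the quantity is quadratic in $\qmat{S}$ and $\qmat{T}$ --- it is the subsequent expansion, not a linearity reduction, that carries the proof. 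Your caution that $\chi_{\qmat{G}}$ (and likewise the real representation) is not an i.i.d.\ complex Gaussian matrix is correct and worth keeping.

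For the second identity, the reductions $\normFSquare{\qmat{G}^\dagger}=\tfrac12\normFSquare{(\chi_{\qmat{G}})^\dagger}$ and $\chi_{\qmat{G}}\chi_{\qmat{G}}^*=\chi_{\qmat{G}\qmat{G}^*}$ are correct but ultimately idle, since they return you to $\mathbb{E}\,\mathrm{tr}\bigl[(\qmat{G}\qmat{G}^*)^{-1}\bigr]$, which is where you started. The genuine content --- the first inverse moment of the quaternion ($\beta=4$) Wishart matrix, with the correct scaling for entries satisfying $\mathbb{E}|\qmat{G}_{ij}|^2=4$ --- is asserted rather than derived; that assertion is precisely what \cite{RandomizedQSVD} supplies, and your bookkeeping does check out: the $\beta$-ensemble pattern $\mathbb{E}\,\mathrm{tr}[W^{-1}]=m/(n-m+1-2/\beta)$ gives $2m/(2(n-m)+1)$ at $\beta=4$, and dividing by $4$ for the variance normalization yields $m/(4(n-m)+2)$. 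So as a proof deferring to the cited source your plan is fine; if a self-contained argument were demanded, the one missing piece is a derivation of that inverse-moment formula (e.g.\ via unitary invariance of the quaternion Gaussian ensemble and a Bartlett-type decomposition in quaternion arithmetic), not anything in the parts you worked out explicitly.
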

	In what follows, we denote $f(n,m):= \frac{4n}{4(m-n)+2}$.
	\begin{theorem}[Probabilistic QB error]\label{thm:prob_qb_error_gaussian}
		For    $\qmat{A} \in\bbQmn \color{blue}(m\geq n )\color{black}$, let $(\qmat{H},\qmat{X})$ be generated by Algorithm \ref{alg:S2S} with \eqref{eq:rangefinder_basic_requirement} and \eqref{eq:sketch_size_basic_requirement} hold. If $\bdPsi$ and $\bdOmega$ in  \eqref{eq:Y=AOmegaW=PsiA} are   standard Gaussian, then
		\begin{small}
		\begin{align}
		&	\mathbb{E}\normFSquare{\qmat{HX}-\qmat{A}}\leq (1+f(s,l))(1+f(r,s))\|\Sigma_2\|_F^2= \left(\frac{2l+1}{2\left(l-s\right)+1}\right) \left(\frac{2s+1}{2\left(s-r\right)+1}\right) \normFSquare{\Sigma_2} \label{eq:thm:prob_QB_error_Fnorm_error}.
		\end{align}
	\end{small}
	\end{theorem}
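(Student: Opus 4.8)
The plan is to feed the exact deterministic QB identity \eqref{eq:thm_QB_error_for_Guassian} of Theorem \ref{thm:errorAnalysisOfTwoSketch} into the two moment formulas of Lemma \ref{lem:normOfSGT}, via a tower-of-expectations argument in the spirit of \cite{Practical_Sketching_Algorithms_Tropp}. Since $\bdPsi$ and $\bdOmega$ are drawn from continuous distributions, $\bdPsi_2$ and $\bdOmega_1^*$ have full column rank almost surely, so Theorem \ref{thm:errorAnalysisOfTwoSketch} applies and
\[
	\normFSquare{\qmat{A}-\qmat{H}\qmat{X}} = \normFSquare{\qmat{A}-\qmat{Q}\qmat{Q}^*\qmat{A}} + \normFSquare{\bdPsi_2^\dagger\bdPsi_1(\qmat{Q}_\bot^*\qmat{A})}
\]
holds with probability one; I would then integrate first over $\bdPsi$ with $\bdOmega$ (hence $\qmat{Q}$, $\qmat{Q}_\bot$ and $M:=\qmat{Q}_\bot^*\qmat{A}$) frozen, and afterwards over $\bdOmega$.

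For the inner expectation: $[\qmat{Q},\qmat{Q}_\bot]$ is a unitary quaternion matrix, so by unitary invariance of the standard quaternion Gaussian distribution (a property recorded in \cite{RandomizedQSVD}) the matrix $\bdPsi[\qmat{Q},\qmat{Q}_\bot]=[\bdPsi_2,\bdPsi_1]$ is again standard Gaussian, and its two column blocks $\bdPsi_2$ and $\bdPsi_1$ are independent. Conditioning additionally on $\bdPsi_2$ and applying the first identity of Lemma \ref{lem:normOfSGT} with $\qmat{S}=\bdPsi_2^\dagger$ and $\qmat{T}=M$ gives $\mathbb{E}_{\bdPsi_1}\normFSquare{\bdPsi_2^\dagger\bdPsi_1 M}=4\normFSquare{\bdPsi_2^\dagger}\normFSquare{M}$. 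Integrating out $\bdPsi_2$ requires $\mathbb{E}\normFSquare{\bdPsi_2^\dagger}$; since $\bdPsi_2\in\bbQ^{l\times s}$ is tall, I would pass to $\bdPsi_2^*\in\bbQ^{s\times l}$ --- again standard Gaussian, with $\normF{\bdPsi_2^\dagger}=\normF{(\bdPsi_2^*)^\dagger}$ --- and use the second identity of Lemma \ref{lem:normOfSGT} (with $m=s\le l=n$) to obtain $\mathbb{E}\normFSquare{\bdPsi_2^\dagger}=\tfrac{s}{4(l-s)+2}$. Since $\normF{M}=\normF{\qmat{A}-\qmat{Q}\qmat{Q}^*\qmat{A}}$, this yields $\mathbb{E}_{\bdPsi}\bigl[\normFSquare{\qmat{A}-\qmat{H}\qmat{X}}\,\big|\,\bdOmega\bigr]=(1+f(s,l))\normFSquare{\qmat{A}-\qmat{Q}\qmat{Q}^*\qmat{A}}$.

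For the outer expectation I would bound $\normFSquare{\qmat{A}-\qmat{Q}\qmat{Q}^*\qmat{A}}$ by \eqref{eq:lem:A-QQ^*A:1} of Lemma \ref{lem:A-QQ^*A} (valid a.s. because $\bdOmega_1$ has full row rank), $\normFSquare{\qmat{A}-\qmat{Q}\qmat{Q}^*\qmat{A}}\le\normA{\Sigma_2}^2+\normFSquare{\Sigma_2\bdOmega_2\bdOmega_1^\dagger}$, and repeat the same manoeuvre: $[\qmat{V}_1,\qmat{V}_2]$ is unitary, so $\bdOmega_1$ and $\bdOmega_2$ are independent standard Gaussian; conditioning on $\bdOmega_1$ and applying Lemma \ref{lem:normOfSGT} twice (first with $\qmat{S}=\Sigma_2$, $\qmat{T}=\bdOmega_1^\dagger$, then with $\qmat{G}=\bdOmega_1\in\bbQ^{r\times s}$, $r\le s$) gives $\mathbb{E}_{\bdOmega}\normFSquare{\Sigma_2\bdOmega_2\bdOmega_1^\dagger}=4\normFSquare{\Sigma_2}\cdot\tfrac{r}{4(s-r)+2}=f(r,s)\normFSquare{\Sigma_2}$, whence $\mathbb{E}_{\bdOmega}\normFSquare{\qmat{A}-\qmat{Q}\qmat{Q}^*\qmat{A}}\le(1+f(r,s))\normFSquare{\Sigma_2}$ after absorbing $\normA{\Sigma_2}^2\le\normFSquare{\Sigma_2}$. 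Multiplying the two stages and simplifying $1+f(s,l)=\tfrac{2l+1}{2(l-s)+1}$, $1+f(r,s)=\tfrac{2s+1}{2(s-r)+1}$ gives the asserted bound.

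I do not expect a genuine obstacle here: the computation is essentially a transcription of the real/complex argument of \cite{Practical_Sketching_Algorithms_Tropp} once Lemma \ref{lem:normOfSGT} is available. The only points demanding care are the bookkeeping of independence and conditioning --- invoking unitary invariance of the quaternion Gaussian to split $\bdPsi$ and $\bdOmega$ into independent blocks $(\bdPsi_2,\bdPsi_1)$ and $(\bdOmega_1,\bdOmega_2)$ --- and the need to route $\bdPsi_2^\dagger$ through the conjugate transpose so that the ``fat-matrix'' pseudoinverse moment in Lemma \ref{lem:normOfSGT} is applicable. One should also note that all the full-rank hypotheses invoked hold with probability one for continuous test matrices, so the deterministic identities may be integrated without further qualification.
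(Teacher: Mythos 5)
Your proposal is correct and follows essentially the same route as the paper's proof: the deterministic identity \eqref{eq:thm_QB_error_for_Guassian}, independence of $\bdPsi_2,\bdPsi_1$ (resp. $\bdOmega_1,\bdOmega_2$) from unitary/marginal invariance, Lemma \ref{lem:normOfSGT} applied twice with the pseudoinverse moment routed through $\bdPsi_2^*$, and then Lemma \ref{lem:A-QQ^*A} for the outer expectation. The conditioning, full-rank-almost-surely remarks, and the arithmetic $1+f(s,l)=\tfrac{2l+1}{2(l-s)+1}$, $1+f(r,s)=\tfrac{2s+1}{2(s-r)+1}$ all match the paper's argument.
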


	\begin{proof}
		Write $\qmat{H}=\qmat{Q}\qmat{B}$   as in \eqref{eq:QB_representation_H} with $\qmat{Q}$ orthonormal and $\qmat{B}$ invertible.  From  \eqref{eq:thm_QB_error_for_Guassian}  of Theorem \ref{thm:errorAnalysisOfTwoSketch}, it suffices to respectively estimate $\normFSquare{\bdPsi_2^\dagger \bdPsi_1\left(\qmat{Q}_\bot^*\qmat{A}\right)}$ and $\normFSquare{\qmat{A}-\qmat{Q}\qmat{Q}^*\qmat{A}}$.   
	 Owing to the marginal property of the standard normal distribution, $\bdPsi_2=\bdPsi \qmat{Q}\in\bbQ^{l\times s}$ and $\bdPsi_1=\bdPsi \qmat{Q}_\bot$ are statistically independent standard Guassian matrices \cite{RandomizedQSVD}. \color{black}Note that $\bdPsi_2^* \in\bbQ^{s\times l}(s\leq l)$ is still standard Gaussian and it holds that $\bdPsi_2^\dagger = ((\bdPsi_2^\dagger)^*)^* = ((\bdPsi_2^*)^\dagger)^*$; thus by Lemma \ref{lem:normOfSGT}, $\mathbb E\|\bdPsi_2^\dagger\|_F^2=\mathbb E\|(\bdPsi_2^*)^\dagger\|_F^2 = s/(4(l-s)+2) =f(s,l)/4 $.  \color{black} 
Therefore, 
		\begin{align}
			\mathbb{E}_{\bdPsi}\|\bdPsi_2^\dagger \bdPsi_1\left(\qmat{Q}_\bot^*\qmat{A}\right)\|_F^2&=\mathbb{E}_{\bdPsi_2}\mathbb{E}_{\bdPsi_1}\|\bdPsi_2^\dagger \bdPsi_1\left(\qmat{Q}_\bot^*\qmat{A}\right)\|_F^2 \nonumber\\
		\text{\tiny (Lemma  \ref{lem:normOfSGT})}	&=4\mathbb{E}\normFSquare{\bdPsi_2^\dagger}\normFSquare{\qmat{Q}_\bot^*\qmat{A}}\nonumber\\
			&= f(s,l)\normFSquare{\qmat{Q}_\bot^*\qmat{A}} 
			 = f(s,l)\normFSquare{(I-\qmat{Q}\qmat{Q}^*)\qmat{A}}. \label{eq:estimate_Psi2Psi1}
		\end{align}	
It follows from  \eqref{eq:thm_QB_error_for_Guassian} and  the independence of $\bdOmega$ and $\bdPsi$ that
		\begin{align}\label{eq:estimate_HX_A_fnorm_first_step}
			\mathbb{E}\normFSquare{\qmat{H}\qmat{X}-\qmat{A} } &= \bigxiaokuohao{1+ f(s,l)}\mathbb{E}_{\bdOmega}  \normFSquare{ (I-\qmat{Q}\qmat{Q}^*)\qmat{A}  }. 
		\end{align}
		\eqref{eq:lem:A-QQ^*A:2} of	Lemma \ref{lem:A-QQ^*A} shows that 
		\begin{align}			
		\mathbb{E}_{\bdOmega}\normFSquare{ (I-\qmat{Q}\qmat{Q}^*)\qmat{A}  }	&\leq  \normFSquare{\Sigma_2}+\mathbb{E}_{\bdOmega}\normFSquare{\Sigma_2\bdOmega_2\bdOmega_1^\dagger}\nonumber\\
		&\leq \left(1+f(r,s)\right)\normFSquare{\Sigma_2},\label{eq:normF_A-QQ*A}
		\end{align}
		where the deduction of the second inequality is similar to that of \eqref{eq:estimate_Psi2Psi1}. Plugging this into \eqref{eq:estimate_HX_A_fnorm_first_step} yields \eqref{eq:thm:prob_QB_error_Fnorm_error}.
	\end{proof}

\begin{theorem}[Probabilistic fixed-rank error] 
	\label{thm:prob_fix_rank_error_gaussian} Under the setting of Theorem \ref{thm:prob_qb_error_gaussian}, let $\hat{\qmat{A}}=\qmat{H}\lfloor \qmat{X}\rfloor_r$ be generated by Algorithm \ref{alg:FixedRankTwoSketches}. Then
	\begin{small}
	\[
		\mathbb{E}\normF{\hat{\qmat{A}}-\qmat{A}}\leq \bigxiaokuohao{ (1+\kappaq{H})\sqrt{(1+f(s,l)\cdot(1+f(r,s)))  }+\kappaq{H} }\|\Sigma_2\|_F.  
	\]
	\end{small}
\end{theorem}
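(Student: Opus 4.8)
The plan is to reduce the statement to two facts already in hand: the deterministic truncation bound of Theorem~\ref{thm:truncationError} and the probabilistic QB bound of Theorem~\ref{thm:prob_qb_error_gaussian}, glued together by one triangle inequality and one application of Jensen's inequality to pass from a second-moment to a first-moment estimate. No new random-matrix input is needed.

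Concretely, I would first write $\hat{\qmat{A}}=\qmat{H}\lfloor\qmat{X}\rfloor_r$ and apply the triangle inequality,
\[
\normF{\hat{\qmat{A}}-\qmat{A}}\le\normF{\qmat{H}\lfloor\qmat{X}\rfloor_r-\qmat{H}\qmat{X}}+\normF{\qmat{H}\qmat{X}-\qmat{A}}.
\]
The hypotheses of Theorem~\ref{thm:truncationError} hold here: condition \eqref{eq:rangefinder_basic_requirement} is part of the standing assumptions inherited from Theorem~\ref{thm:prob_qb_error_gaussian}, and a Gaussian $\bdPsi$ has full row rank almost surely (the same genericity fact already used inside the proof of Theorem~\ref{thm:truncationError}). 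Hence $\normF{\qmat{H}\lfloor\qmat{X}\rfloor_r-\qmat{H}\qmat{X}}\le\kappaq{H}\bigxiaokuohao{\normF{\qmat{A}-\qmat{H}\qmat{X}}+\normF{\Sigma_2}}$, and combining the two displays,
\[
\normF{\hat{\qmat{A}}-\qmat{A}}\le(1+\kappaq{H})\normF{\qmat{A}-\qmat{H}\qmat{X}}+\kappaq{H}\normF{\Sigma_2}.
\]

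Taking expectations, the only quantity left to control is $\mathbb{E}\normF{\qmat{A}-\qmat{H}\qmat{X}}$. Theorem~\ref{thm:prob_qb_error_gaussian} bounds its second moment, and since $x\mapsto\sqrt{x}$ is concave, Jensen's inequality yields $\mathbb{E}\normF{\qmat{A}-\qmat{H}\qmat{X}}\le\bigxiaokuohao{\mathbb{E}\normFSquare{\qmat{A}-\qmat{H}\qmat{X}}}^{1/2}\le\sqrt{(1+f(s,l))(1+f(r,s))}\,\normF{\Sigma_2}$. Substituting back reassembles the factors $(1+\kappaq{H})$ and $\kappaq{H}$ into exactly the claimed estimate.

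The only point that needs care — and the sole place a careful reader might push back — is that $\kappaq{H}$ is itself a random variable, since $\qmat{H}$ is built from the random sketch $\qmat{Y}=\qmat{A}\bdOmega$. This is handled by conditioning on $\bdOmega$ first: given $\bdOmega$, the matrices $\qmat{Q},\qmat{B}$ and hence $\kappaq{H}$ are deterministic, the inner expectation over $\bdPsi$ in Theorem~\ref{thm:prob_qb_error_gaussian} still applies verbatim (cf. \eqref{eq:estimate_HX_A_fnorm_first_step}), and one then takes the outer expectation over $\bdOmega$; alternatively one invokes a uniform a priori bound on $\kappaq{H}$ of the type guaranteed by the rangefinders of Section~\ref{sec:quaternionFastRangeFinder} (e.g.\ Proposition~\ref{prop:pseudo-qr_summarize}), which is the reading implicit in the statement. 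Beyond this bookkeeping there is no real obstacle: the heavy lifting is done entirely by Theorems~\ref{thm:truncationError} and~\ref{thm:prob_qb_error_gaussian}.
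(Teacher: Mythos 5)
Your proposal is correct and follows essentially the same route as the paper: triangle inequality, the truncation bound of Theorem \ref{thm:truncationError}, the probabilistic QB bound of Theorem \ref{thm:prob_qb_error_gaussian}, and Jensen's inequality to pass from the second moment to the first. Your extra remark on the randomness of $\kappaq{H}$ (handled by a uniform a priori bound as in Proposition \ref{prop:pseudo-qr_summarize}) is a careful point the paper leaves implicit, but it does not change the argument.
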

\begin{proof}
This is obtained by	  $\|\hat{\qmat{A}}-\qmat{A}\|_F \leq \|\qmat{A}-\qmat{H}\qmat{X}\|_F + \|\qmat{H}\qmat{X} - \qmat{H}\lfloor\qmat{X}\rfloor_r\|_F$ together with Theorems \ref{thm:truncationError}, \ref{thm:prob_qb_error_gaussian}, and Jensen inequality.
\end{proof}

The above two probabilistic errors generalize \cite[Theorem 4.3, Corollary 6.4]{Practical_Sketching_Algorithms_Tropp} to the quaternion and non-orthonormal rangefinders settings. 

	\subsection{Probabilistic error: sub-Guassian embedding}\label{sec:sub-gaussian}

		A random variable $\xi$ is called   sub-Gaussian   if it satisfies \cite{vershynin_2012_Introduction_non-asymptotic} 
	 $
			\mathbb{P}\left\{\left|\xi \right|>t\right\}\leq \exp\left(1-ct^2\right)
	 $	for all $t>0$ and some constant $c>0$.
	The sub-Gaussian norm of $X$    is defined as:
	 $
			\|\xi\|_{\psi_2}=\sup_{p>=1} p^{-1/2}\left(\mathbb{E}\left|\xi\right|^p\right)^{1/p} 
	 $. 
	 Common sub-Gaussian variables include Gaussian, sparse Gaussian, Radmacher,  and   bounded random  variables. Using sub-Gaussian  such as sparse Gaussian or Radmacher gives more flexibility or improves the speed of generating random test matrices and sketches, We define sub-Gaussian quaternion matrices as follows:

	\begin{definition}[Entry-independent sub-Gaussian quaternion matrix] Let $\qmat{A}\in\mathbb{Q}^{m\times n}$ and write $\qmat{A}=A_w+A_x\bi+A_y\bj+A_z\bk$. 
		If all entries of  the real matrix $ \cptQr{A}:= \begin{bmatrix}
			A_w^T, A_x^T, A_y^T,  A_z^T
		\end{bmatrix}^T\in \mathbb R^{4m\times  n}$
		are  independent   sub-Gaussian random variables, we call   $\qmat{A}$ a quaternion sub-Gaussian matrix. 
		If  every entry of $\cptQr{A}$   is centered and has unit variance, we say that $\qmat{A}$ is a centered and unit variance quaternion sub-Gaussian matrix. 
		 \label{def:sub_gaussian_q_mat}
	\end{definition}

We   establish the probabilistic QB error    with sub-Gaussian test matrices, while the deduction for fixed-rank error is ommited as it is similar to Theorem \ref{thm:prob_fix_rank_error_gaussian}. 
\begin{theorem}[Probabilistic QB error]\label{thm:ErrorAnalysisOfSubGaussian}
	 Assume that the sketch   parameters satisfy    $r< s< l< min\{m,n\}$. \color{black} Draw   sub-Gaussian quaternion test matrices $\bdOmega\in\mathbb{Q}^{n\times s}$ and $\bdPsi\in\mathbb{Q}^{l\times m}$ according to Definition \ref{def:sub_gaussian_q_mat} and assume that every entry of $\bdOmega_r$ and $\bdPsi_r$ has the same sub-Gaussian norm $K$. Let $\qmat{H},\qmat{X}$ be   generated by Algorithm \ref{alg:S2S}; then
	 \begin{small}
	\begin{align}
		\normF{\qmat{HX}-\qmat{A}}\leq&\left(1+\frac{K\sqrt{l}\sqrt{2\bigxiaokuohao{\log(\min\{m-s,n\})+\log(4l)+1}/c_1}}{\sqrt{l}-C_K\sqrt{s}-t }\right)\nonumber\\
		&\cdot\left(1+\frac{K\sqrt{s}\sqrt{2\bigxiaokuohao{\log(min\{m,n\}-r)+\log(4s)+1} /c_1}}{\sqrt{s}-C_K\sqrt{r}-t }\right) \normF{\Sigma_2} \label{eq:qb_error_sub_gaussian}
	\end{align}
\end{small}
\noindent 	with probability at least $1-2\exp\left(-\frac{2c_2 t^2}{K^4}\right)-\frac{1}{\min\{m-s,n\}}-\frac{1}{\min\{m,n\}-r}$, where $c_1>0,c_2>0$ are absolulte constants and $C_K>0$    only depends on  $K$. 
\end{theorem}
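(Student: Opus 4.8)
The plan is to turn the exact QB decomposition \eqref{eq:thm_QB_error_for_Guassian} of Theorem \ref{thm:errorAnalysisOfTwoSketch},
\[
\normFSquare{\qmat{A}-\qmat{H}\qmat{X}}=\normFSquare{\qmat{A}-\qmat{Q}\qmat{Q}^*\qmat{A}}+\normFSquare{\bdPsi_2^\dagger\bdPsi_1(\qmat{Q}_{\bot}^*\qmat{A})},
\]
into a probabilistic statement; since both summands depend only on $\qmat{Q},\qmat{Q}_\bot$ (hence only on $\rangemat{\qmat{Y}}$), nothing $\kappaq{H}$-dependent enters, which is why a non-orthonormal $\qmat{H}$ costs nothing in the QB stage. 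I would first split off scalars: $\normF{\bdPsi_2^\dagger\bdPsi_1(\qmat{Q}_\bot^*\qmat{A})}\leq\normSpectral{\bdPsi_2^\dagger}\normF{\bdPsi_1(\qmat{Q}_\bot^*\qmat{A})}$ with $\normSpectral{\bdPsi_2^\dagger}=1/\sigma_{\min}(\bdPsi_2)$, and, using \eqref{eq:lem:A-QQ^*A:1} of Lemma \ref{lem:A-QQ^*A} together with $\normF{\Sigma_2\bdOmega_2\bdOmega_1^\dagger}\leq\normF{\Sigma_2\bdOmega_2}\normSpectral{\bdOmega_1^\dagger}$ and $\normSpectral{\bdOmega_1^\dagger}=1/\sigma_{\min}(\bdOmega_1)$, bound $\normF{\qmat{A}-\qmat{Q}\qmat{Q}^*\qmat{A}}\leq\normF{\Sigma_2}+\normF{\Sigma_2\bdOmega_2}/\sigma_{\min}(\bdOmega_1)$. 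The whole estimate thus reduces to four quantities, $\sigma_{\min}(\bdPsi_2)$, $\sigma_{\min}(\bdOmega_1)$, $\normF{\bdPsi_1(\qmat{Q}_\bot^*\qmat{A})}$, $\normF{\Sigma_2\bdOmega_2}$; collecting them produces exactly the product of the two bracketed factors times $\normF{\Sigma_2}$ in \eqref{eq:qb_error_sub_gaussian}.

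Next I would estimate the four quantities after passing to complex representations, using $\chiQ{\bdPsi\qmat{Q}}=\chiQ{\bdPsi}\chiQ{\qmat{Q}}$, $\chiQ{\qmat{V}_1^*\bdOmega}=\chiQ{\qmat{V}_1}^*\chiQ{\bdOmega}$ (Proposition \ref{prop:Chi_properties}) and the (partial) unitarity of $\chiQ{\qmat{Q}},\chiQ{\qmat{Q}_\bot},\chiQ{\qmat{V}}$. The key point is that right-multiplying $\bdPsi$ by $\qmat{Q}$ or $\qmat{Q}_\bot$, and left-multiplying $\bdOmega$ by $\qmat{V}_1^*$ or $\qmat{V}_2^*$, preserves (i) the independence of the rows of $\bdPsi$ and of the columns of $\bdOmega$, (ii) their isotropy in the quaternion inner product $\innerprod{\cdot}{\cdot}$, and (iii) their sub-Gaussian norm up to a factor $C_K$ depending only on $K$. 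Hence $\bdPsi_2\in\bbQ^{l\times s}$ and $\bdOmega_1^*\in\bbQ^{s\times r}$ have independent isotropic quaternion sub-Gaussian rows, and the quaternion analogue of Vershynin's non-asymptotic smallest-singular-value bound \cite{vershynin_2012_Introduction_non-asymptotic} gives $\sigma_{\min}(\bdPsi_2)\geq\sqrt{l}-C_K\sqrt{s}-t$ and $\sigma_{\min}(\bdOmega_1)\geq\sqrt{s}-C_K\sqrt{r}-t$, each on an event of probability at least $1-\exp(-2c_2t^2/K^4)$. For the two Frobenius norms the adjacent factor is deterministic: absorbing its right (resp.\ left) singular vectors into a Frobenius-norm-preserving quaternion rotation, $\normFSquare{\bdPsi_1(\qmat{Q}_\bot^*\qmat{A})}$ (resp.\ $\normFSquare{\Sigma_2\bdOmega_2}$) becomes a singular-value-weighted sum of at most $\min\{m-s,n\}$ (resp.\ $\min\{m,n\}-r$) squared norms of vectors in $\bbQ^{l}$ (resp.\ $\bbQ^{s}$) with independent isotropic sub-Gaussian entries, whose weights sum to $\normFSquare{(I-\qmat{Q}\qmat{Q}^*)\qmat{A}}$ (resp.\ $\normFSquare{\Sigma_2}$); a sub-exponential tail for one such squared norm, union-bounded over its $4l$ (resp.\ $4s$) real coordinates and over the $\min\{m-s,n\}$ (resp.\ $\min\{m,n\}-r$) terms, yields $\normF{\bdPsi_1(\qmat{Q}_\bot^*\qmat{A})}\leq K\sqrt{l}\sqrt{2(\log(\min\{m-s,n\})+\log(4l)+1)/c_1}\,\normF{(I-\qmat{Q}\qmat{Q}^*)\qmat{A}}$ and the analogue for $\normF{\Sigma_2\bdOmega_2}$, each failing with probability at most $1/\min\{m-s,n\}$, resp.\ $1/(\min\{m,n\}-r)$, where the ``$\exp(1-c_1u)$''-type tail supplies the ``$+1$''.

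Finally I would assemble: feeding the $\bdOmega$-side estimates into the bound of the first paragraph gives $\normF{\qmat{A}-\qmat{Q}\qmat{Q}^*\qmat{A}}\leq\normF{\Sigma_2}$ times the second bracketed factor of \eqref{eq:qb_error_sub_gaussian}; inserting this together with the $\bdPsi$-side estimates into \eqref{eq:thm_QB_error_for_Guassian} and using $\sqrt{a^2+b^2}\leq a+b$ gives \eqref{eq:qb_error_sub_gaussian}, and a union bound over the events yields the stated probability $1-2\exp(-2c_2t^2/K^4)-1/\min\{m-s,n\}-1/(\min\{m,n\}-r)$ (the two singular-value events contributing the $2\exp$ term). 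I expect the real work to be the quaternion version of Vershynin's singular-value bound together with the invariance claim (i)--(iii): the entrywise independence of the real representation in Definition \ref{def:sub_gaussian_q_mat} is \emph{destroyed} by multiplication with $\chiQ{\qmat{Q}}$, so the argument has to be recast in terms of independent isotropic quaternion sub-Gaussian rows/columns -- a property that \emph{is} preserved -- and the $\eps$-net / sub-Gaussian-process argument behind the singular-value bound must be carried out over the unit sphere of $\bbQ^{n}$ while tracking the noncommutativity of $\bbQ$ and the $4$-fold real structure (the origin of the $\sqrt{4l},\sqrt{4s}$ normalizations and of the constants $c_1,c_2,C_K$). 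Once that lemma and multiplication-compatible notions of quaternion isotropy and sub-Gaussian norm are in place, the rest is routine concentration plus a union bound.
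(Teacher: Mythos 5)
Your proposal follows essentially the same route as the paper's proof: the same split of \eqref{eq:thm_QB_error_for_Guassian} into the four quantities $\sigma_{\min}(\bdPsi_2)$, $\sigma_{\min}(\bdOmega_1)$, $\normF{\bdPsi_1(\qmat{Q}_\bot^*\qmat{A})}$, $\normF{\Sigma_2\qmat{V}_2^*\bdOmega}$, the same two key ingredients (the quaternion deviation bound of Theorem \ref{thm:ConclusionOfSingularValueEstimation} plus the fact, proved in the appendix as Lemma \ref{lem:psi_omega_has_isotropic_rows}, that multiplication by a partially unitary quaternion matrix preserves independent isotropic sub-Gaussian rows with norm $\leq CK$), and the same union-bound assembly. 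The only cosmetic difference is that the paper's Proposition \ref{prop:AVOmega} controls the two Frobenius norms by an entrywise Hoeffding bound on $\hat V\bdOmega_r$ rather than your sub-exponential tail on squared row norms, which yields the same logarithmic factors and failure probabilities.
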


Comparing with the Gaussian bound \eqref{eq:thm:prob_QB_error_Fnorm_error},  an extra factor of order $O\bigxiaokuohao{ \log(\min\{m,n \} )   }$ appears in   \eqref{eq:qb_error_sub_gaussian}. The main reason is that for quaternion  sub-Gaussian, we do not have a sharp estimation of $\|\qmat{S}\qmat{G}\qmat{T}\|_F$ as that in Lemma \ref{lem:normOfSGT}. 
The main tool for proving Theorem \ref{thm:ErrorAnalysisOfSubGaussian}  is the following result, which generalizes the   deviation bound of \cite{vershynin_2012_Introduction_non-asymptotic} for extream singular values of a   sub-Gaussian real matrix to the quaternion realm.  

\begin{theorem}[Deviation bound]\label{thm:ConclusionOfSingularValueEstimation}
	Let $\qmat{A}\in\mathbb{Q}^{N\times n}(N>4n)$. If each row $\qmat{A}_i$  of $\qmat{A}$ are   independent quaternion sub-Gaussian isotropic   vectors, then for every $t> 0$, with probability at least $1-\exp\left(-{c t^2}/{K^4}\right)$ one has
	\begin{align}\label{eq:ConclusionOfSingularValueEstimation}
		2\sqrt{N}-2C_K\sqrt{n}-2t\leq \sigma_{\min}\left(\qmat{A}\right)\leq \sigma_{\max}\left(\qmat{A}\right)\leq 2\sqrt{N}+2C_K\sqrt{n}+2t.
	\end{align}
	Here  $c>0$ is an absolulte constant and $C_K>0$ \color{black}   only depends on the max sub-Gaussian norm $K=\max_i{\|\qmat{A}_i\|_{\psi_2}}$ of the rows.
\end{theorem}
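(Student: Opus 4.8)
The plan is to follow Vershynin's proof of the real deviation bound \cite{vershynin_2012_Introduction_non-asymptotic}, replacing real scalars by quaternion scalars and the ambient dimension $n$ by the \emph{real} dimension $4n$ of $\bbQ^{n}$; this last substitution is exactly what produces the hypothesis $N>4n$ and the factor $2$ in the conclusion. First I would record the elementary reduction. Since $\qmat{A}^{*}\qmat{A}$ is Hermitian, its eigenvalues are real and equal the squared singular values of $\qmat{A}$, so with $\delta:=C_{K}\sqrt{n/N}+t/\sqrt{N}$ it suffices to show that, on an event of the stated probability,
\[
\normSpectral{\tfrac1N\qmat{A}^{*}\qmat{A}-4I_{n}}\le 4\max\{\delta,\delta^{2}\}.
\]
Indeed, the scalar fact ``$u\ge0$ and $|u^{2}-4|\le 4\max\{\delta,\delta^{2}\}$ imply $2-2\delta\le u\le 2+2\delta$'', applied with $u=\sigma_{j}(\qmat{A})/\sqrt{N}$ for each $j$, turns the displayed bound into $2\sqrt{N}-2C_{K}\sqrt{n}-2t\le\sigma_{\min}(\qmat{A})\le\sigma_{\max}(\qmat{A})\le 2\sqrt{N}+2C_{K}\sqrt{n}+2t$, which is \eqref{eq:ConclusionOfSingularValueEstimation}.

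Next comes the net-plus-concentration step. Fix a $\tfrac14$-net $\mathcal{N}$ of the unit sphere of $\bbQ^{n}$; identifying $\bbQ^{n}\cong\mathbb{R}^{4n}$ gives $|\mathcal{N}|\le 9^{4n}$, and the quaternion spectral theorem for Hermitian matrices (or, equivalently, passage to the Hermitian complex representation $\chiQ{M}$ via Proposition \ref{prop:Chi_properties}) yields $\normSpectral{\qmat{M}}\le 2\max_{\qmat{x}\in\mathcal{N}}|\qmat{x}^{*}\qmat{M}\qmat{x}|$ for Hermitian $\qmat{M}\in\bbQ^{n\times n}$. Taking $\qmat{M}=\tfrac1N\qmat{A}^{*}\qmat{A}-4I_{n}$ reduces the task to controlling $\big|\tfrac1N\|\qmat{A}\qmat{x}\|_{2}^{2}-4\big|$ for each fixed $\qmat{x}\in\mathcal{N}$. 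Now $\|\qmat{A}\qmat{x}\|_{2}^{2}=\sum_{i=1}^{N}|\qmat{A}_{i}\qmat{x}|^{2}$ with independent summands. Each of the four real coordinates of the quaternion scalar $\qmat{A}_{i}\qmat{x}=\sum_{k}\qmat{A}_{ik}\qmat{x}_{k}$ is a real linear combination, with coefficient vector of Euclidean norm $\le\|\qmat{x}\|_{2}=1$, of the independent real sub-Gaussian coordinates of the row $\qmat{A}_{i}$; hence $\qmat{A}_{i}\qmat{x}$ is a sub-Gaussian quaternion with $\|\qmat{A}_{i}\qmat{x}\|_{\psi_{2}}\le cK$, so $|\qmat{A}_{i}\qmat{x}|^{2}$ is sub-exponential with norm $\le c'K^{2}$, and its mean equals $\tfrac1N\mathbb{E}\|\qmat{A}\qmat{x}\|_{2}^{2}=4$ by isotropy (under the normalization of Definition \ref{def:q_gaussian_mat}). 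Bernstein's inequality for sums of independent centered sub-exponential variables then gives
\[
\mathbb{P}\Big\{\big|\tfrac1N\|\qmat{A}\qmat{x}\|_{2}^{2}-4\big|\ge\epsilon\Big\}\le 2\exp\!\big(-c_{1}N\min\{\epsilon^{2}/K^{4},\,\epsilon/K^{2}\}\big).
\]

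Finally I would union-bound over $\mathcal{N}$: the event $\normSpectral{\tfrac1N\qmat{A}^{*}\qmat{A}-4I_{n}}>2\epsilon$ has probability at most $9^{4n}\cdot 2\exp(-c_{1}N\min\{\epsilon^{2}/K^{4},\epsilon/K^{2}\})$. Choosing $\epsilon=2\max\{\delta,\delta^{2}\}$ and $C_{K}$ large enough (depending only on $K$) makes $c_{1}N\min\{\epsilon^{2}/K^{4},\epsilon/K^{2}\}\ge 4n\ln 9+\ln 2+ct^{2}/K^{4}$ — here the $\sqrt{n/N}$ part of $\delta$ is what beats the net, the exponent $4n$ being the real dimension of $\bbQ^{n}$, which is why the hypothesis is $N>4n$ rather than $N>n$ — so the union bound leaves probability at most $\exp(-ct^{2}/K^{4})$. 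Combined with the first paragraph, this gives \eqref{eq:ConclusionOfSingularValueEstimation}.

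The step I expect to be the main obstacle is the quaternion sub-Gaussian scalar calculus underlying the per-$\qmat{x}$ estimate: because quaternion multiplication couples the four real coordinates of $\qmat{A}_{ik}$ with those of $\qmat{x}_{k}$, one must expand $\qmat{A}_{i}\qmat{x}$ into its real coordinates, check that the four coefficient vectors have norm $\le1$, and combine this with the precise definition of an isotropic quaternion sub-Gaussian vector to pin the mean of $|\qmat{A}_{i}\qmat{x}|^{2}$ to exactly $4$ — this is where the factor-$4$ (hence factor-$2$) normalization really lives. A secondary, more routine point is justifying the net-approximation inequality $\normSpectral{\qmat{M}}\le 2\max_{\qmat{x}\in\mathcal{N}}|\qmat{x}^{*}\qmat{M}\qmat{x}|$ for Hermitian quaternion $\qmat{M}$, which can be carried out either directly from the quaternion spectral theorem or by transferring to $\chiQ{M}$; equivalently, the whole argument may be run on the $4N\times 4n$ real representation of $\qmat{A}$, whose Gram matrix is a sum of $N$ independent real PSD blocks of rank $\le 4$.
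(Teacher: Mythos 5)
Your proposal is correct and follows essentially the same route as the paper: Vershynin's net-plus-Bernstein argument over the unit sphere of $\mathbb{R}^{4n}$, with the approximate-isometry conversion at the end and a union bound over a net of cardinality exponential in $4n$. The only organizational difference is that the paper runs the computation on the real representation $\Upsilon_{\qmat{A}}\in\mathbb{R}^{4N\times 4n}$, splitting it into four $N\times 4n$ row blocks and combining them via Lemma \ref{lem:rvsSumIneq} before applying Lemma \ref{lem:Approximate isometries}, whereas you work directly with the quaternion Gram matrix $\qmat{A}^*\qmat{A}$ and treat each $|\qmat{A}_i\qmat{x}|^2$ as a single sub-exponential summand of mean $4$; as you note yourself, the two bookkeepings are equivalent.

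One justification should be repaired: you bound $\|\qmat{A}_i\qmat{x}\|_{\psi_2}\le cK$ by viewing each real coordinate of $\qmat{A}_i\qmat{x}$ as a combination of the \emph{independent} sub-Gaussian coordinates of the row, but the theorem does not assume independence of entries within a row --- only that each row is a sub-Gaussian isotropic vector in the sense of Definition \ref{def:q_subG_vector_isotropc}, and this weaker hypothesis is essential because the theorem is later applied to $\bdPsi_2=\bdPsi\qmat{Q}$ and $\bdOmega_1^*$, whose rows have dependent entries (Lemma \ref{lem:psi_omega_has_isotropic_rows}). The fix is immediate and is what the paper does: each real coordinate of $\qmat{A}_i\qmat{x}$ is a one-dimensional marginal $\langle(\qmat{A}_i)_r,y\rangle$ with $\|y\|_2=\|\qmat{x}\|_2=1$, hence has $\psi_2$-norm at most $K$ directly by the definition of the row's vector sub-Gaussian norm; likewise the mean-$4$ normalization comes from the isotropy condition $\mathbb E\,\qmat{v}_r\qmat{v}_r^T=I_{4n}$ in Definition \ref{def:q_subG_vector_isotropc}, not from Definition \ref{def:q_gaussian_mat}.
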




Theorem \ref{thm:ConclusionOfSingularValueEstimation} relies on the    following   definitions; see  \cite[Def. 5.19]{vershynin_2012_Introduction_non-asymptotic} for their real counterparts.
 	\begin{definition}[Quaternion sub-Gaussian and isotropic vectors] \label{def:q_subG_vector_isotropc}
		A quaternion random vector $\qmat{v}$ in $\bbQ^n$ is   called  a sub-Gaussian vector if the one-demensional marginals $\innerprod{\qmat{v}_r}{x}$ are sub-Gaussian random variables for all $x\in\mathbb{R}^{4n}$, where $\qmat{v}_r = [v_w^T,v_x^T,v_y^T,v_z^T]^T\in\mathbb R^{4n}$. The sub-Gaussian norm of $\qmat{v}$ is defined as:
		\begin{align*}
			\|\qmat{v}\|_{\psi_2}:=\|\qmat{v}_r\|_{\psi_2}=\sup_{x^Tx=1,x\in \mathbb R^{4n}}\|\innerprod{\qmat{v}_r}{x}\|_{\psi_2}.
		\end{align*}
		$\qmat{v}$ is called isotropic if $\mathbb E \qmat{v}_r\qmat{v}_r^T = I_{4n}$.
	\end{definition}

	The following proposition is also crucial for proving Theorem \ref{thm:ErrorAnalysisOfSubGaussian}.
	\begin{proposition}\label{prop:AVOmega}
		Let $\qmat{A}\in\mathbb{Q}^{N\times m}$, $\qmat{V}\in\mathbb{Q}^{m \times n}$ ($m\leq n)$ be row-orthonormal, and $\bdOmega\in\mathbb{Q}^{n\times s}$ be a quaternion centered sub-Gaussian matrix (Def. \ref{def:sub_gaussian_q_mat})  with    entries of $\bdOmega_r$ all having the same sub-Gaussian norm $K$. 
		Then 
		\begin{align}
			\normF{\qmat{A} \qmat{V}\bdOmega}\leq 2\sqrt{s}t\normF{\qmat{A}}
		\end{align}
		with probability at least $1-4e\cdot \exp\left(\log (s\min\{N,m\}  )-c t^2/K^2\right)$, where $c>0$ is an absolulte constant. 
	\end{proposition}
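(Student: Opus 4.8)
The plan is to peel off the unitary factors of $\qmat{A}$ via a compact QSVD, reduce the claim to a concentration bound for a single quaternion inner product, and then assemble the estimate by a union bound over the rows of a row-orthonormal factor. First I would write a compact QSVD $\qmat{A}=\qmat{U}_A\Sigma_A\qmat{W}_A^*$ (Theorem~\ref{lem:QSVD}, applied to $\qmat{A}$ or to $\qmat{A}^*$ according to whether $N\ge m$), where $\rho:=\rank(\qmat{A})\le\min\{N,m\}$, the matrices $\qmat{U}_A\in\bbQ^{N\times\rho}$ and $\qmat{W}_A\in\bbQ^{m\times\rho}$ have orthonormal columns, and $\Sigma_A=\diag(\sigma_1,\dots,\sigma_\rho)$. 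Put $\qmat{P}:=\qmat{W}_A^*\qmat{V}\in\bbQ^{\rho\times n}$. Since $\qmat{V}$ is row-orthonormal ($\qmat{V}\qmat{V}^*=I_m$) and $\qmat{W}_A^*\qmat{W}_A=I_\rho$, we get $\qmat{P}\qmat{P}^*=\qmat{W}_A^*\qmat{V}\qmat{V}^*\qmat{W}_A=I_\rho$, so $\qmat{P}$ is row-orthonormal; write $\qmat{p}_i^*$ for its $i$-th row, so $\|\qmat{p}_i\|_2=1$. Because $\qmat{U}_A$ has orthonormal columns, $\normFSquare{\qmat{A}\qmat{V}\bdOmega}=\normFSquare{\qmat{U}_A\Sigma_A\qmat{P}\bdOmega}=\normFSquare{\Sigma_A\qmat{P}\bdOmega}=\sum_{i=1}^{\rho}\sigma_i^2\normFSquare{\qmat{p}_i^*\bdOmega}$, while $\sum_{i=1}^{\rho}\sigma_i^2=\normFSquare{\qmat{A}}$. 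Hence it suffices to show that, on a single high-probability event, $\normFSquare{\qmat{p}_i^*\bdOmega}\le 4st^2$ for all $i=1,\dots,\rho$ simultaneously.

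Next I would prove the single-entry concentration. Fix a unit quaternion row vector $\qmat{q}^*\in\bbQ^{1\times n}$ ($\|\qmat{q}\|_2=1$) and let $\boldsymbol{\omega}\in\bbQ^n$ be a column of $\bdOmega$, so that (by Definition~\ref{def:sub_gaussian_q_mat}) the real vector $\boldsymbol{\omega}_r\in\mathbb R^{4n}$ has independent, centered, sub-Gaussian coordinates of common $\psi_2$-norm $K$. Expanding $\qmat{q}^*\boldsymbol{\omega}=\sum_k\overline{q_k}\,\omega_k$ with the rules $\bi^2=\bj^2=\bk^2=\bi\bj\bk=-1$, one checks that for each $l\in\{w,x,y,z\}$ the real component $(\qmat{q}^*\boldsymbol{\omega})_l$ is a real linear functional $\langle c^{(l)},\boldsymbol{\omega}_r\rangle$ of $\boldsymbol{\omega}_r$ whose coefficient vector satisfies $\|c^{(l)}\|_2^2=\sum_k|q_k|^2=\|\qmat{q}\|_2^2=1$ (the point being that every real component of a quaternion product $\overline{a}\,b$ equals, up to signs and a permutation of coordinates, the Euclidean inner product of $(a_w,a_x,a_y,a_z)$ with $(b_w,b_x,b_y,b_z)$, hence has coefficient-norm $|a|$). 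By the rotation invariance of the sub-Gaussian norm \cite{vershynin_2012_Introduction_non-asymptotic}, $(\qmat{q}^*\boldsymbol{\omega})_l$ is then sub-Gaussian with $\psi_2$-norm at most $C_0K$ for an absolute $C_0$, so $\mathbb P\{|(\qmat{q}^*\boldsymbol{\omega})_l|>t\}\le e\exp(-ct^2/K^2)$ for all $t>0$, where $c>0$ is absolute and the prefactor $e$ reflects the tail convention $\exp(1-c't^2)$ used in the paper. (The coordinate ordering of $\bdOmega_r=[\,\Omega_w^T,\Omega_x^T,\Omega_y^T,\Omega_z^T\,]^T$ differs from the entrywise one used here only by a permutation, which changes neither independence, nor $\psi_2$-norms, nor Euclidean lengths.)

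Finally I would union-bound. Applying the previous estimate with $\qmat{q}^*=\qmat{p}_i^*$ and $\boldsymbol{\omega}$ equal to the $j$-th column of $\bdOmega$, over all $i\in\{1,\dots,\rho\}$, $j\in\{1,\dots,s\}$ and $l\in\{w,x,y,z\}$, there are $4s\rho$ events, so with probability at least $1-4s\rho\,e\exp(-ct^2/K^2)\ge 1-4e\exp\!\big(\log(s\min\{N,m\})-ct^2/K^2\big)$ we have $|(\qmat{p}_i^*\bdOmega)_j|^2=\sum_{l}\big((\qmat{p}_i^*\bdOmega)_j\big)_l^2\le 4t^2$ for all $i,j$ at once; hence $\normFSquare{\qmat{p}_i^*\bdOmega}=\sum_{j=1}^s|(\qmat{p}_i^*\bdOmega)_j|^2\le 4st^2$ for every $i$. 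Plugging back into the identity of the first paragraph gives $\normFSquare{\qmat{A}\qmat{V}\bdOmega}=\sum_{i=1}^\rho\sigma_i^2\normFSquare{\qmat{p}_i^*\bdOmega}\le 4st^2\sum_{i=1}^\rho\sigma_i^2=4st^2\normFSquare{\qmat{A}}$, i.e.\ $\normF{\qmat{A}\qmat{V}\bdOmega}\le 2\sqrt{s}\,t\,\normF{\qmat{A}}$, which is the assertion.

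The step I expect to be the main obstacle is the quaternion-to-real transfer inside the single-entry concentration: one must verify carefully that, although quaternion multiplication couples the four real components, each real component of $\overline{q_k}\,\omega_k$ is a real linear functional of the real components of $\omega_k$ with coefficient $\ell_2$-norm exactly $|q_k|$, so that summing over $k$ yields coefficient norm $\|\qmat{q}\|_2=1$ and the standard rotation-invariance lemma for sub-Gaussians applies without introducing dimension-dependent factors. The QSVD reduction and the union bound are then routine, and the factor $4e$ together with the $\log(s\min\{N,m\})$ term emerge exactly as above from the four components, the $s$ columns, the rank $\rho\le\min\{N,m\}$, and the $e$ in the sub-Gaussian tail.
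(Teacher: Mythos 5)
Your proposal is correct and yields exactly the stated constants, but it implements the key reduction differently from the paper. The paper works entirely through the real representation: it takes the full real SVD $\Upsilon_{\qmat{A}}=U_{\Upsilon_{\qmat{A}}}\Sigma V_{\Upsilon_{\qmat{A}}}^T$, observes that $\hat V:=V_{\Upsilon_{\qmat{A}}}^T\Upsilon_{\qmat{V}}$ is a real row-orthonormal matrix, writes $\normF{\qmat{A}\qmat{V}\bdOmega}=\normF{\Sigma\hat V\bdOmega_r}$, bounds this by $\sqrt{s}\,\normF{\Sigma}\max_{i,j}|(\hat V\bdOmega_r)_{i,j}|$ with $\normFSquare{\Sigma}=4\normFSquare{\qmat{A}}$, and applies the Hoeffding-type inequality (Proposition \ref{prop:hoeffding}) to each real entry $(\hat V\bdOmega_r)_{i,j}$, union-bounding over the $\min\{4N,4m\}\cdot s$ entries; the factor $4$ in $4e\,s\min\{N,m\}$ comes from the $4\min\{N,m\}$ rows of the real SVD factor. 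You instead stay in quaternion arithmetic: a compact QSVD of $\qmat{A}$, the row-orthonormal quaternion matrix $\qmat{P}=\qmat{W}_A^*\qmat{V}$, and an explicit componentwise verification that each of the four real parts of $\qmat{p}_i^*\boldsymbol{\omega}_j$ is a real linear functional of $(\boldsymbol{\omega}_j)_r$ with coefficient norm exactly $\|\qmat{p}_i\|_2=1$ (your check that every real component of $\overline{a}\,b$ has coefficient $\ell_2$-norm $|a|$ is the load-bearing step, and it is right); your factor $4$ then comes from the four components rather than from the enlarged row dimension, with $\rho\le\min\{N,m\}$ rows. Both routes rest on the same Hoeffding-type tail and the same entrywise-max-plus-union-bound skeleton, so the probabilistic content is identical; what your version buys is that it avoids invoking the real representation $\Upsilon_{\qmat{A}}$ and its SVD (and the identities $\normF{\qmat{Q}}=\tfrac12\normF{\Upsilon_{\qmat{Q}}}$, $\normF{\qmat{A}\qmat{B}}=\normF{\Upsilon_{\qmat{A}}\qmat{B}_r}$ the paper relies on), at the cost of the hand verification of the quaternion-to-real transfer and a slightly heavier use of the quaternion QSVD (Theorem \ref{lem:QSVD}) including the $N<m$ case, which you correctly flag as needing the transposed form.
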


	The detailed proofs in this subsetion are left to the appendix, and   we give a	sketch of the proof of Theorem \ref{thm:ErrorAnalysisOfSubGaussian} here: observing \eqref{eq:thm_QB_error_for_Guassian}, we first   derive   $\|\bdPsi_2^\dagger \bdPsi_1\left(\qmat{Q}_{\bot}^*\qmat{A}\right)\|_F \leq \|\bdPsi_2^\dagger\|_2 \|\bdPsi \qmat{Q}_{\bot}\left(\qmat{Q}_{\bot}^*\qmat{A}\right)\|_F$; then respectively bound $\|\bdPsi_2^\dagger\|_2$ by Theorem \ref{thm:ConclusionOfSingularValueEstimation} and $\|\bdPsi \qmat{Q}_{\bot}\left(\qmat{Q}_{\bot}^*\qmat{A}\right)\|_F\leq O( \sqrt{ l\log( \min\{m,n \} ) })\|\qmat{Q}_{\bot}^*\qmat{A}\|_F$ by Proposition \ref{prop:AVOmega}   (observe that $\bdPsi$ is sub-Gaussian, $\qmat{Q}_{\bot}$ is orthonormal, and $\qmat{Q}_{\bot}^*\qmat{A}$ is fixed). Next, use \eqref{eq:lem:A-QQ^*A:1} to obtain $\|\qmat{Q}_{\bot}^*\qmat{A}\|_F^2 =  \normF{\qmat{A}-\qmat{Q}\qmat{Q}^*\qmat{A}}^2 \leq\normF{\Sigma_2}^2+\normF{\Sigma_2\bdOmega_2 \bdOmega_1^\dagger}^2$; the second term $\normF{\Sigma_2\bdOmega_2 \bdOmega_1^\dagger} \leq \|\Sigma_2\qmat{V}_2^*\bdOmega\|_F \|\bdOmega_1^\dagger\|_2$; respectively bound $\|\bdOmega_1^\dagger\|_2$ by Theorem \ref{thm:ConclusionOfSingularValueEstimation} and $\|\Sigma_2\qmat{V}_2^*\bdOmega\|_F \leq O(\sqrt{s \log(\min\{m,n \}) } )\|\Sigma_2\|_F$ by Proposition \ref{prop:AVOmega}. Finally, combining the above pieces and using   union bound or total probability, one obtains   \eqref{eq:qb_error_sub_gaussian}.

	\section{Numerical Experiments} \label{sec:numerical}
	In this section, we tested  our quaternion one-pass algorithm with the devised rangefinders on several examples. We used randomized quaternion SVD (RQSVD) \cite[Algorithm 3.1]{RandomizedQSVD} as a baseline to evaluate the performance of our algorithm for   synthetic data in example \ref{eg:syntheticData}. We  then   handled large-scale data processing tasks of  scientific simulation output compression and color image processing using our algorithm. All the experiments were carried out in MATLAB 2023b on a personal computer with an Intel(R) CPU i7-12700 of 2.10 GHz and 64GB of RAM and work in double-precesion arithmetic. The QTFM   \cite{sangwine2020qtfm} was employed for basic quaternion operations. 
	This section only presents limited selection of results and more detail   results are conatined in supplementary materials.

\subsection{Synthetic data}
\begin{example}\label{eg:syntheticData}
	In this example, 
we	constructed $A=U\Sigma V^*\in\mathbb{Q}^{2000\times 1600}$ where $U$, $V$ are partial unitary and $\Sigma$ were generated as follows:
\begin{enumerate}
	\item Low rank plus noise:  $\Sigma=\diag{\left(1,\ldots,1,0,\ldots,0\right)+\xi n^{-1}\qmat{E}} \in \bbQ^{1600\times 1600}$.   $\qmat{E}$ is a quaternion standard Gaussian matrix;
	\item Polynomial decay spectrum (pds):  $\Sigma=\diag{\left(1,\ldots,1,2^{-p},3^{-p},\ldots,\left(n-R+1\right)^{-p}\right)}\in \bbQ^{1600\times 1600}$;
	\item Exponential decay spectrum (eds):  $\Sigma=\diag{\left(1,\ldots,1,10^{-q},10^{-2q},\ldots,10^{-\left(n-R\right)q}\right)}\in \bbQ^{1600\times 1600}$.
\end{enumerate}

We   adjusted the parameter $\xi,p,q$ to obtain different decay rate. For every synthetic data case, we   evaluated the performance of RQSVD and one-pass algorithm with different rangefinders: 1) RQSVD with QHQR, 2) RQSVD with QMGS, 3) RQSVD with our pseudo-SVD, 4) one-pass with pseudo-QR, and 5) one-pass with pseudo-SVD. The deterministic QSVD used in RQSVD, as suggested in \cite[Remark 3.2]{RandomizedQSVD}, were from \cite{li2016RealStructurepreserving,wei2018quaternion}, whose code is also provided in Jia's homepage; that used in one-pass were implemented by ourselves\footnote{Roughly speaking, our QSVD is based on  computing the   SVD of the full complex representation; however, as pointed out in Sect. \ref{sec:pseudo-svd}, we need to correct the approach such that it still works in the scenorio that   there exist duplicated singular values and the matrix is very ill-conditioned. The idea is similar to pseudo-SVD but the detail is more involved, which may be reported in an independent manuscript. The code is with the name \texttt{ModifiedcsvdQ77} in our github webpage.}. We tested different target rank $r$ varying from $100$ to $500$. For one-pass algorithm we set $s=r+5$ and $l=2s$; the oversampling parameter of RQSVD was set to  $5$, the same as one-pass algorithm. The relative error is defined as $
	\label{def:relativeError}
 {\normF{\qmat{A}-\hat{\qmat{A}}}}/{\normF{\qmat{A}}}
$.
\end{example}

\begin{figure}
	\centering
	\begin{subfigure}[b]{0.48\textwidth}
		\includegraphics[width=\linewidth]{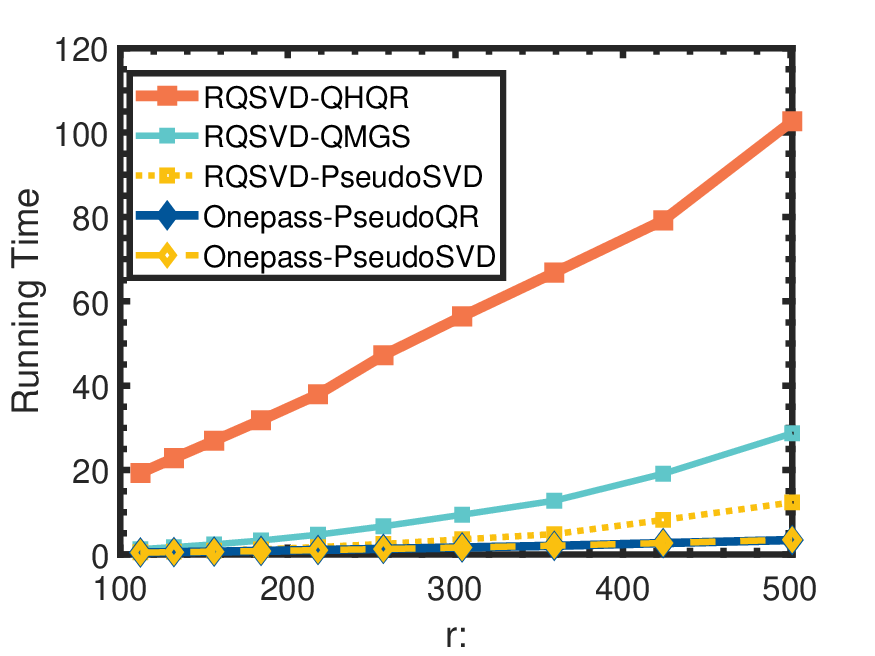}
		\caption{low-rank plus noise and pds}
	 \label{fig:timeNoise}
	\end{subfigure}
	\hfill
	\begin{subfigure}[b]{0.48\textwidth}
		\centering
		\includegraphics[width=\linewidth]{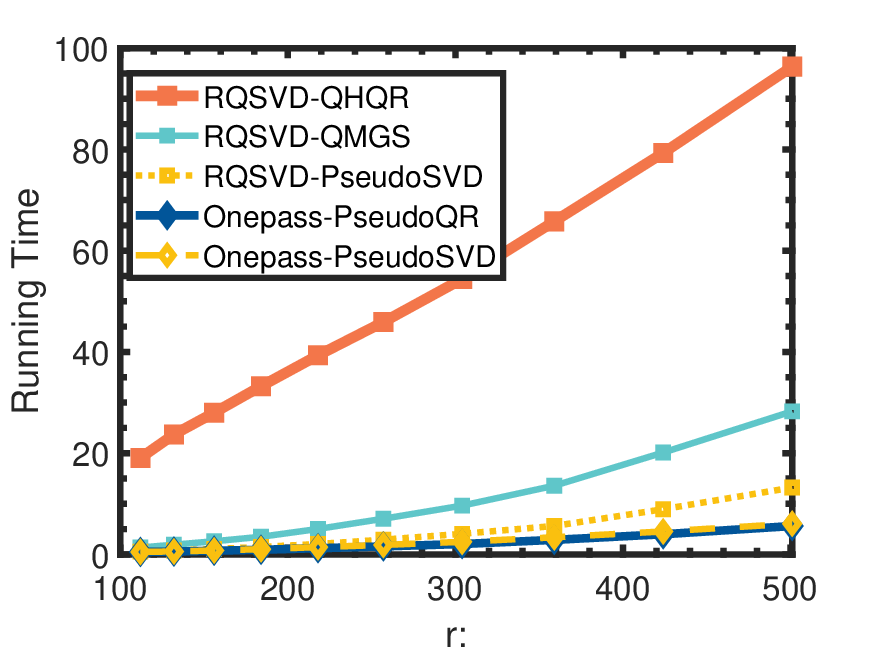}
		\caption{eds}
		\label{fig:timePds}
	\end{subfigure}
	\caption{\tiny $\xi=0.01,p=2,q=0.25$. Time cost of RQSVD (QHQR, QMGS, Pseudo-SVD) and algorithm \ref{alg:FixedRankTwoSketches} (pseudo-QR, pseudo-SVD) of synthetic data. $x$-axis is the target rank; $y$-axis is the running time in seconds. }
	\label{fig:timeSynthetic}
\end{figure}

Fig. \ref{fig:timeSynthetic} shows that RQSVD with QHQR is the slowest one, followed by that with QMGS; RQSVD with Pseudo-SVD is about two times faster than that with QMGS. This shows that even   Pseudo-SVD is useful for accelerating RQSVD. 
One-Pass with Pseudo-QR and Pseudo-SVD are the most fastest ones, which are about 3.5x faster than RQSVD with Pseudo-SVD and are about 8x faster than RQSVD with QMGS. This is also benifitial from the efficiency of our deterministic QSVD function \texttt{ModifiedcsvdQ77}. 

\begin{figure}
	\centering
	\begin{subfigure}[b]{0.32\textwidth}A
		\includegraphics[width=\linewidth]{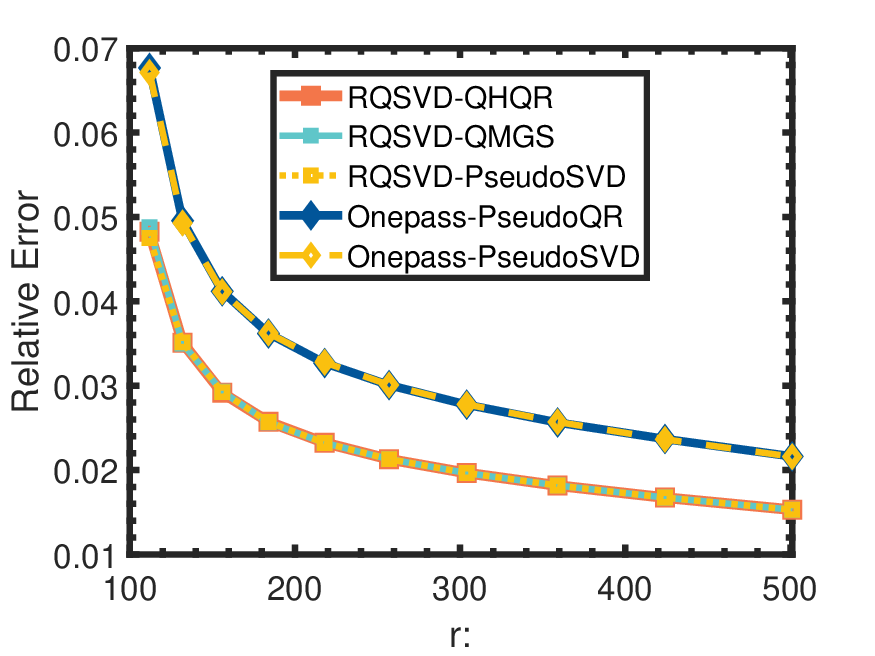}
		\caption{Low rank plus noise}
	 \label{fig:normNoise}
	\end{subfigure}
	\hfill
	\begin{subfigure}[b]{0.32\textwidth}
		\centering
		\includegraphics[width=\linewidth]{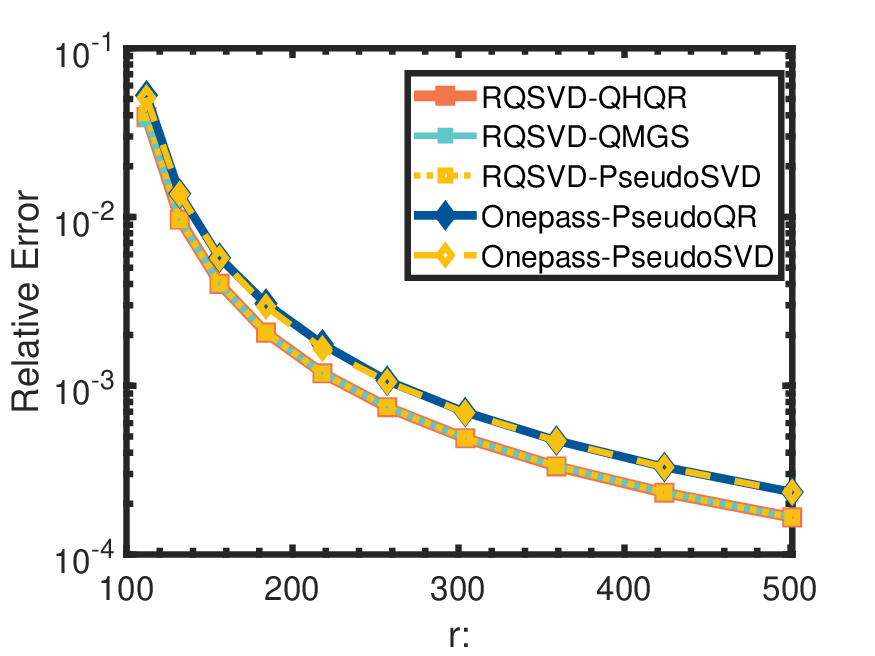}
		\caption{pds}
		\label{fig:normPds}
	\end{subfigure}
	\hfill
	\begin{subfigure}[b]{0.32\textwidth}
		\centering
		\includegraphics[width=\linewidth]{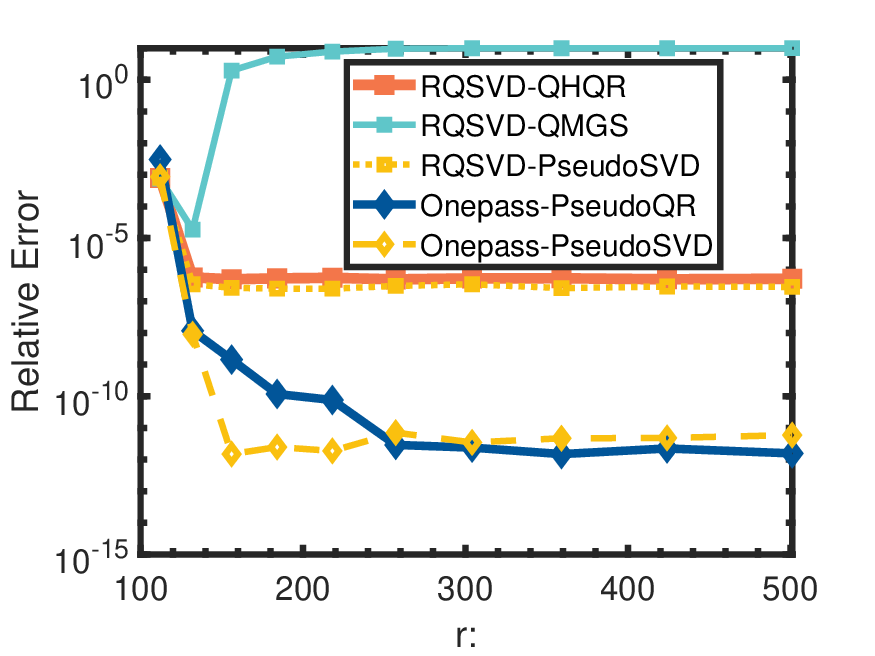}
		\caption{eds}
		\label{fig:normEds}
	\end{subfigure}
	\caption{\tiny Relative Errors of RQSVD (QHQR, QMGS, Pseudo-SVD) and algorithm \ref{alg:FixedRankTwoSketches} (pseudo-QR, pseudo-SVD) of synthetic data. x-label is target rank, y-label is relative $\normF{A-\hat{A}}/\normF{A}$.}
	\label{fig:normSynthetic}
\end{figure}

In terms of the relative error, Fig. \ref{fig:normSynthetic} show that one-pass algorithm is   worse  than RQSVD. This is understandable, as commented in the pagragraph below \eqref{eq:reconstructionOfTwoSketch}.
We also remark that   their performance discrepancy diminishes when the   spectral decays    more rapidly. When the original data is extremely ill-conditioned, as in  Fig. \ref{fig:normEds}, some needs to be interpreted particularly. We first observe that RQSVD with QMGS is inaccuracy when $r$ is near   $120$; this is due to that     QMGS's orthonormality is more sensitive to the condition number of the sketch, as illustrated in Fig. \ref{fig:conditionThree}.  At the same time, RQSVD with QHQR holds the orthogonality while preserving less accurate range (Fig. \ref{fig:rangePrecisionThree}), and Pseudo-SVD   takes both aspects into account, therefore, RQSVD with Pseudo-SVD is slightly better than that with QHQR. On the other hand, it is interesting to see that one-pass with both Pseudo-QR and Pseudo-SVD perform better than the others; in fact, this is due to that in this very ill-conditioned scenorio, our deterministic QSVD function \texttt{ModifiedcsvdQ77} is more accurate than  that used in RQSVD \cite{RandomizedQSVD}.


\subsection{Scientific data}	
	\begin{example}
		In this example, we applied the one-pass algorithm  with pseudo-QR and pseudo-SVD to compress the output of a computational fluid dynamics (CFD) simulation. We have obtained a  numerical simulation on a finite elements model of the unsteady 3D Navier-Stokes equations for microscopic natural convection about biological research applications using the \href{https://old.quickersim.com/cfdtoolbox/}{QuickerSim CFD Toolbox for MATLAB}. 
		There are $20914$ nodes to characteristic the velocity, each node having three directions and can be represented as a pure quaternion. Each element represents the space velocity field by:
		\begin{align}
			a_{p,t}=v_x(p,t)\bi+v_y(p,t)\bj+v_z(p,t)\bk,
		\end{align}
		where $v=(v_x,v_y,v_z)$ is a velocity vector field in the model.
		The velocity field is time-dependent  and so 
		we collect it at $20000$ time instants, resulting into  a pure quaternion matrix \emph{microConvection} of size $20914\times 20000$.
	Due to the periodicity, the data matrix \emph{microConvection} generated by  this type of models has rapidly decay spectrum,  as shown in figure \ref{fig:SingularValueCFDData}. 

		
	\end{example}
	
	Fig. \ref{fig:CFD} show that the algorithm work efficiently and obtain highly accurate approximation for this example. In particular, if we set $k=50$, the relative   error is under $10^{-5}$ and is acceptable to be used in PDE numerical solving, while the time costs of the whole proceedure, including sketching and randomized  approximation, are less than $10$ seconds. 
	\begin{figure}
		\centering
		\begin{subfigure}[b]{0.32\textwidth}
			\includegraphics[width=\linewidth]{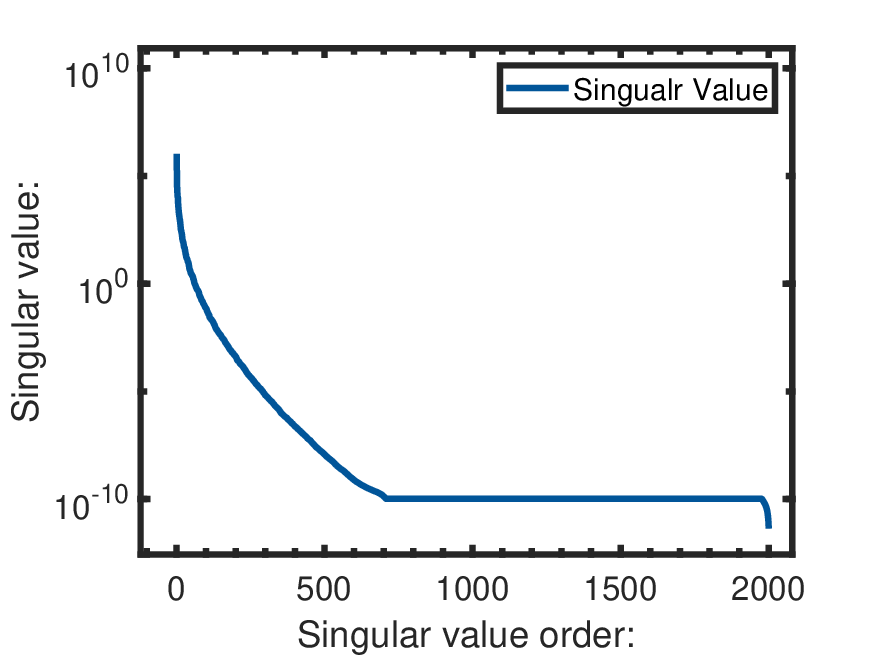}
			\caption{Singular values}
		 \label{fig:SingularValueCFDData}
		\end{subfigure}
		\hfill
		\begin{subfigure}[b]{0.32\textwidth}
			\centering
			\includegraphics[width=\linewidth]{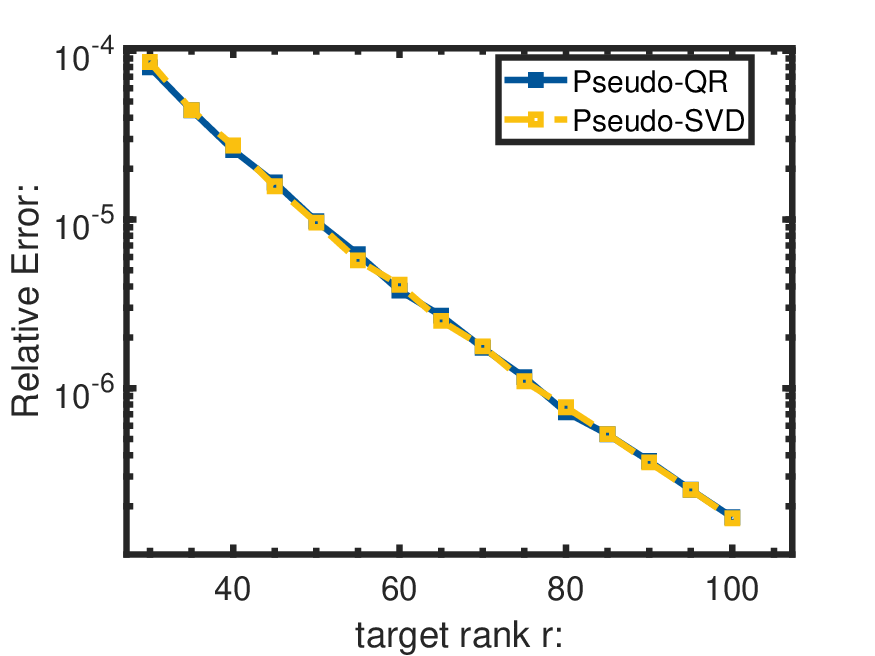}
			\caption{Relative Frobenius Error}
			\label{fig:CFDRelativeError}
		\end{subfigure}
		\hfill
		\begin{subfigure}[b]{0.32\textwidth}
			\centering
			\includegraphics[width=\linewidth]{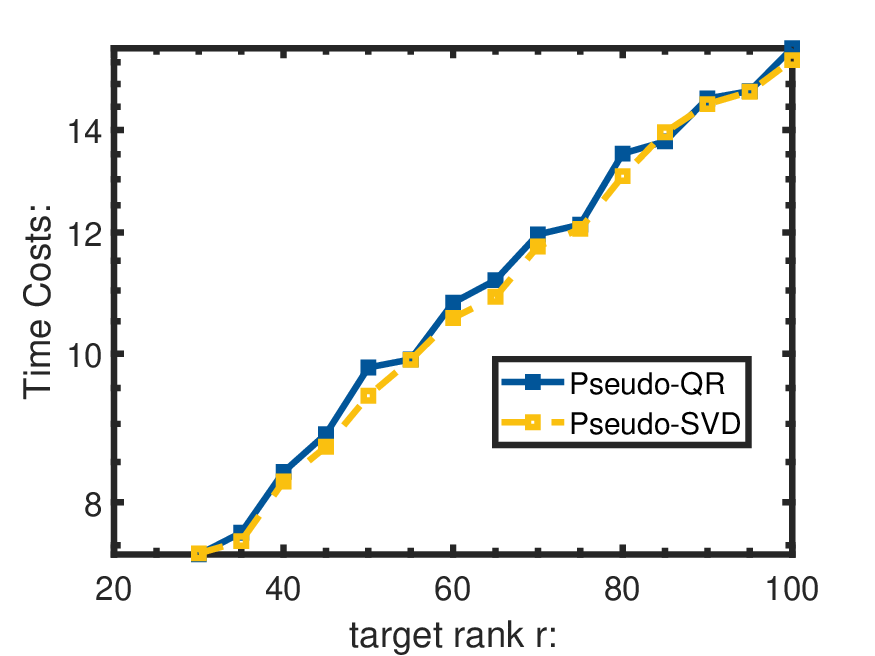}
			\caption{Running Time}
			\label{fig:CFDTime}
		\end{subfigure}
		\caption{Relative Errors and Running Time of CFD simulation with different target rank $k$}
		\label{fig:CFD}
	\end{figure}

	Fig. \ref{fig:ComparsionOfCFD} graphically  illustrates that the compression can highly approximate the origin velocity field: the figures in the second row are the shear rates computed from velocity field ($k=50$), which   closely match the real data, as shown in the first row. In this case, our algorithm compressed the original data from $5.22$GB to $47.8$MB, whose compression ratio is   $99.11\%$.  

	\begin{figure}
		\begin{subfigure}[b]{0.32\textwidth}
			\centering
			\includegraphics[width=\linewidth]{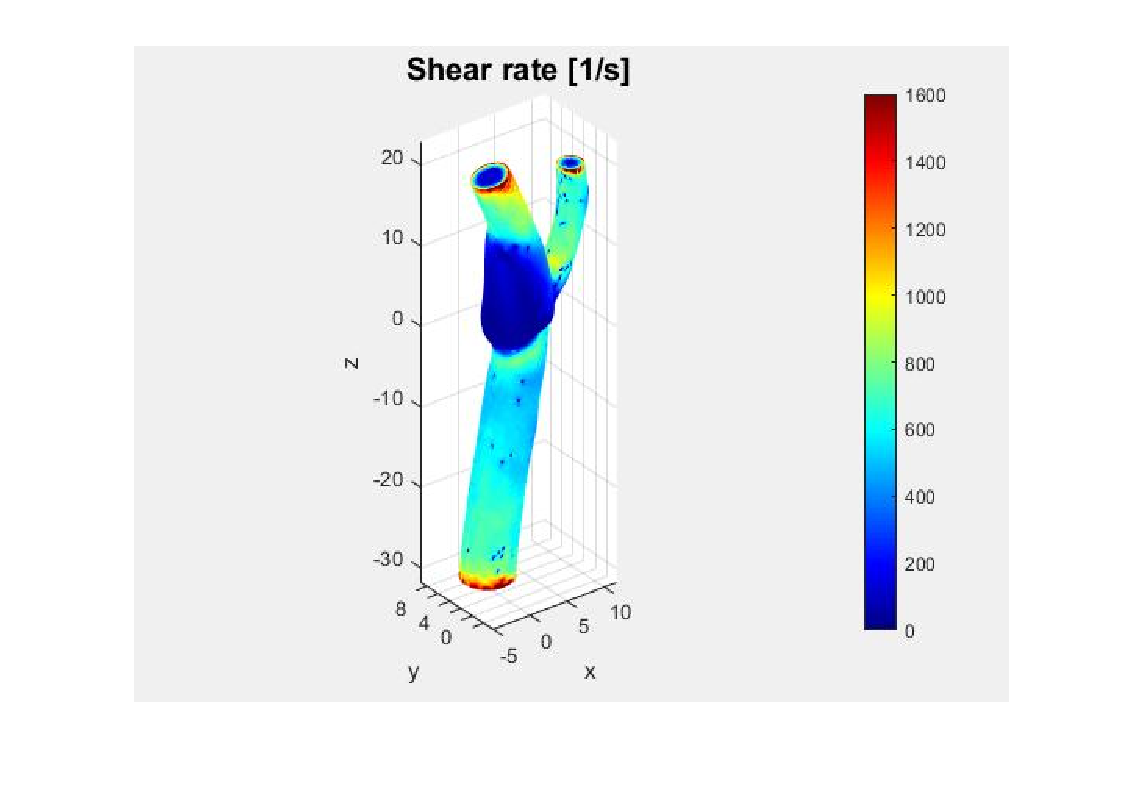}
			\caption{Origin at 8th step}
			\label{fig:Origin8}
		\end{subfigure}
		\hfill
		\begin{subfigure}[b]{0.32\textwidth}
			\centering
			\includegraphics[width=\linewidth]{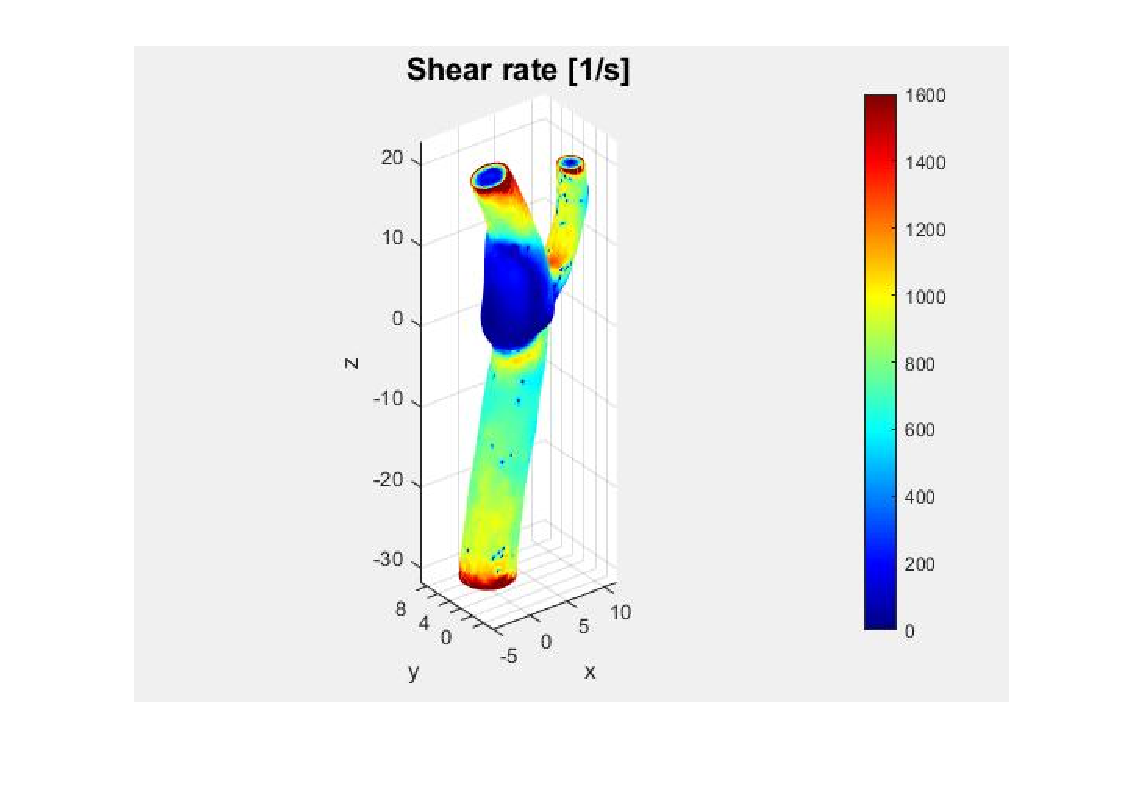}
			\caption{Origin at 14th step}
			\label{fig:Origin14}
		\end{subfigure}
		\hfill
		\begin{subfigure}[b]{0.32\textwidth}
			\centering
			\includegraphics[width=\linewidth]{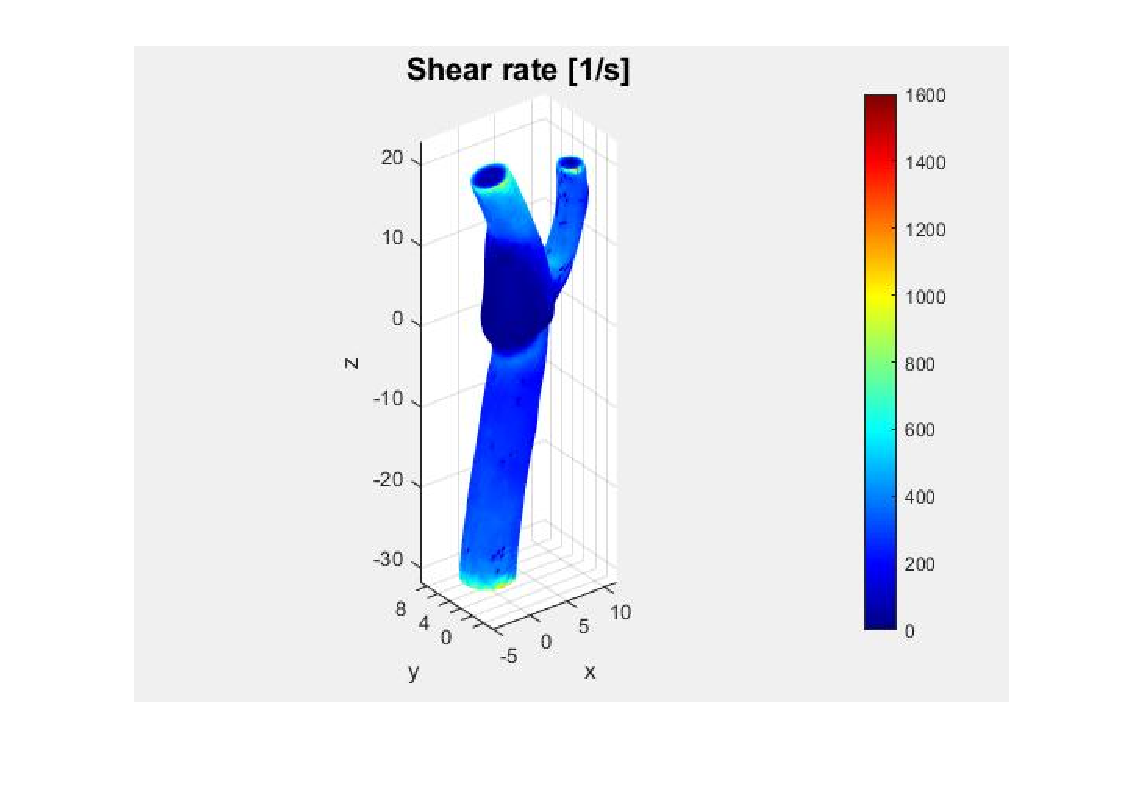}
			\caption{Origin at 23rd step}
			\label{fig:Origin23}
		\end{subfigure}
		\hfill
		\begin{subfigure}[b]{0.32\textwidth}
			\centering
			\includegraphics[width=\linewidth]{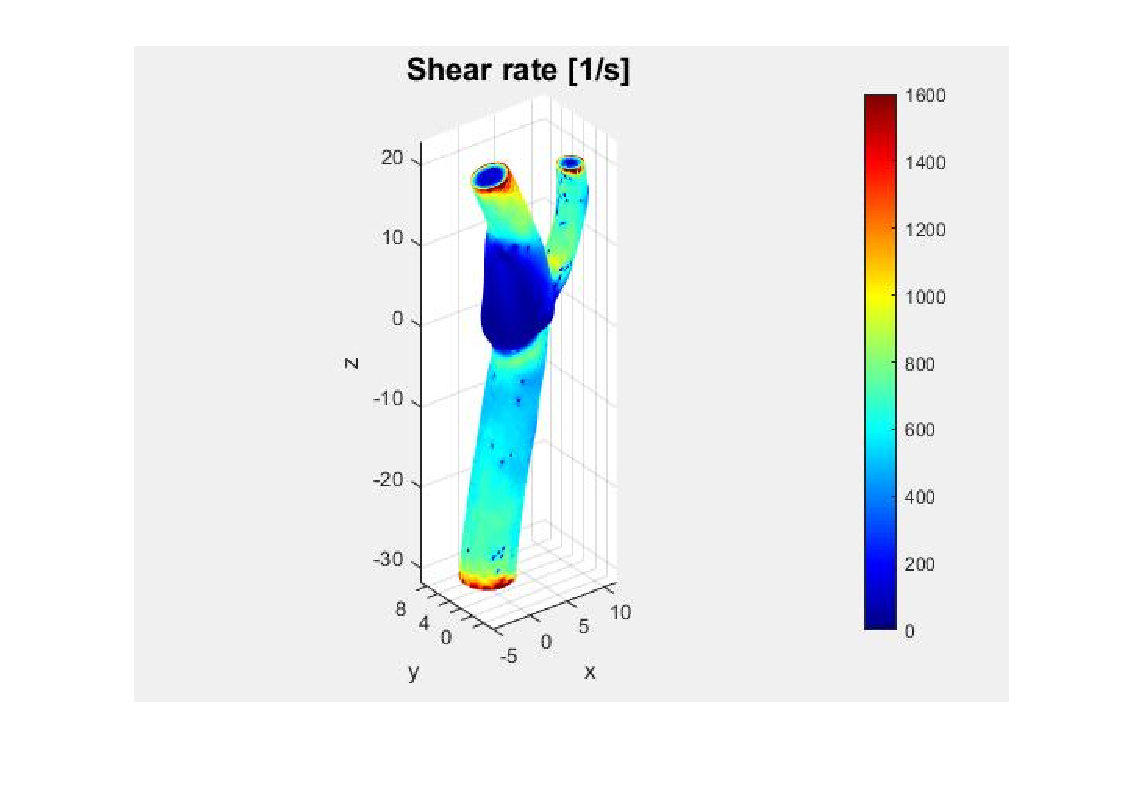}
			\caption{Recovered at 8th step}
			\label{fig:Approx8}
		\end{subfigure}
		\hfill
		\begin{subfigure}[b]{0.32\textwidth}
			\centering
			\includegraphics[width=\linewidth]{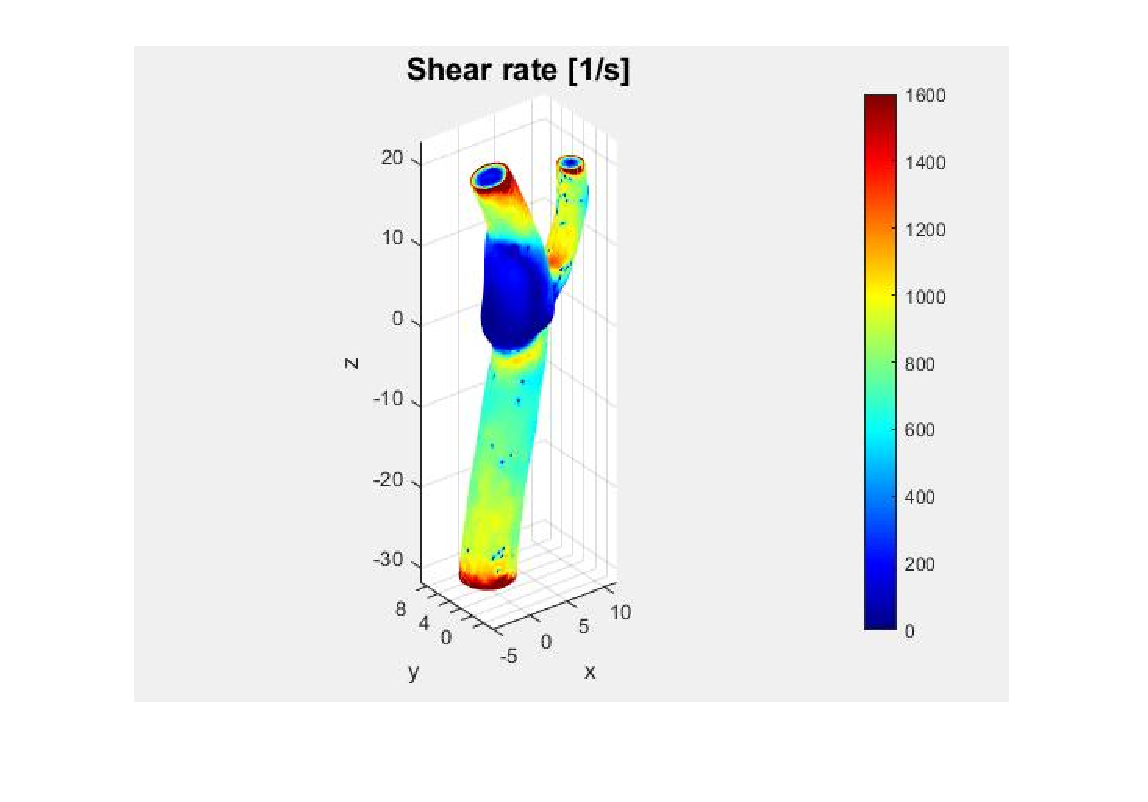}
			\caption{Recovered at 14th step}
			\label{fig:Approx14}
		\end{subfigure}
		\hfill
		\begin{subfigure}[b]{0.32\textwidth}
			\centering
			\includegraphics[width=\linewidth]{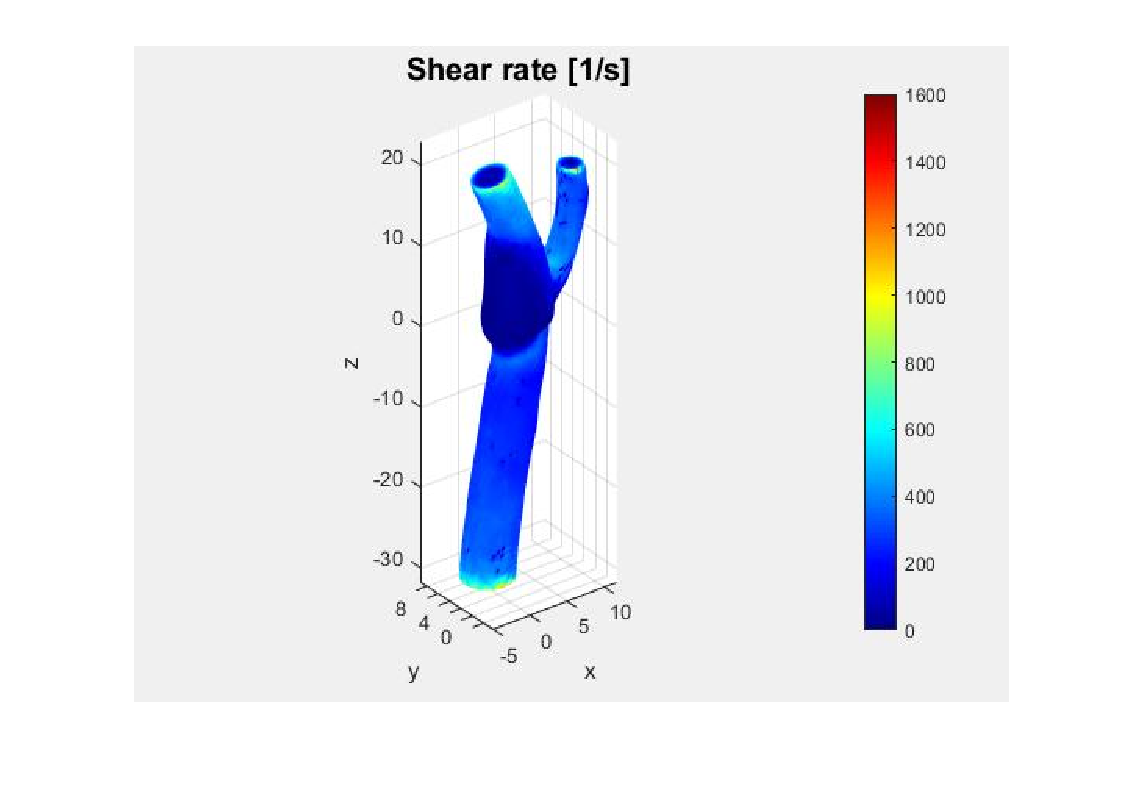}
			\caption{Recovered at 23rd step}
			\label{fig:Approx23}
		\end{subfigure}
		\caption{Shear rate computed from origin data and compressed data ($k=50$)}
		\label{fig:ComparsionOfCFD}
	\end{figure}
\begin{example}
	We   tested   compressing the output of a 4D Lorenz-type chaotic system simulation.  The dynamical system is shown as follows:
	$$\begin{cases}\frac{dx}{dt}=a(y-x),\\\frac{dy}{dt}=cx-y-xz+w,\\\frac{dz}{dt}=-bz+xy,\\\frac{dw}{dt}=(c-1)y+w-\frac{x^3}{b},\end{cases}$$ 
	where $x$, $y$, $z$ and $w$ are state variables and $a$, $b$, $c$, $d$, $e$, $h$ are positive parameters of system. In our simulation, we set $a=15$, $b=2$, $c=28$ and select $10000$ initial states $(x,y,z,w)$ randomly selected from sphere $\normSpectral{(x,y,z,w)}=20$. By using \texttt{ode45} to compute the system at $20000$ time instances, we   obtained a $20000\times 10000$ quaternion matrix which records the information of solutions. The distinction from the previous CFD simulation example is that, this 4D Lorenz-type system   is a hyperchaotic one, and so the quaternion matrix has more flatten spectrum decay, as shown in figure \ref{fig:SingularValueLorenz}. Specifically, in this hyperchaotic system, the presence of attractors causes a significant drop in singular values from the 700th to the 800th. 
\end{example}

Figure \ref{fig:SingularValueLorenz} indicates a larger sketch size is necessary to obtain a high accuracy approximation. One-pass algorithm with pseudo-QR and pseudo-SVD  have similar performance in accuracy. When the target rank $r>1000$, the relative error is less than $10^{-3}$. Our algorithm compresses the original data form $5.74$GB to $998$MB and the compression ratio reaches to $85\%$ ($r=1000$). We can observe that pseduo-QR is faster when the target rank is larger.

\begin{figure}
	\begin{subfigure}[b]{0.32\textwidth}
		\includegraphics[width=\linewidth]{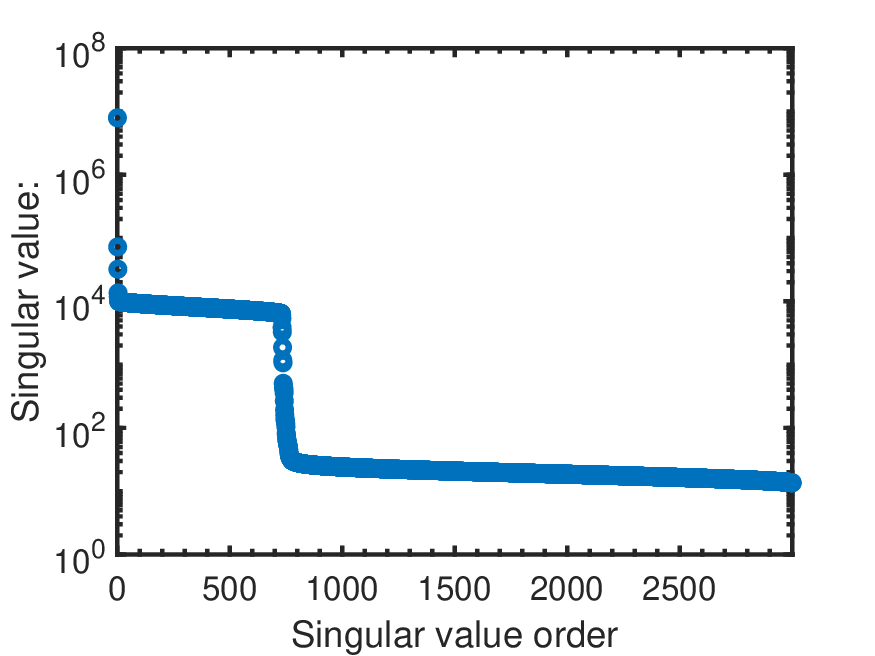}
		\caption{Singular values}
	 \label{fig:SingularValueLorenz}
	\end{subfigure}
	\hfill
	\begin{subfigure}[b]{0.32\textwidth}
		\centering
		\includegraphics[width=\linewidth]{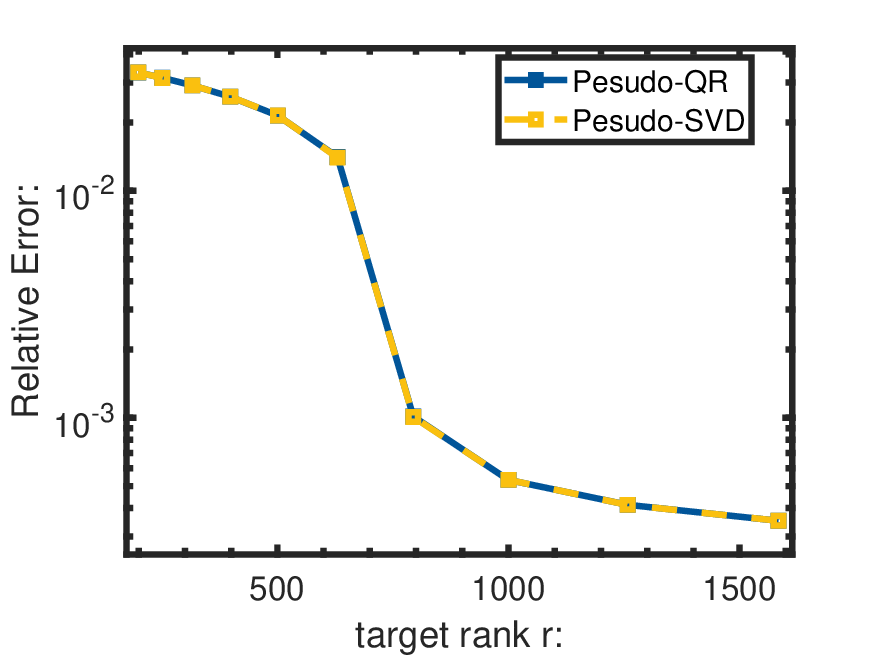}
		\caption{Relative Frobenius}
		\label{fig:LorenzRelativeError}
	\end{subfigure}
	\hfill
	\begin{subfigure}[b]{0.32\textwidth}
		\centering
		\includegraphics[width=\linewidth]{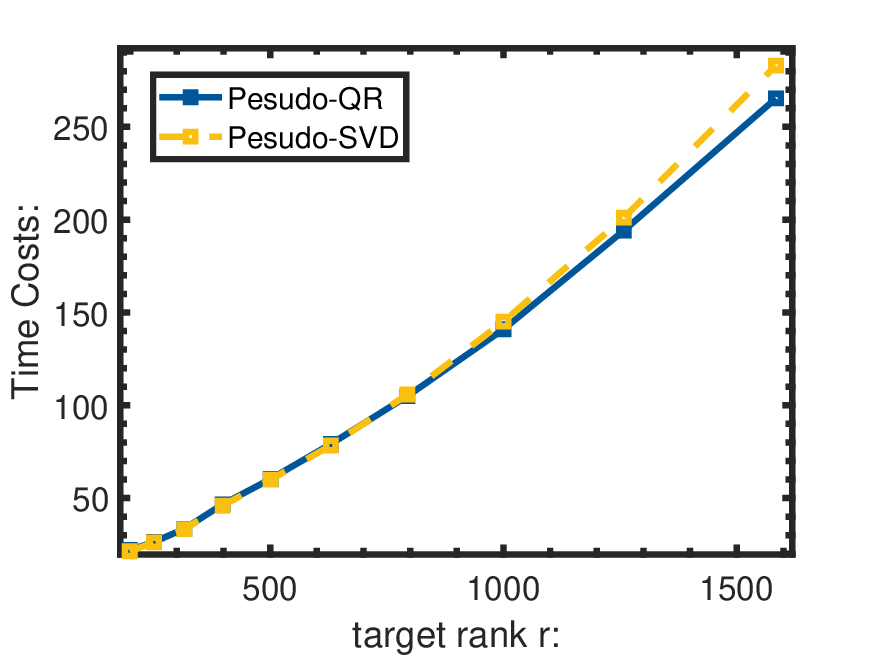}
		\caption{Running Time}
		\label{fig:LorenzTime}
	\end{subfigure}
	\caption{Relative Errors and Running Time of Lorenz system with different target rank $k$}
	\label{fig:Lorenz}
\end{figure}

\subsection{Image compression}
\begin{example}
	We compressed a large-scale color image of high-resolution map, which has $31365\times 27125$ pixels and encompasses a diverse range of environments, including clusters of buildings, factories, fields, and bodies of water. Considering the limitation of memory capabilities of local machines, streaming data processing can significantly reduce the consumption of time and space. We have performed linear updates on the sketches similar to \cite[Algorithm 2]{Practical_Sketching_Algorithms_Tropp}, thus avoiding revisiting the original data and additional transport flops. Through linear updating, we have obtained two sketches whose sizes are $31365\times s$ and $l\times 27125$ respectively. One-pass algorithm with pseudoQR or pseudo-SVD was then used to generate a low-rank approximation.

\end{example}

\begin{figure}
	\centering
	\begin{subfigure}[b]{0.4\textwidth}
		\includegraphics[width=\linewidth]{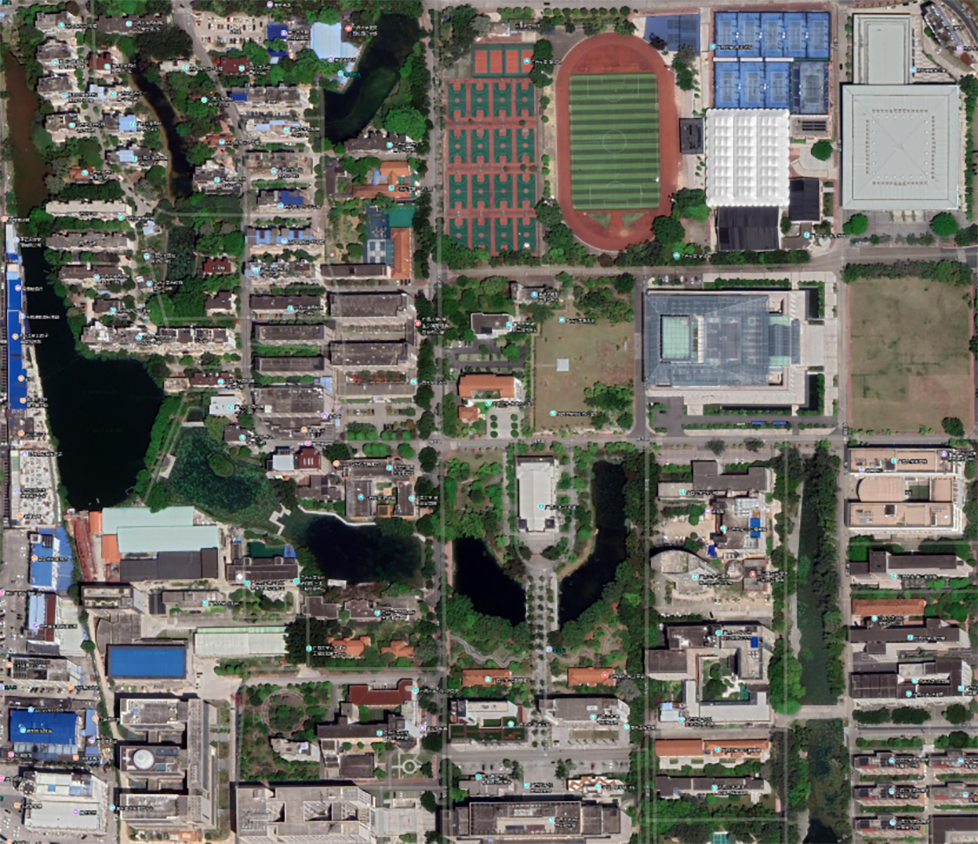}
		\caption{\small Original structures in Guangxi Univ.}
		\label{fig:SchoolOrigin}
	\end{subfigure}
	\hfill
	\begin{subfigure}[b]{0.4\textwidth}
		\centering
		\includegraphics[width=\linewidth]{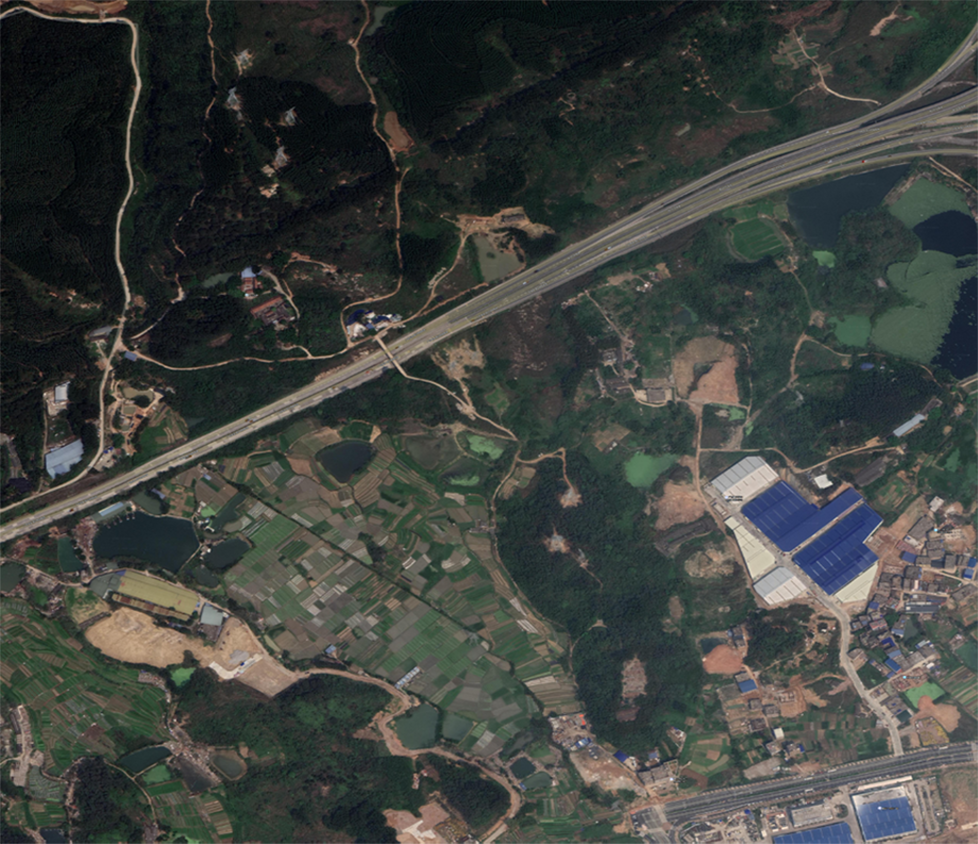}
		\caption{\small Original fields}
		\label{fig:fieldOrigin}
	\end{subfigure}
	\hfill
	\begin{subfigure}[b]{0.4\textwidth}
		\includegraphics[width=\linewidth]{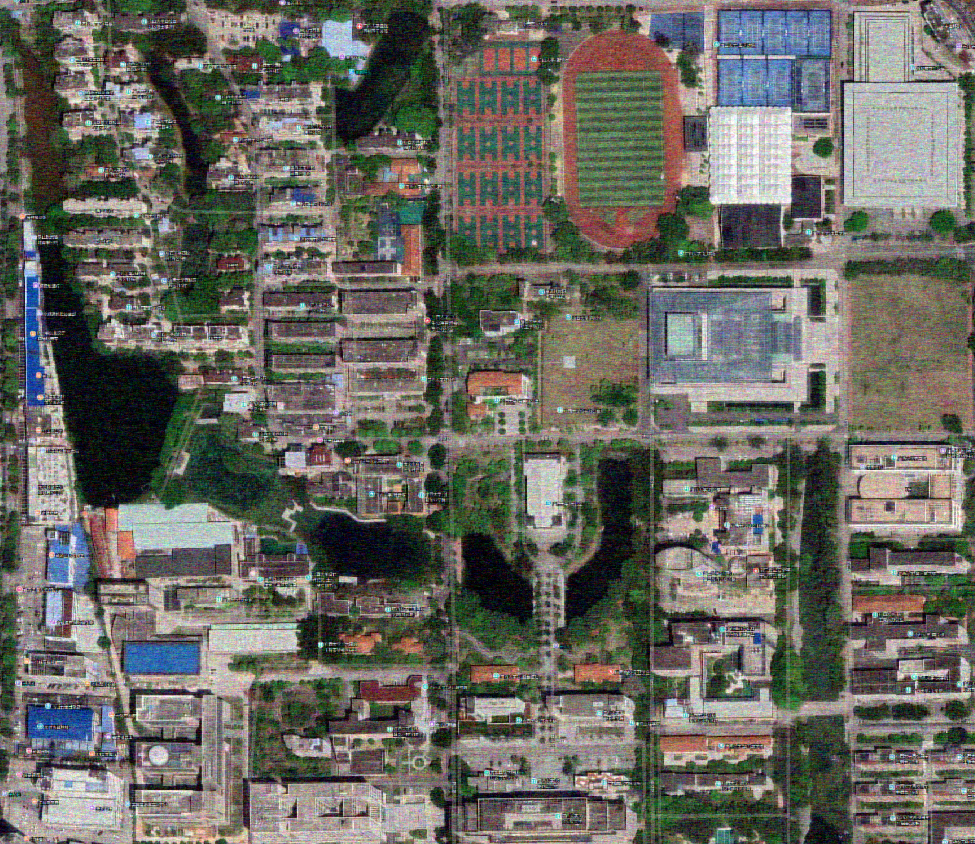}
		\caption{\small Comressed structures  in Guangxi Univ.}
		\label{fig:School}
	\end{subfigure}
	\hfill
	\begin{subfigure}[b]{0.4\textwidth}
		\centering
		\includegraphics[width=\linewidth]{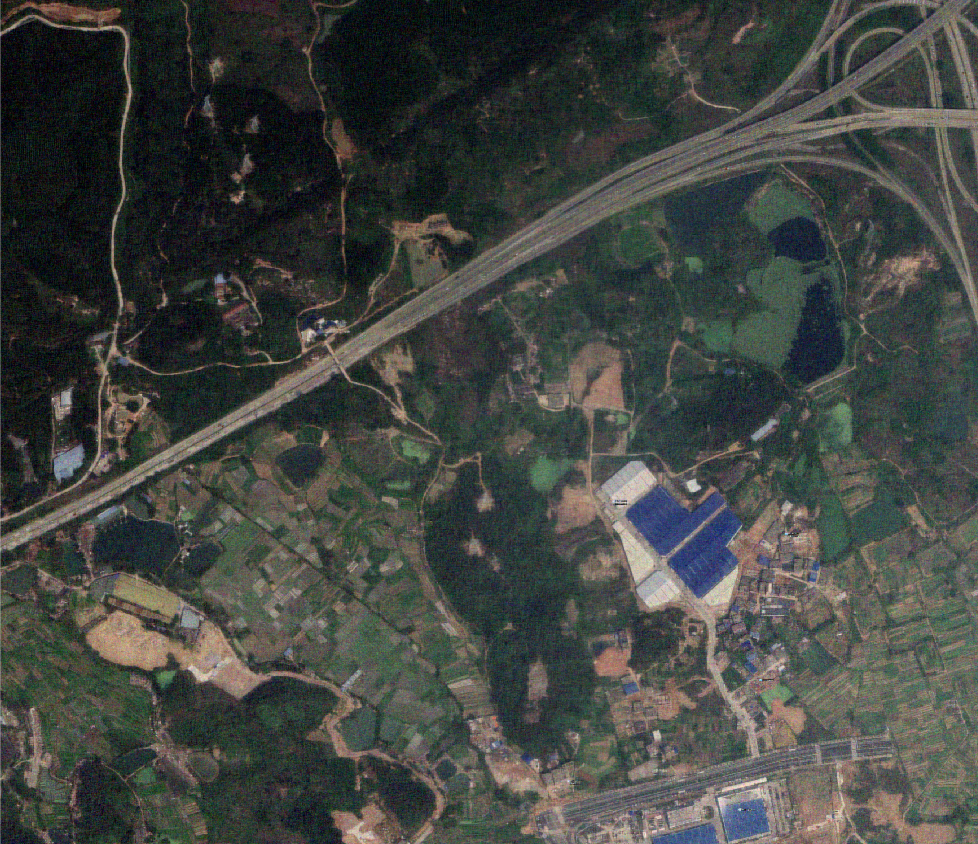}
		\caption{\small Compressed fields}
		\label{fig:field}
	\end{subfigure}
	\caption{\tiny Top two images are two representative areas selected from the orginal iamge \emph{XixiangtangMap} which has $31365\times 27125$ pixels. Down two images are corresponding parts of the approximation with target rank $r=3900$. }
	\label{fig:ApproximationImage}
\end{figure}

\begin{table}[]
	\centering
	\begin{mytabular1}{ccccc}
		\hline
		$r$    & rangefinder & Relative Error & PSNR    & Cost Time \\ \hline
		$2000$  & pseudo-QR    & $0.2613$         & $18.0186$ & $372.62$    \\
		     & pseudo-SVD   & $0.2614$         & $18.0149$ & $400.71$    \\ \hline
		$2900$ & pseudo-QR    & $0.2252$         & $19.3095$ & $764.04$    \\
		     & pseudo-SVD   & $0.2253$         & $19.3066$ & $871.75$    \\ \hline
		$3900$ & pseudo-QR    & $0.1966$         & $20.4905$ & $1433.66$   \\
		     & pseudo-SVD   & $0.1966$         & $20.4885$ & $1675.73$   \\ \hline
	\end{mytabular1}
	\caption{\small Experiment   of image compression, where $r=2000,2900,3900,s=r+50,l=2s$.}\label{table:ImageCompression}
\end{table}

Table \ref{table:ImageCompression} indicates that, as target rank varies from $2000$ to $3900$, the relative error of the low-rank approximation gradually decreases, and the PSNR (Peak Signal-to-Noise Ratio) and running time progressively increase. Comparing between pseudo-QR and pseudo-SVD, they have similar performance with the one-pass  algorithm. However, in most cases, the time consumed by pseudo-QR is   less than that by pseudo-SVD. In image compression, pseudo-QR may be a more suitable rangefinder.

Fig. \ref{fig:ApproximationImage} more intuitively reflects the approximation quality of the algorithm. We select two representative area including structures and fields in the original image; see Fig. \ref{fig:SchoolOrigin} and \ref{fig:fieldOrigin}. The former contains more architectural details, while the latter is more open. We can see that after approximating with target rank $3900$, the main elements of the map are still distinguishable, and the color channels have been well preserved when compression ratio reaches to $71.4\%$.

	\section{Conclusions} \label{sec:conclusions}
 \color{black}
Existing quaternion rangefinders based on quaternion orthogonalization  may    be inefficient for large-scale problems. Based on the strategy of trading accuracy or space for speed, this work presented two practical rangefinders, which may not be orthonormal but still well-conditioned. The proposed rangefinders were then incorporated into the   the one-pass algorithm proposed by Tropp et al. \cite{Practical_Sketching_Algorithms_Tropp} for  low-rank approximation to quaternion matrices. Throughout the whole algorithm, heavy quaternion computations has been transformed to   QR, SVD, and solving linear equations in the complex arithmetic, such that mature scientific computing libraries or advanced algorithms can be employed to accelerate the computations. Theoretically, the probabilistic error bound was established for both quaternion Gaussian and sub-Gaussian test matrices; in particular, it was demonstrated that the       error is proportional to the rangefinder's condition number. 
The efficiency and effectiveness of the algorithm were verified on  large-scale    experiments such as scientifc data compression and color image compression,  which consist of quaternion matrices of dimension $>20000$.   \color{black} 

\color{black}Our experiments also demonstrate that the devised rangefinder pseudo-SVD can accelerate   randomized QSVD \cite{RandomizedQSVD}. Other quaternion randomized algorithms \cite{renRandomizedQuaternionQLP2022,liRandomizedBlockKrylov2023,liu2024FixedprecisionRandomized,xu2024RandomizedQuaternion} may also benefit from pseduo-SVD. Besides, it is also helpful to study how to incorporate a non-orthonormal rangefinder such as pseudo-QR into these algorithms, and it is interesting to devise more practical rangefinders, as that mentioned in Sect. \ref{sect:remarks_rangefinder}. \color{black}

 
	
\bibliographystyle{plain}
\bibliography{sn-bibliography}

\appendix
\small

\section{Proofs in the Pseudo-QR subsection}
 
	\begin{proof}[Proof of Proposition \ref{thm:CondReductionSpeed}]
		Let $\qmat{H} = \qmat{U}\Sigma\qmat{V}^*$ be a compact QSVD of $\qmat{H}$
		  where $\Sigma=\texttt{diag}(\sigma_1,\ldots,\sigma_s)$ with $\sigma_i$ arranged in a decreasing order. Denote 
		  $$f(x):= (1-\epsilon)x+\epsilon x^{-1}.$$
		   Then
		  \eqref{eq:CondNumReductionIter} shows that $\qmat{H}_{new} $ has singular values $f(\sigma_i) = (1-\epsilon)\sigma_i+\epsilon {\sigma_i}^{-1}$, $i=1,\ldots,s$.  
		  We first consider the upper bound of $f$ on $[\sigma_s,1]$ and $[1,\sigma_1]$, respectively. 

		For any    $\epsilon$ and $x\in [1,\sigma_1]$, 
		$$f(x)=(1-\epsilon)x+\epsilon {x}^{-1}\leq(1-\epsilon)x+\epsilon x\leq x,$$
		 meaning that $f(x)\leq \sigma_1$ on $[1,\sigma_1]$. 

		Next, we consider $x\in [\sigma_s,1]$. Observe that $f(x)$ is convex when $x>0$, which achieves the maximal value on the boundary. It follows from $\epsilon\geq \sigma_s$ that $f(\sigma_s) \geq (1-\epsilon)\sigma_s + \sigma_s \sigma_s^{-1} \geq 1 = f(1)$, i.e., $f$ is upper bounded by $f(\sigma_s)=\left(1-\epsilon\right)\sigma_s+\epsilon/\sigma_s$ on $[\sigma_s,1]$. Furthermore, as $\sigma_s\leq 1$ and $\epsilon\leq \delta\sigma_s $, 
		$$f(x)\leq \left(1-\epsilon\right)\sigma_s+\epsilon/\sigma_s \leq (1-\delta \sigma_s )\sigma_s+\delta\sigma_s/\sigma_s =\delta+\sigma_s-\delta\sigma_s^2<\delta + 1/(4\delta).$$

		The analysis above together with $\sigma_1\leq \sqrt{2}$ in Proposition \ref{thm:spectralNormProofOfpseudoQR} shows that the largest singular value of $\qmat{H}_{new}$ cannot exceed $\max \{\sqrt{2},\delta + 1/(4\delta)\}$.
		
		On the other hand, 		it follows from the convexity of  $f(x)$ that when $x>0$,  $ f(x)\geq 2\sqrt{\epsilon(1-\epsilon)}$. The range of $\epsilon$ implies $2\sqrt{\epsilon(1-\epsilon)}\geq  2\sqrt{\sigma_s(1-\sigma_s) }$. Thus $f(x)\geq 2\sqrt{\sigma_s(1-\sigma_s) }$.

		 
		We also need a lower bound on $\sigma_s $. When $\kappa(H)>\max \{2\sqrt{2}\delta,2\delta^2+1/2\} = 2\delta \max \{\sqrt{2},\delta + 1/(4\delta)\} \geq 2\delta \sigma_1$, we   obtain that $\sigma_s\leq 1/\left(2\delta\right)$. 

Comparing the two upper bounds on $f$  obtained  previously,			 if $\sigma_1\geq \delta+1/(4\delta)$,
		 \begin{align} 
			\kappa(\qmat{H}_{new})  
			&\leq \frac{\sigma_1}{2\sqrt{\sigma_s\left(1-\sigma_s\right)}};
		 \end{align}
		 then 
		 \begin{align}
			\frac{\kappa(\qmat{H}_{new}) }{\sqrt{\kappaq{H}}}\leq\frac{\sqrt{\sigma_1}}{2\sqrt{\left(1-\sigma_s\right)}}\leq\sqrt{\sigma_1}/\sqrt{2} \leq \sqrt{\max \{\sqrt{2},\delta + 1/(4\delta)\}}/\sqrt{2}  \leq 1,
		 \end{align}
		 where the second inequality follows from $\sigma_s\leq 1/\left(2\delta\right)<1/2$, and the last one comes from the range of $\delta$.
		 Similarly, if $\sigma_1<\delta+1/(4\delta)$,
		 \begin{align}
			\kappa(\qmat{H}_{new})  
			&\leq \frac{\delta+1/(4\delta)}{2\sqrt{\sigma_s\left(1-\sigma_s\right)}};
		 \end{align} then 
		 \begin{align}
			\frac{\kappa(\qmat{H}_{new}) }{\sqrt{\kappaq{H}}}\leq\frac{\delta+1/(4\delta)}{2\sqrt{\sigma_1\left(1-\sigma_s\right)}}\leq\frac{\delta+1/(4\delta)}{2\sqrt{1-1/(2\delta)}}<1,
		 \end{align}
	where the second inequality follows from $\sigma_1\geq 1$ and $\sigma_s\leq 1/\left(2\delta\right)$, while the last one comes from that $\frac{\delta+1/(4\delta)}{2\sqrt{1-1/(2\delta)}}$	is non-decreasing on $[1, \sqrt{7}/2]$. The result follows.

	\end{proof}

\begin{proof}[Proof of Corollary \ref{col:correction_H_iteratively}]
Using the analysis in Proposition \ref{thm:CondReductionSpeed}, one can  use the induction method to show that if all the singular values $\sigma_i(\qmat{H}_k)$ lies in  $\bigzhongkuohao{ 2\sqrt{\sigma_s(\qmat{H}_{k-1})(1-\sigma_s(\qmat{H}_{k-1}) ) },  \max \{\sqrt{2},\delta + 1/(4\delta)\} }$, then all $\sigma_i(\qmat{H}_{k+1})$ also lie in $[2\sqrt{\sigma_s(\qmat{H}_{k})(1-\sigma_s(\qmat{H}_{k}) ) },  \max \{\sqrt{2},\delta + 1/(4\delta)\}  ]$.  Similar to the proof of   Proposition \ref{thm:CondReductionSpeed},  $\kappa(\qmat{H}_{k+1})< \sqrt{\kappa(\qmat{H}_k)}$.
\end{proof}

\begin{proof}[Proof of Proposition \ref{prop:U_tildeb_span_2t}] The idea of the proof essentially follows from \cite[Theorem 1]{wiegmann1955some}. 
	We first use the induction method to show that $\spanmat{U_{\tilde{b}}, \adjointJ \overline{ U_{\tilde{b}}  }  }=2t$. 	For $t=1$, this claim is true as $u_1 \perp\adjointJ \overline{u_1}$. Suppose now that we have found 
	 $U_{k}:=[u_{i_1},\ldots,u_{i_{k}}] \in U_b~(k<t)$  such that   $\spanmat{U_{k},\adjointJ\overline{U_{k}}}$ has dimension $2k$. Note that $U_b$ is partially orthonormal  and $\spanmat{U_b}$ has dimension $2t>2k$, and so there always exists at least a     $u_{i_{k+1}}\in U_b\setminus U_k$, such that  
	 \begin{align}\label{eq:u_k+1_notin_span_UM}
		u_{i_{k+1}}\notin \spanmat{ U_k,\adjointJ \overline{U_k}}.   
	 \end{align}
	 We first show that \eqref{eq:u_k+1_notin_span_UM} is equivalent to $\adjointJ \overline{u_{i_{k+1}}}\notin \spanmat{ U_k,\adjointJ \overline{U_k}}$. Suppose on the contrary that $\adjointJ \overline{u_{i_{k+1}}}\in \spanmat{ U_k,\adjointJ \overline{U_k}}$, which by Lemma \ref{lem:J-adjoint} is equivalent to
	 \begin{align*}
		 \overline{u_{i_{k+1}}} = \adjointJ^*\adjointJ \overline{u_{i_{k+1}}}\in \spanmat{ \adjointJ^* U_k,\adjointJ^*\adjointJ \overline{U_k}} \Leftrightarrow {u_{i_{k+1}}}    \in \spanmat{ \adjointJ \overline{U_k},  {U_k}},
	 \end{align*}
	 deducing a contradiction. 
	 
	
	 Next, we will prove that $\spanmat{U_{k},\adjointJ\overline{U_{k}},u_{i_{k+1}},\adjointJ\overline{u_{i_{k+1}}}}$ has dimension $2(k+1)$. 
	Denote   $M:=[U_k,\adjointJ U_k]$, $P_{M}:= MM^\dagger$, and $P_{M^\perp}:= I_{2m} - MM^\dagger$ the orthogonal projection onto $\spanmat{M}$ and $\spanmat{M}^\perp$, respectively. Then $u_{i_{k+1}}$ can be divided into two part:
	\begin{align}
		 P_{M^\perp}u_{i_{k+1}}\perp\spanmat{M}\quad &\text{and}\quad P_M u_{i_{k+1}}\in\spanmat{M}.
	\end{align}
	Correspondingly, $\adjointJ \overline{u_{i_{k+1}}}$ can be divided into $P_{M^\perp}\adjointJ\overline{u_{i_{k+1}}}$ and $P_M \adjointJ \overline{u_{i_{k+1}}}$, where we also notice that 
	\begin{align*}
		\adjointJ\overline{P_{M^\perp}u_{i_{k+1}} }=\adjointJ\overline{ u_{i_{k+1}} } - \adjointJ \overline{P_M}\overline{u_{i_{k+1}}} =\adjointJ\overline{ u_{i_{k+1}} } - P_M\adjointJ\overline{u_{i_{k+1}}}=P_{M^\perp}\adjointJ\overline{u_{i_{k+1}}},
	\end{align*}
	with the second equality   from Lemma \ref{lem:j_pm_jstar}. 
	
	By Lemma \ref{lem:J-adjoint}, $P_{M^\perp}u_{i_{k+1}} \perp \adjointJ\overline{P_{M^\perp}u_{i_{k+1}} }$, and due to the above relation,   $\adjointJ\overline{P_{M^\perp}u_{i_{k+1}} }\perp M$. Thus,
	\[
		\spanmat{M,P_{M^\perp}u_{i_{k+1}},  P_{M^\perp}\adjointJ\overline{u_{i_{k+1}}} } =\spanmat{M, P_{M^\perp}u_{i_{k+1}},\adjointJ\overline{P_{M^\perp}u_{i_{k+1}} } }  	
	\]
	has dimension $2(k+1)$. Therefore, 
	\begin{align*}
	&\spanmat{U_k,\adjointJ\overline{U_k},u_{i_{k+1}},\adjointJ\overline{u_{i_{k+1}}}} \\
	=& \spanmat{M, P_{M^\perp}u_{i_{k+1}}+P_{M}u_{i_{k+1}},       P_{M}\adjointJ\overline{u_{i_{k+1}}}+P_{M^\perp}\adjointJ\overline{u_{i_{k+1}}} }	
	\end{align*}
	also has dimension $2(k+1)$. Thus the induction method shows that there exists $U_{\tilde{b}} =[u_{i_1},\ldots,u_{i_t}] \in U_b$, such that $\spanmat{U_{\tilde{b}},\adjointJ U_{\tilde{b}}}$ has dimension $2t$.

		 It is obvious that $\range{U_{\tilde{b}}}\subset\range{U_b}$. Furthermore, for any $u$ in $U_{\tilde{b}}$ corresponding to singular value $\sigma$ of $\chiQ{Y}$,  $\adjointJ\overline{u}$ is also corresponding to $\sigma$. which means that  $\range{\adjointJ\overline{U_{\tilde{b}}}}\subset\range{U_b}$. Recall that $\range{U_b}$ also has dimension $2t$; thus   $\range{U_{\tilde{b}},\adjointJ\overline{U_{\tilde{b}}}}=\range{U_b}$.
	\end{proof}

\section{Proofs in the sub-Gaussian subsection}

\subsection{Real representation}
The analysis relies on the real representation of a quaternion matrix. Similar to the complex representation, a quaternion matrix $\qmat{Q}=Q_w+Q_x\bi+Q_y\bj+Q_z\bk \in\bbQ^{m\times n}$ has a (full) real representation \cite{zhangQuaternionsMatricesQuaternions1997,RandomizedQSVD}:
\begin{align*}
	\Upsilon_{\qmat{Q}} := \begin{bmatrix}
		Q_w & -Q_x & -Q_y & -Q_z \\
		Q_x & Q_w & -Q_z & Q_y \\
		Q_y & Q_z & Q_w & -Q_x \\
		Q_z & -Q_y & Q_x & Q_w
		\end{bmatrix} \in\mathbb R^{4m\times 4n}.
\end{align*}
Similar to the complex representation $\chiQ{Q}$, $\Upsilon_{\qmat{Q}}$ also keeps several basic properties of $\qmat{Q}$, such as 
$$\qmat{C}=\qmat{A}\qmat{B} \Leftrightarrow \Upsilon_{\qmat{C}} = \Upsilon_{\qmat{A}}\Upsilon_{\qmat{B}};$$
 $$\qmat{Q} ~\text{is row/column-orthonormal if and only if}~  \Upsilon_{\qmat{Q}}~\text{is row/column-orthonormal}.$$ 

For convenience, we   use $\qmat{Q}_r \in\mathbb R^{4m\times n}$ to denote the first column block of $\Upsilon_{\qmat{Q}}$ as the compact real representation of $\qmat{Q}$, i.e., 
\begin{align*}
	\qmat{Q}_r :=\begin{bmatrix}
		Q_w\\
		Q_x\\
		Q_y\\
		Q_z
	\end{bmatrix} \in\mathbb R^{4m\times n}.
\end{align*}
Spectral and Frobenius norms of a quaternion matrix $\mathbf{Q}$ can be represented by    the real representations as below:
$$\|\mathbf{Q}\|_2=\|\Upsilon_\mathbf{Q}\|_2\geq\normSpectral{\mathbf{Q}_r},\quad\|\mathbf{Q}\|_F=\frac{1}{2}\|\Upsilon_\mathbf{Q}\|_F=\normF{\mathbf{Q}_r};$$
$$ \normF{\qmat{A}\qmat{B}} = \normF{\Upsilon_{\qmat{A}}\qmat{B}_r  }.  $$

\subsection{Technical lemmas}

\begin{lemma}(\cite[Lemma 5.24]{vershynin_2012_Introduction_non-asymptotic})
	\label{lem:prod_subG}
	Let $x_1, \ldots, x_N$ be real independent centered sub-Gaussian random variables. Then $x =  [x_1, \ldots, x_N]^T$ is a centered sub-Gaussian random vector in $\mathbb{R}^n$, and
	\[
	\|x\|_{\psi_2} \leq C \max_{1\leq i \leq N} \|x_i\|_{\psi_2},
	\]
	where $C$ is an absolute constant.
\end{lemma}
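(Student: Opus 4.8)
The plan is to verify the marginal definition of a sub-Gaussian vector directly. Fix a unit vector $y=[y_1,\ldots,y_N]^T\in\mathbb R^N$ with $\sum_i y_i^2=1$ and examine the one-dimensional marginal $\innerprod{x}{y}=\sum_{i=1}^N y_i x_i$. Centering is immediate: $\mathbb E\innerprod{x}{y}=\sum_i y_i\,\mathbb E x_i=0$ by linearity and the assumption that each $x_i$ is centered. It then suffices to bound $\|\sum_i y_ix_i\|_{\psi_2}$ uniformly over all unit vectors $y$, and afterwards take a supremum.

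The key structural input I would recall is the equivalence, up to absolute constants, between the tail/moment characterization of a scalar sub-Gaussian variable used in the excerpt and the moment-generating-function characterization: for a centered sub-Gaussian $Z$ one has $\mathbb E\exp(sZ)\leq\exp\bigxiaokuohao{c's^2\|Z\|_{\psi_2}^2}$ for every $s\in\mathbb R$, with $c'$ absolute. The crucial consequence is the $\ell^2$-subadditivity of the $\psi_2$-norm for independent centered summands: if $Z_1,\ldots,Z_N$ are independent and centered sub-Gaussian, then $\sum_i Z_i$ is centered sub-Gaussian and
\begin{align*}
	\Bigl\|\sum_i Z_i\Bigr\|_{\psi_2}^2 \leq C\sum_i \|Z_i\|_{\psi_2}^2 .
\end{align*}
This follows by factorizing the moment generating function across the independent variables, $\mathbb E\exp(s\sum_i Z_i)=\prod_i\mathbb E\exp(sZ_i)\leq\exp\bigxiaokuohao{c's^2\sum_i\|Z_i\|_{\psi_2}^2}$, and reading the combined sub-Gaussian constant off the exponent.

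I would then apply the display with $Z_i=y_ix_i$, which are independent and centered. Using positive homogeneity $\|y_ix_i\|_{\psi_2}=|y_i|\,\|x_i\|_{\psi_2}$,
\begin{align*}
	\Bigl\|\sum_i y_ix_i\Bigr\|_{\psi_2}^2 \leq C\sum_i y_i^2\|x_i\|_{\psi_2}^2 \leq C\Bigl(\max_i\|x_i\|_{\psi_2}^2\Bigr)\sum_i y_i^2 = C\max_i\|x_i\|_{\psi_2}^2 ,
\end{align*}
where the last equality uses $\|y\|=1$. Since this bound is independent of $y$, taking the supremum over unit vectors gives $\|x\|_{\psi_2}^2=\sup_{\|y\|=1}\|\innerprod{x}{y}\|_{\psi_2}^2\leq C\max_i\|x_i\|_{\psi_2}^2$, which is the claim with the absolute constant replaced by $\sqrt C$.

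The main obstacle is the $\ell^2$-subadditivity step: it rests on the equivalence between the tail/moment definition of sub-Gaussianity adopted here and the moment-generating-function bound that makes independence exploitable through the factorization of the MGF. Establishing this equivalence up to absolute constants is the only non-routine ingredient (it is exactly the content invoked from Vershynin), after which the marginal estimate and the supremum over $y$ are mechanical.
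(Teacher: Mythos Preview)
Your argument is correct and is essentially the standard proof of this fact (as given in Vershynin's reference). Note, however, that the paper does not supply its own proof of this lemma: it is quoted directly from \cite[Lemma 5.24]{vershynin_2012_Introduction_non-asymptotic} without argument, so there is no in-paper proof to compare against. Your write-up would serve perfectly well as a self-contained justification.
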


\begin{proposition}(Hoeffding-type inequality, \cite[Proposition 5.10]{vershynin_2012_Introduction_non-asymptotic})
	\label{prop:hoeffding}
	Let $x_1, \ldots, x_N$ be real independent centered sub-Gaussian random variables, and let $K = \max_i \|x_i\|_{\psi_2}$. 	
	Then for every $a = [a_1, \ldots, a_N]^T \in \mathbb{R}^N$ and every $t \geq 0$, we have
\[
\mathbb{P}\left\{ \left| \sum_{i=1}^N a_i x_i \right| \geq t \right\} \leq e \cdot \exp \left( - \frac{ct^2}{K^2 \|a\|^2} \right),
\]
where $c > 0$ is an absolute constant.
\end{proposition}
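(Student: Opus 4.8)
The plan is to use the classical exponential-moment (Chernoff) method. By homogeneity we may rescale so that $\|a\|=1$: otherwise replace each $a_i$ by $a_i/\|a\|$ and $t$ by $t/\|a\|$, which produces exactly the claimed $\|a\|^2$ in the denominator. The one analytic input that does real work is the equivalence of the sub-Gaussian characterizations: if $x$ is centered with $\|x\|_{\psi_2}\le M$, then
\[
\mathbb{E}\exp(\lambda x)\le \exp(C\lambda^2 M^2)\qquad\text{for all }\lambda\in\mathbb{R},
\]
with $C>0$ an absolute constant. I would derive this by writing $\exp(\lambda x)=1+\lambda x+\sum_{p\ge 2}\lambda^p x^p/p!$, taking expectations, using $\mathbb{E}x=0$ to discard the linear term, bounding $\mathbb{E}|x|^p\le (M\sqrt{p})^p$ directly from the definition of $\|x\|_{\psi_2}$, and summing the series (using, e.g., $p!\ge(p/e)^p$). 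This is essentially \cite[Lemma 5.5]{vershynin_2012_Introduction_non-asymptotic}.

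Granting that bound, set $S:=\sum_{i=1}^N a_i x_i$. Since $\|a_ix_i\|_{\psi_2}\le |a_i|K$ and the $x_i$ are independent,
\[
\mathbb{E}\exp(\lambda S)=\prod_{i=1}^N\mathbb{E}\exp(\lambda a_i x_i)\le\prod_{i=1}^N\exp\big(C\lambda^2a_i^2K^2\big)=\exp\big(C\lambda^2K^2\big),
\]
where the last step uses $\sum_i a_i^2=1$. For $\lambda>0$, Markov's inequality applied to $\exp(\lambda S)$ gives $\mathbb{P}\{S\ge t\}\le\exp(-\lambda t+C\lambda^2K^2)$; choosing $\lambda=t/(2CK^2)$ yields $\mathbb{P}\{S\ge t\}\le\exp(-t^2/(4CK^2))$. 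As $-x_i$ is centered with the same $\psi_2$-norm, the same bound holds for $\mathbb{P}\{S\le -t\}$, and a union bound gives $\mathbb{P}\{|S|\ge t\}\le 2\exp(-t^2/(4CK^2))$; absorbing $2\le e$ into the prefactor gives the stated inequality with $c=1/(4C)$. Undoing the rescaling reinstates $\|a\|^2$ in the denominator.

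The only genuine obstacle is the Laplace-transform estimate itself — passing from the moment/tail definition of sub-Gaussianity used in the paper to the sub-Gaussian moment generating function bound — and in particular verifying that the centering hypothesis is actually used to kill the $\mathbb{E}x$ term (without it, $\mathbb{E}\exp(\lambda x)$ picks up an unwanted linear-in-$\lambda$ factor, and the tail estimate fails). The remaining ingredients — independence, the Chernoff bound, optimizing in $\lambda$, and the two-sided union bound — are routine. Since this statement is \cite[Proposition 5.10]{vershynin_2012_Introduction_non-asymptotic} verbatim, one may instead simply cite it; the above is how it is reconstructed.
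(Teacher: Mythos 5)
Your argument is correct and coincides with the proof in the cited source: the paper does not prove this proposition at all but imports it verbatim as \cite[Proposition 5.10]{vershynin_2012_Introduction_non-asymptotic}, and Vershynin's proof is exactly your rescaling-plus-Chernoff argument built on the sub-Gaussian moment generating function bound, so there is nothing in the paper to diverge from. One small caveat on your sketch of that auxiliary bound: the Taylor-series estimate with $\mathbb{E}|x|^p\le (M\sqrt{p})^p$ yields $\mathbb{E}\exp(\lambda x)\le\exp(C\lambda^2M^2)$ only in the regime $|\lambda|M\lesssim 1$, and the large-$\lambda$ regime requires a separate case (e.g.\ Young's inequality $\lambda x\le c\lambda^2M^2+x^2/(4cM^2)$ combined with $\mathbb{E}\exp\left(x^2/(CM^2)\right)\le e$, as in Vershynin's Lemma 5.5); since you explicitly defer to that lemma, this is not a gap, merely a point to state if you wrote the proof out in full.
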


\begin{definition}(sub-exponential random variable, \cite{vershynin_2012_Introduction_non-asymptotic})
	A real random variable $x$ is called a sub-exponential random variable if satisfying
	\begin{align*}
		\mathbb{P}\left\{\left|x\right|>t\right\}\leq \exp\left(1-t/K_1\right)
	\end{align*}
	for all $t>0$. And the sub-exponential norm of $x$, denoted $\|x\|_{\psi_1}$, is defined as:
	\begin{align*}
		\|x\|_{\psi_1}=\sup_{p\geq1}p^{-1}(\mathbb{E}|x|^p)^{1/p}.
	\end{align*}
\end{definition}

	\begin{lemma}{(\cite[Corollary 5.17]{vershynin_2012_Introduction_non-asymptotic})}\label{lem:SubExpRVSum}
		Let $x_1,\ldots,x_N$ be real independent centered sub-exponential random real variables, and let $K=\max_{i}\|x_i\|_{\psi_1}$. Then, for every $\varepsilon\geq0$, we have:
		\begin{align*}
			\mathbb{P}\left\{\left|\sum_{i=1}^Nx_i\right|\geq\varepsilon N\right\}\leq2\exp\left[-c\min\left(\frac{\varepsilon^2}{K^2},\frac{\varepsilon}{K}\right)N\right],
		\end{align*}
		where $c>0$ is an absolulte constant. 
	\end{lemma}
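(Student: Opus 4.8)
The plan is to prove this Bernstein-type inequality via the classical exponential-moment (Chernoff) method, which reduces the tail bound to controlling the moment generating function (MGF) of each summand. First I would record the analytic fact that underlies the whole argument: a centered sub-exponential random variable has a well-behaved MGF in a neighborhood of the origin. Concretely, if $x$ is centered with $\|x\|_{\psi_1}=\kappa$, then there exist absolute constants $C_0>0$ and $c_0\in(0,1)$ such that
\[
\mathbb{E}\exp(\lambda x)\leq\exp\left(C_0\lambda^2\kappa^2\right)\qquad\text{whenever }|\lambda|\leq c_0/\kappa .
\]
This single-variable MGF bound is the technical heart of the argument, and I expect it to be the main obstacle.

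To establish it I would first pass from the tail condition to a moment condition: the sub-exponential tail $\mathbb{P}\{|x|>t\}\leq\exp(1-t/\kappa)$ implies the moment growth $\mathbb{E}|x|^p\leq(Cp\kappa)^p$ for all $p\geq1$ by integrating the tail. Then, expanding $\exp(\lambda x)=\sum_{p\geq0}\lambda^p x^p/p!$ and using $\mathbb{E}x=0$ to annihilate the linear term, I would bound $\mathbb{E}\exp(\lambda x)\leq 1+\sum_{p\geq2}|\lambda|^p\,\mathbb{E}|x|^p/p!$. The moment estimate turns this into a convergent geometric-type series once $|\lambda|\kappa$ is small enough, whose leading contribution is quadratic in $\lambda$; applying $1+u\leq e^u$ then yields the claimed $\exp(C_0\lambda^2\kappa^2)$.

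With the single-variable bound in hand, the rest is routine. Let $S=\sum_{i=1}^N x_i$. For $\lambda\in[0,c_0/K]$, Markov's inequality applied to $\exp(\lambda S)$ together with independence gives
\[
\mathbb{P}\{S\geq\varepsilon N\}\leq e^{-\lambda\varepsilon N}\prod_{i=1}^N\mathbb{E}\exp(\lambda x_i)\leq\exp\left(-\lambda\varepsilon N+C_0\lambda^2 N K^2\right),
\]
where I used $\|x_i\|_{\psi_1}\leq K$ for every $i$. I would then optimize the exponent over $\lambda$ on $[0,c_0/K]$, which splits into two regimes. The unconstrained minimizer $\lambda^\ast=\varepsilon/(2C_0K^2)$ is admissible precisely when $\varepsilon\leq 2C_0c_0K$, and substituting it produces an exponent of order $-\varepsilon^2 N/K^2$; when $\varepsilon$ is larger I would instead take the boundary value $\lambda=c_0/K$, which produces an exponent of order $-\varepsilon N/K$. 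Writing the two cases jointly yields the bound $\exp\!\left(-c\min(\varepsilon^2/K^2,\varepsilon/K)\,N\right)$ for the upper tail.

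Finally I would obtain the two-sided statement by applying the identical argument to the variables $-x_i$, which are again centered sub-exponential with the same norm, to control $\mathbb{P}\{S\leq-\varepsilon N\}$, and then combine the two one-sided estimates by a union bound; this produces the factor $2$ in front. The constant $c>0$ is absolute, being assembled from $C_0$ and $c_0$ alone.
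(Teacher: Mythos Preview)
Your proof is correct and follows the standard Chernoff-method argument for Bernstein's inequality. Note, however, that the paper does not actually prove this lemma: it is stated with a direct citation to \cite[Corollary 5.17]{vershynin_2012_Introduction_non-asymptotic} and used as a black box. Your argument is essentially the proof given in that reference, so there is nothing to compare beyond observing that you have reproduced the cited source's proof rather than anything original to the paper.
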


	\begin{lemma}{(\cite[Lemma 5.36]{vershynin_2012_Introduction_non-asymptotic})}\label{lem:Approximate isometries}
		Consider a real matrix $M$ that satisfies
		\begin{align*}
			\|M^TM-I\|\leq\max(\delta,\delta^2)
		\end{align*}
		for some $\delta>0$. Then 
		\begin{align}\label{eq:s_min_max}
			1-\delta\leq s_{\min}(M)\leq s_{\max}(M)\leq 1+\delta.
		\end{align}
		Conversely, if $M$ satisfies (\ref{eq:s_min_max}) for some $\delta>0$ then $\|M^TM-I\|\leq3\max(\delta,\delta^2)$.
	\end{lemma}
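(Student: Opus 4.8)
The plan is to reduce the two implications to elementary scalar inequalities, one singular value at a time, via the spectral characterization of the operator norm. The starting point is that $M^{T}M-I$ is a real symmetric matrix, so $\|M^{T}M-I\|$ equals the largest eigenvalue of $M^{T}M-I$ in absolute value. Because the eigenvalues of $M^{T}M$ are precisely the squared singular values $s_{i}(M)^{2}\ge 0$, this yields the identity
\[
\|M^{T}M-I\|=\max_{i}\bigl|s_{i}(M)^{2}-1\bigr|.
\]
Writing $s=s_{i}(M)$ and factoring $s^{2}-1=(s-1)(s+1)$ with $s\ge 0$ reduces the whole lemma to controlling each $|s-1|$ against $\max(\delta,\delta^{2})$ and, in the converse direction, the reverse.

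For the forward implication I would assume $|s^{2}-1|\le\max(\delta,\delta^{2})$ for every singular value $s$ and derive $1-\delta\le s\le 1+\delta$, splitting on the size of $\delta$. When $\delta\le 1$ one has $\max(\delta,\delta^{2})=\delta$, and from $s^{2}\le 1+\delta$ together with $1+\delta\le(1+\delta)^{2}$ I get $s\le 1+\delta$; from $s^{2}\ge 1-\delta$ together with $(1-\delta)^{2}\le 1-\delta$ (valid exactly for $\delta\le 1$, since $1-\delta-(1-\delta)^{2}=\delta(1-\delta)\ge 0$) I get $s\ge 1-\delta$. When $\delta>1$ one has $\max(\delta,\delta^{2})=\delta^{2}$; the lower bound is then automatic since $1-\delta<0\le s$, while $s^{2}\le 1+\delta^{2}\le(1+\delta)^{2}$ gives the upper bound. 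This establishes \eqref{eq:s_min_max}.

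For the converse I would assume \eqref{eq:s_min_max} and bound $|s^{2}-1|=|s-1|\,(s+1)$. In both regimes $s\le 1+\delta$, so $s+1\le 2+\delta$, and $|s-1|\le\delta$: when $\delta\le 1$ this is immediate from $1-\delta\le s\le 1+\delta$, while when $\delta>1$ the lower bound in \eqref{eq:s_min_max} is vacuous, but $0\le s\le 1+\delta$ still forces $|s-1|\le\delta$. Hence $|s^{2}-1|\le\delta(2+\delta)=2\delta+\delta^{2}$, which is $\le 3\delta$ when $\delta\le 1$ (since $\delta^{2}\le\delta$) and $\le 3\delta^{2}$ when $\delta>1$ (since $2\delta\le 2\delta^{2}$); in either case $\|M^{T}M-I\|\le 3\max(\delta,\delta^{2})$.

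The only real subtlety—and the place where the factor $3$ and the asymmetry between $\delta$ and $\delta^{2}$ enter—is the bookkeeping across the two regimes $\delta\le 1$ and $\delta>1$. Once the reduction to $\max_{i}|s_{i}(M)^{2}-1|$ is in hand, every remaining step is a one-line scalar estimate, so I do not anticipate any genuine obstacle beyond keeping the cases straight.
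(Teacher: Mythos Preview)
Your argument is correct; the reduction to $\|M^{T}M-I\|=\max_{i}|s_{i}(M)^{2}-1|$ together with the case split on $\delta\le 1$ versus $\delta>1$ is exactly the standard route. Note that the paper does not supply its own proof of this lemma but simply quotes it from \cite{vershynin_2012_Introduction_non-asymptotic}, and your write-up matches the argument given there.
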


	\begin{lemma}\label{lem:rvsSumIneq}
		Let $x$, $y$ be two  real random variables; we have:
		\begin{align*}
			\mathbb{P}\{x+y>2\varepsilon\}\leq\mathbb{P}\{x>\varepsilon\}+\mathbb{P}\{y>\varepsilon\}
		\end{align*}
		
	\end{lemma}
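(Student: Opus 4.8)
The plan is to prove this by an elementary event-inclusion argument followed by the union bound; no distributional assumptions, moments, or independence are required. First I would record the pointwise implication: on any sample point $\omega$ at which $x(\omega)+y(\omega)>2\varepsilon$, it cannot happen that both $x(\omega)\le\varepsilon$ and $y(\omega)\le\varepsilon$, since that would force $x(\omega)+y(\omega)\le 2\varepsilon$, a contradiction. Hence at least one of $x(\omega)>\varepsilon$ or $y(\omega)>\varepsilon$ must hold, which is precisely the set inclusion
\[
\{x+y>2\varepsilon\}\subseteq\{x>\varepsilon\}\cup\{y>\varepsilon\}.
\]

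Then I would apply monotonicity and finite subadditivity of the probability measure to this inclusion:
\[
\mathbb{P}\{x+y>2\varepsilon\}\le\mathbb{P}\bigl(\{x>\varepsilon\}\cup\{y>\varepsilon\}\bigr)\le\mathbb{P}\{x>\varepsilon\}+\mathbb{P}\{y>\varepsilon\},
\]
which is exactly the asserted bound.

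There is essentially no obstacle here; the one place meriting a line of care is the direction of the contrapositive when splitting the strict inequality $x+y>2\varepsilon$ into the two strict events $\{x>\varepsilon\}$ and $\{y>\varepsilon\}$ — phrasing the complement via the non-strict events $\{x\le\varepsilon\}$ and $\{y\le\varepsilon\}$ keeps the complementation clean and makes the inclusion transparent. This lemma is then used downstream, together with the deviation bound (Theorem \ref{thm:ConclusionOfSingularValueEstimation}) and Proposition \ref{prop:AVOmega}, to merge the two multiplicative factors appearing in the QB error of Theorem \ref{thm:ErrorAnalysisOfSubGaussian} into a single failure-probability estimate.
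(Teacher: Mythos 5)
Your proof is correct: the inclusion $\{x+y>2\varepsilon\}\subseteq\{x>\varepsilon\}\cup\{y>\varepsilon\}$ follows from the contrapositive you state, and monotonicity plus finite subadditivity of $\mathbb{P}$ finishes the argument with no assumptions on the joint law of $x$ and $y$. The paper proves the same bound by a slightly different route: it conditions on $\{y>\varepsilon\}$ versus $\{y\le\varepsilon\}$ and applies the law of total probability, bounding the first term by $\mathbb{P}\{y>\varepsilon\}$ and the second by $\mathbb{P}\{x>\varepsilon\}$. As written there, the final step passes from the conditional probability $\mathbb{P}\{x+y>2\varepsilon\mid y\le\varepsilon\}$ to the unconditional $\mathbb{P}\{x>\varepsilon\}$ after the factor $\mathbb{P}\{y\le\varepsilon\}$ has already been dropped, which is only rigorous if one keeps that factor and uses $\mathbb{P}\{x>\varepsilon\mid y\le\varepsilon\}\mathbb{P}\{y\le\varepsilon\}=\mathbb{P}\{x>\varepsilon,\,y\le\varepsilon\}\le\mathbb{P}\{x>\varepsilon\}$; your union-bound formulation sidesteps this bookkeeping entirely and is the cleaner and more transparent of the two, while both deliver exactly the same inequality used downstream to combine the failure probabilities in Theorem \ref{thm:ErrorAnalysisOfSubGaussian}.
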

	\begin{proof}
		Using formula of total probability, 
		\begin{align*}
			\mathbb{P}\{x+y>2\varepsilon\} &=\mathbb{P}\{x+y>2\varepsilon|y>\varepsilon\}\mathbb{P}\{y>\varepsilon\}+\mathbb{P}\{x+y>2\varepsilon|y\leq\varepsilon\}\mathbb{P}\{y\leq\varepsilon\} \\
			&\leq \mathbb{P}\{y>\varepsilon\}+\mathbb{P}\{x+y>2\varepsilon|y\leq\varepsilon\} \\
			&\leq \mathbb{P}\{y>\varepsilon\}+\mathbb{P}\{x>\varepsilon\}. \\
		\end{align*}
	\end{proof}
	
	\begin{lemma}\label{lem:rvsMultiplneq}
		Let $x$, $y$ be positive random variables; we have:
		\begin{align*}
			\mathbb{P}\{xy>\varepsilon^2\}\leq\mathbb{P}\{x>\varepsilon\}+\mathbb{P}\{y>\varepsilon\}
		\end{align*}
	\end{lemma}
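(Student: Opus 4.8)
The plan is to reduce the statement to an elementary set inclusion followed by the union bound, exactly in the spirit of the proof of Lemma \ref{lem:rvsSumIneq}. First I would observe that, since $x$ and $y$ are positive (and $\varepsilon \ge 0$), the event $\{xy > \varepsilon^2\}$ forces at least one of $x > \varepsilon$ or $y > \varepsilon$ to occur. Indeed, on the complementary event $\{x \le \varepsilon\} \cap \{y \le \varepsilon\}$ one may multiply the two inequalities — this is legitimate precisely because both factors and $\varepsilon$ are nonnegative — to obtain $0 < xy \le \varepsilon^2$, contradicting $xy > \varepsilon^2$. Hence $\{xy > \varepsilon^2\} \subseteq \{x > \varepsilon\} \cup \{y > \varepsilon\}$.

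Then I would simply invoke monotonicity and subadditivity of the probability measure:
\[
\mathbb{P}\{xy > \varepsilon^2\} \le \mathbb{P}\big(\{x > \varepsilon\} \cup \{y > \varepsilon\}\big) \le \mathbb{P}\{x > \varepsilon\} + \mathbb{P}\{y > \varepsilon\},
\]
which is exactly the claimed bound. Alternatively, mirroring Lemma \ref{lem:rvsSumIneq} verbatim, one can condition on $\{y > \varepsilon\}$ versus $\{y \le \varepsilon\}$ via the law of total probability; on $\{y \le \varepsilon\}$ the event $\{xy > \varepsilon^2\}$ implies $\{x > \varepsilon\}$, and one arrives at the same inequality.

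There is essentially no obstacle here. The only point that deserves a moment of care is the multiplication of the two inequalities $x \le \varepsilon$ and $y \le \varepsilon$ into $xy \le \varepsilon^2$, which uses the positivity hypothesis on $x$ and $y$ (and nonnegativity of $\varepsilon$); if $\varepsilon < 0$ the statement is vacuous or trivial. I would therefore keep the written proof to just two or three lines.
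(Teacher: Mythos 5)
Your proof is correct. The paper takes a slightly different (though equally short) route: it sets $x_1=\log x$, $y_1=\log y$ and invokes Lemma \ref{lem:rvsSumIneq}, since $xy>\varepsilon^2$ is equivalent to $\log x+\log y>2\log\varepsilon$; positivity of $x,y$ is used there to make the logarithms well defined, and the case $\varepsilon\le 0$ is trivial in either treatment. Your primary argument instead proves the inclusion $\{xy>\varepsilon^2\}\subseteq\{x>\varepsilon\}\cup\{y>\varepsilon\}$ directly (on $\{x\le\varepsilon\}\cap\{y\le\varepsilon\}$ one multiplies the inequalities, using nonnegativity) and concludes by monotonicity and subadditivity of $\mathbb{P}$. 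This is marginally more elementary: it bypasses the logarithmic change of variables and any appeal to the additive lemma, and it makes the role of the positivity hypothesis explicit at the one step where it is needed. Your alternative conditioning argument is essentially the paper's route unwound, since Lemma \ref{lem:rvsSumIneq} is itself proved by total probability. Either version is acceptable; nothing is missing.
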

	\begin{proof}
		This is by setting  $x_1=\log (x)$, $y_1=\log (y)$ and using   Lemma \ref{lem:rvsSumIneq}.
	\end{proof}


	\subsection{Proof of Proposition \ref{prop:AVOmega}}
	\begin{proof}[Proof of Proposition \ref{prop:AVOmega}]
	Let $\Upsilon_{\qmat{A}} \in\mathbb R^{4N\times 4m} $ be the real representation of $\qmat{A}$. Let the full SVD of $\Upsilon_{\qmat{A}}$ be $\Upsilon_{\qmat{A}}=U_{\Upsilon_{\qmat{A}}}\Sigma  V_{\Upsilon_{\qmat{A}}}^T$ with $U_{\Upsilon_{\qmat{A}}}\in\mathbb R^{4N\times 4N}, \Sigma\in\mathbb R^{4N\times 4m}, V_{\Upsilon_{\qmat{A}}}\in \mathbb R^{4m\times 4m}$.    Denote $\hat{{V}}=V_{\Upsilon_{\qmat{A}}}^T\Upsilon_{\qmat{V}}$. Since $\qmat{V}$ is row-orthonormal, so is $\Upsilon_{\qmat{V}}$;   then $\hat{{V}}$ is also  row-orthonormal. We have:
		\begin{align}
			\normFSquare{\qmat{A} \qmat{V}\bdOmega}=\normFSquare{\Upsilon_{\qmat{A}} \Upsilon_{\qmat{V}}\bdOmega_r}
			=\normFSquare{U_{\Upsilon_{\qmat{A}}}\Sigma  V_{\Upsilon_{\qmat{A}}}^*\Upsilon_{\qmat{V}}\bdOmega_r}
			=\normFSquare{\Sigma\hat{V}\bdOmega_r}.
		\end{align}
  Write 
		\begin{align}
			\normFSquare{\Sigma\hat{V}\bdOmega_r}=\sum_{1\leq i\leq \min\{ 4N,4m  \},1\leq j\leq s}{{\Sigma}}_{i,i}^2\bigxiaokuohao{{\hat{V}\bdOmega_r}}_{i,j}^2 \bigxiaokuohao{I_s}_{j,j}^2,
		\end{align}
		where   $I_s$ is the identity matrix of size $s\times s$. Then 
		\begin{align}
			\normFSquare{\Sigma\hat{V}\bdOmega_r}\leq \normFSquare{\Sigma}\max_{i,j}|(\hat{V}\bdOmega_r)_{i,j}|^2\normFSquare{I}= s\normFSquare{\Sigma}\max_{i,j}|(\hat{V}\bdOmega_r)_{i,j}|^2.
		\end{align}
		For any fixed $i,j$, 
		\begin{align}
			\bigjueduizhi{(\hat{V}\bdOmega_r)_{i,j}}=\bigjueduizhi{\sum_{k}\hat{V}_{i,k}(\bdOmega_r)_{k,j}};
		\end{align}
since each $(\bdOmega_r)_{k,j}$ is an independent centered sub-gaussian random variable 	with the same sub-Gaussian norm $K$, by Hoeﬀding-type inequality (Proposition \ref{prop:hoeffding}), 
		\begin{align}
			\mathbb{P}\left\{\left|\sum_{k}\hat{V}_{i,k}(\bdOmega_r)_{k,j}\right|>t\right\}\leq e\cdot \exp\left(-\frac{ ct^2} {K^2 \sum_{k}\hat{V}_{i,k}^2   }\right) = e\cdot \exp\left(-\frac{ ct^2} {K^2     }\right),
		\end{align}
where the equality is because $\hat{V}$ is real partially orthonormal.		Then we union bound for all $i=1,\ldots ,\min\{4N,4m  \}$, $j=1,\ldots,s$:
		\begin{align}
			\mathbb{P}\left\{\max_{i,j}\left|\sum_{k}\hat{V}_{i,k}{(\bdOmega_r)}_{k,j}\right|>t\right\}&\leq  4e\cdot s\min\{N,m  \}\exp\left( -ct^2/K^2\right) \\
			&=    4e\cdot \exp\left(\log (s\min\{N,m\}  )-ct^2/K^2\right).
		\end{align}
		Combining inequalities above and the fact that $4\normFSquare{\qmat{A}} = \normFSquare{\Upsilon_{\qmat{A}}}= \normFSquare{\Sigma}$, the following inequality holds:
		\begin{align}
			\mathbb{P}\left\{\normFSquare{\qmat{A} \qmat{V}\bdOmega}>4st^2\normFSquare{\qmat{A}}\right\} &= \mathbb{P}\left\{\max_{i,j}\left|\sum_{k}\hat{V}_{i,k}{(\bdOmega_r)}_{k,j}\right|^2>t^2\right\}\\
			&\leq
			4e\cdot \exp\left(\log (s\min\{N,m\}  )-ct^2/K^2\right),
		\end{align}
i.e., 
		\begin{align}
			\mathbb{P}\left\{\normF{\qmat{A}\qmat{V}\bdOmega}>2\sqrt{s}t\normF{\qmat{A}}\right\}\leq
			4e\cdot \exp\left(\log (s\min\{N,m\}  )-ct^2/K^2\right).
		\end{align}
	\end{proof}
 
\subsection{Proof of Theorem \ref{thm:ConclusionOfSingularValueEstimation}}
	\begin{proof}[Proof of Theorem \ref{thm:ConclusionOfSingularValueEstimation}]
		First, from \cite{RandomizedQSVD}, $\sigma_{\min}(\Upsilon_{\qmat{A}}) = \sigma_{\min}(\qmat{A})$ and $\sigma_{\max}(\Upsilon_{\qmat{A}}) = \sigma_{\max}(\qmat{A})$. Thus we focus on the estimation of $\sigma_{\min}(\Upsilon_{\qmat{A}})$ and $\sigma_{\max}(\Upsilon_{\qmat{A}})$.      The conclusion is to prove that
		\begin{align*} 
			1 - C_K\sqrt{ \frac{n}{N} }- \frac{t}{\sqrt{N}}\leq \sigma_{\min}\left( \frac{\Upsilon_{\qmat{A}}}{\sqrt{4N}} \right)\leq \sigma_{\max}\left(\frac{\Upsilon_{\qmat{A}}}{\sqrt{4N}}\right)\leq 1 + C_K\sqrt{ \frac{n}{N} }+ \frac{t}{\sqrt{N}};
		\end{align*} 
		applying Lemma \ref{lem:Approximate isometries}  to   $M:=\Upsilon_{\qmat{A}}/\sqrt{4N}$, it suffices to prove that
		\begin{align}\label{eq:AA*-I_tmp}
			\|\frac{1}{4N}\Upsilon_{\qmat{A}}^T\Upsilon_{\qmat{A}}-I\|_2\leq\max(\delta,\delta^2)=:\varepsilon\quad\mathrm{where}\quad\delta=C\sqrt{\frac {n}{N}}+\frac t{\sqrt{N}}.
		\end{align}
		We can evaluate the spectral norm $\|\frac{1}{4N}\Upsilon_{\qmat{A}}^T\Upsilon_{\qmat{A}}-I\|_2$   on a $\frac{1}{4}$-net $\mathcal{N}$ of the unit sphere $S^{4n-1} = \{x\in\mathbb R^{4n}\mid x^T x = 1  \}$:
		\begin{align}\label{eq:AA*-I}
			\left\|\frac{1}{4N}\Upsilon_{\qmat{A}}^T\Upsilon_{\qmat{A}}-I\right\|_2\leq2\max_{x\in\mathcal{N}}\left|\left\langle(\frac{1}{4N}\Upsilon_{\qmat{A}}^T\Upsilon_{\qmat{A}}-I)x,x\right\rangle\right|=2\max_{x\in\mathcal{N}}\left|\frac{1}{4N}\|\Upsilon_Ax\|_2^2-1\right|.
		\end{align}
		Write $\Upsilon_{\qmat{A}}$ as a block matrix $ \Upsilon_{\qmat{A}}= \begin{bmatrix}
			A_w & -A_x & -A_y & -A_z \\
			A_x & A_w & -A_z & A_y \\
			A_y & A_z & A_w & -A_x \\
			A_z & -A_y & A_x & A_w
			\end{bmatrix} =: \begin{bmatrix}
			{B}_1\\
			{B}_2\\
			{B}_3\\
			{B}_4
		\end{bmatrix}$ with ${B}_i\in\mathbb R^{N\times 4n}$, $i=1,2,3,4$. 
	Since each row of $\qmat{A}$  is independent and  isotropic (Def. \ref{def:sub_gaussian_q_mat}),	  each   row   of ${B}_i$ is an independent sub-Gaussian isotropic real vector.
		
		Fix any real vector $x\in S^{4n-1}$,  we will upper bound   $P\left\{\left|\frac{1}{4N}\normSpectral{{B}_i x}-\frac{1}{4}\right|>\frac{\varepsilon}{2}\right\}$ for each fix $i$. The idea comes from the proof of \cite[Thm. 5.39]{vershynin_2012_Introduction_non-asymptotic}. First denote 
		\begin{align*}
			\|{B}_i x\|^2=\sum_{j=1}^{N}{  \bigxiaokuohao{({B}_i)_j x}^2}=:\sum_{j=1}^{N}{Z_j^2},
		\end{align*}
		where $({B}_i)_j$ represents the $j$-th row of ${B}_i$. As $({B}_i)_j~,j=1,\ldots,N$ are independent rows,  $Z_j=({B}_i)_j x$ are independent sub-Gaussian random variables; one can compute $\mathbb{E}Z_j^2=1$ and 
		$$\|Z_j\|_{\psi_2}=\|({B}_i)_jx\|_{\psi_2}\leq\sup_{y\in S^{4n-1}}\|({B}_i)_jy\|_{\psi_2}=\|({B}_i)_j\|_{\psi_2}\leq \max_{1\leq k\leq N}\| \qmat{A}_k\|_{\psi_2} =  K,$$
		where the second inequality follows from the definition of the sub-Gaussian norm of the quaternion vector $\qmat{A}_k$ in Def. \ref{def:q_subG_vector_isotropc}.
		Thus $Z_j^2-1$ are independent centered sub-exponential random variables.
		Using Lemma \ref{lem:SubExpRVSum} to give:
		\begin{align*}
			\mathbb{P}\left\{\left|\frac{1}{N}\|{B}_i x\|_2^2-1\right|\geq\frac{\varepsilon}{2}\right\}& =\mathbb{P}\left\{\left|\frac1N\sum_{j=1}^N {Z_j^2}-1\right|\geq\frac\varepsilon2\right\}\leq2\exp\left[-\frac{c_1}{K^4}\min(\varepsilon^2,\varepsilon)N\right] \nonumber \\
			&=2\exp\left[-\frac{c_1}{K^4}\delta^2N\right]=2\exp\left[-\frac{c_1}{K^4}(C^2n+t^2)\right],
		\end{align*}
		where $c_1$ is an absolute constant and the last two equalities come from the definition of $\delta$ in \eqref{eq:AA*-I_tmp}. By Lemma \ref{lem:rvsSumIneq}, 
		\begin{align*}
			P\left\{\left|\frac{1}{4N}\normSpectral{\Upsilon_{\qmat{A}} x}^2-1\right|>\frac{\varepsilon}{2}\right\}&\leq\sum_{i=1}^{4}P\left\{\left|\frac{1}{4N}\normSpectral{B_i x}^2-\frac{1}{4}\right|>\frac{\varepsilon}{8}\right\}\nonumber\\
			&\leq 8\exp\left[-\frac{c_1}{K^4}(C^2n+t^2)\right].
		\end{align*}
		
		Taking the union bound over all vectors $x$ in the net $\mathcal{N}$ of cardinality $\left|\mathcal{N}\right|\leq 9^n$, we obtain:
		\begin{align*}
			\mathbb{P}\left\{\max_{x\in\mathcal{N}}\left|\frac{1}{4N}\|\Upsilon_{\qmat{A}}x\|_2^2-1\right|\geq\frac{\varepsilon}{2}\right\}\leq 9^n\cdot8\exp\left[-\frac{c_1}{K^4}(C^2n+t^2)\right]\leq\exp\left(-\frac{c_1t^2}{K^4}\right),
		\end{align*}
		where the last inequality holds when $C\geq K^2\sqrt{\frac{1}{c_1}\left(\ln 9+\frac{\ln 8}{n}\right)}$.
		
		Using \eqref{eq:AA*-I}, we have:
		\begin{align*}
			\mathbb{P}\left\{\|\frac{1}{4N}\Upsilon_{\qmat{A}}^*\Upsilon_{\qmat{A}}-I\|_2\geq\varepsilon \right\}\leq \exp\left(-\frac{c_1t^2}{K^4}\right)
		\end{align*}
		It means that at least with probability $1-\exp\left(-\frac{c_1t^2}{K^4}\right)$, we have:
		
		\begin{align*}
			1-\delta\leq \sigma_{\min}\left(\frac{1}{2\sqrt{N}}\Upsilon_{\qmat{A}}\right)\leq \sigma_{\max}\left(\frac{1}{2\sqrt{N}}\Upsilon_{\qmat{A}}\right)\leq 1+\delta
		\end{align*}
		where $\delta=C\sqrt{\frac {n}{N}}+\frac {t}{\sqrt{N}}$ in \eqref{eq:AA*-I_tmp}. As noted at the beginning of the proof, this completes the proof of the theorem.
	\end{proof}
	\begin{remark}
		
		The right-hand side of inequality (\ref{eq:ConclusionOfSingularValueEstimation}) still holds when $N\leq 4n$. However, the left-hand side may yield a trivial result. 
		
		If $\qmat{A}$ has independent quaternion isotropic columns, the largest singular value can be estimated by using conjugate transposition $\qmat{A}^*$.   
	\end{remark}

	\subsection{Proof of Theorem \ref{thm:ErrorAnalysisOfSubGaussian}}

	The following lemma is   useful for proving Theorem \ref{thm:ErrorAnalysisOfSubGaussian}.
	\begin{lemma}\label{lem:psi_omega_has_isotropic_rows}
		Let $\bdPsi\in\mathbb{Q}^{l\times m}$ be a quaternion centered sub-Gaussian matrix (Def. \ref{def:sub_gaussian_q_mat}) with entries of $\bdPsi_r$ all having the same sub-Gaussian norm $K$;   $\qmat{Q}\in\mathbb{Q}^{m\times s}$ is   column-orthnormal. Then $\bdPhi=\bdPsi \qmat{Q}$ has independent rows, each of which is quaternion sub-Gaussian isotropic and has sub-Gaussian norm $CK$ for some constant $C>0$  (c.f. Def. \ref{def:q_subG_vector_isotropc}). 
	\end{lemma}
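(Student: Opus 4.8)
The plan is to pass to the real representation and verify the three required properties of the rows of $\bdPhi=\bdPsi\qmat{Q}$ one at a time: independence, isotropy, and a sub-Gaussian norm bound. Recall from the real-representation subsection that $\Upsilon_{\bdPhi}=\Upsilon_{\bdPsi}\Upsilon_{\qmat{Q}}$, and that the compact real representation $\bdPhi_r\in\mathbb R^{4l\times s}$ is the first column block of $\Upsilon_{\bdPhi}$, so $\bdPhi_r=\Upsilon_{\bdPsi}\qmat{Q}_r$ where $\qmat{Q}_r\in\mathbb R^{4m\times s}$ is the compact real representation of $\qmat{Q}$. The key structural fact is that $\Upsilon_{\bdPsi}$ has a block structure in which each of the $4l$ rows is a signed shuffle of one row of $\bdPsi_r^T$; since $\bdPsi$ is centered entry-independent sub-Gaussian, the $4l$ rows of $\Upsilon_{\bdPsi}$ are built from disjoint groups of the independent entries of $\bdPsi_r$ (four rows of $\Upsilon_{\bdPsi}$, one group of $4m$ independent entries), hence the rows of $\Upsilon_{\bdPsi}$ — and therefore the rows of $\bdPhi_r=\Upsilon_{\bdPsi}\qmat{Q}_r$ — fall into $l$ independent groups of four. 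First I would make this independence claim precise: the $j$-th quaternion row $\bdPhi_j$ of $\bdPhi$ corresponds to rows $\{j,l+j,2l+j,3l+j\}$ of $\bdPhi_r$, which depend only on the entries of the $j$-th ``quaternion row block'' of $\bdPsi_r$; distinct $j$ use disjoint entry sets, so $\bdPhi_1,\dots,\bdPhi_l$ are independent.

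Next I would check isotropy, i.e. $\mathbb E\,(\bdPhi_j)_r(\bdPhi_j)_r^T=I_{4s}$ in the sense of Definition \ref{def:q_subG_vector_isotropc}, where $(\bdPhi_j)_r\in\mathbb R^{4s}$ stacks the four real rows of $\bdPhi_r$ indexed by $j,l+j,2l+j,3l+j$. Writing $(\bdPhi_j)_r = R_j\,\qmat{Q}_r$ where $R_j\in\mathbb R^{4\times 4m}$ selects the relevant four rows of $\Upsilon_{\bdPsi}$, one has $\mathbb E\,(\bdPhi_j)_r(\bdPhi_j)_r^T = \qmat{Q}_r^T\,\mathbb E[R_j^T R_j]\,\qmat{Q}_r$. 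Since the entries of $\bdPsi_r$ are centered with unit variance and independent, $\mathbb E[R_j^T R_j]$ equals the identity on $\mathbb R^{4m}$ up to the signed-shuffle structure of $\Upsilon$; the crucial computation — and this is the main obstacle — is to verify that the four rows $R_j$ are mutually orthonormal in expectation, i.e. $\mathbb E[R_j R_j^T]=I_4$ and the cross terms vanish. This follows from the explicit form of $\Upsilon_{\bdPsi}$: the four rows share the same four underlying independent entries $\{\bdPsi_{w,ji},\bdPsi_{x,ji},\bdPsi_{y,ji},\bdPsi_{z,ji}\}$ arranged so that each column of the $4\times 4$ block $\begin{bmatrix}a_w&-a_x&-a_y&-a_z\\ a_x&a_w&-a_z&a_y\\ a_y&a_z&a_w&-a_x\\ a_z&-a_y&a_x&a_w\end{bmatrix}$ is an orthogonal rearrangement with signs; taking expectations over independent unit-variance entries and summing over $i=1,\dots,m$ gives $\mathbb E[R_j^TR_j]=I_{4m}$, hence $\mathbb E[(\bdPhi_j)_r(\bdPhi_j)_r^T]=\qmat{Q}_r^T\qmat{Q}_r=I_{4s}$, using that $\qmat{Q}$ column-orthonormal forces $\qmat{Q}_r$ to be column-orthonormal (a property of $\Upsilon$ recorded in the excerpt).

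Finally I would bound the sub-Gaussian norm. For a fixed unit vector $x\in\mathbb R^{4s}$, the marginal $\langle (\bdPhi_j)_r, x\rangle$ is a linear combination $\sum_k c_k\,\xi_k$ of the $4m$ independent centered sub-Gaussian entries $\xi_k$ of the $j$-th block of $\bdPsi_r$, each of norm $K$, with coefficient vector $c=\widetilde R_j x$ for the appropriate linear map; one checks $\|c\|_2 = \|x\|_2 = 1$ by the isometry computation above. Then Lemma \ref{lem:prod_subG} (or the Hoeffding-type bound, Proposition \ref{prop:hoeffding}) gives $\|\langle (\bdPhi_j)_r,x\rangle\|_{\psi_2}\le C K$ uniformly in $x$, i.e. $\|\bdPhi_j\|_{\psi_2}\le CK$ by Definition \ref{def:q_subG_vector_isotropc}. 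Combining the three parts yields the lemma. I expect the isotropy verification — tracking the signs and shuffles in the $4\times4$ blocks of $\Upsilon$ to confirm the rows of $R_j$ are orthonormal in expectation — to be the only genuinely fiddly step; independence and the $\psi_2$-bound are immediate once the block bookkeeping is in place.
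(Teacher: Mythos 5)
Your overall plan (pass to real representations, get independence from disjoint entry blocks, isotropy from orthonormality, and the $\psi_2$ bound from a Hoeffding-type argument) is the same in spirit as the paper's, and your independence argument is fine. However, the isotropy step — which you yourself flag as the crux — is not correct as written, and this is a genuine gap rather than a cosmetic slip. The quantity Definition \ref{def:q_subG_vector_isotropc} requires is the $4s\times 4s$ outer-product identity $\mathbb E\,(\bdPhi_j)_r(\bdPhi_j)_r^T=I_{4s}$, but the object you compute, $\qmat{Q}_r^T\,\mathbb E[R_j^TR_j]\,\qmat{Q}_r$, is the $s\times s$ Gram matrix of the $4\times s$ matrix $R_j\qmat{Q}_r$; your final chain ``$\mathbb E[(\bdPhi_j)_r(\bdPhi_j)_r^T]=\qmat{Q}_r^T\qmat{Q}_r=I_{4s}$'' is dimensionally inconsistent, since $\qmat{Q}_r^T\qmat{Q}_r=I_s$. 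The intermediate moment identities you assert are also wrong: with unit-variance independent entries one gets $\mathbb E[R_j^TR_j]=4I_{4m}$ and $\mathbb E[R_jR_j^T]=4m\,I_4$, not $I_{4m}$ and $I_4$. More importantly, the genuine content of isotropy lies in the off-diagonal $s\times s$ blocks $\qmat{Q}_r^T\,\mathbb E[r_a^Tr_b]\,\qmat{Q}_r$ for distinct components $a\neq b$ of the row; these expectations $\mathbb E[r_a^Tr_b]$ are nonzero signed block permutations, and their vanishing after conjugation by $\qmat{Q}_r$ uses the \emph{imaginary} components of $\qmat{Q}^*\qmat{Q}=I_s$ (for instance $Q_w^TQ_x-Q_x^TQ_w-Q_y^TQ_z+Q_z^TQ_y=0$), equivalently the full identity $\Upsilon_{\qmat{Q}}^T\Upsilon_{\qmat{Q}}=I_{4s}$ — the compact fact $\qmat{Q}_r^T\qmat{Q}_r=I_s$ that you invoke is not sufficient. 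Since your $\psi_2$ bound appeals to ``the isometry computation above'' for $\|c\|_2=\|x\|_2$, that step inherits the same gap.

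The repair is exactly the route the paper takes, and it is shorter: work row by row with the \emph{compact} representation of the row of $\bdPsi$ and the \emph{full} representation of $\qmat{Q}$. Writing $\qmat{v}=\qmat{w}\qmat{Q}$ for a quaternion row, one has $\qmat{v}_r=\qmat{w}_r\Upsilon_{\qmat{Q}}$, so for any unit $x\in\mathbb R^{4s}$ the marginal is $\qmat{w}_r(\Upsilon_{\qmat{Q}}x)$ with $\|\Upsilon_{\qmat{Q}}x\|_2=\|x\|_2=1$ because $\Upsilon_{\qmat{Q}}$ is column-orthonormal (this is where column-orthonormality of $\qmat{Q}$ enters, once and for all); sub-Gaussianity and $\|\qmat{v}\|_{\psi_2}\le CK$ then follow from Lemma \ref{lem:prod_subG}, and isotropy is the one-line computation $\mathbb E[\qmat{v}_r^T\qmat{v}_r]=\Upsilon_{\qmat{Q}}^T\,\mathbb E[\qmat{w}_r^T\qmat{w}_r]\,\Upsilon_{\qmat{Q}}=\Upsilon_{\qmat{Q}}^TI_{4m}\Upsilon_{\qmat{Q}}=I_{4s}$. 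Your block-bookkeeping approach can be pushed through, but only by explicitly verifying the cross-component cancellations above, which your sketch does not do.
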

	
	\begin{proof}
		Let $\bdPsi_i,\bdPhi_i$ respectively be the $i$-th row of $\bdPsi$ and $\bdPhi$; then $\bdPhi_i = \bdPsi_i\qmat{Q},~i=1,\ldots,l$ are independent due to the independence of entries of  $\bdPsi$. 
		
		For notational convenience, we next use row vectors $\qmat{v}\in\bbQ^{1\times s}, \qmat{w}\in\bbQ^{1\times m}$ to respectively represent $\bdPhi_i$ and $\bdPsi_i$, i.e., $\qmat{v}=\qmat{w}\qmat{Q}$. We still denote $\qmat{v}_r = [v_w,v_x,v_y,v_z]\in\mathbb R^{1\times 4s}$; via real representaion, $\qmat{v}=\qmat{w}\qmat{Q}$ is equivalent to $\qmat{v}_r = \qmat{w}_r\Upsilon_{\qmat{Q}}$. We show that  $\qmat{v}$ is a quaternion sub-Gaussian and isotropic vector. 
		  For any real column vector $x\in\mathbb R^{4s}$, let $y = \Upsilon_{\qmat{Q}}x \in \mathbb R^{4m}$. Then
		\[
		\qmat{v}_r x = \qmat{w}_r\Upsilon_{\qmat{Q}}x = \qmat{w}_r y.
		\]
		Since $\qmat{w}_r$ is a   real row vector of independent  sub-Gaussian entries with zero mean, $\qmat{w}_ry$ is a linear combination of the entries of $\qmat{w}_r$, and so $\qmat{w}_r y$, namely, $\qmat{v}_r x$, is still a sub-Gaussian random variable. 
		To show that $\qmat{v}$ is  isotropc, again by Def. \ref{def:q_subG_vector_isotropc}, we compute  
		\begin{align}
			\mathbb{E}_{\qmat{v}}\left(\qmat{v}_r^T\qmat{v}_r\right)=\mathbb{E}_{\qmat{w}}\left(\Upsilon_{\qmat{Q}}^T\qmat{w}_r^T\qmat{w}_r\Upsilon_{\qmat{Q}}\right)=\Upsilon_{\qmat{Q}}^T\mathbb{E}_{\qmat{w}}\left(\qmat{w}_r^T\qmat{w}_r\right)\Upsilon_{\qmat{Q}}.
		\end{align}
		By assumption of $\bdPsi$, every entries of $\qmat{w}_r$ have zero mean and unit variance and are independent. Thus
		\begin{align}
			\mathbb{E}_{\qmat{w}}\left(\qmat{w}_r^T\qmat{w}_r\right)
			=I_{4m}\quad \text{and}\quad  \mathbb{E}_{\qmat{v}}\left(\qmat{v}_r^T\qmat{v}_r\right)=\Upsilon_{\qmat{\qmat{Q}}}^T\Upsilon_{\qmat{\qmat{Q}}}=I_{4s},
		\end{align}
	where the last equality is 	because $\Upsilon_{\qmat{Q}}^{4m\times 4s}$ is also column-orthonormal (by the assumption on $\qmat{Q}$). Thus $\qmat{v}$ is isotropc. 

	Last, we estiamte the sub-Gaussian norm of the row $\qmat{v}$: $\|\qmat{v}\|_{\psi_2}$. We have
	\begin{align*}
		\|\qmat{v}\|_{\psi_2} &= \|\qmat{v}_r\|_{\psi_2} = \sup_{x^Tx=1,x\in \mathbb R^{4s}}\|\qmat{v}_r x\|_{\psi_2} \\
		& = \sup_{x^Tx=1,x\in \mathbb R^{4s}}\|\qmat{w}_r\Upsilon_{\qmat{Q}}x\|_{\psi_2}\\
		& \leq  \sup_{y^Ty=1,y\in \mathbb R^{4m}}\|\qmat{w}_ry\|_{\psi_2} = \|\qmat{w}_r\|_{\psi_2}, 
	\end{align*}
	where the inequality comes from that $y^Ty = x^T\Upsilon_{\qmat{Q}}^T \Upsilon_{\qmat{Q}} x = x^Tx  =1$, and that $y$ is in an $s$-dimensional subspace of $\mathbb R^{4m}$. Finally, since every entries of $\qmat{w}_r$ has the same sub-Gaussian norm  $K$, by Lemma \ref{lem:prod_subG}, $\|\qmat{w}_r\|_{\psi_2} \leq C\max_{j\leq 4m}\| (\qmat{w}_r)_j\|_{\psi_2} = CK$ for some absolulte constant $C$. Thus $\|\qmat{v}\|_{\psi_2} \leq CK$. This completes the proof. \end{proof}

	Now we prove   Theorem \ref{thm:ErrorAnalysisOfSubGaussian} based on Theorems \ref{thm:errorAnalysisOfTwoSketch} and   \ref{thm:ConclusionOfSingularValueEstimation}.
	\begin{proof}[Proof of Theorem \ref{thm:ErrorAnalysisOfSubGaussian}]
		Recall that \eqref{eq:thm_QB_error_for_Guassian}  of Theorem \ref{thm:errorAnalysisOfTwoSketch} implies 
		\begin{align}\label{eq:proof:prob_subG:1}
			&\|\qmat{H}\qmat{X}-\qmat{A}\|_F \leq \|\qmat{A}-\qmat{Q}\qmat{Q}^*\qmat{A}\|_F + \|\bdPsi_2^\dagger \bdPsi_1\left(\qmat{Q}_{\bot}^*\qmat{A}\right)\|_F,
		\end{align}
		where $\bdPsi_2=\bdPsi\qmat{Q}\in\mathbb{Q}^{l\times s}$, $\bdPsi_1=\bdPsi\qmat{Q}_\bot\in\mathbb{Q}^{l\times \left(m-s\right)}$.		On the other hand, \eqref{eq:lem:A-QQ^*A:2} of	Lemma  \ref{lem:A-QQ^*A} implies
		\begin{align}\label{eq:proof:prob_subG:2}
			 \normF{\qmat{A}-\qmat{Q}\qmat{Q}^*\qmat{A}} \leq\normF{\Sigma_2}+\normF{\Sigma_2\bdOmega_2 \bdOmega_1^\dagger},
		\end{align}
		where $\bdOmega_1=\qmat{V}^*_1 \bdOmega\in\mathbb{Q}^{r\times s}$, $\bdOmega_2=\qmat{V}^*_2 \bdOmega\in\mathbb{Q}^{(n-r)\times s}$. Recall that   $\bdPsi$ and $\bdOmega$ are quaternion matrices;   each entry of $\bdPsi_r$ and $\bdOmega_r$    is  independent centered   sub-Gaussian random variable with unit variance, all having  the same sub-Gaussian norm $K$. Some deviation bounds will be given first.

		By the property of the spectral norm, we have following estimations:
		\begin{align}
			\|\bdPsi_2^\dagger \bdPsi_1\left(\qmat{Q}_{\bot}^*\qmat{A}\right)\|_F&\leq \|\bdPsi_2^\dagger\|_2 \|\bdPsi_1\left(\qmat{Q}_{\bot}^*\qmat{A}\right)\|_F=\|\bdPsi_2^\dagger\|_2 \|\bdPsi\qmat{Q}_{\bot}\left(\qmat{Q}_{\bot}^*\qmat{A}\right)\|_F \label{eq:proof:prob_subG:3}\\
			\normF{\Sigma_2\bdOmega_2 \bdOmega_1^\dagger}&\leq\normSpectral{\bdOmega_1^\dagger}\normF{\Sigma_2\bdOmega_2}=\normSpectral{\bdOmega_1^\dagger}\normF{\Sigma_2\qmat{V}^*_2 \bdOmega} \label{eq:proof:prob_subG:3-2}
		\end{align}

	We first bound $\normSpectral{\bdPsi_2^\dagger}$ and $\normSpectral{\bdOmega_1^\dagger}$ using Theorem \ref{thm:ConclusionOfSingularValueEstimation}. 	By definition, $\normSpectral{\bdPsi_2^\dagger}=1/ \sigma_{\min}\left(\bdPsi_2\right)$ and $\normSpectral{\bdOmega_1^\dagger}=1/\sigma_{\min}\left(\bdOmega_1\right)=1/\sigma_{\min}\left(\bdOmega_1^*\right)$. 
	By Lemma \ref{lem:psi_omega_has_isotropic_rows}, 
	$\bdPsi_2 \in \bbQ^{l\times s}$ ($s<l$) and
	$\bdOmega_1^*\in\bbQ^{s\times r}$ ($r<s$) 
	all has independent rows, which are all isotropic having sub-Gaussian norm $CK$ for some constant $C>0$. 
		Thus, by   Theorem \ref{thm:ConclusionOfSingularValueEstimation}, we have:
		\begin{align}
			\mathbb{P}\left\{\normSpectral{\bdPsi_2^\dagger}>\frac{1}{2\sqrt{l}-2C_K\sqrt{s}-2t}\right\}&\leq\exp\left(-\frac{c_2t^2}{K^4}\right);\label{eq:proof:prob_subG:4}\\
			\mathbb{P}\left\{\normSpectral{\bdOmega_1^\dagger}>\frac{1}{2\sqrt{s}-2C_K\sqrt{r}-2t}\right\}&\leq\exp\left(-\frac{c_2t^2}{K^4}\right) \label{eq:proof:prob_subG:4-2}
		\end{align}
		for some absolulte constant $c_2>0$ and $C_K$ only dependes on the sub-Gaussian norm $CK$. 

We next bound $\|\bdPsi\qmat{Q}_{\bot}\left(\qmat{Q}_{\bot}^*\qmat{A}\right)\|_F$ and 		$\normF{\Sigma_2\qmat{V}^*_2 \bdOmega}$ by Proposition \ref{prop:AVOmega}. 
		First
		conjugate transpose such that $\|\bdPsi\qmat{Q}_{\bot}\left(\qmat{Q}_{\bot}^*\qmat{A}\right)\|_F =   \normF{ \bigxiaokuohao{\qmat{Q}_{\bot}^*\qmat{A}}^* \qmat{Q}_{\bot}^* \bdPsi^* }$; now $\bigxiaokuohao{\qmat{Q}_{\bot}^*\qmat{A}}^*\in \bbQ^{n\times (m-s)}$, $\qmat{Q}_{\bot}^* \in \bbQ^{(m-s)\times m}$, and $\bdPsi^*\in\bbQ^{m\times l}$. Since  $\qmat{Q}_{\bot}^* \in \bbQ^{(m-s)\times m}$ is row-orthonormal and $\bdPsi^*$ is a quaternion sub-Gaussian matrix with every entries of $\bdPsi^*_r$ having the same sub-Gaussian norm $K$, applying Proposition \ref{prop:AVOmega} with $$t_1 := K\sqrt{\frac{2\log\bigxiaokuohao{ \min\{ m-s,n \} }+ \log(4l) +1  }{c_1}   },$$
		where $c_1>0$ is the absolulte constant in Proposition \ref{prop:AVOmega}, 
		we have
		\begin{align*}
			&\mathbb{P}\left\{\normF{ \bigxiaokuohao{\qmat{Q}_{\bot}^*\qmat{A}}^* \qmat{Q}_{\bot}^* \bdPsi^* } > 2 \sqrt{l} t_1 \normF{\qmat{Q}_{\bot}^*\qmat{A}}\right\}	\\
					 \leq &  4e\cdot \exp\left(\log (l\min\{m-s,n\}  )-c_1 t_1^2/K^2\right) \\
			 = &4e\cdot \exp\bigxiaokuohao{ \log (l\min\{m-s,n\}  )-  2\log(\min\{m-s,n  \}) -\log(4l) - 1   } \\
			 =& \frac{1}{ \min\{m-s,n  \}  }.
		\end{align*}
	Note that	  
	\begin{align} \label{eq:proof:prob_subG:6}
			\|\qmat{Q}_\bot^*\qmat{A}\|_F=\|\qmat{Q}_\bot \qmat{Q}_\bot^*\qmat{A}\|_F=\|\qmat{A}-\qmat{Q}\qmat{Q}^*\qmat{A}\|_F
	;\end{align}
	thus
	\begin{align}\label{eq:proof:prob_subG:7}
		\mathbb{P}\left\{\bdPsi\qmat{Q}_{\bot}\left(\qmat{Q}_{\bot}^*\qmat{A}\right)\|_F > 2 \sqrt{l} t_1 \|\qmat{A}-\qmat{Q}\qmat{Q}^*\qmat{A}\|_F\right\} \leq  \frac{1}{ \min\{m-s,n  \}  }.
	\end{align}	
		Similarly, for $\normF{\Sigma_2\qmat{V}^*_2 \bdOmega}$,  $\Sigma_2\in\bbQ^{(\min\{m,n\}-r)\times (\min\{m,n\}-r)  }$, $\qmat{V}^*_2 \in\bbQ^{ (\min\{m,n\}-r) \times n  }$, $\bdOmega \in\bbQ^{n\times s}$. 	Applying Proposition \ref{prop:AVOmega} with $$t_2 := K\sqrt{ \frac{ 2\log(\min\{m,n  \}-r ) + \log(4s) + 1   }{c_1}   },$$  we have
		\begin{align}\label{eq:proof:prob_subG:8}
			&\mathbb{P}\left\{\normF{\Sigma_2\qmat{V}^*_2 \bdOmega}>2\sqrt{s} t_2\normF{\Sigma_2}\right\}\nonumber\\
			 \leq & 4e\cdot \exp\bigxiaokuohao{ \log\bigxiaokuohao{s (\min\{ m,n \}-r  )   }  -c_1t_2^2/K^2  }\nonumber\\
			 = &4e\cdot \exp\bigxiaokuohao{ \log\bigxiaokuohao{s (\min\{ m,n \}-r  )   } -   2\log(\min\{m,n  \}-r ) - \log(4s) - 1   }\nonumber\\
			 = &\frac{1}{ \min\{m,n\}-r }.
		\end{align}

Finally, using \eqref{eq:proof:prob_subG:1}, \eqref{eq:proof:prob_subG:3},  \eqref{eq:proof:prob_subG:4}, \eqref{eq:proof:prob_subG:7},  and Lemma \ref{lem:rvsMultiplneq}, we have
		\begin{align*}
		&\mathbb{P}\bigdakuohao{  \normF{\qmat{H}\qmat{X} - \qmat{A} } \leq \bigxiaokuohao{ 1+ \frac{2\sqrt{l}t_1}{ 2\sqrt{l}-2C_K\sqrt{s}-t}} \normF{ \qmat{A}-\qmat{Q}\qmat{Q}^*\qmat{A}  }  } \\
		\geq & 1-   \exp\left(-\frac{c_2t^2}{K^4}\right) - \frac{1}{ \min\{m-s,n  \}  }.
		\end{align*} 
\eqref{eq:proof:prob_subG:2}, \eqref{eq:proof:prob_subG:3-2}, \eqref{eq:proof:prob_subG:4-2}, \eqref{eq:proof:prob_subG:8} together with Lemma \ref{lem:rvsMultiplneq} gives
\begin{align*}
	\mathbb P\bigdakuohao{ \normF{ \qmat{A}-\qmat{Q}\qmat{Q}^*\qmat{A}  } \leq   \bigxiaokuohao{ 1+ \frac{ 2\sqrt{s}t_2 }{ 2\sqrt{s}-2C_K\sqrt{r}-t }   } \normFSquare{\Sigma_2 }  } \geq 1-   \exp\left(-\frac{c_2t^2}{K^4}\right) -   \frac{1}{ \min\{m,n\}-r }. 
\end{align*}		
Combining the above two inequalities and using again Lemma \ref{lem:rvsMultiplneq} finally yields that
\begin{align*}
	& \normF{\qmat{H}\qmat{X}-\qmat{A}} \\
	\leq& \bigxiaokuohao{ 1+ \frac{K\sqrt{l} \sqrt{ {2\log\bigxiaokuohao{ \min\{ m-s,n \} }+ \log(4l) +1  }    }   }{ (\sqrt{l}-C_K\sqrt{s}-t)\sqrt{c_1}  }  } \bigxiaokuohao{ 1+ \frac{ K\sqrt{s} \sqrt{  { 2\log(\min\{m,n  \}-r ) + \log(4s) + 1   }   } }{ (\sqrt{s}-C_K\sqrt{r}-t)\sqrt{c_1} }   } \normF{\Sigma_2 }    
\end{align*}
with probability at least $ 
	 1-   \exp\left(-\frac{c_2t^2}{K^4}\right) - \frac{1}{ \min\{m-s,n  \}  } - \exp\left(-\frac{c_2t^2}{K^4}\right) -   \frac{1}{ \min\{m,n\}-r }$.
	\end{proof}

	\color{black}

\end{document}